\documentclass[A4,14pt,openany]{book}

\def\drafts{0} 
\usepackage{mathtools}\usepackage{etoolbox}

\pretocmd{\includegraphics}{%
  \typeout{GRAPHICS FILE: #1}%
}{}{}

\usepackage{fancyhdr}
\pagestyle{fancy}
\usepackage{mdframed}
\usepackage{etoolbox}
 \usepackage{ifthen}
\usepackage{makecell}
\usepackage{booktabs}
\usepackage{rotating}
\usepackage{thumbpdf,lmodern}
 \usepackage{bigints}
\usepackage{tcolorbox}
\usepackage{hyperref}
\usepackage{subcaption}
\usepackage{relsize}
\usepackage{multirow}
\usepackage{minitoc}
 \usepackage{pdfpages}
\usepackage{amsmath,amsfonts,amssymb,amsthm,mathrsfs,mathabx}
\usepackage{stmaryrd}\usepackage{latexsym}
\usepackage{graphicx}
\usepackage{verbatim}

\usepackage{ulem}
\usepackage{tikz}
\usepackage{environ}
 \usepackage[latin1]{inputenc}  
\usepackage[T1]{fontenc}  
\usepackage{comment}
\usepackage{cleveref}
\usepackage{draftwatermark}

 \newcommand{\maxscale}{b}
  \newcommand{\what}[2]{ {\widehat{ #1}}^{ \hspace{-.1cm}    ~^{#2}}    }

 \newcommand{\draft}[1]{ \if\drafts1 { {\color{blue}[DRAFT]#1[/DRAFT]   }}\fi }

    \newenvironment{longversion}[1]
{
  \begin{tcolorbox}[
    colback=blue!5,
    colframe=blue!50!black,
    boxrule=0.5pt,
    arc=3pt
  ]
  \color{blue} #1 {\sl (long version)}
}
{
  \end{tcolorbox}
}

\if\drafts1

\SetWatermarkText{DRAFT}
\SetWatermarkScale{.2}
\else
\SetWatermarkText{}

 \fi

\fancyhf{}
\fancyfoot[L]{ \if\drafts1\hyperref[toc]{ {\color{red}TOC}} - \hyperref[ch:maths] {\color{red} Voir Chapitre Maths} - \hyperref[ch:numerics]{  {\color{red} Voir Chapitre numerics}}  \fi \hyperref[toc]{  {\tiny% Back to TOC
 }}   }

  \newcommand{\tconv}{  \overline{*}}  
    
\rhead{\thepage} 
\lhead{\authorname} 
\chead{\titletext} 
 \newcommand{\GAF}{   \mathsf   F^{ \text{\rm{Pl}}}}  
 \renewcommand{\a}{  {\bf a}}  
 \newcommand{\CC}{  \mathscr  C}  
 \newcommand{\Q}{ {   \mathsf 	 Q}}  
 \renewcommand{\k}{{  \bf k}}  
  \newcommand{\m}{  {\bf m}}  
\newcommand{\authorname}{R. Lachièze-Rey}
\newcommand{\titletext}{Hyperuniform random measures}
 \newcommand{\s}{   \mathsf s}  

 \newcommand{\wsrm}{weakly stationary random measure}  
  \newcommand{\W}{   \mathsf W}  

  \newcommand{\mo}{   {\mathfrak m}  }
 \newcommand{\smallequlaw}{\stackrel{\mathclap{\scalebox{0.7}{$\scriptscriptstyle(d)$}}}{=}}  
 
\author{Rapha\"el Lachi\`eze-Rey}
 \newcommand{\hatn}[1]{ \widehat{ #1}^{  { \lambda } }}
 \newcommand{\hatX}[1]{ \widehat{ #1}^{ \X}}
 \newcommand{\Sc}{  \mathscr  S}  
\usepackage{color}

 \newcommand{\T}[1][\lambda]{   \mathbb  T  _{ #1}^{ d}}  
 \newcommand{\LSI}{$ L^{ 2}_{ \text{\rm{loc}}}$ }  
 \newcommand{\D}{   \mathsf D}  
   
 \newcommand{\F}{  \mathscr  F}  
 \usepackage{geometry}
\geometry{hmargin=3cm,vmargin=3cm}

\newtheoremstyle{myexample}     
  {3pt}                         
  {3pt}                         
  {}                            
  {}                            
  {\bfseries}                   
  {.}                           
  { }                           
  {}                            
 
 \newcommand{\E}{  \mathbf  E}  
 \renewcommand{\sc}{  \mathscr  S}  
   
 \newcommand{\Leb}[1][d]{  \mathscr  L^{ #1}}  
\theoremstyle{myexample}

\newtheorem{theorem}{Theorem}[chapter]
\newtheorem{lemma}{Lemma}[chapter]
\newtheorem{proposition}{Proposition}[chapter]
\newtheorem{remark}{Remark}[chapter]
\newtheorem{question}{Question}[chapter]
\newtheorem{example}{Example}[chapter]
\newtheorem{corollary}{Corollary}[chapter]

\newtheorem{definition}{Definition}[chapter]

  \newcommand{\Poi}{\text{\rm{Poi}}}  




\newtheoremstyle{pasimportant} 
    {\topsep}                    
    {\topsep}                    
    {\color{magenta}}                   
    {}                           
    { \bf}                   
    {.}                          
    {.5em}                       
    {}  %
    
\theoremstyle{pasimportant}


\newtheorem{exercice}{Exercice}
\newtheorem{exercise}{(Draft) Exercise} 



      \if\drafts0
       \RenewEnviron{longversion}{ 
  { 
  }
  }  
  
     \RenewEnviron{exercise}{ 
  { 
  }
  }  
\fi

 \if\drafts0
\RenewEnviron{exercise}{{}}

\fi

  \newcommand\bloc[2]{%
\expandafter\gdef\csname bloc#1\endcsname{#2}%
\label{#1}#2}

\newcommand\repeatbloc[1]{\csname bloc#1\endcsname}

\renewcommand{\P}{\mathsf{P}}
\newcommand{\M}{\mathsf{M}}
\renewcommand{\S}{\mathsf{S}}
\newcommand{\C}{\mathsf{C}}

 \newcommand{\N}{  \mathscr  N}  
  \newcommand{\equlaw}{ \stackrel{(d)}{ = }}

\author{ Raphael Lachieze-Rey\thanks{INRIA Paris, Team Mathnet, Lab. MAP5, Université Paris Cité, raphael.lachieze-rey@math.cnrs.fr}}

\title{Hyperuniformity of random measures, transport and rigidity}

%

\begin{document} \thispagestyle{empty}
\pagenumbering{gobble} 
\newpage
~
\vspace{2em}
\begin{center}
    {\LARGE \textbf{Hyperuniform   random measures, \\
    transport and rigidity}\\
\vspace{.5cm} 
    \small {\it Second version} \par}
    \vspace{1em}
    {\large Raphael Lachieze-Rey\footnote{INRIA Paris, Team Mathnet, Lab. MAP5, Université Paris Cité, raphael.lachieze-rey@math.cnrs.fr} \par}
    \vspace{1em} 
\end{center}

\vspace{2em}

 \begin{quote}
     
 {\bf Abstract.} This survey explores the foundational theory and recent developments in the study of hyperuniformity. We present a comprehensive mathematical framework in the context of weakly stationary random measures, emphasizing spectral characterizations and second order asymptotics. Classical examples - including determinantal point processes, Gibbs measures, and zero sets of Gaussian analytic functions - are presented in depth to illustrate core principles. We also highlight recent progress connecting hyperuniformity with optimal transport and rigidity phenomena, pointing to emerging directions in the field.

 \end{quote}
\thispagestyle{empty}

%

\IfFileExists{\jobname.toc}{

\def\noauxfile{0}
  \tableofcontents
}{

 \tableofcontents
 \def\noauxfile{1}

 toc does not exists, leave 2 pages blank  to avoid document shift
 
\newpage ~

} 

\if\drafts1
\section*{\draft{TOC}}
\fi
\label{toc}

\vspace{2cm} 

 \chapter{Introduction and examples}
 \label{ch:intro}
  
\pagenumbering{arabic}

When one has to generate a  sample of {\it  random points} in a window $ W$ of $ \mathbb{R}^{ d}$, the default strategy across
 many fields is to choose a large number $ n$ and draw $ n$  independent points uniformly in $ W$. The asymptotic  mathematical object as $ n\to \infty $ is the celebrated  {\it homogeneous Poisson process}, used as a universal reference, especially in theoretical studies, due to its nice mathematical properties. 
 Such finite ($ n$  i.i.d points) or infinite (Poisson) samples of independent points have some downsides,  such as the tendency to leave large empty spaces, or on the contrary, regions  cluttered with too many points (see the picture on the right, below), but this is an inevitable consequence of total randomness.

Hyperuniformity is a property exhibited by many mathematical models presenting instead a  {\it regular} spatial arrangement, remedying some flaws of independent samples. This type of arrangement is reminiscent of the way particles subject to mutual repulsive forces would be distributed. Many natural models from statistical physics, biology, or other fields, exhibit hyperuniform behavior.

\begin{figure}[h]
\centering
  \begin{subfigure}{0.28\textwidth}
    \raggedright
    \includegraphics[width=\linewidth]{illust/ginibre-hawat}
    \caption*{A hyperuniform sample}
  \end{subfigure}%
  ~\;
  \begin{subfigure}{0.28\textwidth}
    \raggedleft
    \includegraphics[width=\linewidth]{illust/poisson-hawat}
    \caption*{A Poisson sample}
  \end{subfigure}
\end{figure}
Figure  \ref{fig:three_figs}  (left) shows the photoreceptor locations of a bird's eye, a class of species renowned for their excellent long-distance vision. 
This sample can be categorised as   hyperuniform due to its spatial statistical characteristics  \cite{avian-eyes}. In image analysis and optimal transport, hyperuniformity is also present, sometimes under the term {\it blue noise}, because regular samples can be useful for many tasks, such as texture synthesis or dithering,  and variance reduction is an essential feature of blue noise samples \cite{BlueNoise,DitheringBlueNoise,VarianceMC,deGoes}.   Figure  \ref{fig:three_figs}  (right), for instance, has been obtained by replacing greyscale levels by blue noise samples with the corresponding density, and the hyperuniform behaviour is essential for the nice visual impression  \cite{deGoes}. 
Birds photoreceptors can be compared with insect photoreceptors (Figure  \ref{fig:three_figs} - Middle) which are arranged in a periodic manner. For many applications, the samples should actually be  {\it disordered}, i.e. non-periodic, it can otherwise cause in Monte Carlo integration or image processing undesired aliasing or structured artifacts   \cite{VarianceMC}.

\begin{figure}[h!]
    \centering
    \begin{subfigure}[b]{0.28\textwidth}
        \includegraphics[width=\textwidth]{illust/chicken} 
    \end{subfigure}
    \hfill    
    \begin{subfigure}[b]{0.3\textwidth}
        \includegraphics[width=\textwidth]{illust/hexa-eye}
    \end{subfigure}
    \hfill  \hspace{1.5cm} 
    \begin{subfigure}[b]{0.28\textwidth}
        \includegraphics[width=\textwidth]{illust/deGoes}
    \end{subfigure}
 \caption{  {\it Left.} Disordered hyperuniform receptors \cite{avian-eyes}.  {\it Middle.} Periodic ``ordered'' photoreceptors.  {\it Right.} Dithering - Greyscale levels replaced by  hyperuniform ``blue noise'' samples  \cite{deGoes}, ACM Trans. Graph.}
    \label{fig:three_figs}
\end{figure}

This {\it regularity} is difficult to define rigorously in a non-ambiguous way, but the good fortune of mathematicians and the reason why this field of study exists is that hyperuniformity is a very natural and universal way to mathematically define a certain form of regularity. Roughly speaking, a sample is hyperuniform if the variance of the number of points in each large window is smaller than if the points were independent  (Poisson or i.i.d), see Figure  \ref{fig:reduced-fluct} for a numerical illustration of this variance reduction. Hyperuniformity is a simple second-order assumption, which does not involve higher order structure of the process, but it surprisingly implies many macroscopic phenomena, related to optimal transport, or rigidity. It is also quite universal since it can equivalently be defined by low variance for not-too-irregular linear statistics, and the whole theory extends to general random measures, including for instance Gaussian fields, spin systems and nodal domains of random fields.

\begin{figure}[h!]
\begin{center}
\includegraphics[width = 10cm]{illust/PoissonVShu}
\caption{Non-hyperuniform VS  hyperuniform}
\label{fig:reduced-fluct}
\end{center}
\end{figure}

The reduced charge fluctuations in some particular models, also called  {\it sum rules},  have been noticed a long time ago for particle systems \cite{martin1980charge, Leb}, or for zeros of random polynomials and eigenvalues of random matrices  \cite{ForresterHonner,Forrester}. 
The systematic investigation of this phenomenon  started in the 2000's with the team of S. Torquato at Princeton  \cite{TS03}, who popularised the term  {\it hyperuniformity,} or J. Lebowitz at Rutgers University, sometimes under the terminology of {\it superhomogeneity}. We will see that hyperuniformity of a medium   is related to the  {\it blue noise} property in the Fourier domain, i.e.  the transparency to very small wavelengths, a property with a universal utility. The parallel investigation of this phenomenon in the aforementionned  fields, mostly experimental,    has remained largely unnoticed.
 
 Besides its usefulness and appearances in other sciences, many popular mathematical models turned out to be  hyperuniform, in random matrices, statistical physics, random polynomials, quasicrystals, see the surveys  \cite{Tor18,GosLeb} for an in depth collection. Reading the literature gives the impression that  hyperuniform  point processes can be categorised in two classes: the class of periodic  or almost periodic, that eventually undergo small perturbations,  having properties similar to those of crystalline or quasi-crystallines structures, and the class of particle systems that look like spontaneous organisation of particles  arranging themselves after a pairwise repulsive force, and conserve some sort of local disorder. To draw a parallel with the way animal visual receptors sample space, the latter models seem visually more disordered, somewhat like the bird photoreceptors in Figure \ref{fig:three_figs} (left), whereas one can make a parallel between crystalline models and the regular arrangement of the eyes of insects (middle figure). A remarkable property of disordered hyperuniform processes is that they often display the same large-scale properties as their crystalline counterparts, which is why physicists sometimes subtitle hyperuniformity as {\it global order and local disorder}.
 
  The scope of this survey is to study hyperuniformity and its consequences under a mathematical perspective.
 We also present most stationary models for which hyperuniformity has been proven rigourously: determinantal processes, zeros of random Gaussian functions, Coulomb gases, quasicrystals ...   We give a first definition in Section  \ref{sec:intro-pp} and discuss the concept of  {\it disordered} sample, we give some emblematic examples in Sections  \ref{sec:perturbed-intro},  \ref{sec:intro-examples}. In Chapter 2, the mathematical core of this book, we explore hyperuniformity from the spectral viewpoint, which allows for a practical and universal characterization in the direct space through smooth linear statistics variance. The most natural framework for studying hyperuniformity is in fact that of (weakly) stationary random measures, generalising point processes. In this setting, we also provide universal bounds on the variance of linear statistics, useful for parameterizing measures by their hyperuniformity exponent.
Section 3 is devoted to a deeper study of certain classes of hyperuniform point processes, which requires delving into the world of Gaussian analytic functions (GAFs), determinantal point processes (DPPs), and quasicrystals. Sections 4 and 5 present surprising macroscopic properties of hyperuniform processes in dimension $ 1$ and $ 2$: in Section 4, we show that they enjoy good optimal transport properties. This large scale order allows crystalline/periodic structures to be merged with the class of disordered hyperuniform processes into a single continuum. It more broadly explores optimal transport between a hyperuniform measure and Lebesgue measure, or equivalently the link between hyperuniformity and random fair partitions of the space. Section 5 deals with another property, {\it rigidity}, which states that for many hyperuniform processes, the number of particles in a region of space can be completely inferred by observing the process on the rest of the space. This rigidity can take more extreme forms as the hyperuniformity exponent increases, leading us to the study of   {\it stealthy processes}, with infinite exponent, showcasing even more fascinating properties.
 
\subsection*{Context and objectives}

 These notes were written at the occasion of the mini-course   {\it Hyperuniformity of random samples} given at the 2025 GeoSto conference at Grenoble-Alpes University, the slides can be found at \url{https://helios2.mi.parisdescartes.fr/~rlachiez/recherche/talks/slides-hu.pdf}. This version is intended to take part in the Springer series  {\it Stochastic Geometry}. The lecture notes  \cite{Lr-DPPs} were written at the occasion of a Master course at Paris Dauphine Université, they give more details for some aspects related to random matrices and determinantal processes.\\
 
 There already exists general studies about  hyperuniformity. 
 The   surveys  \cite{Tor16,GosLeb}  list many different physical models experiencing  hyperuniformity at different orders, and presents most important properties, and some conjectures on  hyperuniform and stealthy processes. The more mathematical discussion of Coste \cite{Coste} contains some of the material treated here. Since the last version, there  have been several theoretical advances that we report here, notably concerning spectral characterization  \cite{HartBjo},  transport properties  \cite{LrY,BDG,HueLebl,KLLY,DFHL,Flimmel,LotzKlatt},  rigidity  \cite{DFHL,Lr24,rigid-companion,LotzKlatt}, linear statistics  and limit theorems \cite{MBL,jalowyBox,KrishYogesh}, Gibbs measures  \cite{DereudreFlimmel,Leble,leble-stationary}, and others. The excellent book \cite{BKPV} describes several models of hyperuniform DPPs and random GAF zeros, and provides their properties, it constitutes an essential source for this work. \\
 
 Between the first and the second version of this survey, the main additions concern quantitative aspects (Chapter \ref{ch:numerics}), finite volume hyperuniformity (Section  \ref{sec:hu-finite}), infinite volume optimal transport (Section \ref{sec:finite-Coulomb}), hyperuniform constructions based on fair partitions (Section  \ref{sec:fair-partitions}), hyperuniformity of specific models of Gibbs measures (Section  \ref{sec:gibbs}), and the removal of many erros and typos, more or less serious (many thanks for all the feedback received!).
 
  \newcommand{\B}{  \mathscr  B}  
 
  \section{Point processes formalism and first definition of HU}
  \label{sec:intro-pp}
  
  We shall give a quick introduction to the formalism of point processes here. We refer the reader to the reference textbook  \cite{DVj08}, or to the   recent monographs  \cite{baccelli2020random,BKPV} more connected to the models of interest in this work.
  Even though many general results will be stated without additional cost to general random measures, the main objects of this branch of literature are  {\it simple point processes}. To define them properly, 
introduce the space of configurations $ \N = \N(  \mathbb R^{d})$ whose elements are the atomic measures $ P = \sum_{i}\delta _{ x_{ i}}$, where the $ x_{ i}$ are countably many isolated points in $ \mathbb{R}^{ d}$. A configuration $ P$  can unambiguously be assimilated to its support and we often  use  set-related   notation such as, for $ P,P'\in \N,$ 
\begin{align*}
 P\cup P' =  { \rm supp}(P)\cup  { \rm supp}(P'), P\cap P'     =  { \rm supp}(P)\cap  { \rm supp}(P')        ,P  \setminus A =   { \rm supp}(P)  \setminus A,   ...
\end{align*}
Endow $ \N$ with the  {\it vague} topology, generated by the mappings 
\begin{align*}
\varphi _{ f}:P\in \N\to P(f): =  \int_{}fdP
\end{align*}
for $ f$  continuous  with compact 
support. The corresponding Borel $ \sigma $-algebra is denoted by $ \B(\N)$, it is also generated by mappings $ \varphi _{ 1_{ A}}$ for $ A\subset \mathbb{R}^{ d}$ bounded measurable.  A  {\it simple point process}, or just   {\it point process} in the following, is a random element $ \P$ of $ (\N, \B(\N))$. Equivalently, it is a family of measurable mappings
\begin{align*}
 A  \mapsto  \P(A)\in \mathbb{N}\end{align*} for $ A\subset \mathbb{R}^{ d}$ bounded  measurable, and the family of laws $ \{\P(A);A\subset \mathbb{R}^{ d}\}$ uniquely defines the law of $ \P$  as a probability measure over $ \N$ (\cite[Th. 9.2.XII]{DVj08b}).  An essential assumption for the modelisation of homogeneous structures 
is that of stationarity. Call $ \tau _{ x}$ the operator of shift by $ x\in \mathbb{R}^{ d}$, lifted to a set $ P\subset \mathbb{R}^{ d}$ with $ \tau _{ x}P = \{y + x;y\in P\}$, and say that a   point process $ \P$ is  {\it stationary} if for all $ x\in \mathbb{R}^{ d}$ $
 \tau _{ x}\P \smallequlaw   \P $,
using the set notation. For $ A\subset \mathbb{R}^{ d}$, we use the notation $  \mathscr  N(A)$ for the $ P\in  \mathscr  N$ with $ P\subset A$, endowed with the restriction topology and Borel $ \sigma $-algebra.\\

Let us introduce the   {\it Poisson  process}, fundamental brick in the realm of point processes. Given a non-zero non-negative locally finite measure $ \mu $ on $ \mathbb{R}^{ d}$ without atoms,   $ \P^{ \Poi(\mu) }$ is defined as the unique process satisfying $$ \P^{ \Poi(\mu) }(A) \stackrel{(d)}{=}  \text{\rm{\color{black} Poisson}}(\mu (A)), A\subset \mathbb{R}^{ d},$$ where $ \text{\rm{\color{black} Poisson}}(\lambda )$ denotes the law of a Poisson  variable with parameter $ \lambda \in \mathbb{R}\cup \{\infty \}$ ($ \Poi(\infty ) = \infty $ a.s. by convention).  One way to explicitly build $ \P^{ \Poi(\mu ) }$ is to start from  i.i.d.   variables uniform in the ball $ B_{ n}$ centred in $ 0$ with volume $ n$, i.e. $ \P_{ n}^{ \mu }: = \{X^{ (n)}_{ 1},\dots ,X^{ (n)}_{ n}\}$   with law $ \frac{ 1}{\mu (B_{ n})}\mu 1_{ B_{ n}}$ (for $ n$ sufficiently large). We have for $ A\subset \mathbb{R}^{ d}$ bounded as an easy exercise
\begin{align*}
 \#\{k:X^{ (n)}_{ k}\in A\} \xrightarrow[ n\to \infty ]{\text{\rm{Law}}} \text{\rm{\color{black} Poisson}}(\mu (A)),
\end{align*}hence $ \P^{ \Poi(\mu) }$ exists in $ \N$ as the  weak limit of the $ \P_{ n}^{ \mu }$ in the  vague topology. Let $ \Leb$ be Lebesgue measure. To obtain a stationary model, one must necessarily choose $ \mu  = \lambda \Leb$ for some $ \lambda >0$, and $ \lambda $ is called the \text{\rm{intensity}} of $ \P.$ More generally, for any stationary point process $ \P$, the intensity $ \lambda $ is defined by 
\begin{align*}
 \lambda  = \frac{ \mathbf E \P(A)}{\Leb(A)},A\subset \mathbb{R}^{ d}\text{\rm{ bounded non-negligible}},
\end{align*}
and this definition does not depend on $ A$; the finiteness of $ \lambda $ is by no means automatic, but we will implicitly assume it by default. Since we mainly conduct here second order analyses of such processes, we will in fact always assume  {\it local square integrability} (denoted by $ L^{ 2}_{ \text{\rm{loc}}}$), i.e.  $ \mathbf E \P(B)^{ 2}<\infty $ for $ B$ bounded.

As a Poisson variable, the variance of the number of points in the ball $ B_{ R}$  centered in $ 0$ with radius $R>0$ for a Poisson process is the volume of the ball
\begin{align*}
  \textrm{Var}\left(\P^{\Poi(\lambda )}(B_{ R})\right) =  \textrm{Var}\left( \text{\rm{\color{black} Poisson}}(\lambda \Leb(B_{ R}))\right) = \lambda \Leb(B_{ R}) = \kappa _{ d}\lambda  R^{ d}\text{\rm{ with }}\kappa _{ d} = \Leb(B_{ 1}).
\end{align*}
We write for short $ \P^{ \Poi(\lambda )} = \P^{ \Poi(\lambda \Leb)}$.
In general, a random measure with variance proportional to the volume on large domains is said to be  {\it extensive}, and is in fact expected for most   natural stationary point processes where particles only interact locally.

The study of perturbed lattices, random matrices, particle systems,  random polynomials, and many other natural objects, that will be the main topic of the current work, made emerge a substantial class of stationary processes where there is no extensivity, and some cancellation seems to equilibrate fluctuations of points, in what we call a  {\it hyperuniform}, or  {\it superhomogeneous} behaviour. The reasons for this compensation, also called  {\it screening} in statistical physics, are not always clear, and generally different for each system.
\begin{definition}
A stationary  point process $ \P$ of $  \mathbb R^{d}$ is  hyperuniform if 
\begin{align}
\label{eq:intro-hu}
\lim_{R\to \infty }\frac{  \textrm{Var}\left({\P}({B_{R}})\right)}{  \Leb(B_{R})} = 0.
\end{align}

\end{definition}

%
%
%
%
%
%
%
%
%
   \section{Perturbed lattices }
   \label{sec:perturbed-intro}
    \newcommand{\Z}{   \mathsf Z}  
   The most basic example of a  hyperuniform infinite sample is the  {\it shifted lattice}, i.e. in $ \mathbb{R}^{ d}$
\begin{align*}
 \Z  ^{ d}: = \{\k + U;\k\in \mathbb{Z} ^{ d}\} ,
\end{align*}
where $ U$ has the uniform distribution on  $ [0,1]^{ d}$, denoted by $  \mathscr  U_{ [0,1]^{ d}}$. The shift by $ U$ ensures stationary, i.e. invariance in law under $ \mathbb{R}^{ d}$ translations. This model is not very rich from the mathematical point of view, it still serves as a reference or as a counter-example for many phenomena.
Any other Bravais lattice, i.e. obtained through a linear mapping applied to $\mathbb{Z} ^{ d}$, would do as well; for simplicity we mostly  consider $ \Z^{ d}.$
 We shall introduce the more general concept of  {\it independently perturbed lattice} (IPL).
\begin{example}[IPL]
\label{ex:IPL-intro} For $ \mu $ a probability measure  on $ \mathbb{R}^{ d}$, let
$\Z^{ d,\mu }: =  \{\k + U + U_{ \k};\k\in \mathbb{Z} ^{ d}\}$ where the $ U_{\k}$ are i.i.d.   with law $ \mu ,$ called IPL with law $ \mu .$
\end{example} 

The hyperuniformity of $ \Z^{ d,\mu }$ is not trivial,  especially when $ \mu  = \delta _{ 0}$, i.e. $\Z^{ d,\mu } =  \Z^{ d}$, where it is related to  {\it Gauss's circle problem}.
A general proof in the spectral domain  is a corollary of Theorem \ref{thm:general-hu}. 
We can still give some geometric intuition when the $ U_{ \k}$ are bounded and not deterministic: there are approximately $ O(R^{ d-1})$ points $ \k + U_{ \k}$ close to $ \partial B_{ R}$, hence likely to cross the boundary under application of the shift $ U$, and they would cross approximately independently of other distant points. The variance of the number of particles inside is  the sum of variances of indicators for such points, which therefore gives a sum with $ O(R^{ d-1})$ uniformly bounded terms,   indeed negligible  with respect to $ R^{ d}.$

This model can be refined by introducing dependency among the $ U_{ \k}$, but to ensure stationarity we will always require the perturbations to form a stationary field of $ \mathbb{Z} ^{ d}$, i.e. for $ \m\in \mathbb{Z} ^{ d},$ $$ 
\{U_{\m +  \k};\k\in \mathbb{Z} ^{ d}\} \stackrel{(d)}{=}   \{U_{ \k};\k\in \mathbb{Z} ^{ d}\}.$$
 {At Chapter  \ref{chap:transport}, we will see that hyperuniformity persists if the assumption of independence of the $ U_{ \k}$ is replaced by a mixing assumption.
 More surpsisingly, in dimension $ d\geqslant 2$, most  hyperuniform processes can be written as a (non-mixing) stationary perturbating field  applied to a lattice. In this framework, the $ U_{ \k}$ can be interpreted as a transport between $ \Z^{ d}$ and the obtained point process $ \P.$ }

\section{Disordered samples}
\label{sec:disordered}   
Like particles of a gas, or  chicken photoreceptors (Figure \ref{fig:three_figs}), many hyperuniform processes observed in physics or biology seem to be  {\it disordered}.  Physicist  sometimes present disorder as  the absence of peaks in the spectrum, which can be reminiscent of an underlying periodic structure, and also ask from a disordered model to be  {\it isotropic},  i.e. no direction is priviledged: $ \P = \sum_{i}\delta _{ x_{ i}}$ is isotropic if for any orthogonal matrix $ O$ of size $ d$, $$ O\P: = \sum_{i}\delta _{ Ox_{ i}}\smallequlaw \P.$$ 
 It is sometimes additionally assumed that the covariance measure has  finite total mass (see   the formal definition at \eqref{def:covariance}).
The latter assumption, and most of the results in this survey, pertain to second order analysis, i.e. variance and covariance behaviour, and is referred to as  {\it weak disorder} here. Still, one might have ``disorder'' at order $ 2$ and order from a more global perspective, see for instance the example of cloaked lattices  \cite{cloak},   Example  \ref{ex:IPL}. It is not hard to build counter-examples which satisfy the above properties but cannot be categorised as  {\it disordered}, but they are probably physically unnatural.
A more satisfying mathematical concept is that of  {\it mixing}. This property is another interpretation of disorder where the behaviour of the model at distant locations should be asymptotically independent.   
\begin{definition}
Say that a stationary point process $ \P$ is mixing if for $ A,B\subset \mathbb{R}^{ d}$ bounded Borel sets, 
\begin{align*}
 \mathbf P (\P(A) = 0,\P(\tau _{ x}B) = 0)\xrightarrow[ x\to \infty ] {}\mathbf P (\P(A) = 0)\mathbf P (\P( B) = 0).
\end{align*}

\end{definition}  
The fact that empty intersection events characterise the law comes from the Renyi-Monch theorem for point processes \cite[Corollary 9.2.XIII]{DVj08}.
Mixing extends to   general events $ \Omega ,\Omega '$ of $ \B(\N)$ (see  \cite[Lm. 12.3.II]{DVj08}). If $ \P$ is mixing, we have 
\begin{align*}
 \mathbf P (\P\in \Omega ,\tau _{ x}\P\in \Omega ')\to \mathbf P (\P\in \Omega )\mathbf P (\P\in \Omega ').
\end{align*}   
Another advantage of the mixing property is that, by basic principles of ergodic theory, it prevents the spectrum, formally defined at  \eqref{eq:structure-def}, to have any atoms, also called  {\it Bragg peaks}. In many physics papers, disordered samples are said  {\it amorphous}, which often means isotropy and absence of Bragg peaks  \cite{Tor18}.   See a precise statement at Proposition  \ref{prop:mixing-spectral}.

\begin{longversion}{}
 mixing implies continuous spectrum?
\end{longversion}

\begin{longversion}{
This definition is not completely satisfactory either as some models, such as  the  stationary Poisson line intersection process satisfies it and still   exhibits very long range dependency, see  \cite{KlattLast} and references therein. Still, it excludes strongly structured models such as independently perturbed lattices. We introduce at Section \ref{sec:CLT-mixing} the concept of Brillinger mixing, which seems ideal from many points of view, but hard to verify in practice.

Let us finally note that, as a consequence of a general principle from ergodic theory, the mixing property forbids the presence of  peaks in the spectrum, fulfilling a requirement often present in the physics  literature, see  \cite{sinai-ergodic}. The converse is not true, as one can ``erase'' the peaks in the spectrum with a procedure that can   seem artificial from the ergodic theory point of view, see the discussion at Example \ref{eq:IPL-SF}.}

This concept discriminates well between models that are

Mixing is a good mathematical definition for (long range) disorder. A counter example might be the following.

\begin{example}
Let $   \mathsf L$ a Poisson stationary isotropic line process, and $ \P = \cup _{ L\neq L'\in    \mathsf L}L\cap L'.$ Then $ \P$ is mixing because 
\begin{align*}
 |  \mathbf P (\P(A) = 0,\P(\tau _{ x}B) = 0)-  \mathbf P (\P(A) = 0)\mathbf P (\P(\tau _{ x}B) = 0) | \leqslant \mathbf P (  \exists  L\in   \mathsf L: L\cap A\neq \emptyset ,L\cap \tau _{ x}B\neq \emptyset ).
\end{align*}
\end{example}  

This example has long range interaction because if, say, in 2D, 2 points $ x\neq y$ are in $ \P,$ then there is a positive probability that other points are on the (Lebesgue-negligible line) spanned by $ x,y$, and this could be expressed with factorial moment measures. This can be avoided by introducing a perturbation $ \P^{ \mu }$ as in ..., but it does not suppress long range interaction. A stronger notion of mixing is defined with factorial moment measures (Brillinger. Does it discard $ \P^{ \mu }$?)

{Poisson line intersection process}
 
\begin{exercice}
Poisson line intersection process, Cox and ...
\end{exercice}

Rk: One can easily isotropize a mixing process (?), so isotropic might be nonnecessary to ask.

Show that IPLs are not mixing?

\begin{exercice}
Random intersection of stationary line networks are not HU?
\end{exercice}
\end{longversion}

 \newcommand{\Gin}{  \text{\rm{Gin}}}  
   
    \section{Three emblematic examples}  
    \label{sec:intro-examples}
    We present here three  examples that emerge from different models. The first examples come from  random matrices, more precisely they are the scaling limit of points in the bulk of the eigenvalues of two prominent models. Two of them,  the GUE (in dimension $ d = 1$) and Ginibre ensemble ($ d = 2$), are also  {\it determinantal processes}, which will lead us  to introduce this very important class at Chapter \ref{chap:proofs}. The third example comes from the  field of random polynomials and functions.
It still bears a flavour similar to the Ginibre ensemble in that they are naturally defined on the Complex plane, through Gaussian Standard Complex Variables, and are
 connected to the theory of analytic functions through the complex covariance $ C(z,w) = e^{ z  \bar w}$. Together with the Ginibre ensemble, they  really are the two seminal examples for which have been uncovered in first the universal properties of  hyperuniform processes such as rigidity or good transport properties, partly because they  are  tractable, up to a certain point, among  the jungle of all physically relevant point processes. They are just at the right place in the world of mathematical particle models, between  relevancy and tractability.

    We say a random complex variable $ G$ is a  {\it Standard Complex Gaussian} (SCG), denoted $ G\sim \N_{  \mathbb C }(0,1),$  if it has density
\begin{align*}
 \frac{1}{\pi  }e^{ - | z | ^{ 2} },z\in  \mathbb C .
\end{align*}
Equivalently, $ G =  X + iY$, where $ X,Y$ are i.i.d.   with variance $ 1/2$ Gaussian law, denoted by $ \N(0,1/2)$. Note the simplicity of this definition, without square root or factor $ 2$, and the easy computation of the normalisation constant with Gauss's integral.

    \subsection{The $ \text{\rm{Sine}}_{ \beta }$ processes.}
    
Let $ \beta >0$. Consider the random vector $  (\Lambda _{ 1},\dots ,\Lambda _n)$ on $ \mathbb{R}^{ n}$  with joint density 
\begin{align}
\label{eq:beta-ensemble}
 \propto  \prod_{1\leqslant i<j\leqslant n}  | \lambda _{ i}-\lambda _{ j} | ^{ \beta } \prod_{i = 1}^{ n}\exp(-\beta \lambda _{ i}^{ 2}/4),
\end{align} where the symbol $ \propto$ means  {\it proportional to}, which essentially allows to avoid mentionning the renormalising constant. This density can be rewritten  $\propto
 \exp(-\beta H(\lambda _{ 1},\dots ,\lambda _{ n}))$
with the Hamiltonian
\begin{align*}
 H(\lambda _{ 1},\dots ,\lambda _{ n}) =- \frac{ 1}{2}\sum_{i\neq j}\ln(|\lambda _{ i}-\lambda _{ j}|)  +  \frac{ 1}{ 4}    \sum_{i}\lambda _{ i}^{ 2}.
\end{align*}
This can be interpreted in terms of a system,   called 
 $ \beta $-ensemble, or $ \beta $-log gas, where particles are individually  attracted to $ 0$ due to the  {\it confinement term} $ \exp(-\beta \lambda _{ i}^{ 2}/4)$ term, and this tendancy is compensated by the pairwise  {\it repulsion terms} $  | \lambda _{ i}-\lambda _{ j} | ^{ \beta }$, that favor configurations where particles are not too close from one another. 
A fundamental point   is that   $ \P_{ n}^{ \beta }: = \{\Lambda _{ 1},\dots ,\Lambda _n\}$ can also be seen as the set of eigenvalues of a random matrix:\begin{itemize}
\item If $ \beta  = 1$, $ \P_{ n}^{ 1 }$ has the same law as the set of  eigenvalues of the Gaussian Orthogonal Ensemble (GOE). The GOE is the random     matrix $  \M^{ 1,(n)} = (\M_{ i,j})_{ 1\leqslant i,j\leqslant n}$ in the space $  \mathscr  S_{ n}( \mathbb{R})$ of $ n\times n$  symmetric matrices, where the $ \M_{ i,i}$ are i.i.d.   with law $ \N(0,2)$, and $ \M_{ j,i} = \M_{ i,j},1\leqslant i<j\leqslant n$ are i.i.d.   with law $ \N(0,1)$ (proved at Section \ref{sec:change-of-variables} through a change of variables). The reason for a different variance on the diagonal is that the   density of the law of $  \M^{ 1,(n)}$ at  some $ M\in  \mathscr  S_{ n}( \mathbb{R})$ has a neat  expression in terms of $ M$'s eigenvalues $ \lambda _{ 1},\dots ,\lambda _{ n}$: the density is by independence the product of densities of variables involved:
\begin{align}
\notag
\propto  \prod_{i<j}\exp(-M_{ i,j}^{ 2}/2)  \prod_{i}\exp(-M_{ i}^{ 2}/4)   = &  \prod_{i\neq j}\exp(-M_{ i,j}^{ 2}/2)^{ 1/2}  \prod_{i}\exp(-M_{ i}^{ 2}/4) \\
 \label{eq:gue-particle}= &\exp(-  \text{\rm{Tr}}(MM^{ T}) /4)\\
=& \notag \exp(-\sum_{i = 1}^{ n}\lambda _{ i}^{ 2}/4).
\end{align}
This expression differs from  \eqref{eq:beta-ensemble} as it is the density of the matrix itself, not its eigenvalues (see  Section \ref{sec:change-of-variables}).
It is clear under this form that the law of $  \M^{ 1,(n)}$ is invariant under conjugation by the orthogonal group, which is the reason for the name  {\it orthogonal ensemble}: for $ O$ an orthogonal matrix, 
\begin{align*}
 O  {\M^{ 1,(n)}} O^{ T} \stackrel{(d)}{=}    \M^{ 1,(n)}.
\end{align*}
\item If $ \beta  = 2$, the $ \Lambda _{ i}'$s are the eigenvalues of the Gaussian Unitary Ensemble (GUE), the random matrix $   \M^{2, (n)} = (\M_{ i,j})_{ i,j}$ whose entries are independent  \underline {complex}   Gaussian variables  with variance $ 1$ on the upper diagonal and    \underline {real}  Gaussian with variance $ 1$ on the diagonal.  We then define a Hermitian model by imposing $  \M _{j, i}: =  \overline{\M _{ i,j}}$ for $ i<j$. Similarly as for the GOE, the matrix $   \M^{ 2,(n)}$ has a density in each Hermitian matrix $ H$
\begin{align*}
 \propto  \exp(-\sum_{i}\lambda _{ i}^{ 2}/2) = \exp(-  \text{\rm{Tr}}(H  \bar H^{ T}) /2),
\end{align*}
invariant under the conjugation by a unitary matrix.
\item The case $ \beta  = 4$ involves matrices of quaternions and is called the Gaussian Symplectic Ensemble (GSE), but we will not explicit further cases $ \beta \notin \{1,2\}$.
 
\item For any $ \beta >0$, the  $ \beta $-ensemble  has been showed by Dimitriu and Edelmann   \cite{betaEns} to constitute the eigenvalues of an explicit tridiagonal matrix model $   \mathsf M^{ \beta ,(n)}$, that we do not study further either. \end{itemize}

\begin{remark}

Some might find it more convenient to describe the GOE, GUE (and GSE) ensembles under the form $ X + X^{ *}$ where $ X$ is a $ n\times n$ (non-Hermitian) matrix with $ n^{ 2}$  i.i.d entries in the proper body ($ \mathbb{R},  \mathbb C $ or $   \mathbb{ H}$).
\end{remark}

    Recently,  Valko and Vir\`ag \cite{ValkoVirag} derived the construction for each $ \beta >0$ of the  {\it Brownian carrousel}, a set of SDEs whose limit points form a  point process of $ \mathbb{R}$, and which   is the weak limit of the $ \beta $-ensembles as $ n\to \infty $. Under the formulation  \eqref{eq:beta-ensemble}, the mean number of particles per unit volume  goes to infinity, which is why a rescaling by $ \sqrt{n}$ is necessary:
    
    \begin{theorem}[\cite{ValkoVirag}]\label{thm:sine-beta} 
     For $ \beta >0,$ there is a stationary point process $ \P^{ \beta }\subset \mathbb{R}$, called $ \text{\rm{Sine}}_{ \beta }$ process, such that 
\begin{align*}
\sqrt{n} \P_{ n}^{ \beta } \xrightarrow[ n\to \infty ]{\text{\rm{Law}}}\P^{ \beta }.
\end{align*}
Furthermore, $ \P^{ \beta }$ is  hyperuniform.
     \end{theorem}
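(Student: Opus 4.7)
The plan is to split the statement into two parts: the existence of $\P^{\beta}$ as a weak limit, and its hyperuniformity. For the first part, I would follow the classical route via tridiagonal matrix models. A result of Dumitriu--Edelman realises the $\beta$-ensemble \eqref{eq:beta-ensemble} as the eigenvalues of an explicit tridiagonal symmetric matrix $\mathsf{M}^{\beta,(n)}$ with independent Gaussian and chi-distributed entries. The eigenvalue equation for a tridiagonal matrix is a three-term recurrence, which one reinterprets, via Sturm oscillation theory, as the phase evolution of a discrete dynamical system on $\mathbb{S}^{1}$: the number of eigenvalues below $\lambda$ equals the winding number of this phase.

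Then I would zoom into the bulk by replacing $\lambda$ by $\lambda/\sqrt{n}$ and passing to the diffusion scaling, showing that the discrete phase dynamics converges to a coupled family of SDEs driven by a common complex Brownian motion -- the \emph{Brownian carrousel} of Valk\'o--Vir\'ag. The limit process $\P^{\beta}$ is then defined by the set of energies $\lambda$ at which the carrousel trajectory picks up a full $2\pi$ winding. Stationarity is obtained in this representation because the SDE coefficients for the carrousel are invariant under shifts of the energy parameter (after recentering), so shifting $\lambda\mapsto\lambda+t$ amounts to a simple time reparametrisation preserving the law. The tightness and convergence of $\sqrt{n}\,\P_{n}^{\beta}$ in the vague topology follows from the Skorokhod-type convergence of the phase functions combined with a locally uniform estimate on the number of windings on compact intervals.

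For the hyperuniformity assertion, my plan is to establish the quantitative bound
\begin{align*}
 \textrm{Var}\left(\P^{\beta}([-L,L])\right) = O(\log L) \quad \text{as } L\to\infty,
\end{align*}
which is much stronger than $o(L)$ required by \eqref{eq:intro-hu}. Writing $N(L)$ as the winding number of the carrousel's phase between energies $-L$ and $L$, the increments of $N$ can be controlled through the coupled SDEs: the phase difference between two trajectories at nearby energies is itself governed by a diffusion on $\mathbb{S}^{1}$, and a careful Lyapunov/coupling analysis gives $\mathbf{E}[(N(L)-\mathbf{E} N(L))^{2}] \sim \frac{2}{\beta\pi^{2}}\log L$ (this is essentially the Killip--Stoiciu / Valk\'o--Vir\'ag logarithmic variance identity). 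In the special case $\beta=2$ one has the shortcut that $\P^{2}$ is the determinantal process with sine kernel $K(x,y)=\sin(\pi(x-y))/(\pi(x-y))$, and the logarithmic variance is a direct consequence of the DPP variance formula applied to $K$; this case can be treated separately with the machinery from Chapter \ref{chap:proofs}.

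The main obstacle is certainly the variance bound for general $\beta$: one cannot rely on determinantal miracles, and one must extract the logarithmic growth from the subtle mixing properties of the Brownian carrousel SDE. Everything else -- tridiagonalisation, passage to the SDE limit, stationarity, deduction of \eqref{eq:intro-hu} from a $\log L$ bound -- is, while technical, essentially mechanical once the variance estimate is in hand.
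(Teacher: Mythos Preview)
The paper does not prove this theorem at all: it is stated as a citation to Valk\'o--Vir\'ag, and immediately afterwards the author writes ``Most proofs of those results are pretty involved, and often parts of classical textbooks about random matrices, so we will mostly omit them.'' The only related argument actually carried out in the paper is the change-of-variables in Section~\ref{sec:change-of-variables} showing that \eqref{eq:beta-ensemble} is the eigenvalue density of the GOE for $\beta=1$, and later the identification of $\text{Sine}_2$ as a DPP (Section~\ref{sec:gue-as-dpp}).

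Your sketch is therefore not competing with a proof in the paper but filling in what the paper defers to the literature. The route you outline---Dumitriu--Edelman tridiagonalisation, Sturm oscillation/phase reinterpretation, diffusion limit to the Brownian carrousel, then the logarithmic variance estimate for hyperuniformity---is exactly the Valk\'o--Vir\'ag program and is correct at the level of a plan. Your remark that $\beta=2$ admits the DPP shortcut matches how the paper itself handles that special case. One small caution: the constant $\frac{2}{\beta\pi^2}$ you quote for the variance asymptotics should be checked against the normalisation conventions in \eqref{eq:beta-ensemble}, but this does not affect the hyperuniformity conclusion, for which any $O(\log L)$ bound suffices.
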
  
     The scaling $ \sqrt{n}$ is not immediate to justify from  \eqref{eq:beta-ensemble}. Denote $ a_{ n}\asymp b_{ n}$ if $ a_{ n}\leqslant cb_{ n}$ and $ b_{ n}\leqslant c'a_{ n}$ for finite constants $ c,c'.$ Let us compare with   i.i.d.   points  $ X_{ 1},\dots ,X_{ n}$ uniform on $ [-n,n]$, where indeed the mean number of points per unit volume remains constant: 
\begin{align*}
 \mathbf E \sum_{i}X_{ i}^{ 2}\asymp  n^{ 3},
\end{align*}
  which matches the rescaled eigenvalues
\begin{align*}
 \mathbf E \sum_{i}(\sqrt{n}\lambda _{ i})^{ 2} = n\mathbf E \sum_{i,j}(M_{ i,j}^{ \beta ,(n)})^{ 2}\asymp n^{ 3}.
\end{align*}

   Most proofs of those results are pretty involved, and often parts of classical textbooks about random matrices, so we will mostly omit them.    See also more detail in the lectures notes  \cite{Lr-DPPs}, sharing a lot of notation with the current survey. We  provide at Section  \ref{sec:change-of-variables} a proof that for $ \beta  = 1$, \eqref{eq:beta-ensemble} indeed is the density of the eigenvalues of $   \M^{ 1,(n)}$, to illustrate the fundamental link between statistical physics and random matrices. The process $ \text{\rm{Sine}}_{ 1}$ turns out to be a member  of the class of  {\it Pfaffian point processes}  \cite{BKPV}. The case $ \beta  = 2$ is also special as  $ \text{\rm{Sine}}_{ 2}$ process is a member of the class of Determinantal Point Processes (DPPs), important in the theory of  hyperuniformity, which we will prove at Section  \ref{sec:gue-as-dpp}.    Determinantal   point processes, central in random matrix theory, are probably the main source of mathematically  tractable hyperuniform point processes in any dimension.

    \subsection{The Ginibre ensemble}    The next example is again a determinantal system of particles, and at the same time the eigenvalues of a random matrix model and the law of a repulsive system of particles, but in dimension $ 2$, more naturally in $  \mathbb C $. 
    Let $   \mathsf G_{ i,j} \sim \N_{  \mathbb C }(0,1)$   i.i.d, $ 1\leqslant i,j\leqslant n,$  and
 the (non-Hermitian) random matrix $  \Gin_{ n} = (   \mathsf G_{ i,j})_{ 1\leqslant i,j\leqslant n}$.
 Let $ \P_{ n}^{ \Gin}\subset  \mathbb C $ the random subset of $  \mathbb C $ formed by the a.s.  distinct $ n$ eigenvalues of $ \Gin_{ n}$.
  A change of variable yields that $ \P^\Gin_{ n}$ yields an interpretation in terms of statistical physics, namely it corresponds to the equilibrium state of $ n$ particles with the so-called  {\it Coulomb interaction potential}:
 
 \begin{proposition}
$ \P_{ n}^{ \Gin}$ has density
\begin{align}
\label{eq:ginibre}
\propto  \prod_{1\leqslant i<j\leqslant n} | z_{ i}-z_{ j} | ^{ 2}\exp(-\sum_{i} | z_{ i} | ^{ 2}), z_{ 1},\dots ,z_{ n}\in  \mathbb C  .
\end{align}

 \end{proposition}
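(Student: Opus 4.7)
\medskip

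\noindent\textbf{Proof plan.} The plan is to perform a change of variables on the space of complex $n\times n$ matrices using the Schur decomposition, so as to identify the marginal density of the eigenvalues. The starting point is that, since the entries $\mathsf G_{i,j}$ are i.i.d.\ $\N_{\mathbb C}(0,1)$, the joint density of $\Gin_n$ on $\mathbb C^{n\times n}\simeq \mathbb R^{2n^2}$ with respect to Lebesgue measure is
\begin{equation*}
\frac{1}{\pi^{n^2}}\exp\Bigl(-\sum_{i,j}|\mathsf G_{i,j}|^2\Bigr)
=\frac{1}{\pi^{n^2}}\exp(-\mathrm{Tr}(G G^*)).
\end{equation*}

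\medskip

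\noindent First, I would invoke the Schur decomposition: almost surely, $G$ can be written $G=U(T+N)U^*$ where $U$ is unitary, $T=\mathrm{diag}(z_1,\dots,z_n)$ carries the (a.s.\ distinct) eigenvalues, and $N$ is strictly upper triangular. This representation is unique up to permutation of the eigenvalues and multiplication of the columns of $U$ by phases, so $U$ really varies in the quotient $\mathbb U(n)/\mathbb T^n$ (times the finite symmetric group acting on the eigenvalues). Using the unitary invariance of the trace, the density rewrites as
\begin{equation*}
\frac{1}{\pi^{n^2}}\exp\Bigl(-\sum_{i=1}^n|z_i|^2-\sum_{i<j}|N_{ij}|^2\Bigr),
\end{equation*}
so the ``mass'' already decouples into an eigenvalue part, a strictly upper-triangular Gaussian part in $N$, and a uniform part in $U$.

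\medskip

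\noindent Second, I would compute the Jacobian of the map $(U,T,N)\mapsto G$. Differentiating $G=U(T+N)U^*$ and writing $U^*dU=dH$ with $dH$ skew-Hermitian, one gets
\begin{equation*}
U^*\, dG\, U \;=\; dT+dN+[dH,T+N].
\end{equation*}
The strictly upper-triangular, diagonal, and strictly lower-triangular blocks of this identity decouple into three linear systems. The diagonal block gives $dT$ directly (the commutator vanishes on the diagonal once restricted to the off-diagonal coordinates of $dH$). The strictly upper-triangular block gives $dN$ plus a triangular coupling with $dH$. The crucial step is the strictly lower-triangular block: the commutator $[dH,T]$ there is diagonalized by the action of $T$, and contributes a factor $\prod_{i<j}|z_i-z_j|^2$ to the Jacobian determinant, while triangular terms involving $N$ only add lower-triangular perturbations that do not affect the determinant. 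This ``Vandermonde-squared'' factor is the heart of the computation and the step I expect to be the main obstacle, requiring a careful triangularization/ordering argument to keep the Jacobian lower-triangular with explicit diagonal.

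\medskip

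\noindent Finally, I would integrate out the irrelevant variables. The $N$-integral is Gaussian and produces a finite constant (it factorizes as a product of integrals over the $\binom{n}{2}$ complex entries $N_{ij}$). The $U$-integral over $\mathbb U(n)/\mathbb T^n$ against Haar measure also produces a finite constant. Combining this with the Vandermonde factor from the Jacobian and the diagonal part $\exp(-\sum_i|z_i|^2)$ of the density, and symmetrizing over permutations of the eigenvalues, yields the joint density of the unordered eigenvalues as
\begin{equation*}
\propto \prod_{1\le i<j\le n}|z_i-z_j|^2\,\exp\Bigl(-\sum_{i=1}^n|z_i|^2\Bigr),
\end{equation*}
which is exactly \eqref{eq:ginibre}. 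The proportionality constant could be pinned down explicitly by evaluating the Selberg-type integral, but is not needed here.
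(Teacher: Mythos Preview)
The paper does not actually prove this proposition: it is stated in the introduction, and later in the proof of Lemma~\ref{lm:finite-ginibre} the text explicitly says ``We admit here the density representation \eqref{eq:ginibre}, it can be proved in a similar (but more intricate) manner than for the GOE, see Section~\ref{sec:change-of-variables}.'' The change-of-variables section only carries out the computation for the GOE ($\beta=1$), and refers to the monographs of Mehta, Guionnet, and Hough--Krishnapur--Peres--Vir\'ag for the Ginibre case.

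Your approach via the Schur decomposition $G=U(T+N)U^*$ is precisely the standard route (usually attributed to Dyson) and is correct. The outline you give --- unitary invariance of $\mathrm{Tr}(GG^*)$ to decouple the density, differentiation $U^*\,dG\,U=dT+dN+[dH,T+N]$ with $dH=U^*dU$ skew-Hermitian, extraction of the factor $\prod_{i<j}|z_i-z_j|^2$ from the strictly lower-triangular block of the commutator $[dH,T]$, and Gaussian integration over $N$ --- is exactly how the standard references proceed. Your identification of the triangularity argument for the Jacobian as the delicate step is accurate: the key is that the perturbations from $N$ in $[dH,N]$ only contribute nilpotent corrections that leave the diagonal of the Jacobian (in a suitable ordering) untouched. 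So you have supplied a complete and correct plan for a result the paper leaves as a black box.
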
 

Here again, the density  \eqref{eq:ginibre} translates an antagonism between an individual confinement term and a repulsive pairwise interaction, and this system is also characterised as the $ 2$-dimensional one component plasma, or  {\it Coulomb gas}, at inverse temperature $ \beta  = 2.$ Its determinantal nature makes it much more tractable than $ \beta $-Coulomb gases  for $ \beta \neq 2$.

\begin{theorem}
\label{thm:ginibre}
The point processes $ \P_{ n}^{ \Gin}$ converge weakly in the vague topology to a 
point process $ \P^{ \Gin} \subset  \mathbb C $ that is  stationary, isotropic,   hyperuniform.
 \end{theorem}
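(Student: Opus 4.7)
The plan is to recognise $\P_n^{\Gin}$ as a determinantal point process (DPP) with an explicit projection kernel, pass to the limit at the kernel level, and then read off stationarity, isotropy and hyperuniformity directly from the limiting kernel. First, I apply the Vandermonde identity $\prod_{i<j}|z_i-z_j|^2 = |\det(z_i^{j-1})_{i,j}|^2$ to the density $\propto \prod_{i<j}|z_i-z_j|^2\exp(-\sum_i|z_i|^2)$. With the orthonormal family $\phi_k(z) := (\pi k!)^{-1/2}z^k e^{-|z|^2/2}$ in $L^2(\mathbb{C},\Leb[2])$, multilinearity of the determinant rewrites the density as $\det(K_n(z_i,z_j))_{1\le i,j\le n}$ with
\[ K_n(z,w) = \sum_{k=0}^{n-1}\phi_k(z)\overline{\phi_k(w)} = \frac{1}{\pi}e^{-(|z|^2+|w|^2)/2}\sum_{k=0}^{n-1}\frac{(z\bar w)^k}{k!}. \]
Hence $\P_n^{\Gin}$ is the DPP associated with the orthogonal projection onto $\mathrm{span}(\phi_0,\dots,\phi_{n-1})$ inside the Bargmann--Fock space. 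The kernels converge pointwise to $K(z,w) := \pi^{-1}e^{z\bar w - (|z|^2+|w|^2)/2}$, uniformly on compacts of $\mathbb{C}^2$, with the uniform bound $K_n(z,z)\le 1/\pi$. The $k$-point correlation functions $\rho_k^{(n)} = \det(K_n(z_i,z_j))_{i,j}$ therefore converge locally uniformly and are locally uniformly bounded via Hadamard, and the standard DPP convergence theorem (convergence of joint intensities, with domination) gives weak convergence in the vague topology $\P_n^{\Gin}\to\P^{\Gin}$, where $\P^{\Gin}$ is the DPP with kernel $K$.

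Stationarity and isotropy are then symmetries of $K$. Rotation invariance $K(e^{i\theta}z,e^{i\theta}w) = K(z,w)$ is immediate, giving isotropy. For translations, a direct expansion of exponents yields
\[ K(z+a,w+a) = e^{i\,\mathrm{Im}(z\bar a)}\,K(z,w)\,e^{-i\,\mathrm{Im}(w\bar a)}, \]
so that conjugation by the diagonal unitary $\mathrm{diag}(e^{i\,\mathrm{Im}(z_j\bar a)})$ gives $\det(K(z_i+a,z_j+a))_{i,j} = \det(K(z_i,z_j))_{i,j}$; all correlation functions are translation-invariant and $\P^{\Gin}$ is stationary.

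Finally, hyperuniformity follows from the DPP variance identity and the reproducing property of $K$ in the Bargmann--Fock space, namely $\int_{\mathbb{C}}|K(z,w)|^2 dw = K(z,z) = 1/\pi$. Combined with the translation invariance $|K(z,w)|^2 = \pi^{-2}e^{-|z-w|^2}$, a Steinhaus-type rearrangement gives
\[ \mathrm{Var}(\P^{\Gin}(B_R)) = \int_{B_R}K(z,z)\,dz - \int_{B_R\times B_R}|K(z,w)|^2\,dz\,dw = \int_{\mathbb{C}}\frac{e^{-|u|^2}}{\pi^2}\,|B_R\setminus(B_R+u)|\,du. \]
The geometric estimate $|B_R\setminus(B_R+u)| \le CR|u|$ for $|u|\le 2R$, together with the Gaussian decay of $|K|^2$, yields $\mathrm{Var}(\P^{\Gin}(B_R)) = O(R) = o(\Leb[2](B_R))$, which is the required hyperuniformity.

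The main technical obstacle is the DPP convergence step in the first paragraph, as it relies on machinery (correlation functions, Hadamard bounds, continuity of the Laplace functional) developed only later in Chapter \ref{chap:proofs}; once $K$ is identified as the limiting kernel, the remaining symmetry and variance arguments reduce to algebraic manipulations on the explicit expression of $K$.
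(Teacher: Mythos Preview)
Your proof is correct and follows essentially the same route as the paper: identify $\P_n^{\Gin}$ as a projection DPP via Vandermonde, pass to the limit kernel $K(z,w)=\pi^{-1}e^{z\bar w-(|z|^2+|w|^2)/2}$ using convergence of correlation functions with Hadamard domination, and deduce stationarity/isotropy from the unimodular conjugation identity for $K$ under shifts and rotations. The one genuine difference is the hyperuniformity step: the paper invokes its spectral machinery (reproducing kernel $\Rightarrow$ $\s(0)=0$ $\Rightarrow$ hyperuniformity via Theorem~\ref{thm:general-hu}), whereas you compute $\textrm{Var}(\P^{\Gin}(B_R))=\int \pi^{-2}e^{-|u|^2}|B_R\setminus(B_R+u)|\,du=O(R)$ directly from the covariogram. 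Your argument is more elementary and self-contained, and in fact yields the sharper Class~I bound $O(R^{d-1})$ immediately; the paper's route has the advantage of plugging into the general framework of Chapter~\ref{ch:maths} and giving the hyperuniformity exponent $\alpha=2$ via Proposition~\ref{prop:hyp-expo}.
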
  
 Note the absence of rescaling, which can again be justified by comparing with i.i.d.   variables $ X_{ 1},\dots ,X_{ n}$ uniform on $ B_{ \sqrt{n}}$: 
\begin{align*}
 \mathbf E\left[
 \sum_{i}X_{ i}^{ 2}
\right]\asymp n^{ 2} \asymp \mathbf E \left[
\sum_{i,j}G_{ i,j}^{ 2} 
\right]= \mathbf E  \left[
 \text{\rm{Tr}}(\Gin_{ n} \overline{ \Gin_{ n}}^{ T})
\right] .
\end{align*}
 The proof of Theorem  \ref{thm:ginibre} is at Section  \ref{sec:ginibre}, it relies as for $ \text{\rm{Sine}}_{ 2}$ in dimension $ 1$ on the fact that $ \P_{ n}^{ \Gin}$ is a determinantal point process, those two proofs are actually very similar.

\subsection{Zeros of the planar GAF}
\label{sec:intro-GAF}

 \newcommand{\Gaf}{  { \mathsf{ GAF}}}  
 
Another important class of point processes, or more generally random measures, is that of nodal sets of random functions, i.e. $ \P = \{x:   \mathsf F(x) = 0\}\subset \mathbb{R}^{ d}$ for some random $   \mathsf F:\mathbb{R}^{ d}\to \mathbb{R}^{ q}$. In general these systems are extensive, i.e. they present no  hyperuniformity  \cite{Lac20,Gass-cancellation}. A notable exception is the zero set of the planar Gaussian Analytic Function (GAF).
Let $   \mathsf G_{ k},k\geqslant 1,$ i.i.d.    $ \N_{  \mathbb C }(0,1)$ distributed variables, and the random function 
\begin{align*}
   \GAF (z) = \sum_{k\geqslant 1}\frac{   \mathsf G _{ k}}{\sqrt{k!}}z^{ k},
\end{align*}  
where a.s. the series converges absolutely.
Let its zero set be
\begin{align}
\label{eq:zero-gaf}
 \P^{ \Gaf} = \{z:  \GAF (z) = 0\}.
\end{align}
 
\begin{theorem}
\label{thm:gaf}
 $ \P^{ \Gaf}$ is a stationary  hyperuniform isotropic point process.
 \end{theorem}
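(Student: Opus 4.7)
The plan is to obtain stationarity and isotropy by invariance of the zero set under gauge-type transformations of $\mathsf\GAF$, and then to establish hyperuniformity from a logarithmic (Poincar\'e--Lelong) representation of zeros together with the rapid decay of the normalized GAF covariance. For stationarity, given $a\in \mathbb{C}$ I would introduce the twisted translate $F_a(z) := e^{-\bar a z - |a|^2/2}\,\mathsf\GAF(z+a)$. Since the exponential prefactor is nowhere vanishing, $F_a$ has zero set $\P^{\Gaf} - a$. Using the identity $\mathbf E[\mathsf\GAF(u)\overline{\mathsf\GAF(v)}] = e^{u\bar v}$, a direct computation gives
\begin{align*}
\mathbf E\bigl[F_a(z)\overline{F_a(w)}\bigr] = e^{-\bar a z - a\bar w - |a|^2}\,e^{(z+a)(\bar w+\bar a)} = e^{z\bar w},
\end{align*}
so $F_a$ is a centered complex Gaussian analytic function with the same covariance kernel as $\mathsf\GAF$, hence the same law. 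Consequently $\P^{\Gaf}-a \stackrel{(d)}{=} \P^{\Gaf}$. Isotropy follows from $\mathsf\GAF(e^{i\theta}z) = \sum_{k\geqslant 1} e^{ik\theta}\mathsf G_k z^k/\sqrt{k!}$, whose coefficients $(e^{ik\theta}\mathsf G_k)$ remain i.i.d.\ $\N_{\mathbb{C}}(0,1)$.

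For hyperuniformity, the Poincar\'e--Lelong formula identifies $\P^{\Gaf}$ with the distributional Laplacian $\tfrac{1}{2\pi}\Delta \log|\mathsf\GAF|$. Writing $\mathsf\GAF(z) = e^{|z|^2/2}U(z)$, where $U$ is a centered complex Gaussian field with $\mathbf E|U(z)|^2=1$, and using $\Delta|z|^2 = 4$, one reads off the intensity $\rho \equiv 1/\pi$. For $\varphi$ smooth with compact support,
\begin{align*}
\P^{\Gaf}(\varphi) - \mathbf E\,\P^{\Gaf}(\varphi) = \frac{1}{2\pi}\int_{\mathbb{C}} \bigl(\log|U(z)| - \mathbf E\log|U(z)|\bigr)\,\Delta\varphi(z)\,dz,
\end{align*}
so $\mathrm{Var}(\P^{\Gaf}(\varphi))$ becomes a double integral of $\mathrm{Cov}(\log|U(z)|,\log|U(w)|)$ weighted by $\Delta\varphi(z)\Delta\varphi(w)$. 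The normalized correlation $\gamma(z,w):=\mathbf E[U(z)\overline{U(w)}]$ satisfies $|\gamma(z,w)|^2 = e^{-|z-w|^2}$, and a Wick/Hermite expansion of $\log|U|^2$ in the Fock space of the complex Gaussian vector $(U(z),U(w))$ yields $\mathrm{Cov}(\log|U(z)|,\log|U(w)|) = O(e^{-|z-w|^2})$. Taking a smooth approximation $\varphi_R$ of $\mathbf 1_{B_R}$ with $\Delta\varphi_R$ concentrated in an $O(1)$ boundary layer of $\partial B_R$ then yields $\mathrm{Var}(\P^{\Gaf}(B_R)) = O(R) = o(R^2)$.

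The main obstacle is precisely the covariance bound $\mathrm{Cov}(\log|U(z)|,\log|U(w)|) = O(e^{-|z-w|^2})$: since $\log|U|$ has logarithmic singularities wherever $U$ vanishes, elementary Gaussian estimates do not suffice. The cleanest route is the Wick expansion of $\log|U|^2$ in powers of $|U|^2-1$, which diagonalizes in Fock space and extracts the Gaussian decay directly from $|\gamma|^2$. An equivalent strategy, bypassing the smoothing of $\mathbf 1_{B_R}$, is to invoke the Kac--Rice/Hannay formula for the pair intensity $\rho_2(z,w)$, which depends only on $|z-w|^2$, and to verify the sum rule $\int_{\mathbb{C}}(\rho_2(0,w)-\rho^2)\,dw = -\rho$; this exactly cancels the volume term in $\mathrm{Var}(\P^{\Gaf}(B_R)) = \int_{B_R}\rho\,dz + \int_{B_R\times B_R}(\rho_2-\rho^2)\,dz\,dw$, leaving only an $O(R)$ boundary contribution and establishing \eqref{eq:intro-hu}.
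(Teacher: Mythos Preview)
Your stationarity and isotropy arguments coincide with the paper's (Lemma~\ref{lm:conjugate-expo-kernel}): multiply the shifted GAF by a deterministic nowhere-vanishing factor so that the complex covariance returns to $e^{z\bar w}$, and the zero set is unchanged. The hyperuniformity core is also the same: the Poincar\'e--Lelong identity $\P^{\Gaf}(f)=\tfrac1{2\pi}\int\Delta f\,\log|\mathsf\GAF|$, normalisation to a unit-variance field $\tilde{\mathsf F}$ (your $U$), and the bound $\textrm{Cov}(\log|\tilde{\mathsf F}(z)|,\log|\tilde{\mathsf F}(w)|)\leqslant\tfrac12 e^{-|z-w|^2}$. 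The paper quotes this last inequality as a black box (\cite[Lemma~3.5.2]{BKPV}) rather than deriving it via a Wick expansion, but that is the same estimate.

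Where you and the paper diverge is the final step, and yours has a small gap. The paper does \emph{not} smooth $\mathbf 1_{B_R}$: it fixes a smooth $f$ with $\int f\neq 0$, applies the bound to the dilation $f_R=f(\cdot/R)$ to get $\textrm{Var}(\P^{\Gaf}(f_R))\leqslant C\|\Delta f_R\|_{L^2}^2=O(R^{-2})$, and then invokes the spectral characterisation Theorem~\ref{thm:general-hu}-(ii) to pass from smooth statistics to balls. Your boundary-layer route gives $\textrm{Var}(\P^{\Gaf}(\varphi_R))=O(R)$, but the jump to $\textrm{Var}(\P^{\Gaf}(B_R))=O(R)$ is not automatic: you still owe a bound on $\textrm{Var}\bigl(\P^{\Gaf}(\mathbf 1_{B_R}-\varphi_R)\bigr)$, the variance of the point count in an $O(1)$-thick annulus. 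That is doable once you know the second intensity is locally bounded, but it is an extra argument you have not supplied. The paper's detour through Theorem~\ref{thm:general-hu} sidesteps this and, as a bonus, extracts the sharper conclusion that the hyperuniformity exponent is at least $4$ (via Proposition~\ref{prop:hyp-expo}). Your alternative Kac--Rice/sum-rule route is genuinely different and would also work, but requires explicit control of $\rho_2$, which the logarithmic route avoids.
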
  
 A surprising point is that the law of $ \GAF$ is not invariant under $  \mathbb C $-translations, but its zero set is. An easy way to prove stationarity is that $ z  \mapsto  e^{ - | z | ^{ 2}/2} | \GAF(z) | $ is a stationarity field having the same zeros than $ \GAF$, but analycity and gaussianity are lost in the process. 
  One can also define $ \P^{ \Gaf}$ as the weak limit $ \P_{ n}^{ \Gaf}$ of the   zeros of the $ n$-degree  {\it Weyl polynomials} $     \sum_{k = 1}^{ n}\frac{   \mathsf G _{ k}}{\sqrt{k!}}z^{ k}.$  
More background and results about GAFs and a proof are provided at Section  \ref{sec:GAF}, based on  \cite{BKPV}.

\begin{figure}[h!]
\begin{center}
\includegraphics[width = 10cm]{illust/dppVSgaf}
\label{fig:DPPvsGAF}\caption{  {\it Left.} Ginibre ensemble.  {\it Right.} GAF zeros.}
\end{center}
\end{figure}

\begin{longversion}{}

   \section{Global order and local disorder}

We have seen in this chapter two classes of examples, that correspond to two ways of sampling the space. The first one ...
 
 A line of results that we present shows that the general behaviour of  hyperuniform point processes is very different if $ d\leqslant 2$ or if $ d>2.$ For instance:\begin{itemize}
\item Good matching properties
\item Rigidity
\item Gaussian fluctuations of linear statistics [Mastrilli 2025]
\end{itemize}
Also, zeros of entire functions only in $ \mathbb{R}$ and $  \mathbb C $.

In general, $ d = 1$ and $ d = 2$ behave differently also. 
\end{longversion}

\chapter{Mathematical hyperuniformity of random measures}
 \label{ch:maths}

 Physicists often define  the hyperuniformity of 	 a  point process as the property that its Fourier transform vanishes at $ 0$. 
 We provide here the mathematical material to justify this assertion in full generality and deduce other characterisations of  hyperuniformity easier to handle technically, and prepare some of the proofs of Chapter  \ref{chap:proofs}.
 
Even though  point processes provide the main motivation and featured examples, the theory can be applied most generally in the framework of  {\it weakly stationary measures}, which will allow us to illustrate technical considerations with examples drawn from random Gaussian fields, spin systems, or nodal lines. We also give universal lower bounds on the variance and Central Limit theorems, and other insights about general  hyperuniformity.
 
  \section{Weakly stationary random measures and spectral measure}
  
  \label{sec:wsrm}
  
  \newcommand{\MM}{  \mathscr   M}  
  
Following Kolmogorov's isomorphism theorem in the 1930's,  
several authors in the 1950's and 1960's, such as Doob, Yaglom, or Gelfand, derived a spectral theory for generalised processes, with works from Bartlett in the 1960's targetted towards point processes  \cite{Bartlett}. It relies on the fact that continuous positive definite bilinear forms can be represented by a non-negative measure, building on the work of Bochner in the 1930's, generalised by Schwartz to tempered distributions. See for instance Daley and Vere-Jones'book  \cite[Section 8]{DVj08} for a brief history about random measures, or  \cite{AT07} for Gaussian fields.  
  
  Let us make a brief introduction,  details and precise definitions can be found at Appendix  \ref{app:wsrm}, in particular with Theorem  \ref{thm:wsrm-X}.  We consider $ \M$ a random signed measure on $ \mathbb{R}^{ d}$, and note $ \M(f) = \int_{ }fd\M$ when it makes sense. Assume $ \M$ is locally square integrable (denoted by $ L^{ 2}_{ loc}$), i.e. $ \mathbf E  | \M | (A)^{ 2}<\infty $ for $ A$ bounded measurable. Also assume that $ \M$ is   {\it weakly} stationary, i.e. $$  \mathbf E \left[
\M(\tau _{ x}f)
\right] = \mathbf E \left[
\M(f)
\right],\;\;\textrm{Var}\left(\M(\tau _{ x}f)\right) \stackrel{(d)}{=}   \textrm{Var}\left(\M(f)\right)$$ for $ x\in \mathbb{R}^{ d},f\in \mathscr B_{ c}(\mathbb{R}^{ d})$.
$ \M$ it is furthermore   {\it strongly} stationary, if $ \M(\tau _{x}f)  $  has the same law as $\M(f)$ for such $ x,f$.

Under weak stationarity, the  {\it covariance measure} $ \C$, sometimes called  {\it reduced covariance measure,} is characterised by  
\begin{align}
 \label{def:covariance}
  \textrm{Cov}\left(\M(f),\M(g)\right) = \int_{}f(x)   \overline{   g(x + y)}\C(dy)dx, f,g\in\mathscr B_{ c}(\mathbb{R}^{ d}).
\end{align}
When $ f,g$ have complex values, recall that we consider the complex covariance $  \textrm{Cov}\left(U,V\right) = \mathbf E \left[
U  \bar V
\right]-\mathbf E [U] \mathbf E  [  \bar V].$ 
Taking for $ f,g$ approximations of Dirac masses in resp. $ 0$ and some $ y\in \mathbb{R}^{ d}  \setminus \{0\}$, we see that $ \C(dy)$ measures the covariance between infinitesimal masses around $ 0$ and $ y.$

The covariance measure is semi-definite positive in the sense that 
\begin{align*}
 \int_{}f(x)  \bar f(x + y)\C(dy)dx =   \textrm{Var}\left(\M(f)\right)\geqslant 0
\end{align*} for $ f\in\mathscr B_{ c}(\mathbb{R}^{ d})$, hence Bochner's theorem    yields the existence of a non-negative measure $ \S$ such that 
\begin{align}
\label{eq:structure-def}
  \textrm{Var}\left({ \M}(f)\right) = (2\pi )^{ -d}\int_{\mathbb{R}^{ d}} | \hat f(u) | ^{ 2}\S(du)
\end{align}
where  the Fourier transform is defined by
\begin{align}
\label{eq:L2loc-signed}
 \hat f(u): = \int_{\mathbb{R}^{ d}}f(x)e^{ -i u\cdot x}dx,
\end{align} see Theorem  \ref{thm:wsrm-X} for details. One can also view $ \C$ as a tempered distribution on $ \mathbb{R}^{ d}$, and then $ \S = \F\C$, where $ \F$ denotes the Fourier transform in the space of tempered distributions (see Appendix  \ref{app} for a refresher). 

$ \S$ is called the  {\it spectral measure}, or  {\it Bartlett's spectrum} in the context of point processes. Physicists often call it  {\it structure factor} when it is continuous  with respect to $ \Leb.$ 
 The   identity  \eqref{eq:structure-def}  is really what is needed in most of this work, we give several examples below.  

 We give at Appendix \ref{app:wsrm} a general version of these results, valid for complex values measures on a locally compact Abelian group, and we use for instance the results on the torus at Section \ref{sec:hu-finite}.

Investigating through the spectral domain is fruitful for many purposes:\begin{itemize}
\item It can be shown that the asymptotic behaviour of the number variance can be deduced in all generality from the behaviour of smooth linear statistics (see Theorem  \ref{thm:general-hu} and Proposition  \ref{prop:hyp-expo}).
\item One way to measure the regularity of a point configuration, in the sense that points are well distributed accross space, is through the optimal transport distance of its subsamples to Lebesgue measure, and here again a good strategy goes through the Fourier domain (see Theorem \ref{thm:BDG}).
\item The problem of least-square errors prediction, related to the topic of rigidity (Chapter \ref{chap:rigid}), have to be conducted in the spectral domain, like for time series or any other prediction problem in a framework of invariance, see Chapter  \ref{chap:rigid}.
\end{itemize}
  \renewcommand{\sc}{   \mathsf c}

\begin{example}
[Gaussian processes]
\label{ex:gaussian}
Any finite measure $ \S$ is the  Fourier transform of some continuous covariance function $ \sc$, and there exists a random Gaussian process $ G(x),x\in \mathbb{R}^{ d}$, and the corresponding random measure $ \M(dx) = G(x)dx$, characterised by 
\begin{align*}
 \textrm{Cov}\left(G(x),G(y)\right) = \sc(x-y),
\end{align*} see for instance  \cite[Th. 5.4.2]{AT07}.
Minimal regularity assumptions on $ \C$ (or tail decay on $ \S$) imply that $ G$ can be chosen to have continuous sample paths, see for instance  \cite[Th.1.4.2]{AT07}.  
\end{example} 

\begin{example}[Point processes]
A \LSI random measure $ \P$ taking only integer values is called a  point process as it can a.s. be represented as $ \P = \sum_{i}n_{ i}\delta _{ x_{ i}}$ for some isolated points $ x_{ i}$ and $ n_{ i}\in \mathbb{N}^{ *}$. As in the previous chapter, we shall generally require here that the process is  {\it simple}, i.e. $n_{ i} =  \P(\{x_{ i}\}) = 1$ , so that $ \P$ can be unambiguously associated with its support, we often abusively write $ \P(A) = \#\P\cap A$. Local square integrability implies that  $  { \rm supp}(\P)   $ is a.s. locally finite.  

The most important example is certainly the unit intensity homogeneous Poisson point process (defined at Section  \ref{sec:intro-pp}), which satisfies 
 $
  \textrm{Var}\left(\P^{ \Poi(1)}(A)\right) = \Leb(A),
 $
hence \eqref{eq:structure-def},\eqref{def:covariance} readily imply $ \C = \delta _{ 0},\S = \Leb.$ This is the translation that there is no interaction between different locations (the Dirac mass in $ 0$ is an artifact of the atomic nature of  point processes).   
A general disordered  point process, supposed to have asymptotic independence for distant points, is expected to have a covariance measure of the form $ \C = \delta _{ 0} + g\Leb$ for some integrable $ g$, and a structure factor $ \S = \s\Leb$ where $ \s-1$ is expected to be integrable.
\end{example}

\begin{example}[Shifted lattices]
\label{ex:shifted-lattices}
Following up on Section   \ref{sec:perturbed-intro}, we have for test functions $ f,g$

\begin{align*}
 \mathbf E\left[
 \Z^{ d}(f)\Z^{ d}(g) 
\right]= &\sum_{\k,\m\in \mathbb{Z} ^{ d}}\mathbf E\left[
 f(\k + U)g(\m + U)
\right]\\
 = &\sum_{\k,\m\in \mathbb{Z} ^{ d}}\int_{[0,1]^{ d}}f(\k + u)g(\m + u)du\\
  = &\sum_{\k\in \mathbb{Z} ^{ d}}\int_{[0,1]^{ d}}f(\k + u)
  {\sum_{  {\bf l}\in \mathbb{Z} ^{ d}}g(\k +  {\bf l} + u)}du
\end{align*}
hence we have
\begin{align*}
  \textrm{Cov}\left(\Z^{ d}(f),\Z^{ d}(g)\right) = \int_{}f(x)g(x + y)\sum_{    {\bf l} \in \mathbb{Z} ^{ d}}\delta _{ {\bf l} }(y)dx-\int_{}f(x)g(x + y)dxdy
\end{align*} and the covariance is $ \C = \sum_{  {\bf l}\in \mathbb{Z} ^{ d}}\delta _{   {\bf l}}- \Leb $ (remark that $ \C(\mathbb{R}^{ d})$ is not well defined). We then use the Poisson summation formula 
\begin{align*}
 \sum_{  {\bf l}\in \mathbb{Z} ^{ d}}f(  {\bf l}) = \sum_{\k\in 2\pi \mathbb{Z} ^{ d}} \hat f(\k)
\end{align*}
to have by the Parseval formula, with $ \S = \F\C,$
\begin{align*}
(2\pi )^{ d} \langle \S, f \rangle = \langle \C , \hat f\rangle =  \sum_{  {\bf l}\in \mathbb{Z} ^{ d}} \hat f(  {\bf l})- \int_{} \hat f = \sum_{ \k\in 2\pi \mathbb{Z} ^{ d}} (2\pi )^{ d}f(\k)-(2\pi )^{ d}f(0),
\end{align*}
i.e. $ \S = \sum_{\k\in 2\pi \mathbb{Z} ^{ d}  \setminus \{0\}}\delta _{ \k}.$ 
We will see at the next section that this form of the spectral measure,  in particular the gap around $ 0,$ neatly proves the  hyperuniformity of $ \Z^{ d}$, a fact that is not obvious through direct geometric computations.

\end{example} 

\begin{example}[Independently perturbed and cloaked lattices]
\label{ex:IPL}Following up on Example \ref{ex:IPL-intro},
let us now give the spectral measure for the perturbed lattice $ \Z^{ d,\mu }$, where $ \mu $ is a probability measure on $ \mathbb{R}^{ d}$. Let $ \psi  (u) =  \int_{}e^{ -iu\cdot t}d\mu (t).$ We have 
\begin{align}
\label{eq:IPL-SF}
 \S(du) = (1- | \psi (u) | ^{ 2})du + \sum_{\m\in 2\pi \mathbb{Z} ^{ d}  \setminus \{0\}} |\psi  (\m) | ^{ 2}.
\end{align}
This is a particular case of the more general Proposition  \ref{thm:cluster-process} where a point process is perturbed by  i.i.d. finite point processes. The first time this formula appears seems to be in Gabrielli's work  \cite[(24)]{Gabrielli04}. We can observe that the periodic structure of the lattice is present through the atomic component in the second term, while the continuous component expresses the slight disorder introduced in the system.

As for shifted lattices with no perturbations, we observe that the spectral measure vanishes around $ 0$. Still using the next section, this shall imply the  hyperuniformity of such models.\\

A nice observation by Kim, Klatt and Torquato  \cite{cloak} is that if $ \mu $ is $   \mathscr U _{ [0,1]^{ d}}$, $ \F \mu (2\pi \mathbb{Z} ^{ d}) = \{0\}$, and the singular component vanishes. It implies by Fourier duality that the covariance measure has a compact support. It is a perfect example of a system which is highly ordered but has the second order marginals of a disordered system.

 By the multiplicative structure of the characteristic function for sums of independent variables, the same phenomenon occurs when the  perturbations are  i.i.d of the form $ U_{ \k} = V_{ \k} + W_{ \k},\k\in \mathbb{Z} ^{ d},$ where $ V_{ k} \sim	 \mathscr  U_{ [0,1]^{ d}}$ and $ W_{ \k}$ is the ``genuine perturbation'', so it is a good practice to consider independently perturbed lattices of this form if one does not want Bragg peaks in the spectrum. Such perturbed lattices, called  {\it cloaked}, are clearly a good benchmark model for anyone wishing to experiment on large  hyperuniform samples without aliasing artifacts.  The cloaking implies that the periodic structure is not detectable by a  second order analysis. It is still present at higher orders, in the sense of factorial moment measures defined at Section \ref{sec:factorial}; more generally it is likely not mixing, and the cloaking of the second order periodic structure does not kill  the anisotropy of $ \mathbb{Z} ^{ d}.$

\end{example}

  \subsubsection{Translation boundedness}
  
  An essential feature of spectral measures is that they are translation bounded  \eqref{eq:adhikari}:
   {\begin{lemma}
\label{lm:trans-bd}
Let $\M$ be a $L^2_{\rm loc}$ weakly stationary random measure on
$\mathbb R^d$, with spectral measure $\S$. Then, uniformly over $x\in\mathbb R^d$ and $T\ge 1$,
\begin{align}
\label{eq:adhikari}
 \S(B(x,T))\le C_{d,\M}T^d,
\end{align}
where
\[
C_{d,\M}
=   c_{d}\,
\mathbf E\!\left[|\M|(B_1)^2\right]
<\infty .
\]
\end{lemma}
The proof of this lemma and a tractable value for $ c_{ d}$ are given at Appendix  \ref{app:balls}. This result implies in particular that $ \S$ is a tempered measure.
}

 \subsubsection{Disordered processes and spectral atoms}   
 In several parts of this work, we have an interest in  {\it disordered} processes, a concept discussed at Section  \ref{sec:disordered}. Physicists often demand the absence of spectral atoms, but the previous example of cloaked lattices shows that stronger conditions are necessary to discard crystalline structures, and we advocated at Section  \ref{sec:disordered} the use of the mixing condition, which is strictly stronger.

 \begin{proposition}\label{prop:mixing-spectral}  
 Let $ \P$ a \wsrm~with spectral measure $ \S$. Then if $ \P$ is mixing, or more weakly if $ \P$ has uncorrelated linear statistics, i.e. for $ \varphi \in  \mathscr  C_{ c}^{ \infty }(\mathbb{R}^{ d})$, 
\begin{align*}
  \textrm{Cov}\left(\P(\varphi ),\P(\tau _{ x}\varphi )\right)\to 0,
\end{align*} then $ \S$ has no atoms.
 \end{proposition}

 \begin{proof}
 Let $ \varphi \in \CC_{c}^{ \infty }(\mathbb{R}^{ d})$ such that $ \hat \varphi >0$ on $ \mathbb{R}^{ d}$. Let us work   with the smoothed field 
\begin{align*}
 G(x) = \P(\tau _{ x}\varphi ) ,
\end{align*}
or $ G = \P \ast \varphi $ in terms of convolution of tempered distributions. Note that 
\begin{align*}
 \textrm{Var}\left(G(x)\right) =  \textrm{Var}\left(\P(\varphi )\right)<\infty ,
\end{align*} and the spectral measure $ \S_{ G}$ of the random measure $ G(x)dx$ is finite because $ \varphi $ has fast decay and $ \S$ is translation bounded (Lemma  \ref{lm:trans-bd}):
\begin{align*}
 \S_{ G}(\mathbb{R}^{ d})  =  \textrm{Var}\left(G(0)\right) = \displaystyle\int_{} | \hat \varphi  | ^{ 2}\S(dx)<\infty .
\end{align*} Basic properties of Fourier transforms for distributions yield that $ \S_{ G} = \S \hat \varphi ^{ 2}$ (\cite{Rudin2}). If $ \S$ has an atom, then so does $ \S_{ G}$.

The assumption implies that $ \P(\varphi ),\P(\tau _{ x}\varphi )$ are asymptotically decorrelated as $ x\to \infty $, i.e. 
\begin{align*}
C(x): =   \textrm{Cov}\left(G(0),G(x)\right)  = \textrm{Cov}\left(\P(\varphi ),\P(\tau _{ x}\varphi ))\right)\to 0.
\end{align*}
Since $ C$ is bounded and goes to $ 0$, for any $ u\in \mathbb{R}^{ d}$, as $ R\to \infty ,$
\begin{align*}
 \frac{ 1}{\Leb(B_{ R})}\displaystyle\int_{B_{ R}}C(x)e^{ -ix\cdot u }dx\to 0.
\end{align*} 
If $ \S_{ G} =  \hat C$ had an atom at some $ u_{ 0}\in \mathbb{R}^{ d}$, the previous average could not converge to $ 0$ using the Fourier inversion $ C (-x)= (2\pi )^{ d}\F \S(x)$ (see  \cite{Rudin2}).
 \end{proof}

  \section{Spectral characterisation of  hyperuniformity}
\label{sec:spectral-char}
                     
Physicists noted early that the hyperuniformity of a point process amounts to the vanishing in $ 0$ of its structure factor, under density assumptions of the covariance  \cite{TS03}.
Coste \cite{Coste} derived a more general spectral characterisation of  hyperuniformity of a weakly disordered \LSI  point process, i.e. when $ \C$  is integrable or with constant sign \cite[Prop. 2.2]{Coste}: hyperuniformity is equivalent to  $ \S(B_{ \varepsilon }) = o(\varepsilon ^{ d})$ as $ \varepsilon \to 0 $; Bj\"orklund and Hartnick
 \cite{HartBjo} removed this assumption. This is in particular useful to show that  hyperuniformity can be equivalently characterised using smooth linear statistics instead of discontinuous ball indicators.  
 
 For $ f:\mathbb{R}^{ d}\to \mathbb{C}$, let $ f_{ R}(x) = f(x/R),R>0$. 
 Recall that \eqref{eq:structure-def} holds for Schwartz functions and bounded measurable functions with compact support, but often it also holds for a wider class of functions. Without discussing this further, we  call  {\bf $ \S$-admissible} an integrable function $ f$ such that \eqref{eq:structure-def} holds for all $ f_{ R},R>0$, but the two afore-mentionned classes are sufficient for most purposes.

\begin{theorem}
\label{thm:general-hu}
Let $ \M$ a $ L^{ 2}_{ loc}$ \wsrm~with spectral measure $ \S$. The three following are equivalent.\begin{itemize}
\item (i) $ \M$ is  hyperuniform, i.e. $
  \textrm{Var}\left(\M(B_{ R})\right) = o(R^{ d})$ as $ R\to\infty .$
\item (ii) there exists $ f$  $ \S$-admissible such that $ \int_{}f\neq 0$  and   
$
  \textrm{Var}\left(\M(f_{ R})\right) = o(R^{ d}) $.
\item (iii) We have  {\it spectral  hyperuniformity}:\begin{align*}
\lim_{\varepsilon \to 0}\frac{ \S(B_{ \varepsilon })}{\varepsilon ^{d}} = 0.
\end{align*}
\end{itemize}
 
 \end{theorem}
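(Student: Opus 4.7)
The plan is to bring every statement to a common form using the scaling identity for the Plancherel formula \eqref{eq:structure-def}. Since $\widehat{f_R}(u)=R^d\hat f(Ru)$, we get
\begin{align*}
\frac{\mathrm{Var}(\M(f_R))}{R^d}
= (2\pi)^{-d}\,R^d\int_{\mathbb R^d}|\hat f(Ru)|^2\,\S(du)
= (2\pi)^{-d}\int_{\mathbb R^d}|\hat f(v)|^2\,\S^{(R)}(dv),
\end{align*}
where $\S^{(R)}$ is the measure defined by $\S^{(R)}(A):=R^d\S(A/R)$. Note that $\S^{(R)}(B_\delta)=R^d\S(B_{\delta/R})$, so spectral hyperuniformity (iii) is exactly the statement that $\S^{(R)}(B_\delta)\to 0$ for every fixed $\delta>0$. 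Taking $f=\one_{B_1}$ gives $\mathrm{Var}(\M(B_R))/R^d$, so (i) is the instance of (ii) with $f=\one_{B_1}$, for which $\int f=\kappa_d\neq 0$ and which is $\S$-admissible as an indicator of a bounded measurable set.

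For (ii) $\Rightarrow$ (iii), I would use that $\hat f(0)=\int f\neq 0$ together with continuity of $\hat f$ at the origin to find $\delta>0$ and $c>0$ such that $|\hat f(v)|^2\geqslant c$ for $\|v\|\leqslant\delta$. Then
\begin{align*}
\frac{\mathrm{Var}(\M(f_R))}{R^d}\;\geqslant\;(2\pi)^{-d}c\,\S^{(R)}(B_\delta)\;=\;(2\pi)^{-d}c\,R^d\,\S(B_{\delta/R}).
\end{align*}
Setting $\varepsilon=\delta/R$, assumption (ii) forces $\S(B_\varepsilon)/\varepsilon^d\to 0$, which is (iii). In particular (i) $\Rightarrow$ (iii) by taking $f=\one_{B_1}$.

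For the converse (iii) $\Rightarrow$ (ii) and (iii) $\Rightarrow$ (i) I would show that, under (iii), $\int|\hat f(v)|^2\,\S^{(R)}(dv)\to 0$ for any $f$ whose Fourier transform satisfies $|\hat f(v)|^2\leqslant C(1+\|v\|)^{-(d+1)}$. This assumption covers Schwarz functions (settling (ii)) and, by Lemma~\ref{lm:fourier-ball}, also $f=\one_{B_1}$ (settling (i)). Splitting the integral at some fixed $\delta>0$, the low-frequency part is controlled by $\|\hat f\|_\infty^2\,\S^{(R)}(B_\delta)=\|\hat f\|_\infty^2\,R^d\S(B_{\delta/R})$, which vanishes by (iii). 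For the high-frequency part, changing variables back and using the decay of $\hat f$ gives, up to a constant,
\begin{align*}
R^{-1}\int_{\|u\|>\delta/R}\|u\|^{-(d+1)}\,\S(du),
\end{align*}
after bounding $(1+R\|u\|)^{-(d+1)}\leqslant (R\|u\|)^{-(d+1)}$. The part $\|u\|>1$ is finite by the temperedness estimate \eqref{eq:integral-SF} of Lemma~\ref{lm:S-tempered}, contributing $O(R^{-1})$. The remaining piece $\int_{\delta/R<\|u\|\leqslant 1}\|u\|^{-(d+1)}\,\S(du)$ would be handled by a dyadic decomposition over shells $B_{2^{-k}}\setminus B_{2^{-k-1}}$ for $k=0,\dots,K$ with $K\asymp\log_2 R$, bounding each shell's contribution by $2^{(k+1)(d+1)}\S(B_{2^{-k}})$. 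Writing $\S(B_{2^{-k}})=2^{-kd}\eta_k$ with $\eta_k\to 0$ (by (iii)) reduces this to $\sum_{k\leqslant K}2^k\eta_k=o(2^K)=o(R)$, so the high-frequency part is $o(1)$.

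The main technical obstacle is this last step: converting the pointwise smallness statement in (iii) into the uniform summable bound controlling the high-frequency integral. A careless bound produces only $O(1)$ rather than $o(1)$; the dyadic decomposition together with the elementary Ces\`aro-type argument $\sum_{k\leqslant K}2^k\eta_k=o(2^K)$ whenever $\eta_k\to 0$ is what extracts the required decay. Once this estimate is in place, all three implications are essentially immediate from the scaling identity.
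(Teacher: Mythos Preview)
Your proof is correct and follows essentially the same architecture as the paper: the implication (ii) $\Rightarrow$ (iii) via a lower bound on $|\hat f|$ near the origin, and (iii) $\Rightarrow$ (i) via the decay of $\widehat{1_{B_1}}$ from Lemma~\ref{lm:fourier-ball} together with the temperedness bound \eqref{eq:integral-SF}. The only genuine difference is in how you handle the intermediate region $\delta/R<\|u\|\leqslant 1$ in the step (iii) $\Rightarrow$ (i). The paper uses the layer-cake identity $\int\varphi\,d\mu=\int_0^{\sup\varphi}\mu(\{\varphi>t\})\,dt$ with $\varphi(u)=(1+R\|u\|)^{-(d+1)}$ and then invokes Lebesgue's dominated convergence theorem; you instead run a dyadic decomposition over shells $B_{2^{-k}}\setminus B_{2^{-k-1}}$ and finish with the elementary Ces\`aro-type fact that $\eta_k\to 0$ implies $\sum_{k\leqslant K}2^k\eta_k=o(2^K)$. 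Both arguments are extracting the same information --- that $\S(B_\varepsilon)=o(\varepsilon^d)$ is just enough to kill this term --- and are of comparable length; your version is arguably more hands-on, while the paper's is a bit more systematic and reusable (it recurs in the proof of Proposition~\ref{prop:hyp-expo}).
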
    
 
 {The most surprising implication is perhaps (ii) $  \Rightarrow\;$ (i), and it really requires to go through (iii). The proof can be summarised by the decomposition of the phase space formula \eqref{eq:structure-def} for $ f = 1_{ B_{ R}}$ into low and high frequencies. When $ f$ is irregular, e.g. $ f = 1_{ B_{ 1}}$, the low-order frequencies are equivalent to the low-order frequencies contribution for $ f$ smooth, provided by Assumption (ii). High-order frequencies are dealt with thanks to the translation boundedness of $ \S$ (Lemma  \ref{lm:trans-bd}).}

Point (iii) immediately implies   that  (independently perturbed) shifted lattices $ \Z^{ d,\mu }$ from Examples  \ref{ex:shifted-lattices},\ref{ex:IPL} are hyperuniform, since the structure factor vanishes at the origin.  
 For a  hyperuniform system, the class of   functions giving reduced variance fluctuations, i.e. satisfying (ii), includes the indicator of a ball by Lemma \ref{lm:fourier-ball}, but not the indicator of any shape, as for instance a direct geometric reasoning yields the hyperfluctuating variance
\begin{align*}
  \textrm{Var}\left(\Z^{ d}([-n,n + 1/2]^{ d})\right) \asymp n^{ 2(d-1)}
\end{align*}for $ n\in \mathbb{N}$ going to infinity. Geometrically, the large variance comes from the possibility of large groups of $ O(n^{ d-1})$ points to cross the border of a large cube in the same direction without being compensated.
This irregularity hence does not come from the sharp corners of the rectangles,  rather from its flat edges; a similar reasoning yields that  the indicator of the rectangle with ``rounded corners'' $ W = \{x + y:x\in [-10,10]^{ d},y\in B(0,1)\}$ does not satisfy (ii) either.  \cite[Theorem 3.6]{HartBjo} shows that the number variance cancellation holds for so-called  {\it Fourier smooth} shapes, having a uniform decay of the Fourier transform in $ (1 + \|u\|)^{ -(d + \gamma )/2 },\gamma >0$.  {One needs an additional disorderness assumption so that the kernel shape does not modify the linear statistic asymptotic variance, see  \cite[Proposition 2.2]{Coste}.

\begin{proposition}
\label{prop:universality-C-integrable}
Let $ \M$ a $ L^{ 2}_{ loc}$ stationary random measure with integrable covariance $ \C$, i.e.  $  | \C | (\mathbb{R}^{ d})<\infty $. Then $ \S$ has a continuous density $ \s$, and (i),(ii),(iii) are equivalent to \begin{itemize}
\item (iv) $ \s(0) = \C(\mathbb{R}^{ d}) = 0$
\item (v) For any $ f\in \mathscr B_{ c}(\mathbb{R}^{ d})$, $
\textrm{Var}\left(\M(f_{ R})\right) =  o(R^{ d}).$
\end{itemize} 

\end{proposition}
 The proof actually extends immediately to any $ f\in L^{ 2}(\mathbb{R}^{ d})$ such that   \eqref{eq:scaled-cov} holds below.
\cite[Proposition 2.2]{Coste}   gives a version where the $ \M(f_{ R})$ are replaced by $ \M(D_{ n})$ where the $ D_{ n},n\geqslant 1$ form a nested sequence of convex open sets (his method is used in the proof below).

\begin{proof}
As the Fourier transform of the finite measure $ \C$, $ \S$ is continuous  with respect to Lebesgue's measure and it has  the bounded density   $$ \s(u) = \int_{\mathbb{R}^{ d}}e^{ -ix\cdot u}\C(dx)du\leqslant |\C|.$$ Lebesgue's theorem yields that $ \s$ is continuous. Then (iii) is  equivalent to (iv): $ \s(0) =\C(\mathbb{R}^{ d}) =  0$.  

To treat (v), define for some $ f\in \mathscr B_{ c}(\mathbb{R}^{ d})$ 
\begin{align*} I_{ R}(y): = \int_{}f_{ R}(x)f_{ R}(x + y)dx =R^{ d} \int_{}f(x)f(x + y/R)dx.
\end{align*}
By
 \eqref{def:covariance},
\begin{align}
\label{eq:scaled-cov}
  \textrm{Var}\left(\M(f_{ R})\right) = &\int_{\mathbb{R}^{ d}}I_{ R}(y) \C(dy),
\end{align}
and $   | I_{ R}(y) | \leqslant   R^{ d}\|f\|_{ 2}^{ 2}$ by Cauchy-Schwarz inequality.
It is a consequence of the continuity in $ 0$ of the convolution that for fixed $ y\in \mathbb{R}^{ d},$ $ R^{ -d}I_{ R}(y)\to  \|f\|_{ 2}^{ 2} $
 as $ R\to \infty .$ Therefore  Lebesgue's theorem yields
\begin{align}
\label{eq:var-extensive-C-int}
  \frac{  \textrm{Var}\left(\M(f_{ R})\right)}{ R^{ d}}\to \|f\|_{ 2}^{ 2}\C(\mathbb{R}^{ d}).
\end{align}
We indeed have equivalence of (iv) and (v).

\end{proof}

}

Possible magnitudes for number variance fluctuations are further discussed at Section \ref{sec:beck}.\\

Let us now turn to the proof of the theorem. 
It is obvious from \eqref{eq:structure-def} that, in order to compute the number variance over ball indicators, we need an estimate of their Fourier transforms. 
Its expression and some estimates are given at Appendix \ref{app:balls}.
It is sufficient for most purposes to use the estimate
\begin{align}
\label{eq:bessel}
 |  \widehat{ 1_{ B_{ 1}}}(u) |   = O((1 + \|u\|)^{ -\frac{ d + 1}{2}})\text{\rm{ as }}u\to \infty .
\end{align}

 {We also show that the variance bounds extend to kernel more irregular than ball indicators. For $ \gamma >0,A\subset \mathbb{R}^{ d}$, let $ \F_{ \gamma }(A)$ be the class of $ \varphi \in \B_{ c}(A)$ such that for some finite $   \mathsf c_{ \varphi },$
\begin{align}
\label{eq:def-F-gamma}
 \hat \varphi (u) \leqslant   \mathsf c_{ \varphi }
(1 + \|u\|)^{ -\frac{ d + \gamma }{2}}
,
\end{align}and note $ \F_{ \gamma }: = \F_{ \gamma }(\mathbb{R}^{ d})$. For $ a\in (0,d/2)$, let $ \gamma  = d-2a$. Let $ h\in \CC_{ c}^{ \infty }(\mathbb{R}^{ d}),x_{ 0}\in \mathbb{R}^{ d}$, and the function with a singularity 
\begin{align*}
 \varphi (x) = h(x)\|x-x_{ 0}\|^{ -a}.
\end{align*}$ \varphi $ 
is square integrable and approximating it by functions from $ \F_{ \gamma }(\mathbb{R}^{ d})$ yields that it satisfies  \eqref{eq:def-F-gamma} (see  \cite[Thm. 2.3]{Lr-HU-finite}). Let $ \F_{ \gamma }^{ *}(A)$ the space generated by functions of this form supported by $ A$ and functions from $ \F_{ \gamma }(A)$ (hence $ \F_{ \gamma }^{ *} = \F_{ \gamma }$ for $ \gamma \geqslant d$). One can often show by approximation that such functions are also $ \S$-admissible.

}

  We need also to understand how the behaviour in $ 0$ of the spectral measure and the variance of linear statistics influence each other. We give a general lemma that will also be useful later on. For $ \xi  :\mathbb{R}_{  + }\to \mathbb{R}_{ + },\gamma >0$, 
define  
\begin{align}
\label{eq:def-xi-sigma}
 \sigma _{\xi  ,\gamma } (R): = \int_{R^{ -d-\gamma }}^{ 1}\xi  (R t^{ \frac{ 1}{d + \gamma }})t^{ -\frac{ d}{d + \gamma }}dt.
\end{align} 
Note that if $ \xi  (R)\to 0$ as $ R\to \infty $, $ \sigma _{\xi  ,\gamma }(R)\to 0$ as well by Lebesgue's theorem.  
  
   { 
\begin{lemma}
\label{lm:new-Adhik-SF} 
   Let $ f$ a $ \S$-admissible function 
    {from $ \F^{ *}_{ \gamma }(\mathbb{R}^{ d})$} for some $ \gamma >0$. 
  \begin{enumerate}
\item There is $ \rho _{ f}>0$ such that for $R>0$,   
\begin{align*}
 \frac{ 1}{ 2}   \left|
\int_{}f
\right| ^{ 2}\S(B_{ \rho _{ f}R^{ -1}})\leqslant  (2\pi )^{ d} R ^{ -2d}  \textrm{Var}\left(\P(f_{ R})\right),
\end{align*}
\item   Let $ R\geqslant 1.$ If $ \S(B_{ \varepsilon })\leqslant \varepsilon ^{ d}\xi  (\varepsilon ^{ -1})$  for all $\varepsilon \in [R^{ -1},1]$  for some $ \xi :\mathbb{R}_{  + }\to \mathbb{R}_{  + }$,  then     
\begin{align*}
 \textrm{Var}\left(\M(f_{ R})\right)\leqslant  (2\pi )^{ -d}R^{ d}  { \mathsf c_{ f}^{ 2}}(C_{ d, \M}\frac{ d + \gamma }{\gamma }R^{ -\gamma } + \xi  (R)  + \sigma  _{ \xi ,\gamma   }(R)),
\end{align*}
where $ C_{ d,\M}$ is from   Lemma \ref{lm:trans-bd}.
\end{enumerate}
  
  \end{lemma}
  }

  Let us first prove the Theorem.
  \begin{proof} [Proof of Theorem  \ref{thm:general-hu}] 
 We immediately  have (i) implies (ii) since $ 1_{ B_{ 1}}\in  \
\mathscr B_{ c}(\mathbb{R}^{ d})$ and hence is admissible. \begin{itemize}
\item (ii)$  \Rightarrow\;$(iii). 
(ii) means that $  \textrm{Var}\left(\M(f_{ R})\right) = R^{ d}\sigma (R)$ with $ \sigma (R)\xrightarrow[ R\to \infty ]{}0.$ Since $ f$ is integrable, $ \hat f$ is continuous with $ \hat f(0) = \int_{}f\neq 0$. Hence  { by Lemma  \ref{lm:new-Adhik-SF}-(1), (iii) is proved:
\begin{align*}
 \S(B_{ \varepsilon }) = O(\varepsilon ^{ d}\sigma ( \varepsilon ^{ -1})) = o(\varepsilon ^{ d}).
\end{align*} }
\item (iii) $  \Rightarrow\;$(i).  {  {Recall that $ 1_{ B_{ 1}}\in \F_{ 1}.$ Hence we apply} Lemma  \ref{lm:new-Adhik-SF}-(2), with $ \S(B_{ \varepsilon }) = \varepsilon ^{ d}\xi (1/\varepsilon )$ with $\xi  (R)\xrightarrow[ R\to \infty ]{}0.$ Hence  \begin{align*}
  \textrm{Var}\left(\M(B_{ R})\right) = O(R^{ d}(R^{ -1} + \xi  (R) + \sigma _{ \xi ,1 }(R))).
\end{align*}
Lebesgue's theorem yields that $ \sigma  _{\xi ,1 }(R)\xrightarrow[ R\to \infty ]{}0$, which concludes the proof.
}

 \end{itemize}
 
\end{proof}

Let us now prove the lemma. 

\begin{proof}[Proof of Lemma  \ref{lm:new-Adhik-SF} ]

 We have with    \eqref{eq:structure-def}, \begin{align}
\notag
 (2\pi )^{ d} \textrm{Var}\left(\M(f_{ R})\right) =&\int_{} | \hat f_{ R} | ^{ 2}\S(du).
  \end{align} We  have  the scaling identity $ \widehat{ f_{ R}}  = R^{ d} \hat f(R\;\cdot )$ for admissible $ f\in L^{ 1}(\mathbb{R}^{ d}).$ 
  \begin{enumerate}
\item  Let $ \rho _{ f}>0$ such that $  | \hat f(u) | \geqslant  | \hat f(0) | /\sqrt{2} =   | \int {f} |/\sqrt{2}>0 $ for $u\in  B_{ \rho_{ f} }$. Then the conclusion comes from
\begin{align*}
 (2\pi )^{ d}  \textrm{Var}\left(\M(f_{ R})\right) = \int_{} | \hat f_{ R} | ^{ 2}d\S \geqslant  \frac{ 1}{ 2}   R^{ 2d} \S(B_{ \rho _{ f}/R})    \left|
\int_{}f 
\right| ^{ 2}.
\end{align*}
\item We have by   {definition of $  \F_{ \gamma }^{ *}$}
\begin{align*}
 |  \widehat{ f_{ R}}(u)  | =   R^{ d}  | \widehat{ f}(Ru) | \leqslant   \mathsf c_{ f }R^{ d}(1 + \|Ru\|)^{ -\frac{ d + \gamma }{2}}.
\end{align*} We   have 
\begin{align*}
 (2\pi )^{ d} \textrm{Var}\left(\M(f _{ R})\right)\leqslant & R^{ 2d}  \mathsf c_{ f}^{ 2}\S(B_{ 1/R}) + R^{ 2d}   \mathsf c_{ f}^{ 2} \int_{B_{ 1/R}  ^{ c}}(  \|u\|R)^{ -d-\gamma    }\S(du)
\end{align*}
  \begin{itemize}
\item The first term is  bounded by  $ \mathsf c_{ f}^{ 2}R^{ d  }\xi   (R).$
\item For the second term, we use the layer cake formula with $ \varphi (u) = (R\|u\|)^{ -d-\gamma   }$
\begin{align*}
  \int_{B_{ 1/R}^{ c}} \varphi (u)\S(du) = \int_{0}^{ 1    }\S(\{u:  \varphi (u)>t\})dt = \int_{0}^{ 1    }\S\left(
B(0,{ t^{ -\frac{ 1}{d+\gamma  }}/R})
\right)dt.
\end{align*}

We divide further in $ t<R^{ -d-\gamma   }$ and $ t\geqslant R^{ -d-\gamma   }$.
 \begin{itemize}
\item 
Recall  Lemma \ref{lm:trans-bd} for the large radius  contribution: the $ [0,R^{ -d-\gamma }]$ part is bounded by a constant times
\begin{align*}
  \int_{0}^{ R^{ -d-\gamma   }}R^{ -d}t^{ -\frac{ d}{d+\gamma  }}dt =  \frac{d +  \gamma }{ \gamma } R^{ -d} (R^{ -(d + \gamma )   \frac{ \gamma }{d+\gamma  } })=    \frac{d +  \gamma }{ \gamma } R^{ -d-\gamma   }.
\end{align*}

\item For the other contribution,  $ t^{ -\frac{ 1}{d + \gamma }}/R\geqslant 1/R$, hence the assumption yields
\begin{align*} 
\int_{R^{ -d-\gamma }}^{ 1}\S(B(0,{ t^{ -\frac{ 1}{d + \gamma }}/R}))dt \leqslant   & \int_{R^{ -d-\gamma }}^{ 1}\xi  (Rt^{ \frac{ 1}{d+\gamma  }})R^{ -d}t^{ -\frac{ d}{d+\gamma  }}dt = R^{ -d}\sigma  _{ \xi  ,\gamma }(R).
\end{align*} \end{itemize}

 \end{itemize} 
Gathering all terms gives the result
\begin{align*}
(2\pi )^{ d}  \textrm{Var}\left(\M(f_{ R})\right) \leqslant  \mathsf c_{ f}^{ 2}\left(
R^{ d}\xi  (R) + C_{ d,\M}  \frac{d +  \gamma }{\gamma } R^{ d-\gamma   }+R^{  d} \sigma  _{ \xi ,\gamma } (R))
\right).
\end{align*} 

\end{enumerate}
  
  \end{proof}

  \section{Universal variance lower bounds and non-spherical windows}
  \label{sec:beck}
  
A celebrated result of Beck \cite{Beck87} states that for a deterministic point configuration $ P$, the fluctuations of the number of points in a large sliding  window $ B_{ R}$ are at least of the order $ \sqrt{R^{d-1}}$, which leads in general to a variance lower bound of the order $ R^{ d-1}$ for arbitrarily large $ R$ and a stationary  point process. This principle is  not restricted to atomic measures, as we shall see here, stating the generalisation of  \cite[Theorem 1.1]{bylehn}. 

\begin{theorem}[Beck]
\label{thm:beck}
 Let $ \M$ a  \wsrm~that is not identically $ 0$ a.s.. Then for some $ c >0$, for $  {\bf R} $ sufficiently large,
\begin{align*}
 \int_{1}^{  {\bf R}}\frac{  \textrm{Var}\left(\M(B_{ R})\right)}{R^{ d-1}}dR\geqslant c  {\bf R}
\end{align*}
which in particular yields
\begin{align*}
\limsup_{ R\to \infty }\frac{  \textrm{Var}\left(\M(B_{ R})\right)}{R^{ d-1}}>0.
\end{align*} 
 
 \end{theorem}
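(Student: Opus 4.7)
The plan is to integrate the Plancherel identity \eqref{eq:structure-def} applied to $f=1_{B_R}$ in the variable $R$ and exchange order of integration. Writing $I_{\mathbf{R}}(u):=\int_1^{\mathbf{R}} R^{-(d-1)}|\widehat{1_{B_R}}(u)|^2\,dR$, Fubini gives
\begin{align*}
\int_1^{\mathbf{R}}\frac{\textrm{Var}(\M(B_R))}{R^{d-1}}\,dR \;=\; (2\pi)^{-d}\int_{\mathbb{R}^d} I_{\mathbf{R}}(u)\,\S(du).
\end{align*}
Using the scaling $\widehat{1_{B_R}}(u)=R^d\widehat{1_{B_1}}(Ru)$ and the change of variables $v=R\|u\|$, one obtains
\begin{align*}
I_{\mathbf{R}}(u)=\|u\|^{-(d+2)}\int_{\|u\|}^{\mathbf{R}\|u\|} v^{d+1}|\widehat{1_{B_1}}(v\dot u)|^2\,dv,\qquad \dot u := u/\|u\|,
\end{align*}
which sets the stage for the decisive input: the Bessel asymptotic \eqref{eq:bessel} from Lemma~\ref{lm:fourier-ball}.

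The key analytic step is that \eqref{eq:bessel} gives $v^{d+1}|\widehat{1_{B_1}}(v\dot u)|^2 = C_d^2\sin^2(v-c_d)(1+o_{v\to\infty}(1))$, and since $\sin^2$ has time-average $1/2$ one has $\int_a^T\sin^2(v-c_d)\,dv=(T-a)/2 + O(1)$. Fixing a threshold $V_0$ past which the $o(1)$ error in \eqref{eq:bessel} is at most $1/2$, I would deduce, for $\mathbf{R}$ large enough and $\|u\|\geq V_0/\mathbf{R}$,
\begin{align*}
I_{\mathbf{R}}(u)\;\geq\; \frac{c_d\,\mathbf{R}}{\|u\|^{d+1}}
\end{align*}
for some universal constant $c_d>0$. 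Discarding the non-negative contribution from $\{\|u\|<V_0/\mathbf{R}\}$ and integrating against $\S$ yields
\begin{align*}
\int_1^{\mathbf{R}}\frac{\textrm{Var}(\M(B_R))}{R^{d-1}}\,dR\;\geq\; c'_d\,\mathbf{R}\int_{\|u\|\geq V_0/\mathbf{R}}\frac{\S(du)}{\|u\|^{d+1}}.
\end{align*}
As $\mathbf{R}\to\infty$, the integral on the right converges monotonically to $\int_{u\neq 0}\|u\|^{-(d+1)}\S(du)$, which is strictly positive as soon as $\S$ is not carried by $\{0\}$; Lemma~\ref{lm:S-tempered} already ensures its finiteness. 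The degenerate case $\S=c\delta_0$ with $c>0$ must be treated separately, but there \eqref{eq:structure-def} directly gives $\textrm{Var}(\M(B_R))\geq c(2\pi)^{-d}\kappa_d^2 R^{2d}$, far stronger than the announced bound.

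The main obstacle will be turning the $\sin^2$-averaging argument into a genuinely quantitative, direction-uniform lower bound on $I_{\mathbf{R}}(u)$: \eqref{eq:bessel} is only effective past a fixed threshold and its remainder is not a priori uniform in $\dot u$, so one must check that integrating against the true Bessel profile (not only its asymptotic form) still produces the linear-in-$\mathbf{R}$ contribution, all the way up to $\|u\|$ close to $V_0/\mathbf{R}$ which is where the cancellation could conceivably be worst. A secondary subtlety concerns the hypothesis ``$\M$ not identically $0$ a.s.'': it must really be read as $\S\neq 0$, since $\S\equiv 0$ forces $\M$ to equal its mean intensity measure---a multiple of $\Leb$ with vanishing centered fluctuations---and the theorem is a statement about centered variance.
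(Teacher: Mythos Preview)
Your approach is essentially the paper's: apply Plancherel and Fubini, use the Bessel asymptotic \eqref{eq:bessel} to extract linear-in-$\mathbf{R}$ growth from the inner $R$-integral, then integrate against $\S$. Two small corrections: your direction-uniformity worry evaporates because $\widehat{1_{B_1}}$ is radial (the unit ball is rotation-invariant, so \eqref{eq:bessel} depends only on $\|u\|$ and is automatically uniform in $\dot u$); and Lemma~\ref{lm:S-tempered} controls $(1+\|u\|)^{-(d+1)}$, not $\|u\|^{-(d+1)}$, so it does \emph{not} give finiteness of $\int_{u\neq 0}\|u\|^{-(d+1)}\S(du)$ near the origin --- but you only need strict positivity there anyway. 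The paper streamlines the endgame by fixing once and for all a $\rho_0>0$ with $\S(B_{\rho_0}^c)>0$ and restricting to $\|u\|\geq\rho_0$, which sidesteps the shrinking-threshold bookkeeping you flag as the main obstacle.
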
       
  \begin{proof}
  
Let $ f=1_{B_{1}}$. By Lemma \ref{lm:fourier-ball}, for some finite constants $ c_{ d},c_{ d}'>0,$
\begin{align*}
 \widehat{ 1_{ B_{ 1}}}(u) =c_{ d} \|u\|^{ -\frac{ d + 1}{2}}\sin(\|u\|-c'_{ d})(1 + o_{ u\to \infty }(1)).
\end{align*}
Hence for some $ \kappa >0,$ $  | \hat f | ^{ 2}$ is larger than $\kappa (1 + \|u\|)^{ -{ d-1}{}}$ ``on average'', i.e. there is $  \rho _{0}>0,  {\bf R}_{0}>0$ such that $ \S(B_{\rho _{0}}^{c})>0$ and   for $ u\notin B_{\rho _{0}}, {\bf R}>  {\bf R_{0}},$ 
\begin{align}
\label{eq:avg-fourier-large}
 \int_{1}^{  {\bf R}} R^{ d + 1} | \hat f(Ru) | ^{ 2}dR\geqslant &\kappa  \int_{1}^{  {\bf R}}R^{ d + 1}(1 + \|u\|R)^{ -d-1}dR.
\end{align}
 
   We then have by \eqref{eq:structure-def}, for $  {\bf R}>  {\bf R_{0}},$
\begin{align*}
(2\pi )^{ d}\int_{1}^{  {\bf R}}R^{ 1-d}  \textrm{Var}\left(\M(f_{ R})\right)dR   \geqslant    &\int_{\mathbb{R}^{ d}  \setminus B_{\rho _{0}}}\int_{1}^{  {\bf R}}R^{d+1}  | \hat f(Ru) | ^{ 2}dR\S(du) \\
\geqslant & \kappa \int_{\mathbb{R}^{ d}  \setminus B_{\rho _{0}}} \int_{1}^{  {\bf R}}R^{ d + 1}(1 + \|u\|R)^{ -d-1}dR\S(du)\\
 =&\kappa  \int_{\mathbb{R}^{ d}  \setminus B_{u_{0}}} \|u\|^{ -d-2}\int_{\|u\|}^{ \|u\|  {\bf R}}\rho ^{ d + 1}(1 + \rho )^{ -d-1}d\rho \S(du)\\
\geqslant& \kappa \int_{\mathbb{R}^{ d}  \setminus B_{\rho _{0}}} \|u\|^{ -d-2}(1 + 1/\rho _{ 0})^{ -d-1}\|u\|  ({\bf R}-1)\S(du).
\end{align*}  
This completes the proof.
\end{proof}


   The proof above actually works for any kernel $ f$ satisfying  \eqref{eq:avg-fourier-large}. 
   Since the ball is the most regular shape in many aspects, one could imagine that it has the lowest Fourier transform in some sense, hence this lower bound could hold for any shape. This is in fact not the case for rectangular shapes, whose Fourier transform is indeed larger in some directions, but also smaller in others. The class of admissible shapes, called  {\it Fourier smooth,} is discussed around \cite[Theorem 3.6]{HartBjo}. 
Let us make some further remarks:\begin{itemize}
\item {(balls number variance vanishing $ \liminf$)}  \cite[Prop. 2.2.3]{bylehn} and \cite{Parnovski} show that,  iff  $ d \equiv 1  \text{\rm{ mod }}  4 $,
\begin{align*}
 \liminf_{ R\to \infty }\frac{  \textrm{Var}\left(\Z^{ d}(B_{ R})\right)}{R^{ d-1}} = 0.
\end{align*}
\item {(General shape dependence)} The $ \limsup$ bound can be far off on rectangular windows when one studies non-atomic random measures, even with short range dependency. Consider for instance the spectral measure  $ \S = 1_{S}\Leb$ where $ S=\{(u_{i})\in \mathbb{R}^{d}: \forall i, 2>| u_{i} | >1\}$, and the unique centred Gaussian field whose spectral measure is $ \S$ (Example \ref{ex:gaussian}). One can directly show that the variance is bounded: from \eqref{eq:structure-def}
\begin{align*}
  (2\pi )^{ d}\textrm{Var}\left(\M([-R,R]^{ d})\right)  = & \int_{S} R^{ 2d}  |  \prod_{i = 1}^{ d} \sin(Ru_{ i})/(Ru_{ i}) | ^{ 2}du\\
  \leqslant &\int_{S}  c\prod_{i}u_{i}^{ -2}du<\infty .
\end{align*}
One can refine this example by taking $ \S$ with support on all $ \mathbb{R}^{ d}$, as long as it vanishes sufficiently fast close to the axes and at infinity.
\item {(shape-dependance for point processes)} In a private communication, M. Bylehn mentions that there are some  {\it admissible orthogonal transformations} $  {\bf O}$ such that the previous $ \limsup$ bounds does not hold for the  rotated lattice $  {\bf O}\Z^{ d}$, using a bound of  Skryganov \cite{Skryganov}:
\begin{align*}
{  \textrm{Var}\left(  {\bf O}\Z^{ d}([-R,R]^{ d})\right)} = O(\ln(R)^{ 2(d-1)}).
\end{align*}
\item {(no shape-dependence for disordered point processes)} See also Proposition  \ref{prop:universality-C-integrable}.  Nazarov \& Sodin  \cite[Lemma 1.6]{nazarov2012correlation} show that in dimension $ d = 2,$ under the  {\it weak disorder} assumption that $ \C-\delta _{ 0}$   is integrable for some point process $ \P$, for any bounded window $ W$ with non-empty interior, $  \textrm{Var}\left(\P(RW)\right)$ satisfies Beck's lower bound.
\end{itemize}
  
  \begin{remark}
  When the number variance of a point process is subextensive, i.e. in $ o(R^{ d})$, then under some integrability conditions, it  behaves necessarily in $ R^{ d-1}$, see  \cite[Proposition 2]{martin1980charge}. Hence other rates  indicate a weak asymptotic decorrelation (or no decorrelation).
  \end{remark}

\section{Hyperuniformity  exponent and classification}
\label{sec:hyp-expo}

The speed of decay of the structure factor in $ 0$ actually matters, for the decay of smooth linear statistics, but also for other phenomena, such as rigidity (Chapter  \ref{chap:rigid}).
 It is traditionally said that some number $ \alpha>0 $ is a  hyperuniformity exponent of $ \S$ (or $ \M$) if $ \S(du)\sim c\|u\|^{\alpha }du$ as $ u\to 0$ for some $ c>0$. We shall more generally say without requiring a density that $ \S$ admits hyperuniformity index $ \alpha>0 $ if $ \S(B_{ \varepsilon }) = O(\varepsilon ^{ d + \alpha })$ as $ \varepsilon \to 0.$ We show here that $ \alpha $ also determines the maximal speed to which the variance of linear statistics can go to $ 0.$ 
\begin{proposition}
\label{prop:hyp-expo}   Let $ \S$ the spectral measure of a $ L^{ 2}_{ loc}$ \wsrm~ $ \M.$
Assume $ \S$ admits exponent $ \alpha>0 $. Let $ f$ an $ \S-$admissible function  {of $ \F_{ \gamma }^{ *}(\mathbb{R}^{ d})$. Then} 
\begin{align*}
 \textrm{Var}\left(\M(f_{R})\right) =   
 \begin{cases} 
 O(R^{d-\min(\alpha ,\gamma )  })$  id $\alpha \neq \gamma  .\\
 O(R^{ d-\alpha }\ln(R)))$  if $\alpha  =  \gamma . 
  \end{cases}
\end{align*}
In this case we say that $ \M$ is  {\it  $ \alpha $-hyperuniform.}

If conversely $  \textrm{Var}\left(\P(f_{ R})\right) = O(R^{ d-\alpha })$ holds for some $ \S$-admissible function $ f$ with $ \int_{}f\neq 0$, then $ \S$ admits exponent $ \alpha $.
\end{proposition}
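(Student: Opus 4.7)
The plan is to exploit the Plancherel identity \eqref{eq:structure-def} applied to $f_R$, which, using the scaling identity $\widehat{f_R}(u) = R^d \hat f(Ru)$, gives
\begin{align*}
(2\pi)^d \textrm{Var}(\M(f_R)) = R^{2d} \int_{\mathbb{R}^d} |\hat f(Ru)|^2 \S(du),
\end{align*}
and then to split the integration domain into three zones, each handled with different tools: a core region $\{\|u\|\leq 1/R\}$ where the hyperuniformity exponent of $\S$ controls the mass, an intermediate region $\{1/R<\|u\|\leq 1\}$ where one must balance the exponent of $\S$ against the decay of $\hat f$ via a dyadic decomposition, and a tail region $\{\|u\|> 1\}$ where Lemma \ref{lm:S-tempered} closes the integral thanks to the assumption $\beta\geq (d+1)/2$.

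For the core region, I would bound $|\hat f(Ru)|^2\leq \sup|\hat f|^2<\infty$ (since $f\in L^1$) and apply $\S(B_{1/R})=O(R^{-(d+\alpha)})$, contributing $O(R^{d-\alpha})$. For the intermediate region, I would decompose into dyadic annuli $A_k=B_{2^{k+1}/R}\setminus B_{2^k/R}$, $k=0,\dots,K$ with $2^K\sim R$, and note that on $A_k$ one has $|\hat f(Ru)|^2=O(2^{-2k\beta})$ and $\S(A_k)=O((2^{k+1}/R)^{d+\alpha})$; the contribution of the annulus is therefore $O(R^{d-\alpha})\cdot 2^{k(d+\alpha-2\beta)}$, and summing the geometric series produces the three regimes of the statement: bounded sum when $d+\alpha<2\beta$ (giving $O(R^{d-\alpha})$), domination by the last term when $d+\alpha>2\beta$ (giving $O(R^{2d-2\beta})$), and a logarithmic factor in the equality case. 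Finally on $\{\|u\|>1\}$, the bound $\|u\|^{-2\beta}\leq C(1+\|u\|)^{-d-1}$ (valid precisely when $2\beta\geq d+1$) combined with Lemma \ref{lm:S-tempered} provides a finite integral, so the total contribution is $O(R^{2d-2\beta})$. Adding the three pieces yields the stated estimate.

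For the converse direction, I would reuse the argument from the proof of (ii)$\Rightarrow$(iii) in Theorem \ref{thm:general-hu}: since $f\in L^1$, $\hat f$ is continuous and $\hat f(0)=\int f\neq 0$, hence there exist $a,\kappa>0$ with $\kappa\mathbf{1}_{B_a}\leq |\hat f|$ pointwise, so
\begin{align*}
\kappa^2 \S(B_{a/R}) \;\leq\; R^{-2d}\int_{\mathbb{R}^d}|\widehat{f_R}(u)|^2\S(du) \;=\; (2\pi)^d R^{-2d}\,\textrm{Var}(\M(f_R)) \;=\; O(R^{-(d+\alpha)}),
\end{align*}
which after the substitution $\varepsilon=a/R$ reads $\S(B_\varepsilon)=O(\varepsilon^{d+\alpha})$, i.e.\ $\S$ has exponent $\alpha$.

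The main obstacle is the intermediate regime: the three cases in the estimate arise from the competition between the low-frequency mass of $\S$ and the high-frequency decay of $\hat f$, and the bookkeeping of the dyadic sum must be done carefully to see the transition and to capture the boundary logarithm. One should also verify that the threshold $\beta\geq (d+1)/2$ is precisely what is needed to close the tail integral against the tempered bound of Lemma \ref{lm:S-tempered}; this is why that assumption appears in the hypotheses.
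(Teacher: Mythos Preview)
Your proof is correct and follows essentially the same three-region strategy as the paper: core via the exponent assumption, tail via Lemma~\ref{lm:S-tempered} (this is where $2\beta\geq d+1$ is used), and the converse via the lower bound $|\hat f|\geq \kappa\mathbf 1_{B_a}$ exactly as in (ii)$\Rightarrow$(iii) of Theorem~\ref{thm:general-hu}. The only methodological difference is in the intermediate annulus $\{1/R<\|u\|\leq 1\}$: you use a dyadic decomposition and sum a geometric series, whereas the paper uses the layer-cake identity $\int\varphi\,d\mu=\int_0^{\sup|\varphi|}\mu(\{\varphi>t\})\,dt$ with $\varphi(u)=(1+\|u\|R)^{-2\beta}$; both yield the same trichotomy and the same bounds, and your dyadic version is arguably the more transparent of the two.
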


Clearly, if $ \M$ is $ \alpha $-hyperuniform, it is $ \alpha '$-hyperuniform for any $ \alpha '\leqslant \alpha .$\\

A version of the direct implication for Schwartz functions appears in  \cite{MBL} under an integrability assumption on $ \S$. Variance estimation for linear statistics is also a central topic in  \cite{KrishYogesh}, under the running assumption that the covariance measure  $ \C$ is integrable, or has finite higher order moments. They notice in particular that the decay exponent is even when the covariance is integrable. Due to the reciprocal relation with the hyperuniformity exponent exhibited by the previous proposition, one can also see it as a consequence of the symmetry of $ \S$. They also give sufficient conditions for $ \S$ to admit exponent $ \alpha $, and discuss the class of admissible kernels $ f.$


 { 
 \begin{proof}
 
 We apply Lemma \ref{lm:new-Adhik-SF}-(2) with $\sigma  (R ) =R^{- \alpha  }.$ We then have
\begin{align*}
 \xi _{ \sigma ,\gamma }(R) = \int_{R^{ -d-\gamma }}^{ 1} R^{ -\alpha } t^{ \frac{ -\alpha }{d + \gamma }}t^{ -\frac{ d}{d + \gamma }}dt = R^{ -\alpha } \times  
 \begin{cases} 
\left(
(R^{ -(d + \gamma) })^{ 1 - \frac{ \alpha  + d}{d + \gamma }}dt
\right) = O(R^{  -\gamma })$  if $\alpha >\gamma  \\
 O(\ln(R))$  if $\alpha  = \gamma 
\\
O(1)$  if $\alpha <\gamma .
  \end{cases}
\end{align*}
We indeed have
\begin{align*}
  \textrm{Var}\left(\M(f_{ R})\right)  = 
  \begin{cases} 
 O (R^{ d-\min(\gamma ,\alpha )})$  if $\alpha  \neq \gamma  \\
  O(R^{ d-\alpha }\ln(R))$  if $\alpha  = \gamma  .
   \end{cases}\end{align*}
   
   If conversely $  \textrm{Var}\left(\M(f_{ R})\right) = O(R^{ d-\alpha })$, Lemma \ref{lm:new-Adhik-SF}-(1) with $ \sigma (R) = R^{ -\alpha }$ yields indeed $ \S(B_{ \varepsilon }) = O(\varepsilon ^{ d + \alpha }).$
 
 \end{proof}}
   
Example  \ref{ex:irrational-lattices} and  Theorem  \ref{thm:expo-prime}
 provide examples of point processes with arbitrarily fast decay for $ \S$ around $ 0.$
The optimal exponent $ \alpha $ is related to a classification of  hyperuniform point processes  relevant in the physics literature  \cite{Tor18}, depending on  the number variance behaviour (i.e. for $ f = 1_{ B_{ 1}}$ and $\gamma  = 1$). 

 { \begin{itemize}
 \item For $ \alpha  = 0,  \textrm{Var}\left(\P (B(x ,R))\right) \asymp R^{ d}$  corresponds to the the rate of  a Poisson process, or any other ``standard'' process (zeros of random stationary Gaussian functions, cluster or Cox processes, short range Gibbs measures, ...)
\item The case $ \alpha >0$ implies a variance reduction, or main term cancellation, i.e.  {\it hyperuniformity}. More precisely, the assumption means that the system is at least ``$ \alpha $-hyperuniform''. For $ \alpha \in (0,1)$, the exponent is not truncated, this is the case for instance for Riesz gases with exponent $ s\in (0,1)$, see Section \ref{sec:riesz}.
\item We see that a transition occurs at $ \alpha  = 1$.   The $ \ln(R)$ term for $ \alpha  = 1$, corresponding to ``$ 1$-hyperuniformity''  is sharp, in the sense that many models for which $   1 $ is the largest possible exponent, such as the GUE ensemble in dimension $ d = 1$ or its limit the $ \text{\rm{Sine}}_{ 2}$ process, have a logarithmic number variance over large intervals. The case $ \alpha >1$ in 1D corresponds to a bounded variance, it is the case for instance for the one-dimensionnal Coulomb gas \cite{GosLeb}.
\item  Torquato \cite{Tor18} divides hyperuniform systems in three classes $ \alpha >1$  (class I)$,\alpha  = 1$  (class II)$,\alpha \in (0,1)$ (class III), reflecting number variance behaviour over growing balls. It is also formally possible to build  hyperuniform point processes falling in none of these classes, i.e.  for which the variance reduction is subpolynomial, see Example  \ref{ex:irrational-lattices} or   \cite{DFHL}.
  \item For $ \alpha >1$ (class I), the ball number variance growth is in surface order $ R^{ d-1}$. By Beck's lower bound principle (Section \ref{sec:beck}), this is a universal lower bound for balls number variance.
The precise value of $ \alpha $ is therefore uninformative for the   number variance over balls. It does not mean that all class I systems are similar, they incompass both periodic and disordered systems. They can in particular behave very differently when investigated over smooth kernels, especially for large $ \alpha $, where it can be useful in numerical integration.
  \item The most standard case is $ \alpha  = 2$, corresponding to a sufficiently disordered  hyperuniform model, or more formally to a sufficiently decaying covariance function (\cite{GosLeb}), such as for the 1D Coulomb gas or the 2D ginibre ensemble, see Chapter \ref{chap:proofs}.

\item   For the variance of irregular linear statistics others  than the ball indicator, i.e. for $ \gamma \neq 1$, a similar transition occurs at $ \alpha  = \gamma $ under some mild conditions.  For most test functions $ \varphi \in \F_{ \gamma },$ a similar bound lower than Beck in $ O(R^{ d-\gamma })$ applies under broad assumptions, by mimicking the proof of Theorem \ref{thm:beck}.

\item While most known models have index $ \alpha  = 2$, or $ \alpha  = 1$ (more often in dimension $ d = 1$), there exists a unique known disordered model for which $ \alpha >2$; it is the zero set of Weyl polynomials, or in the infinite volume limit the zero set of the planar Gaussian Analytic Function, for which $ \alpha  = 4$  \cite{ST06}, see Section \ref{sec:GAF}. There also exist some toy models with arbitrarily high $ \alpha $, see  \cite{torquato-tilings,Lr24,LotzKlatt}.

\item The purely atomic nature of particle systems is not used, and results   apply to other classes of random measures.
\end{itemize} 
}

Let us conclude with a lemma that yields that most point processes have exponent at most $ 2.$

   \begin{lemma}
   \label{lm:quadra-spectral}
Let $ \mu $ be a  symmetric  non-negative finite  measure which is not supported by a hyperplane. 
 Then there is $ \sigma >0,\rho _{ 0}>0$ such that   for $ \|u\|<\rho _{ 0},$
\begin{align*}
 \left|
\int_{}(1-e^{- iu\cdot x})\mu (dx)
\right|\geqslant \sigma \|u\|^{ 2}.
\end{align*}
This conclusion also holds if the assumption of symmetry is dropped, provided $ \mu $ is instead assumed to have a finite second moment and be centred, i.e. $ \int_{}xd\mu(x) =0.$
\end{lemma}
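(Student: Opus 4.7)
The plan is to lower-bound the modulus by the real part. Since $|z|\geqslant \Re(z)$ whenever $\Re(z)\geqslant 0$, and
\[
\Re\int(1-e^{-iu\cdot x})\mu(dx)=\int(1-\cos(u\cdot x))\mu(dx)\geqslant 0,
\]
it suffices to bound this real integral from below by $\sigma\|u\|^{2}$. The central elementary inequality is that for some $c_{0}>0$ (e.g. $c_{0}=2/\pi^{2}$) one has $1-\cos t\geqslant c_{0}t^{2}$ for $|t|\leqslant \pi$. The whole proof therefore reduces to controlling the quadratic form $u\mapsto\int (u\cdot x)^{2}\mu(dx)$ restricted to the region $|u\cdot x|\leqslant \pi$.

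In the symmetric case, I would introduce the truncated second-moment matrix $M_{R}:=\int_{B_{R}}xx^{T}\mu(dx)$, which is finite (since $\mu$ is finite) and monotone in $R$. The assumption that $\mu$ is not supported on a hyperplane yields $\int (v\cdot x)^{2}\mu(dx)>0$ for every $v\in S^{d-1}$; by monotone convergence there is $R(v)$ such that $v^{T}M_{R(v)}v>0$. Continuity of $v\mapsto v^{T}M_{R}v$ together with compactness of $S^{d-1}$ then provides a single radius $R^{*}$ and a constant $\sigma_{0}>0$ with $v^{T}M_{R^{*}}v\geqslant \sigma_{0}$ for every $v\in S^{d-1}$. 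For $\|u\|\leqslant \pi/R^{*}$ and $x\in B_{R^{*}}$, $|u\cdot x|\leqslant \pi$, so
\[
\int(1-\cos(u\cdot x))\mu(dx)\geqslant c_{0}\int_{B_{R^{*}}}(u\cdot x)^{2}\mu(dx)=c_{0}\,u^{T}M_{R^{*}}u\geqslant c_{0}\sigma_{0}\|u\|^{2}.
\]

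In the second (non-symmetric, finite second moment) case, I do not need the truncation: the matrix $M:=\int xx^{T}\mu(dx)$ is globally defined, and by the same hyperplane argument it is positive definite with smallest eigenvalue $\lambda>0$. Splitting at $|u\cdot x|=\pi$,
\[
\int(1-\cos(u\cdot x))\mu(dx)\geqslant c_{0}\Bigl(u^{T}Mu-\int_{|u\cdot x|>\pi}(u\cdot x)^{2}\mu(dx)\Bigr),
\]
and the tail is dominated by $\|u\|^{2}\int_{\|x\|>\pi/\|u\|}\|x\|^{2}\mu(dx)=o(\|u\|^{2})$ as $\|u\|\to 0$ by dominated convergence. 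Together with $u^{T}Mu\geqslant \lambda\|u\|^{2}$, this gives the desired lower bound for $\|u\|$ small. Note that centering is not used in this lower bound on the real part; rather, it guarantees that the imaginary part $\int\sin(u\cdot x)\mu(dx)$ is also $O(\|u\|^{2})$ (and thus that $\|u\|^{2}$ is the sharp order).

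The only genuinely subtle step is the uniformity in direction in the symmetric case: passing from the pointwise positivity $v^{T}M_{\infty}v>0$ to the existence of a single truncation $R^{*}$ with $\inf_{v\in S^{d-1}}v^{T}M_{R^{*}}v>0$. That is where the compactness of the sphere plus continuity of $v\mapsto v^{T}M_{R}v$ is really needed; the rest of the argument is a straightforward application of the cosine inequality and of dominated convergence.
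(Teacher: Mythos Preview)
Your proof is correct and follows the same backbone as the paper's: bound the modulus by the real part, apply the elementary inequality $1-\cos t\geqslant c_0 t^2$ on a bounded interval, and use a compactness argument on $S^{d-1}$ to make the quadratic lower bound uniform in direction. The differences are packaging and one genuine simplification. For the uniformity step, the paper works with angular cones $C_\varepsilon(v)=\{x:|x\cdot v|\geqslant \varepsilon\|x\|\|v\|\}$ and argues by contradiction, whereas you work with the truncated second-moment matrix $M_R$ and its smallest eigenvalue; these are equivalent, and your version is arguably cleaner since positive-definiteness of $M_{R^*}$ encapsulates the whole directional argument in one linear-algebra statement. For the finite-second-moment case, the paper uses a Taylor expansion $e^{-it}=1-it-\tfrac12 t^2(1+\xi(t))$ and genuinely needs the centring hypothesis to kill the linear term before reaching the quadratic lower bound; your route via the real part bypasses this entirely, and your remark that centring is only needed to make $\|u\|^2$ the \emph{sharp} order (by controlling the imaginary part) is a correct and slightly sharper observation than what the paper records.
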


\begin{proof}
  Let $ R>0$ such that 
\begin{align*}
 \int_{B(0,R)}\|x\|^{ 2}\mu (dx)>0
\end{align*} and $ \rho >0$ such that $ 1-\cos(t)\geqslant t^{2}/4$ for $   | t |  \leqslant R\rho ,$ and let   $ u\in B(0,\rho )$. 
We have
\begin{align}
\notag
\s(u): = \int_{}(1-e^{ iu\cdot x})\mu (dx)= &\int_{}(1-\cos( x\cdot u))\mu (dx)\\
 \label{eq:bound}&  { \geqslant \int_{B(0,R)}\frac{ | x\cdot u | ^{ 2}}{4}\mu (dx).}
\end{align} 
 {If $ \mu $ is not symmetric but centred with a second moment, we use the expansion $ e^{ -it} = 1-it-  \frac{ 1}{ 2}   t ^{ 2}(1 + \xi (t))$ where $ \xi $ is bounded and vanishes in $ 0.$ Therefore 
\begin{align*}
 \s(u) = &iu\cdot\underbrace{ \int_{}x\mu (dx) }_{ = 0}+ \int_{}  \frac{ 1}{ 2}    | x\cdot u | ^{ 2}\mu (dx)  + O\left(
\|u\|^{ 2}\int_{}\|x\|^{ 2}\xi (x\cdot u)\mu (dx)
\right)
\end{align*}
and the last term is in $ o(\|u\|^{ 2})$ using Lebesgue's theorem. Therefore in both cases it suffices to show $ \int_{B(0,R)} | x\cdot u | ^{ 2}\mu (dx)\geqslant \sigma \|u\|^{ 2}$ for $ u$ sufficiently small. }

Call $ C_{ \varepsilon  }(u)$ the  cone  of $ x$'s such that $  | x\cdot u | \geqslant \varepsilon  \|x\|\|u\| $ (and $ C_{0}(u)=\mathbb{R}^{d}$). 
Define the function $$\Sigma (v,\varepsilon ): =\int_{C_{ \varepsilon }(v)\cap B(0,R)}\|x \|^{ 2}\mu (dx),v\in \partial B(0,1).$$
Since by assumption $ \mu $ is not supported by the hyperplane orthogonal to $ v\in \partial B(0,1)$, $\Sigma (v,\varepsilon _{ v})>0 $
for some $ \varepsilon _{ v}>0.$
We wish to show by contradiction that
\begin{align*}
  \exists  \sigma_{ 0} >0,\varepsilon >0:  \forall  v\in \partial B(0,1) ,\Sigma (v,\varepsilon )\geqslant \sigma _{ 0}.
\end{align*}

If it is not the case, there is $ v_{ n}\in \partial B(0,1) $ such that $ \Sigma (v_{ n},1/{ n})\leqslant 1/n$. By compacity we can choose $ v_{ n}$ that converges to some $ v,$ and it yields a contradiction when $ C_{ \varepsilon _{ v}}(v)\subset C_{ 1/n}(v_{ n})$. In consequence, for  $ u\in B(0,\rho )$,  we can conclude the proof with
\begin{align*}
\s(u)\geqslant \varepsilon ^{ 2} \|u\|^{ 2}\sigma_{ 0} /4.
\end{align*}
\end{proof}

This lemma implies  that stationary determinantal processes have exponent at most $ 2$ (see Theorem  \ref{Thm:DPP-rigid}), and with  \eqref{eq:IPL-SF} yields  the following corollary for some IPLs:
\begin{corollary}
Let $ \mu $ a non-null probability measure on $ \mathbb{R}^{d}$ which is symmetric. 
 Then the independently perturbed lattice $ \Z^{d,\mu }$ has exponent at most $ 2.$
\end{corollary}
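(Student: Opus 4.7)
The plan is to substitute the explicit spectral formula \eqref{eq:IPL-SF} into the spectral definition of the hyperuniformity exponent and then invoke Lemma~\ref{lm:quadra-spectral}. Write $\psi(u) = \int e^{-iu\cdot t}d\mu(t)$ and $\varphi(u) = 1 - \psi(u) = \int(1 - e^{-iu\cdot t})d\mu(t)$. Since the atomic part of $\S$ is carried by $2\pi \mathbb{Z}^{d}\setminus\{0\}$, for $\varepsilon < \pi$ only the absolutely continuous part of $\S$ lies in $B_{\varepsilon}$, and
\begin{align*}
\S(B_{\varepsilon}) = \int_{B_{\varepsilon}}\bigl(1 - |\psi(u)|^{2}\bigr)du.
\end{align*}

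The symmetry of $\mu$ makes $\psi(u) = \int\cos(u\cdot t)d\mu(t)$ real-valued, so $\varphi$ is also real and non-negative, which allows the factorisation $1 - |\psi|^{2} = (1-\psi)(1+\psi) = \varphi(u)\bigl(2 - \varphi(u)\bigr)$. By continuity of $\psi$ at $0$ with $\psi(0)=1$, one has $\varphi(u)\to 0$ as $u\to 0$, so $2-\varphi(u)\geqslant 1$ on some ball $B_{\rho_{1}}$, whence $1 - |\psi(u)|^{2} \geqslant \varphi(u)$ on $B_{\rho_{1}}$. Reading the hypothesis ``non-null'' as ``not concentrated on an affine hyperplane'' (otherwise the conclusion fails: $\mu = \delta_{0}$ yields the pure lattice, whose structure factor vanishes near the origin), Lemma~\ref{lm:quadra-spectral} then furnishes constants $\sigma, \rho_{0} > 0$ such that $\varphi(u) \geqslant \sigma\|u\|^{2}$ on $B_{\rho_{0}}$.

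Combining the two estimates, for $\varepsilon < \min(\pi,\rho_{0},\rho_{1})$,
\begin{align*}
\S(B_{\varepsilon}) \;\geqslant\; \sigma \int_{B_{\varepsilon}}\|u\|^{2}du \;=\; c_{d}\,\sigma\,\varepsilon^{d+2}
\end{align*}
with $c_{d}>0$ a dimensional constant. A lower bound of exact order $\varepsilon^{d+2}$ is incompatible with $\S(B_{\varepsilon}) = O(\varepsilon^{d+\alpha})$ for any $\alpha>2$, so the optimal hyperuniformity exponent of $\Z^{d,\mu}$ is at most $2$, as announced.

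All the genuine analytic content has already been concentrated in Lemma~\ref{lm:quadra-spectral}; the only step requiring a comment is the transfer of the quadratic lower bound from $\varphi$ to $1-|\psi|^{2}$, where the symmetry assumption is crucial: it both makes $\psi$ real (so the factorisation is valid) and kills the potentially linear term $-iu\cdot\int t\,d\mu$ that would otherwise dominate $\varphi$ and invalidate the quadratic bound.
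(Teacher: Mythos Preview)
Your argument is correct and follows exactly the route the paper intends: combine the explicit spectral formula \eqref{eq:IPL-SF} with Lemma~\ref{lm:quadra-spectral}. The paper states this in one line (``This lemma \dots\ with \eqref{eq:IPL-SF} yields the following corollary''); you have simply unpacked the factorisation $1-|\psi|^{2}=(1-\psi)(1+\psi)$ and the passage from $\varphi$ to $1-|\psi|^{2}$ that are implicit there.

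One small discrepancy worth noting: you read ``non-null'' as ``not supported by a hyperplane'' so that Lemma~\ref{lm:quadra-spectral} applies directly. The paper's intent is weaker---essentially $\mu\neq\delta_{0}$---and the hyperplane-supported case is handled in the sentence immediately following the corollary: restricting to a minimal supporting subspace still yields $\s(u)\geqslant\sigma|u_{i}|^{2}$ for some coordinate $i$, which already forces $\S(B_{\varepsilon})\gtrsim\varepsilon^{d+2}$. So your reading is slightly too restrictive, but the extension is a one-line afterthought rather than a different idea.
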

Even if $ \mu $ is supported by a hyperplane, doing the same analysis on a subspace of minimal dimension supporting $ \mu $, we have $ \s(u)\geqslant \sigma  | u_{ i} | ^{ 2}$ at least for one coordinate $ i,$ which forbids exponent more than $ 2.$

    \section{CLTs and Brillinger mixing}
    \label{sec:CLT-mixing}

We investigate the Central Limit Theorem for the mass of a large ball $ \M(B_{ R})$   for a \wsrm~$ \M$. The cumulants method, classical for extensive systems, fortunately extends to  hyperuniform systems, due to the universality of Beck's lower bound (Theorem \ref{thm:beck}). 
 Recall that the $ m$-th cumulant of a real random variable $ X$ is the $ m$-th order derivative in $ 0$, when it exists, of the log of the moment generative function 
 $K_{ X}(t) = \ln \mathbf E e^{ tX},$ i.e. $$ \kappa _{ m}(X) = \frac{ d^{ m}}{dt^{ m}}K_{ X}(t) | _{ t = 0}.$$
We have a familiar interpretation for low order cumulants: for $  \bar X = X-\mathbf E X,$
\begin{align*}
 \kappa _{ 1}(X) = &\mathbf E X\\
 \kappa _{ 2}(X) =  &\textrm{Var}\left(X\right)\\
 \kappa _{ 3}(X) = &\mathbf E  \bar X^{ 3}\\
 \kappa _{ 4}(X) = &\mathbf E  \bar X^{ 4}-  \textrm{Var}\left(X\right)^{ 2}.
\end{align*}

For instance for $ X\sim \Poi(\lambda ),\lambda >0$, we have easily $ K_{ X}(t) = \lambda (e^{ t}-1) = \lambda \sum_{k\geqslant 1}\frac{ t^{ k}}{k!}$, hence for all $ m$
\begin{align*}
 \kappa _{ m}(X) = \lambda .
\end{align*}
The Gaussian variables are characterised as those variables such that $ \kappa _{ k}(X) = 0$ for $ k\geqslant 3$, indeed $ K_{ X}(t) = t^{ 2}$ for $ X\sim\N(0,1)$. Marcinkiewicz refined this result by showing that a variable is Gaussian
as soon as only finitely many cumulants do not vanish.

Recall that a sequence of variables $ X_{ n},n\geqslant 1$ converge to a standard Gaussian variable $ X$ if all moments converge, i.e. for each $ m\geqslant 1,$ $ \mathbf E X_{ n}^{ m}\to \mathbf E X^{ m}$ as $ n\to \infty .$ Since moments are linear combination of cumulants, the convergence still holds if for each $ m\geqslant 1,$ $   \kappa _{ m}(X_{ n})\to  \kappa _{ m}(X)$. Using Marcinkiewicz theorem, one can show that this convergence holds  if one only assumes   $  \textrm{Var}\left(X_{ n}\right)\to 1$ and $ \kappa _{ m}(X_{ n})\to 0$ for all $ m\geqslant m_{ 0}$, for some $ m_{ 0}\geqslant 3$, see  for instance \cite[Lemma 3]{soshnikovCLT}.

Define in general 
\begin{align*}
  \widetilde{ \M(B_{ R})} = \frac{ \M(B_{ R})-\mathbf E \M(B_{ R})}{\sqrt{  \textrm{Var}\left(\M(B_{ R})\right)}}.
\end{align*}
The previous method applies to the number of points of  a Poisson point process in a large ball because $ \P^{ \Poi(\lambda )}(B_{ R})  \smallequlaw \text{\rm{Poisson}}( \lambda \Leb(R^{ d}))$. Hence $  \textrm{Var}\left(\P^{ \Poi(\lambda )}(B_{ R})\right) \asymp R^{ d}$ and 
\begin{align*}
 \kappa _{ m}\left(  \widetilde{ \P^{ \Poi(\lambda )}(B_{ R})}
\right) =  \textrm{Var}\left(\P^{ \Poi(\lambda )}(B_{ R})\right)^{ -m/2} \kappa _{ m}(\P^{ \Poi(\lambda )}(B_{ R}))\asymp R^{ -md/2} R^{ d},
\end{align*}
it indeed goes to $ 0$ for $ m\geqslant 3$.
 More generally, it applies to many standard and  hyperuniform random measures:
\begin{theorem}
\label{thm:CLT}
Let $ \M$ a \wsrm~ in dimension $ d\geqslant 2$ having finite moments of all orders on a compact set.   
 Assume that for some $ m_{ 0}\geqslant 3$, for $ m\geqslant m_{ 0}$, the cumulants have Poisson / sub-Poisson decay
 \begin{align}
\label{eq:subpoisson-cumulant} \kappa _{ m}(\M(B_{ R})) =& O(R^{ d}) .
\end{align}
Then we have the CLT for some sequence $ R_{ n}\to \infty $
\begin{align*}
\widetilde{ \M(B_{ R_{ n}})}\to \N(0,1).
\end{align*}
In   dimension $ 1$, if \eqref{eq:subpoisson-cumulant} holds  and $  \textrm{Var}\left(\M(B_{ R})\right)\geqslant cR^{  \alpha }$ with $ \alpha >0,$  as $ R\to \infty ,$
\begin{align*}
  \widetilde{ \M(B_{ R})}\to \N(0,1).
\end{align*}
 \end{theorem}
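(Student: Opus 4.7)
The plan is to use the method of cumulants, leveraging the Marcinkiewicz-based criterion stated in the text: it suffices to show $\kappa_m(\widetilde{M(B_R)}) \to 0$ for every $m$ above some threshold, since the variance of $\widetilde{M(B_R)}$ equals $1$ by construction. Writing
$$
\kappa_m\bigl(\widetilde{M(B_R)}\bigr) \;=\; \frac{\kappa_m(M(B_R))}{\textrm{Var}(M(B_R))^{m/2}},
$$
and using the hypothesis $\kappa_m(M(B_R)) = O(R^d)$, the whole problem reduces to a sufficient lower bound on $\textrm{Var}(M(B_R))$. A useful preliminary remark is that one may freely enlarge the threshold $m_0$, since the assumed Poisson-type decay is postulated for every $m \geq m_0$.

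For $d \geq 2$, I would invoke Beck's theorem (Theorem~\ref{thm:beck}) to obtain $\limsup_{R \to \infty} \textrm{Var}(M(B_R))/R^{d-1} > 0$. This furnishes a sequence $R_n \to \infty$ along which $\textrm{Var}(M(B_{R_n})) \geq c R_n^{d-1}$, and substituting back yields
$$
\bigl|\kappa_m\bigl(\widetilde{M(B_{R_n})}\bigr)\bigr| \;=\; O\bigl(R_n^{\,d - m(d-1)/2}\bigr).
$$
Enlarging $m_0$ so that $m_0 > 2d/(d-1)$, which is finite precisely because $d \geq 2$, makes the exponent strictly negative, so the cumulants of order $m \geq m_0$ tend to $0$ along $R_n$. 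The Marcinkiewicz criterion cited from \cite{soshnikovCLT} then delivers the CLT along $R_n$.

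For $d = 1$, Beck's theorem only forces $\textrm{Var}(M(B_R)) \geq c$ on average, which is not enough to overcome a numerator of size $R$ in the ratio above. This is exactly where the auxiliary hypothesis $\textrm{Var}(M(B_R)) \geq c R^\alpha$ enters: plugging it in gives
$$
\bigl|\kappa_m\bigl(\widetilde{M(B_R)}\bigr)\bigr| \;=\; O\bigl(R^{\,1 - m\alpha/2}\bigr),
$$
and choosing $m_0 > 2/\alpha$ concludes. Crucially, this bound is pointwise in $R$, not merely subsequential, so the convergence holds as $R \to \infty$ and not just along a sequence, matching the stronger conclusion claimed in the one-dimensional case.

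The main conceptual obstacle is not the bookkeeping of Gaussian exponents but the fact that Beck's universal lower bound yields $R^{d-1}$, which in dimension $1$ degenerates to a constant; this is precisely why an independent quantitative growth assumption is needed in $d = 1$ and why the conclusion there is stronger (pointwise) rather than subsequential. The Marcinkiewicz-type CLT criterion is taken as a black box from the cited literature; everything else is essentially a comparison of exponents allowed by the freedom to push $m_0$ arbitrarily high.
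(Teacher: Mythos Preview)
Your proposal is correct and follows essentially the same approach as the paper: use Beck's theorem to extract a sequence $R_n$ with $\textrm{Var}(\M(B_{R_n}))\geqslant cR_n^{d-1}$ when $d\geqslant 2$ (or the assumed $cR^\alpha$ bound when $d=1$), bound the normalised cumulants by $R^{d-m(d-1)/2}$ (resp.\ $R^{1-m\alpha/2}$), and invoke the Marcinkiewicz criterion for $m$ large enough. Your write-up is in fact a bit more explicit than the paper's about the threshold on $m$ and the subsequential-versus-pointwise distinction in $d=1$.
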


 \begin{proof}
Let $ X_{ n} = \M(B_{ R_{ n}}), \widetilde{ X_{ n}} =   \widetilde{ \M(B_{ R})},n\geqslant 1$.
Recalling Beck's Theorem  \ref{thm:beck},  $  \textrm{Var}\left(X_{ n}\right)\geqslant cR_{ n}^{ d-1}$ for some $ R_{ n}\to \infty $ and 
\begin{align*}
 \kappa _{ m}(  \widetilde{ \M(B_{ R_{ n}})})  =  \frac{ \kappa _{ m}(\M(B_{ R_{ n}}))}{  \textrm{Var}\left(\M(B_{ R_{ n}})\right)^{ m/2}}\leqslant c\frac{ R_{ n}^{ d}}{R_{ n}^{ (d-1)m/2}}.
\end{align*}
We see that indeed for $ d> 1$ and $ m$ sufficiently large, the right hand side goes to $ 0$, and Marcinkiewicz's theorem allows to conclude to the CLT. In dimension $ d = 1$, the bound is $ R^{ d-\alpha k/2}$, and we can conclude similarly.
 \end{proof}
 
 Remark that the $ R^{ d-1}$ lower bound in Theorem \ref{thm:beck} actually holds for a set of radii $ R$ with positive Lebesgue density, it is not just a marginal sequence of $ R_{ n}$'s.
  For other linear statistics $ \M(f_{ R})$, Beck's bound might not hold, see in particular Proposition \ref{prop:hyp-expo}. It can still happen that there is a CLT even if the variance is bounded or, surprisingly, goes to $ 0$, see for instance  \cite{ST1}, but this is rather exceptional. See  \cite[Section 4.1]{KrishYogesh} for a discussion, and for more general results than Theorem  \ref{thm:CLT}.

Assumption \eqref{eq:subpoisson-cumulant}  is the manifestation of a property which is expected for strongly disordered point processes, named  {\it Brillinger mixing}, but hard to prove apart from some well understood classes such as Poisson, determinantal  and permanental processes, or zeros of random Gaussian fields.   One can strictly weaken this assumption to $ \kappa _{ m}(\M(B_{ R}))  = o(R^{ m(d-1)/2})$ (for $ m$ above some $ m_{ 0}\in \mathbb{N}$) but there is no immediate interpretation for the relevancy of such an hypothesis. 

This theorem has been applied successfully to many  linear statistics over point processes, but also to random measures. In  \cite{BYY}, the authors consider more generally a geometric functional over a point process $ \P$ under the form
\begin{align*}
 X_{ n} = \sum_{x\in \P\cap B_{ n}}\xi (x,\P)
\end{align*}for some  {\it score function $ \xi $} that does not only depend on the location $ x$ and is invariant under simultaneous translations of its two arguments. It can be interpreted as a linear statistic over the stationary random measure 
\begin{align*}
 \M = \sum_{x\in \P}\delta _{ x}\xi (x,\P).
\end{align*}
 Under some assumptions of stabilisation and dependency decay related to Brillinger mixing on $ \P$ and $ \xi $, they are able to show a CLT for $ \M(B_{ R})$. 

\begin{longversion}{}
 {
 clearly the difficult point in applying this theorem, but it is possible for DPPs, which is the occasion to recall that DPPs are stochastically   dominated by Poisson (...).
 \begin{theorem}
  Let $ \P^{ \det}$ a determinantal process with kernel $ K$ bounded on $ A\times A$ for $ A\subset \mathbb{R}^{ d}$ and $ \P^{ \text{\rm{Poiss}}}$ a Poisson process with intensity ... Then there is a coupling such that a.s. 
\begin{align*}
 \P^{ \det} \subset \P^{ \text{\rm{Poiss}}}.
\end{align*}
  \end{theorem}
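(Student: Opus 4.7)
The plan is to identify the Poisson intensity as $K(x,x)\mathbf{1}_A(x)\, dx$---the only choice compatible with matching first moments---and then produce a pathwise coupling via the finite-rank structure of DPPs. Since $K$ is bounded on $A\times A$, the associated kernel defines a trace-class self-adjoint operator on $L^2(A)$ with spectrum in $[0,1]$, so one may spectrally decompose $K|_A = \sum_i \lambda_i\, \phi_i\otimes\overline{\phi_i}$.

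The core structural tool is the Hough--Krishnapur--Peres--Vir\'ag representation (Theorem 4.5.3 in \cite{BKPV}): sample independent Bernoullis $\chi_i \sim \mathrm{Bern}(\lambda_i)$ and, conditionally on $\chi$, draw a projection DPP $\P_\chi$ with kernel $K_\chi = \sum_{\chi_i = 1}\phi_i\otimes\overline{\phi_i}$. Hadamard's inequality applied to this positive semidefinite kernel gives $\rho_k^{\det}(x_1,\ldots,x_k) = \det(K(x_i,x_j)) \leq \prod_i K(x_i,x_i)$, so all $k$-point factorial moment measures of $\P^{\det}|_A$ are dominated by those of the target Poisson. To promote this moment-level domination to an almost sure inclusion, one invokes the negative association (Rayleigh) property of DPPs---due to Lyons in the discrete setting and extended to continuum DPPs by Ghosh--Peres---which is equivalent to the Papangelou-intensity bound $c(x,\P^{\det}) \leq K(x,x)$, and then applies a Georgii--K\"uneth--Stucki monotonicity theorem, the point-process version of Strassen's theorem, to produce the coupling $\P^{\det}\subset\P^{\text{Poiss}}$.

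The main obstacle is this last step: moment-level domination does not imply stochastic domination of point processes in general, and the Papangelou bound requires some care because the HKPV mixture representation hides the conditional structure. A possible shortcut bypassing abstract negative association would be a direct inductive construction of the coupling conditional on $\chi$, building each point of $\P_\chi$ by thinning an auxiliary Poisson process of intensity $K(x,x)$ and using the Rayleigh inequality at each stage to ensure the acceptance probabilities stay in $[0,1]$; this keeps all of the work explicit but requires a careful bookkeeping of the conditional kernels after each successive sample.
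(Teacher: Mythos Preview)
The paper does not actually prove this statement. The theorem sits inside a \texttt{longversion} block that is stripped at compile time (since \texttt{\textbackslash longversions} is set to $0$), and the surrounding text is visibly unfinished draft material: the Poisson intensity is left as ``\ldots'' and the passage trails off with ``The coupling?''. So there is no proof in the paper to compare against.

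On the substance of your proposal: the ingredients are right, but the organisation is backwards. You lead with the HKPV mixture and Hadamard's inequality, which give domination of factorial moment measures, and then correctly note that this does \emph{not} yield a coupling. The clean route is the one you only mention in passing. Compute the Papangelou conditional intensity directly: for a configuration $\{y_1,\ldots,y_m\}$ the Schur complement gives
\[
c\bigl(x;\{y_1,\ldots,y_m\}\bigr)=\frac{\det K_{m+1}}{\det K_m}=K(x,x)-K(x,\mathbf y)\,K_m^{-1}\,K(\mathbf y,x)\le K(x,x),
\]
the subtracted term being a nonnegative quadratic form. The Georgii--K\"uneth comparison theorem (pointwise domination of Papangelou intensities implies stochastic domination, under mild regularity that holds here since $K$ is bounded on $A$) then delivers the coupling with a Poisson process of intensity $K(x,x)\,\mathbf 1_A(x)\,dx$. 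No detour through the Bernoulli mixture or abstract negative association is needed; the Schur bound is elementary and pointwise, and your ``shortcut'' via iterated thinning is already more intricate than this.

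Two minor points. First, the continuum negative-association property for DPPs is not due to Ghosh--Peres; Lyons proved it for discrete DPPs, and the continuum transfer is usually credited to Goldman or treated as folklore via approximation. Second, the comparison result is Georgii--K\"uneth (1997); Stucki is not a coauthor there.
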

  We have the following corollary, first in [GAF book]
  \begin{corollary}
   On $ \mathbb{R}^{ d}$, if $ K$ is locally bounded, it clearly implies
   $ \kappa _{ k}(\P^{ \det}_{ K}(A))\leqslant \kappa _{ k}(\P^{ \text{\rm{Poiss}}}_{ \|K\|})$.  Hence the DPP has a CLT.
  \end{corollary}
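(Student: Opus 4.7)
The plan is to convert the stochastic domination established in the previous theorem into the cumulant bound \eqref{eq:subpoisson-cumulant} and then invoke Theorem~\ref{thm:CLT}. A word of caution first: almost sure domination $X\le Y$ does not by itself give $\kappa_k(X)\le\kappa_k(Y)$ for $k\ge 3$, so one should not try to transfer cumulants directly through the coupling; instead one exploits the explicit Bernoulli decomposition of a DPP on a compact set.

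First I would invoke the Macchi--Soshnikov representation (proved in \cite{BKPV}): for $A\subset\mathbb{R}^d$ bounded, the truncated kernel $K|_A$ is self-adjoint trace-class with spectrum in $[0,1]$ (this is where local boundedness of $K$, combined with the standing conditions $0\le K\le I$ ensuring existence of $\P^{\det}_K$, actually enters), and one has the identity in law
\begin{equation*}
\P^{\det}_K(A)\ \stackrel{(d)}{=}\ \sum_{i\ge 1}B^A_i,\qquad B^A_i\ \stackrel{\text{ind.}}{\sim}\ \mathrm{Bern}(\lambda^A_i),
\end{equation*}
where $(\lambda^A_i)$ are the eigenvalues of the integral operator with kernel $K|_A$. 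Additivity of cumulants over independent sums, combined with the elementary bound $|\kappa_k(B)|\le c_k\, p$ for $B\sim\mathrm{Bern}(p)$ (read off from $K_B(t)=\log(1+p(e^t-1))$ by Taylor expansion at $0$), then yields
\begin{equation*}
|\kappa_k(\P^{\det}_K(A))|\ \le\ c_k\sum_i\lambda^A_i\ =\ c_k\,\mathrm{tr}(K|_A)\ =\ c_k\int_A K(x,x)\,dx\ \le\ c_k\|K\|_\infty\,\Leb(A).
\end{equation*}
Since $\kappa_k(\P^{\text{Poiss}}_{\|K\|_\infty}(A))=\|K\|_\infty\Leb(A)$, this is exactly the cumulant comparison with Poisson claimed in the statement, up to a combinatorial constant $c_k$ which is harmless for what follows.

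Specialising to $A=B_R$ gives $\kappa_k(\P^{\det}_K(B_R))=O(R^d)$ for every $k\ge 2$, which is precisely hypothesis \eqref{eq:subpoisson-cumulant} of Theorem~\ref{thm:CLT}; in dimension $d\ge2$ this directly yields the CLT along a sequence $R_n\to\infty$ extracted from Beck's lower bound. In dimension $d=1$ one must additionally produce a polynomial lower bound on $\textrm{Var}(\P^{\det}_K(B_R))$, which for non-trivial stationary DPPs can again be deduced from Theorem~\ref{thm:beck}. The main technical obstacle lies in the first step: the Bernoulli decomposition is classical but requires the kernel to be self-adjoint and locally trace-class with spectrum in $[0,1]$, so the hypothesis "locally bounded" must implicitly be read together with the standing assumptions defining an admissible DPP; once this is granted, the remainder of the argument is a routine propagation through cumulants.
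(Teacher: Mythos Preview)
Your approach is correct and is, in fact, precisely the argument the paper itself points to in the sentence immediately following the corollary, where it attributes to \cite{BKPV} a ``simpler proof \dots\ based on the fact that $\P(B_R)$ can be represented as a sum of [independent] Bernoulli variables''. The paper's own wording (``it clearly implies'', placed right after the stochastic-domination theorem) suggests deducing the cumulant inequality directly from the coupling $\P^{\det}\subset\P^{\text{Poiss}}$; you are right to flag that this route does not work, since almost-sure domination does not propagate to cumulants of order $\geq 2$. The Bernoulli decomposition is the correct device, and your bound $|\kappa_k|\le c_k\sum_i\lambda_i^A=c_k\int_A K(x,x)\,dx$ is exactly the sub-Poisson estimate~\eqref{eq:subpoisson-cumulant} needed for Theorem~\ref{thm:CLT}. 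Your remark that the literal inequality may only hold up to a constant $c_k$ is also well taken; it is indeed harmless for the CLT.

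One genuine gap remains in your handling of $d=1$: Beck's Theorem~\ref{thm:beck} only yields $\limsup_R\textrm{Var}(\P(B_R))/R^{d-1}>0$, which in dimension~$1$ is merely $\limsup_R\textrm{Var}(\P(B_R))>0$, not a polynomial lower bound. For hyperuniform one-dimensional DPPs such as the Sine process the variance grows like $\log R$, so the hypothesis $\textrm{Var}\geq cR^\alpha$ of Theorem~\ref{thm:CLT} fails and a separate argument (e.g.\ the Costin--Lebowitz CLT for DPPs) is required there.
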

  They actually present a simpler proof not using  hyperuniformity based on the fact that $ \P(B_{ R})$ can be represented as a sum of i.i.d.   Bernoulli variables, and as they notice, one can deduce further asymptotic results in this way.

   Another way to prove ..., is to prove Brillinger mixing 
   
   ...
   
   The coupling?
}
\end{longversion}

  \section{Hyperuniformity of finite particle systems}  
  \label{sec:hu-finite}
  
  The infinite stationary systems studied above are often idealised models for large finite systems of $ N$ particles, that are usually confined around some growing set $ \Sigma  = N^{ 1/d}\Sigma _{ 0}$, where $ \Sigma _{ 0}\subset \mathbb{R}^{ d}$ is bounded. Typical examples include Gibbs measures, eigenvalues of random matrices, and zeros of random polynomials, see Section \ref{sec:intro-examples}.
  Finite systems present in general more complexity because, inherently to their finite nature, they are not invariant under translations, even locally.  There are in general also different behaviour for the particles close to   $\partial  \Sigma $, due to boundary effects, and particles from  {\it the bulk}, i.e. deep inside the point cloud. The latter will be easier to study  because they  asymptotically are comparable to infinite volume limits.

  The geometric definition of  hyperuniformity in  \eqref{eq:intro-hu} can be adapted to a  sequence of point processes $ \P_{ N},N\geqslant 1$.
  Surprisingly, the power of the spectral analysis of stationary random measures can be reproduced for such models, provided they present sufficient spatial homogeneity, i.e. asymptotic stationarity.

The main point here is to  establish a variance transfer principle stating that $ \alpha $-hyperuniformity for smooth kernels transfers to irregular kernels, as in Theorem \ref{thm:general-hu} and Proposition \ref{prop:hyp-expo}.

\begin{definition}
 We say that $ \{\P_{ N};N\geqslant 1\}$ is asymptotically $ \alpha $-hyperuniform on the scales $ I_{ N}: = [R_{ N}^{ -},R_{ N}^{  + }]\subset [1,N^{ 1/2}]$  if for some $ f\in \CC_{ c}^{ \infty }(B_{ 1})  \setminus \{0\},$ there is $ C<\infty $ such that \begin{align}
 \label{eq:def-finite-HU}
  \sup_{ R\in I_{ N},x:B(x,R)\subset \Sigma }\textrm{Var}\left(\P_{ N}\tau _{ x}f_{ R}\right) \leqslant CR^{ d-\alpha }.
\end{align}

\end{definition}  
Note that this rate can only be obtained for $ f$ sufficently smooth (as in Proposition \ref{prop:hyp-expo}).
Clearly, $ \alpha $-hyperuniformity implies $ \alpha '$-hyperuniformity for $ \alpha '\leqslant \alpha .$
It is for many systems easier to prove  hyperuniformity for a smoother kernel than for the variance of  $ \P_{ N}(B(x,R))$  \cite{CES,serfaty-HU}.
The case that is made at  \cite{Lr-HU-finite}, reproduced here,   is that if  hyperuniformity is proved for such smooth $ f$, it can often be transferred to the number variance over ball indicators, or other irregular kernels. Such a result is based on a spectral analysis of translation invariant random measures on the torus.
  
  \subsection{Spectral measure of periodic systems}

For $ \lambda >0$, let $ \T = \mathbb{R}^{ d} / (\lambda \mathbb{Z} )^{ d}$ the torus with volume $ \lambda ^{ d}$, and $ \M_{ \lambda }$ a random measure with $ \mathbf E \left[
 | \M_{ \lambda } | (\T)^{ 2}
\right]<\infty $.
    Denote the toric spectral density by  
\begin{align}
\label{eq:def-spectral-density}
   \s_{ \M_{ \lambda}}(u) := \frac{ 1}{\mathbf E \left[
\M_{ \lambda} (\mathbb  T_{\lambda }^{ d})
\right]}   \textrm{Var}\left(
 \F \M_{ \lambda}(u)
\right),u\in \mathbb{R}^{ d}
\end{align}where $ \F\M_{ \lambda}(u) = \M_{ \lambda}(e^{- i \langle u,\cdot  \rangle })$ is the Stieltjes-Fourier transform. In general, up to rescaling, we work under the unit intensity assumption, i.e. $ \mathbf E\left[
 \M_{ \lambda}(  \mathbb  T_{\lambda }^{ d})
\right] = \lambda ^{ d}.$ At $ u\neq 0,$ the spectral density is often close to the expectation of the  {\it scattering intensity} 
\begin{align}
 \label{eq:def-scatt-intensity}
\hat \s_{ \M_{ \lambda}}(u) :=
\lambda ^{ -d}| \F\M_{ \lambda}(u) | ^{ 2},\end{align} numerically useful for estimation and further discussed at Chapter  \ref{ch:numerics}. Another possible renormalisation is by $ \M_{ \lambda}(  \mathbb  T_{\lambda }^{ d})$, instead of its expectation, but this yields a more noisy estimator.
The toric spectral density will also be useful to compute optimal transport distances  on finite samples (Chapter \ref{chap:transport}).
For $ \M$ a \LSI stationary point process on $ \mathbb{R}^{ d}$,  and $ \M_{ \lambda}$ the restriction on $  \mathbb  T_{\lambda }^{ d}$,  
$   \s_{ \M_{ \lambda}} $ provides a simple and natural approximation of the spectral measure $ \S$ of $ \M$. See for instance  \cite[Th. 5.1]{Coste} that shows weak convergence  if $ \M$ is ergodic, i.e. for $ f $ a Schwartz function on $ \mathbb{R}^{ d}  \setminus \{0\},$
\begin{align}
\label{eq:coste-weak-cvg-SI}
\int_{ \mathbb{R}^{ d}}f (u) \s_{ \M_{ \lambda}}(u)du
\xrightarrow[ n\to \infty ]{}\S(f). \end{align}
The proof in \cite{Coste} is for point processes on spherical domains with random normalisation, but it extends to the current framework. See Proposition \ref{prop:SI-grid} for a more quantitative statement. \\

 Let $ \tau _{ x}^{ \lambda },x\in \T$ the  translation operators in $ \T$. A random measure $ \M$ on $ \T$ is $ \T$-stationary if for $ x\in \T,$
\begin{align*}
 \tau _{ x}^{ \lambda }\M \stackrel{(d)}{=}  \M.
\end{align*}

We in general only require  {\it weak stationarity}, i.e. for $ x\in \T,f\in \CC_{ c}(\T)$
\begin{align*}
 \mathbf E \left[
\M[\tau _{ x}f]
\right] = \mathbf E \Big[
\M[f]
\Big],  \textrm{Var}\left(\M(\tau _{ x}f)\right) =  \textrm{Var}\left(\M(f)\right).
\end{align*}
For such systems,  $ \s_{ \M}$  define a spectral measure similar to that  of translation invariant systems on $ \mathbb{R}^{ d}$. We need to introduce the dual space 
$ \mathbb{Z} _{ \lambda }^{ d}: = 2\pi \lambda ^{ -1}\mathbb{Z} ^{ d}$ and the corresponding Fourier transform: for $ f\in  \mathscr  C_{ c}(\T)$
\begin{align*}
\what{f}{\lambda }(u) = \displaystyle\int_{\T}f(x)e^{ -iu\cdot x}dx, u\in \mathbb{Z} _{ \lambda }^{ d}.
\end{align*}

 \begin{proposition}
 \label{lm:spectral}
 Let $ \M$ a \LSI weakly  stationary random measure on $ \T.$
 There exists a non-negative measure $ \S = \S_{ \M}$ on $ \mathbb{Z} _{ \lambda }^{ d}$ such that
for $ f,g: \T\to  \mathbb C $ bounded,
\begin{align}
\label{eq:PS}
  \textrm{Cov}\left(\M(   f),\M(g)\right) =    \int_{ \mathbb{Z} _{\lambda }^{ d}}  \what{f}{\lambda }  \overline{  \what{g}{\lambda }}d\S.
\end{align}
$ \S$ is called the  {\it spectral measure} of $ \M.$ 
 \end{proposition}

    {As a trivial example, take $f(y) := 1_{  \mathbb  T_{\lambda }^{ d}}(y)e^{ iu_{ 0}\cdot y}$ for some $ u_{ 0}\in \mathbb{Z} _{ \lambda }^{ d}.$ For $ u\in \mathbb{Z} _{ \lambda }^{ d}$, $ \hat f(u) =  \lambda ^{ d} \delta _{ u = u_{ 0}}$. Hence, 
\begin{align}
\label{eq:exact-spectr}
\lambda ^{ 2d}\S(\{u_{ 0}\}) =  \S (  | \hat f | ^{ 2}) =  \textrm{Var}\left( \F\M ( u_{ 0} )\right).
\end{align}}

This proposition is a consequence of the more general Theorem  \ref{thm:wsrm-X} on abelian compact topological groups. 
Extend $ \M$ to functions on $ \mathbb{R}^{ d}$ through periodisation: for $ \varphi \in L^{ 1}(\mathbb{R}^{ d})$, let
\begin{align*}
 \M(\varphi ) := \sum_{\k\in \mathbb{Z} ^{ d}}\int_{ \T}\varphi (\lambda \k + x)d\M(x),
\end{align*}
well defined a.s. by stationarity because, denoting by $a  = \M(\T)$ the intensity, Fubini identity yields
\begin{align*}
 \mathbf E\left[
 \M( | \varphi  | )
\right] = a \sum_{\k\in \mathbb{Z} ^{ d}}\int_{ \T}  | \varphi (\lambda \k + x) | dx =a  \int_{ } | \varphi  | <\infty .
\end{align*}

 Then one can state a result similar to Theorem \ref{thm:general-hu} and Proposition \ref{prop:hyp-expo}.

 \begin{theorem}[  \cite{Lr-HU-finite}, Thm 2.3]
 \label{thm:main-periodic}
  Let $\lambda >1,$  and $ \M_{\lambda   }$ a weakly stationary random measure on $ \T$,   
with $ \mathbf E\left[
 \M_{\lambda  }(\T)
\right] = \lambda ^{ d}$, and let $ \alpha \in \mathbb{R},\maxscale \in [0,1].$ Assume   that for some non-negative $ f\in \CC_{c}^{ \infty }(\mathbb{R}^{ d})  \setminus \{0\}$   there is $ R^{ \circ}\geqslant 1,C_{ f}>0$ such that for $ R\in [ R^{ \circ},\lambda ^{ \maxscale }]$
\begin{align*}
  \textrm{Var}\left(\M_{\lambda   }(f_{ R})\right)\leqslant C_{ f}R^{d -\alpha }.
\end{align*}
Then for $ \gamma >0,\varphi \in \F_{ \gamma }^{ *},$  there is $ C_{ \varphi ,f} >0$ such that for $R\in [ R^{ \circ},\lambda ^{ \maxscale }]$
\begin{align*}
  \textrm{Var}\left(\M_{\lambda   }(\varphi _{ R})\right)\leqslant
  C_{ \varphi,f}V_{ \alpha ,\gamma }(R).
\end{align*}

  \end{theorem}    
The proof is  based on the decomposition in low and high frequencies in  \eqref{eq:PS}. 
 High frequencies  rely on the translation boundedness of $ \S$ on $ \Z_{ \lambda }^{ d}$, which can be proved like in Lemma  \ref{eq:adhikari}.
We give at  Theorem \ref{thm:riesz} an example of  hyperuniformity with 1-dimensional $ s$-Riesz gases in dimension $ 1$, for $ s\in (0,1).$  They are $ (1-s)$-hyperuniform, meaning they satisfy the previous proposition with $ \alpha  = 1-s$, on all the scales, i.e. with $ b = 1.$

What makes it also work is that on the dual group, one can replace the toroidal Fourier transform by the one on $ \mathbb{R}^{ d}$, and exploit its scaling properties:
 \begin{lemma}
 \label{lm:scaling} Let $ \M$ a \LSI weakly stationary random measure on $ \T.$
 For $f \in \B_{ c}^{}(\mathbb{R}^{ d}), R>0$,
\begin{align*}
 \textrm{Var}\left(\M(f_{ R})\right)=  R^{ 2d}  \int_{ \mathbb{Z} _{ \lambda }^{ d}} | \widehat{ f }(R\xi) | ^{ 2}\S(d\xi ) .
\end{align*}
 \end{lemma}

 \begin{proof}   
 First recall  that $ \M(f) = \M( \tilde f)$ where $ \tilde f$ is defined on $ \T$ through
\begin{align*}
 \tilde f(x) = \sum_{\k\in \mathbb{Z} ^{ d}}f(x + \lambda \k).
\end{align*}

 We start from  \eqref{eq:PS}   after a $ R$-rescaling:
\begin{align*}
  \textrm{Var}\left(\M(f_{ R})\right) =  \textrm{Var}\left(\M( \widetilde{ f_{ R}})\right) =   \displaystyle\int_{\mathbb{Z} _{ \lambda }^{ d}}\left|
{\widehat{  \widetilde{  f_{ R}}}}
^{\lambda }(u) \right|^{ 2}
\S(du ).
\end{align*}
 
For $ u\in  \mathbb{Z} _{ \lambda }^{ d}$
\begin{align*}
 \what{ \widetilde{ f_{ R}}  }{\lambda } (u)=& \int_{  \T} \widetilde {f_{ R}}  (x)e^{ -iu\cdot x}dx\\
  = & \sum_{\k\in \mathbb{Z} ^{ d}}\int_{  \T} f_{ R}  (x + \lambda \k)e^{- iu\cdot x}dx
  \\ 
  = & \sum_{\k\in \mathbb{Z} ^{ d}}\int_{  \mathbb  T _{\lambda } ^{ d}} f_{ R}  (x + \lambda \k)e^{ -iu\cdot (x + \lambda \k)}dx\text{ \rm{\color{black} because }}e^{- iu\cdot \lambda \k} = 1
  \\ 
   = &\int_{ \mathbb{R}^{ d}}f _{ R} (y)e^{ -iu\cdot y}dy\\
   = &\widehat{f_{ R}  } (u).
\end{align*}
Therefore, on $ \mathbb{Z} _{ \lambda }^{ d},$   $ \what{  \widetilde{ f_{ R}}}{\lambda } = \widehat{ { f_{ R}}} =R^{ d} \hat f(R\cdot ) $ by standard considerations, which completes the proof.
  
 \end{proof}

\subsection{Hyperuniformity of asymptotically homogeneous particle systems}
\label{sec:hu-finite-asymp}

For a particle system $ \P_{N }\subset \Sigma $  not defined on the torus, a strategy is to consider a restriction $ \tilde \P_{ N } =  \tau _{ x_{N }}\P_{ N}\cap   \T$ for some $ \lambda \ll N^{ 1/d}$ and apply the previous result. Unfortunately, $ \tilde \P_{ N }$ is in general not $ \T$-stationary, even weakly, but in many cases it is asymptotically homogeneous, in the sense that $$ \mathbf E \left[
\tilde \P_{ N }(\tau _{ x}f_{ R})
\right] \approx \displaystyle\int_{}f_{ R}$$ uniformly in $ x$. Therefore, the strategy is to compare $ \tilde \P_{N }$ to its stationarised version $ \tau _{ U_{ \lambda }}^{ \lambda } \tilde \P_{ N}$, at the second order, where $ U_{ \lambda }$ is independent and uniformly distributed in $ \T$. In this case, similarly as in previous results, variance rates for smooth linear statistics can be passed on to irregular linear statistics as in Theorem  \ref{thm:main-periodic}, with some caveats. In particular,  boundary effects kill the rates on large scales, and one cannot obtain in general  hyperuniformity on the full scales $ I_{ N } = [1,N ^{ 1/d}]$, but on some reduced scales $ [1,N ^{ \frac{ 1}{d(d + 2)}}]$. See Section  \ref{sec:gibbs}  for  applications to Coulomb gases and Girko random matrices.

 \section{Non-Euclidean  hyperuniformity}  
  
   \newcommand{\X}{  \mathbb X}  
  
{
Another direction of generalisation is on an abstract metric space $ \X$.  
  This has been explored in the PhD dissertation of M. Bylehn on the hyperbolic space in  \cite{bylehn}, and on more general metric spaces in  \cite{bjorklund2025hyperuniformity}. 
  A natural generalisation of  hyperuniformity for a point process $ \P$ in a metric space $ X$ endowed with a measure $ m$  would be 
\begin{align*}
 \lim_{ r\to \infty } \frac{  \textrm{Var}\left(\P(B_{ X}(0,r))\right)}{ m(B_{ X}(0,r))} =  0.
\end{align*}
This is unfortunately impossible in the hyperbolic space, as the volume of hyperbolic balls and their boundaries have the same exponential growth. Bylehn and Bj\"orklund  are still able to define an analogue of spectral hyperuniformity in some metric spaces endowed with a group structure satisfying the so-called  {\it Gelfand property}, which allows to perform in some sense a polar decomposition of the Fourier transform. In this framework, they define  hyperuniformity as a sub-Poissonian decay of the Bartlett spectral
measure near the endpoint, in analogy to Theorem \ref{thm:general-hu}-(iii). They identify some lattices invariant to group transformations which satisfy this property. Independently perturbed lattices, on the other hand, do not seem to fulfill this property, nor do Determinantal processes with reproducing kernel.}


\if\noauxfile1{

  {\color{red} 
 toc does not exists

}\fi
 
 \chapter{Hyperuniform  point processes}  
 \label{chap:proofs}
 
 In this chapter, we develop the examples from the introduction and give some proofs. We start by defining factorial moment measures, central in the study of determinantal processes. We then discuss  the planar GAF zeros, as it is a striking and isolated example, easy to define formally. Random matrices and DPPs form a core class of  hyperuniform  point processes, but require more context and preparation. Random matrix eigenvalues and non-integrable Gibbs measures are discussed after, but their high mathematical complexity leaves less room for rigourous results, let alone proofs. We conclude with  models presenting an aperiodic order, such as quasicrystals.

 \section{Factorial moment measures}  
 \label{sec:factorial}
 
 We saw that the law of a point process $ \P = \sum_{i}\delta _{ x_{ i}}$ is characterised by the laws $ \P(f)$ for $ f\in\mathscr C_{ c}^{b }(\mathbb{R}^{ d})$. Factorial moment measures give a more analytic way to decompose   $ \P$'s  law in projections of orders $ 1,2, ...$ and characterise it  in the same way that the law of a reasonable random variable is characterised by its moments of every order. Let $ \mu_{ \P}^{(m)}$ the  {\it $ m$-th factorial moment measure} (FMM) of $ \P$, defined through non-negative test functions $ f\in \mathscr B_{ c}((\mathbb{R}^{ d})^{ m})$ applied to $ m$-tuples of distinct $ x_{ i}$'s with
\begin{align*}
 \mathbf E\left[
 \sum_{i_{ 1},\dots ,i_{ m}\text{\rm{ distinct}}}f(x_{ i_{ 1}},\dots ,x_{ i_{ m}}) 
\right]= \int_{}f\mu_{ \P}^{(m)}.
\end{align*}
The number of terms in the sum is determined by the number of Dirac masses in $ \P = \sum_{i}\delta _{ x_{ i}}.$ When $ \mu _{ \P}^{(m)}$ has a density  with respect to $ \Leb[m]$, it is denoted by $ \rho _{ \P}^{(m)}.$ To further characterise this notion, we assume henceforth that the point process is  {\bf simple}, i.e. $ x_{ i}\neq x_{ j}$ for $ i\neq j.$ This  immediately implies that the FMMs vanish on the diagonal 
\begin{align*}
 \mu _{ \P}^{(m)}(\{(x_{ 1},\dots ,x_{ m}): x_{ i} = x_{ j}\text{\rm{ for some }}i\neq j\}) = 0.
\end{align*}
Another characterisation of $ \mu _{ \P}^{(m)}$ is that it vanishes on the diagonal and that over disjoint bounded measurables sets $ A_{ 1},\dots ,A_{ m}\subset \mathbb{R}^{ d}$, 
\begin{align*}
 \mu _{ \P}^{ (m)}(A_{ 1}\times \dots \times A_{ m}) = \mathbf E \left[
\P(A_{ 1})\dots \P(A_{ m})
\right].
\end{align*}
\begin{itemize}
\item For $ k = 1$, one retrieves the intensity $ \mu_{ \P}^{ (1)}(A) = \mathbf E \P(A)$ for $ A\subset \mathbb{R}^{ d}$. If $ \P$ is  stationary, $ \mu_{ \P}^{ (1)}$ is invariant under translations, hence  $\rho _{ \P}^{ (1)} \equiv  \lambda $, with the  {\bf intensity} $ \lambda \geqslant 0$.

\item In view of Section \ref{sec:wsrm}, the first and second order properties of stationary  $ \P$ can be equivalently described by the couple $ (\lambda ,\mu _{ \P} ^{ 2} )$ or by the couple $ (\lambda ,\C)$ (or obviously the couple $ (\lambda ,\S)$): combine \eqref{def:covariance} with  
\begin{align}
\notag
  \textrm{Cov}\left(\P(f),\P(g)\right) = & \mathbf E \left[
\sum_{i,j}f(x_{ i})  \bar g(x_{ j})
\right]-\mathbf E \left(
\P(f)
\right)\mathbf E (\P( \bar g))\\
\notag
   = &\mathbf E \left[
\sum_{i}f(x_{ i})  \bar g(x_{ i})
\right] + \mathbf E \left[
\sum_{i\neq j}f(x_{ i}) \bar g(x_{ j})
\right] -    \int {f}(x)\lambda dx\int{ \bar g(x)}\lambda dx\\
    = & \label{eq:fact-cov}\int_{}f\otimes  \bar g\rho ^{ 2}_{ \P} + \lambda \int_{}f \bar g -\lambda ^{ 2}\int_{}f\int{ \bar g}.
\end{align}
\item The  {\it factorial} in the name might refer to the formula obtained for a simple point process $ \P$     when $ f  = 1_{ B^{ m}}$ for a bounded set $ B$:  \begin{align}
\label{eq:factorial}
 \mu_{ \P}^{(m)}(B^{ m})  = \mathbf E\left[
 \sum_{x_{ 1}\in B\cap \P}\sum_{x_{ 2}\in B\cap \P  \setminus x_{ 1}}\dots { \sum_{x_{ m}\in B\cap \P  \setminus \{x_{ 1},\dots ,x_{ m-1}\}}}1
\right] = \mathbf E \left[
  \P(B)^{ (m)}
\right]
\end{align}where for a number $ x\geqslant 0,x^{ (m)}: = x(x-1)\dots (x-m + 1).$ 
Notice that  the Newton formula
\begin{align*}
{\bf 1}\{ k = 0 \}  = (1-1)^{ k}= \sum_{m = 0}^{ \infty }(-1)^{ m}\frac{ k^{ (m)}}{m!}
\end{align*}  gives the inclusion-exclusion formula  whenever the sum converges absolutely
\begin{align}
\label{eq:inclusion-exclusion}
 \mathbf P (\P(B) = 0) = \sum_{m = 0}^{ \infty }\frac{ (-1)^{ m}}{m!}\mu_{ \P}^{(m)}(B^{ m}).
\end{align}
This can be useful as, by standard results on  {\it random closed sets,} the law of a simple point process $ \P$ can equivalently be characterised by the values of the functional    $B  \mapsto   \mathbf P (\P(B) = 0),B\subset \mathbb{R}^{ d}$, called  {\it capacity functional}  \cite{Mol05}.
\end{itemize}

For a process $ \P$ with a fixed number $ n$ of points $ x_{ 1},\dots ,x_{ n}$, the law of $ \P$ is equivalently described by the joint law  of the random vector $ (x_{ 1},\dots ,x_{ n})$, assuming an independent labelling of the points.
Up to a combinatorial multiplicative constant, this law is proportional to $ \mu  ^{ (n)}_{ \P}$, and we abusively call $ \mu  ^{ (n)}_{ \P}$ (or its density $ \rho ^{ n}_{ \P}$) the  {\it density} of $ \P$ in this case. It entirely determines the law of $ \P$, and lower order FMMs are obtained by projection.

 \begin{lemma}
 \label{lm:projection-FMM}
 Let $ \P_{ n}$ a point process with $ n$ distinct points a.s.. Then  for $ 1\leqslant k<n$, we have the projection
\begin{align*}
 \mu ^{ (k)}_{ \P_{ n}}  = \frac{ 1}{(n-k)!} \mu ^{(n)}_{ \P_{ n}}(\cdot ,\dots ,\cdot , \mathbb{R}^{ d}\times \dots \times \mathbb{R}^{ d}).
\end{align*}
 
 \end{lemma}
 
 \begin{proof} 
 For each test function $ f$ symmetric in $ k$ arguments let 
\begin{align*}
 \tilde f(x_{ 1},\dots ,x_{ n}) = \sum_{\text{\rm{distinct }}(i_{ 1},\dots ,i_{ k})}f(x_{ i_{ 1}},\dots ,x_{ i_{ k}})
\end{align*}
Then
 \begin{align*}
 \mu ^{k}(f) = &\mathbf E \sum_{(x_{ 1},\dots ,x_{ k})^{ \neq }}f(x_{ 1},\dots ,x_{ k})\\
  = &\mathbf E  \tilde f(Z_{ 1},\dots ,Z_{ n})\\
   = &\frac{ 1}{n!}\mathbf E \left[
\sum_{(x_{ 1},\dots ,x_{ n})\neq }f(x_{ 1},\dots ,x_{ n})
\right]\\
   = &\frac{ 1}{n!}\int_{ } \tilde f(x_{ 1},\dots ,x_{ n})\mu ^{ (n)}(dx_{ 1}\dots dx_{ n})\\
    = & \frac{ 1}{n!}\sum_{(i_{ 1},\dots ,i_{ k})\neq }\int_{ } f(x_{ i_{ 1}},\dots ,x_{ i_{ k}})\mu ^{ (n)}(dx_{ 1}\dots dx_{ n})\\
     = & \frac{ 1}{n!} \underbrace{\#\{\text{\rm{ordered } $ k$-tuples}\} }_{\binom{k}{n}\times k!}\times \int_{ }f(x_{ 1},\dots ,x_{ k}) \tilde \mu ^{ (k)}(dx_{ 1}\dots dx_{ k})
\end{align*}
where
\begin{align*}
 \tilde \mu ^{ (k)}(dx_{ 1},\dots ,dx_{ k}) := \int_{ }\mu ^{ (n)}(dx_{ k + 1}\dots dx_{ n})
\end{align*}
using that $\mu ^{ (n)}$ is symmetric, i.e. invariant under permutation of its arguments. Seeing that the prefactor is $ (n-k)!^{ -1}$, it completes the proof.
\end{proof}

 \subsection{Law characterisation and convergence}
 
 See  \cite[Chap. 9]{DVj08b} more a more complete treatment.
 It is classical that the law of a real random variable $ X$   is characterised by its moments $ \mathbf E X^{ m},m\geqslant 1$ if $ \mathbf E \exp(t | X | )<\infty $ for some $ t>0$. Similarly, if some  point process $ \P$ satisfies $ \mathbf E\left[
 \exp(t_{ A} \P( A ) )
\right]<\infty $ for some $ t_{ A}>0$ for all $ A$ bounded, in which case we say that  {\it $ \P$ has some exponential moments}, then the law of the $ \P(A),A$ bounded, and hence the law of $ \P$ (Section  \ref{sec:intro-pp} or \cite[Th. 9.2.XII]{DVj08b}), is characterised by the moments $ \mathbf E \left[
\P(A)^{ m}
\right],m\geqslant 1, A$ bounded. In turn, the $ \mathbf E \left[
\P(A)^{ m}
\right],m\geqslant 1$ can be recovered from the $ \mu_{ \P}^{(m)},m\geqslant 1$ with  \eqref{eq:factorial}. We hence proved the following:
\begin{proposition}
\label{prop:charact-factorial}
The law of a  point process $ \P$ having some exponential moments is characterised by the $ \mu  ^{(m)}_{ \P},m\geqslant 1.$
\end{proposition}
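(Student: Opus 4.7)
The plan is to reduce the characterisation to the classical moment problem on $\mathbb{R}$ and then exploit the fact, recalled earlier in the chapter, that $\B(\N)$ is generated by the mappings $P \mapsto P(A)$ for $A \subset \mathbb{R}^d$ bounded Borel. First I would fix bounded Borel sets $A_1, \ldots, A_k$ and show that the joint distribution of $(\P(A_1), \ldots, \P(A_k))$ is determined by its mixed moments $\E \prod_i \P(A_i)^{m_i}$. Setting $A = A_1 \cup \cdots \cup A_k$, the hypothesis $\E \exp(t\P(A)) < \infty$ dominates all such joint moments and makes the joint moment generating function analytic in a neighbourhood of $0 \in \mathbb{R}^k$, so Carleman's criterion applied to each directional projection yields that the mixed moments determine the joint law. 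Since the law of $\P$ on $(\N, \B(\N))$ is characterised by its finite-dimensional distributions of the form $(\P(A_1), \ldots, \P(A_k))$, it suffices to show that the family $\{\mu_\P^m\}_{m \geq 1}$ determines all such mixed moments.

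For the second step I would expand $\P(A_i)^{m_i} = \sum_{j_1, \ldots, j_{m_i}} \prod_\ell 1_{A_i}(x_{j_\ell})$ and group the resulting multi-sum $\prod_i \P(A_i)^{m_i}$ according to the partition of the index tuples into classes of equal indices. The fully off-diagonal contribution, where all indices are distinct, equals $\int \bigotimes_i 1_{A_i}^{\otimes m_i}\, d\mu_\P^n$ with $n = \sum_i m_i$, while collapsing indices produces integrals against $\mu_\P^{n'}$ for $n' < n$. This is exactly the standard inversion, via Stirling numbers of the second kind, of the factorial identity behind \eqref{eq:factorial}, so the mixed moment appears as a finite explicit linear combination of evaluations of the $\mu_\P^m$ on products of the $A_i$.

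The only mildly delicate step is the passage from the single-set hypothesis $\E\exp(t \P(A)) < \infty$ to joint analyticity of the MGF $(t_1, \ldots, t_k) \mapsto \E \exp(\sum_i t_i \P(A_i))$. This follows from the pointwise bound $\sum_i t_i \P(A_i) \leq (\sum_i |t_i|) \P(A_1 \cup \cdots \cup A_k)$, after applying the hypothesis to $A_1 \cup \cdots \cup A_k$ and shrinking the $|t_i|$ below the corresponding exponential-moment threshold; this ensures in particular finiteness of all joint moments and legitimates Carleman's argument. Everything else is routine bookkeeping between ordinary and factorial moments.
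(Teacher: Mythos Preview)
Your argument is correct and follows the same overall strategy as the paper --- reduce to a moment problem, then express the relevant moments as linear combinations of evaluations of the $\mu_\P^m$. The paper's route is slightly shorter: rather than handling joint moments and the multivariate Carleman criterion, it works one set at a time, recovering $\mathbf E\,\P(A)^m$ from $\mu_\P^m(A^m)=\mathbf E\,\P(A)^{(m)}$ via \eqref{eq:factorial}, and then invokes the fact (recalled in the introduction, and ultimately resting on the avoidance-function characterisation of simple point processes) that the family of marginal laws $\{\P(A):A\text{ bounded}\}$ already determines the law of $\P$. Your joint-moment version is a bit more work but has the advantage of not relying on that special feature of simple point processes, so it would transfer unchanged to general locally finite random measures.
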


If for instance the factorial moment measures are known to satisfy $ \mu_{ \P}^{(m)}(B^{ m})\leqslant c_{ B}^{ m}$ for some $ c_{ B}<\infty $, it implies finite exponential moments on $ B$: for $ u>0,$
\begin{align}
\label{eq:exp-moments}
 \mathbf E\left[
 (1 + u)^{ \P(B)}
\right] = \sum_{m = 0}^{ \infty }u^{ m}\frac{\mathbf E  \left[
\P(B)^{ (m)}
\right]}{m!}\leqslant  \sum_{m} \frac{| u c_{ B} | ^{ m}}{m!}<\infty ,
\end{align}
hence under such an assumption for all $ B$, the $ \mu_{ \P}^{(m)}$  define at most one distribution.
This will in particular allow to define properly the class of determinantal point processes in Section  \ref{sec:dpp} through their factorial moment measures.\\

Similarly, the convergence between random variables $ X_{ n}\to X$ for $ X$ with some exponential moment is implied by the convergence of the $ m$-th moment $\mathbf E  X^{ m}_{ n}\to \mathbf E X^{ m}$ for each $ m\geqslant 1$. Recall that the convergence between  point processes $ \P_{ n} \xrightarrow[ n\to \infty ]{\text{\rm{Law}}}\P$    is implied by   $ \P_{ n}(A)\xrightarrow[ n\to \infty ]{\text{\rm{Law}}}\P(A)$ for each bounded $ A$  (Section  \ref{sec:intro-pp} or \cite[11.1.VII]{DVj08b}). Hence if for all $ A$ bounded, for all $ m\geqslant 1, \mathbf E \P_{ n}(A)^{ m}\to \mathbf E \P(A)^{ m}$, we have indeed the weak convergence $ \P_{ n}\to \P$. Finally, since $ \mathbf E \left[
\P(A)^{ m}
\right]$ is a linear combination of the $  \mu_{ \P}^{ (k)}(A^{ k}),1\leqslant k\leqslant m$, we have:

\begin{proposition}
\label{prop:cvg-cumulant}
If for some random measures $ \P_{ n},\P$ with $ \P$ having some exponential moments, we have $ \mu _{ \P_{ n}} ^{ (m)}(A^{ m})\to \mu _{ \P}^{(m)}(A^{ m})$,  for each compact  $ A$, then $ \P_{ n}\xrightarrow[ n\to \infty ]{\text{\rm{Law}}}\P.$  
\end{proposition}

 \subsection{Repulsivity and negative dependence} 
  
 Hyperuniform processes are sometimes believed to exhibit local repulsion, probably due to the fact that the most famous examples, DPPs and zeros of random functions, indeed experience a natural local repulsion.
  Mecke's formulas  \cite{baccelli2020random} exactly mean that for $ \P$ a homogeneous Poisson process with intensity $\lambda  = 1$, $ \mu ^{ (m)}_{ \P} = (\Leb)^{ m}$, hence what is considered  {\it repulsion} for a  point process $ \P$ is when the difference  $\mu   ^{ (m)}_{ \P}- (\Leb)^{m}$, measuring somehow  the deviation to neutrality, takes negative values. 
  We say in particular that $ \P$ is completely repulsive if for all $ m\geqslant 1$, $ \mu ^{ (m)}_{ \P}\leqslant (\Leb)^{ m}$ as measures. One often talks about local repulsion when the inequality holds locally for $ m = 2$. It holds much more rarely at large scales, and for all $ m$; the class of DPPs is probably the sole tractable class of useful models having such a property, the GAF zeros do not  \cite{BKPV}. Repulsivity can be seen as negative dependence, in the sense that a positive mass at some location discourages mass in other locations.
   
    The  reduced variance at large scale still induces some negative dependance, as there should necessarily be compensation of large batches of particles. For instance in a large rectangular window $ W$ that can be decomposed in two disjoint congruent rectangles $ W_{ 1},W_{ 2}$,  if there is a large concentration of particles in some half, the concentration in the other half should be below average to ensure the low discrepancy guaranteed by  hyperuniformity. This can be quantified by negative asymptotic correlations
    \begin{align*}
\lim_{ R\to \infty }\text{\rm{Corr}}(\P(RW_{ 1}),\P(RW_{ 2}))<0.
\end{align*}
    {This phenomenon has been first formally studied in  \cite{AdiGhoshLeb} in the discrete setting, and recently in the continuum setting in  \cite{KrishYogesh,jalowyBox,SodinElectric}. }
         
\subsection{Strong and weak decorrelation}

The variance cancellation necessarily implies wide scale compensation,   and hence long range interactions in a sense or another. Still, even if  hyperuniformity is a second order property, this does not necessarily imply long range interactions for second order marginals. Let us illustrate this through two  hyperuniform examples:\begin{itemize}
\item The cloaked lattices of  \cite{cloak}, see Example  \ref{ex:IPL}. The spectral measure $ \S$ is very smooth with possibly fast decay, hence so does the the reduced covariance. Long range interactions still occur through the lattice periodicity, and prevent this model to be mixing.
\item The infinite Ginibre ensemble: we will see that the second order FMM has a very fast decay:
\begin{align*}
 \rho _{ 2}(x,y) \propto \exp(-\|x-y\|^{ 2}/2).
\end{align*}
On the other hand, long range interactions can be characterised in the Coulomb gas representation of the Ginibre process (\eqref{eq:ginibre} or Section  \ref{sec:gibbs}), which does not prevent Ginibre, and many other determinantal point processes, to be mixing (\cite{soshnikov}).
\end{itemize}

\section{Zeros of the planar Gaussian analytic function}
\label{sec:GAF}

We give here some more context about Gaussian fields and Gaussian Analytic Functions (GAFs), based on the excellent reference  \cite{BKPV}. This section is a summary of some results of their Section 2 under the angle of Euclidean Gaussian fields and  point processes.

In general, a Gaussian field $    \mathsf F:\mathbb{R}^{ d} \to \mathbb{R}^{ q}$ is a collection of random vectors $   \mathsf F(x) = (   \mathsf  F_{ 1}(x),\dots ,  \mathsf F_{ q}(x))\in \mathbb{R}^{ q},x\in \mathbb{R}^{ d}$ such that for   $ (x_{ 1},\dots ,x_{ m})\in (\mathbb{R}^{ d})^{ m}$, $ (  {   \mathsf F}(x_{ 1}),\dots ,  {   \mathsf  F}(x_{ m}))$ is a $ (\mathbb{R}^{  q})^{ m}$-valued Gaussian vector. In particular each coordinate field $ \{   \mathsf F_{ i}(x),x\in \mathbb{R}^{ d}\},1\leqslant i\leqslant q$ defines a Gaussian signed measure as in Example \ref{ex:gaussian}. We saw that each finite measure $ \S$ on $ \mathbb{R}^{ d}$ uniquely defines the law of a stationary Gaussian field $  {   \mathsf F}:\mathbb{R}^{ d}\to \mathbb{R}$ with   covariance function $ \C =   \F^{ -1}\S$. For $   {   \mathsf  F}$   a vector-valued field, each of the $ q(q + 1)/2$ pairwise covariance functions $ \C_{ i,j}(x,y)  =  \textrm{Cov}\left(   \mathsf F_{ i}(x),   \mathsf F_{ j}(y)\right),i\leqslant j,$ must be specified to determine the law of $  {   \mathsf  F}$ uniquely in the class of Gaussian fields.

The class of GAFs is a subclass of the class of Gaussian fields from $  \mathbb C $ to $  \mathbb C $ (with the identification $  \mathbb C \equiv \mathbb{R}^{ 2}$) such that a.s. each sample path is  a.s. an analytic function. Hence  one should in principle specify the $ 3$ covariance functions $ \C_{ 1,1},\C_{ 2,2},\C_{ 1,2}$ to characterise the law of the field.
The class of GAFs crucially holds an additional requirement: each finite linear combination $ \sum_{j}a_{ j}  {   \mathsf  F}(z_{ j})$ must be a complex Gaussian variable, which means it should have i.i.d.   (Gaussian) real and imaginary parts. Hence GAFs are not just Gaussian fields which are a.s. analytic. 
  A very convenient and fundamental gain from this requirement is that the law of a centred GAF $   \mathsf F$ is uniquely determined by its  {\it complex covariance}, replacing the $ \C_{ i,j}$ of the real Euclidean representation. 
  
  \begin{proposition}
  Let $   \mathsf F$ a GAF with complex covariance \begin{align*}
 \C(z,w) := \mathbf E \left[
  {   \mathsf F}(z)   \overline{     {   \mathsf  F}(w)}
\right].
\end{align*}
Then any other GAF with the same complex covariance has the same law as $   \mathsf F$.
  \end{proposition}

  \begin{proof}
 This is a consequence of the fact that the distribution of $ \F$ is determined by its finite dimensional distributions $ (\F(z_{ 1}),\dots ,\F(z_{ m})).$The central concept is in fact that of centred {\it complex Gaussian vector} (CGV), which is a random vector of the form $ \mathsf G M   $ where $M$ is a deterministic complex-valued matrix, and $   \mathsf G$ a random vector formed by  i.i.d. $ \N_{  \mathbb C }(0,1)$ variables. It is an easy exercise (\cite[Exercise 2.11]{BKPV}) that the law of $\mathsf G M   $ is characterised by the complex covariance matrix $ \C_{ M}: = \mathbf E [(  \mathsf G M )  (  \mathsf GM)^{ *}] = M M^{ *}$. Hence the result follows from the fact that for $ z_{ 1},\dots ,z_{ m}$, $ (\F(z_{ 1}),\dots ,\F(z_{ m}))$ is a CGV which complex covariance matrix is $ (\C(z_{ i}),\C(z_{ j}))_{ 1\leqslant i,j\leqslant m}$.
  
  \end{proof}

By  \cite[Lemma 2.2.3]{BKPV}, a general recipe to construct GAFs is to use models of the form 
\begin{align*}
   {   \mathsf  F}(z) = \sum_{k = 1}^{ \infty }\mathsf G_{ k}\psi _{ k}(z)
\end{align*}for i.i.d.   $ \N_{  \mathbb C }(0,1)$-distributed $ \mathsf G_{ k}$ and analytic functions $ \psi _{ k}$ such that a.e.
\begin{align*}
 \sum_{k} | \psi _{ k}(z) | ^{ 2}<\infty .
\end{align*}
Such a representation can generally be obtained using the theory of compact operators.
We are   interested here in the planar GAF,  also sometimes called planar GEF (Gaussien Entire Function) defined by 
\begin{align*}
  \mathsf  \GAF(z) = \sum_{k = 1}^{ \infty }  \mathsf G_{ k}\frac{ z^{ k}}{\sqrt{k!}},
\end{align*}
and the analycity of $  \mathsf  \GAF$ yields that its zero set $ \P^{ \Gaf}$ contains a.s. isolated points, it is indeed a point process. 
An easy computation gives the complex  covariance 
\begin{align}
\label{eq:gaf-cov}
 \C(z,w) = \mathbf E \left[
 \GAF (z)   \overline{  \GAF (w)}
\right] = e^{ z  \bar w}.
\end{align}

\begin{proof}[Proof that the $ \P^{ \Gaf}$ is stationary and isotropic]

The   statistical invariances of $ \P^{ \Gaf}$ come from the following conjugation property under shifts and rotations:

\begin{lemma}
\label{lm:conjugate-expo-kernel}
For $ \theta \in \mathbb{R}, z,w,v\in  \mathbb C$
\begin{align*}
 \C(e^{ i\theta }z,e^{ i\theta }w) = &\C(z,w),\\
 \C(z + v,w + v) = &e^{ i\varphi (z,v)}\C(z,w)e^{ -i\varphi (w,v)}
\end{align*}where  $ \varphi(z,v)  =-i z  \bar v/2 + i  \bar zv /2 $ is real valued because $  \bar \varphi  = \varphi $.
\end{lemma}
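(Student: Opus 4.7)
The proof reduces to direct substitution into $\C(z,w)=e^{z\bar w}$. For the rotation identity,
\begin{align*}
\C(e^{i\theta}z,e^{i\theta}w) = \exp\!\bigl(e^{i\theta}z\cdot \overline{e^{i\theta}w}\bigr) = \exp(e^{i\theta}z\cdot e^{-i\theta}\bar w)= e^{z\bar w}=\C(z,w),
\end{align*}
since $e^{i\theta}e^{-i\theta}=1$. This is a one-liner and serves only to record the $U(1)$-invariance that will subsequently give isotropy of $\P^{\Gaf}$.

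For the translation-type identity, expand $(z+v)\overline{(w+v)} = z\bar w + z\bar v + v\bar w + |v|^2$, so that
\begin{align*}
\C(z+v,w+v) = \C(z,w)\cdot\exp\bigl(z\bar v + v\bar w + |v|^2\bigr).
\end{align*}
The plan is then to split the excess exponent $z\bar v + v\bar w + |v|^2$ into its imaginary part, which should reproduce $i\varphi(z,v)-i\varphi(w,v)$, and its real part, which must be matched against the real Gaussian gauge $e^{-|\cdot|^2/2}$ that turns $\GAF$ into a field with stationary modulus. Substituting $\varphi(z,v) = -iz\bar v/2 + i\bar z v/2$, compute
\begin{align*}
i\varphi(z,v) - i\varphi(w,v) = \tfrac{1}{2}(z\bar v-\bar z v) - \tfrac{1}{2}(w\bar v-\bar w v) = i\,\mathrm{Im}(z\bar v)+i\,\mathrm{Im}(v\bar w),
\end{align*}
which is exactly the imaginary part of $z\bar v+v\bar w$; and the leftover real part $\mathrm{Re}(z\bar v)+\mathrm{Re}(v\bar w)+|v|^2$ rearranges as $\tfrac{1}{2}(|z+v|^2-|z|^2)+\tfrac{1}{2}(|w+v|^2-|w|^2)$, which is the change in Gaussian weights.

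The only genuine obstacle is bookkeeping: verifying that $\varphi$ is real (immediate from $\overline{\varphi(z,v)} = iz\bar v/2 - i\bar z v/2 = \varphi(z,v)$), and aligning the real part above with the quadratic gauge. Once this is done, the identity shows that the reweighted entire function $\tilde\GAF_v(z) := e^{-v\bar z - |v|^2/2}\GAF(z+v)$ has complex covariance equal to $e^{z\bar w}$, hence is a centered GAF with the same law as $\GAF$; since $|\tilde\GAF_v|$ and $|\GAF(\cdot+v)|$ share their zero set, this upgrades Lemma \ref{lm:conjugate-expo-kernel} into the stationarity statement $\tau_v\P^{\Gaf}\stackrel{(d)}{=}\P^{\Gaf}$ used in Theorem \ref{thm:gaf}; the rotation identity analogously provides isotropy.
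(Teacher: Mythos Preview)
Your direct-substitution approach matches the paper's (which offers only ``a straightforward computation''), and your rotation identity is verbatim correct. You have in fact been more careful than the stated lemma: the second identity, as written for $\C(z,w)=e^{z\bar w}$, is \emph{false} --- expanding $(z+v)\overline{(w+v)}$ leaves the real excess $\mathrm{Re}(z\bar v)+\mathrm{Re}(v\bar w)+|v|^2$ that cannot be absorbed into the unimodular factors $e^{\pm i\varphi}$. You correctly isolate this and match it against the Gaussian gauge $e^{-|\cdot|^2/2}$. The identity does hold verbatim for the Ginibre kernel $K(z,w)=\pi^{-1}e^{z\bar w-|z|^2/2-|w|^2/2}$, where the weights absorb the real part; that is how the lemma is invoked again in Section~\ref{sec:ginibre}. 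So your splitting into imaginary phase and real gauge is exactly the fix the lemma needs in the GAF context, and your derivation of stationarity from it is sound.

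One minor slip: in your reweighted field $\tilde\GAF_v(z)=e^{-v\bar z-|v|^2/2}\GAF(z+v)$ the exponent should be $-z\bar v-|v|^2/2$ (i.e.\ $z\bar v$, not $\bar z v$). With your version the covariance comes out as $e^{z\bar w+2i\varphi(z,v)-2i\varphi(w,v)}$ rather than $e^{z\bar w}$; this still yields a GAF differing from $\GAF$ only by a deterministic unimodular factor, so the zero set and hence the stationarity conclusion are unaffected, but the claim ``covariance equal to $e^{z\bar w}$'' is off by that phase.
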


The proof of this lemma is a straightforward computation. The invariance under rotation of $ \P^{ \Gaf}$ comes from the fact that the rotated field $ z  \mapsto   \GAF (e^{ i\theta }z)$ has the same complex covariance $ \C(z,w)$  as $  \GAF $, hence they have the same law, and their zero sets have the same law as well.

As for invariance under complex translations, $\tau _{ v}  \GAF  $  has  the same covariance as the GAF $    \mathsf  F^{ v} : z\mapsto e^{ \varphi (z,v)}  \GAF (z)$, hence these two GAFs have the same law. Since $   \mathsf F$ and $     \mathsf  F ^{ v}$ have the same zero set, it indeed yields that $ \P^{ \Gaf}$ and $ \tau _{ v}\P^{ \Gaf}$ have the same law, and $ \P^{ \Gaf}$ is indeed stationary. 
\end{proof}

An alternative approach is to show that the field $ z  \mapsto  e^{ - | z | ^{ 2}/2}  | \GAF(z) | $, which has the same zeros as $ \GAF$, is a stationary field  $  \mathbb C  \to    \mathbb C $, but analycity and Gaussianity are lost.

 \begin{remark}[Non-Euclidean GAFs] The theory  of GAFs is very general and takes its full power on domains endowed with a non-euclidean metric, such as the sphere or the hyperbolic disk. It yields the  {\it spherical} and  {\it hyperbolic} families $   \mathsf F_{ L}^{ \text{\rm{Sph}}},   \mathsf F_{ L}^{ 	\text{\rm{Hyp}}}$ of GAFs, each invariant under natural isometries, parametrized by a density parameter $ L >0 $, which is an analogue of the scaling $ z  \mapsto    \GAF (Lz)$ on the complex plane. A further link with DPPs was uncovered by Peres and Vir\`ag  \cite{hypDPP}:  the zero set of  $   \mathsf F^{ \text{\rm{Hyp}}}_{ 1}$ is a DPP on the unit disk, curiously it is the unique DPP among the zeros of all aforementionned GAFs. \end{remark}

 \begin{remark}
 $ \P^{ \Gaf}$ and $ \P^{ \Gin}$ troubly share quite many features, especially on a  local scale analysis. Krishnapur and Vir\`ag  \cite{KrishVirag} give an interesting explanation: $ \P^{ \Gin}$ can be written as the zero set of a GAF with a random complex covariance function (this is stronger than being the zero set of a random analytic function, which is true a.s. for any point process by Weierstrass's theorem).
 \end{remark}

\subsection{Hyperuniformity of $ \P^{ \Gaf}$}  

Let us give a proof of  hyperuniformity that shows how the harmonic nature of analytic functions is at the core of the  hyperuniform behaviour of $ \P^{ \Gaf}$, borrowed from  \cite{ST1}, and originating in the pioneer work of Forrester and Horner  \cite{ForresterHonner}.

\begin{proposition}
\label{lm:gaf-Delta}There is a constant $ C$ such that
for $ f$ in the  class $  \mathscr  C^{ 2}_{ c}(  \mathbb C )$ of $  \mathscr  C^{ 2}$-smooth  functions with compact support, 
\begin{align*}
  \textrm{Var}\left(\P^{ \Gaf}(f)\right)\leqslant C\|\Delta f\|_{ L^{ 2}(  \mathbb C )}^{ 2}.
\end{align*}
Also, $ \P^{ \Gaf}$ is $ 4$-hyperuniform.
\end{proposition}
The bound entails that as $ R\to \infty $,
\begin{align*}
  \textrm{Var}\left(\P^{ \Gaf}(f_{ R})\right)  = O(R^{ -2}) .
\end{align*}
The  hyperuniformity therefore follows directly from Theorem \ref{thm:general-hu}-(ii), and with $ R^{ -2} = R^{ d-4},$ the exponent $ 4$ follows from Proposition \ref{prop:hyp-expo}  (one can also prove that $ 4$ is actually optimal). This is unusual in the sense that most other  known hyperuniform processes   have index at most $ \alpha  = 2$, even perturbed lattices, due in particular to  Lemma  \ref{lm:quadra-spectral}.  {We give in Section \ref{sec:fair-partitions} constructions of other $ 4$-hyperuniform processes built from fair partitions of the space.}

  \begin{proof}

Morally, the proof follows  from the a.s. analycity  of $ \GAF$ and the stationarity of $ \P^{ \Gaf}$. It relies on two claims: a (non-random) analytic function $ F$ on $ D\subset  \mathbb C $ with zero set $ P $ satisfies  
\begin{align}
\label{eq:claim1-gaf}
P(f) =\frac{ 1}{2\pi }\int_{D}\Delta f(z)\ln | F(z) | dz
\end{align}
and the zero set $ \P$ of a GAF $   \mathsf F$ satisfies for $ f\in \mathscr C ^{ 2}_{ c}(  \mathbb C )$
\begin{align}
\label{eq:var-gaf}
 \textrm{Var}\left(\P(f)\right)  = \frac{ 1}{4\pi ^{ 2}} \int {\Delta f(z)\Delta f(w)}  \textrm{Cov}\left(\ln  |  \widetilde{ \mathsf F}(z) |, \ln  |   \widetilde{   \mathsf  F}(w) | \right) dzdw,
\end{align}
where $  \widetilde{ \mathsf F}(z) =  \textrm{Var}\left(   \mathsf F(z)\right)^{ -1/2}   \mathsf F(z).$
For the first claim, the starting point is the harmonicity of the log on the complex plane: $ \Delta \ln( | \cdot | ) = \frac{ 1}{2\pi }\delta _{ 0}$ in the distributional sense, i.e.  for $f \in \mathscr C_{ c}^{ 2}(  \mathbb C )$, 
\begin{align}
\label{eq:harmonic-ln}
f(0) =\frac{ 1}{2\pi } \int_{}\Delta f(z)\ln ( | z | )dz.
\end{align}
Hereafter, fix $ f$ and denote by $ \Lambda $ its support. A non null holomorphic function $ F$ has finitely many zeros $ z_{ i}$ in $ \Lambda $, and the logarithm has an analytic determination on $ \Lambda $, hence $ F$ can be written 
\begin{align*}
F(z) =  e^{ g(z)}  \prod_{i} ( z-z_{ i})dz,z\in \Lambda ,
\end{align*}
for some analytic function $ g$.
 Therefore with $ P = \sum_{i}\delta _{ z_{ i}}\in \N ( \mathbb C ),$ the smooth linear statistic  can be expressed
\begin{align*}
P( f) =  \sum_{i = 1}^{ n}f(z_{ i}) = \frac{ 1}{2\pi }\sum_{i = 1}^{ n}\int_{}\Delta f (z)\ln ( | z_{ i}-z | )dz  =\frac{ 1}{2\pi } \int_{}\Delta f (z) \ln  | F(z) e^{ -g(z)} | dz = \frac{ 1}{2\pi }\int_{}\Delta f (z)\ln  | F(z) | dz
\end{align*}which yields  \eqref{eq:claim1-gaf},
exploiting the fact that the real part of an analytic function $ g$ is harmonic:
\begin{align*}
 \int_{}\Delta f (z)  \mathscr  Rg(z)dz  = 0.
\end{align*}

Let us apply this to a GAF $    \mathsf F$. Denoting by $ \kappa (z)^{ 2} = \mathbf E   |{\sf F}(z) |^{ 2} ,$  the variable $ \tilde {\sf F}(z) = {\sf F}(z) /\kappa (z)$ is  complex Gaussian with constant variance, hence $ \ell: = \mathbf E \ln | \tilde  {\sf F}(z) | $ is constant as well, and 
\begin{align*}
2\pi \mathbf E \P(f) =& \int_{\Lambda }\Delta f(z)\mathbf E \ln | {\sf F}(z) | dz = \int_{\Lambda }\Delta f(z)\mathbf E \ln | \tilde {\sf F}(z) | dz + \int_{}\Delta f(z)\ln(\kappa (z))dz =  {  0}+ \int_{\Lambda }\Delta f(z)\ln(\kappa (z))dz\\
 4\pi ^{ 2} \mathbf E \left[
 | \P(f) | ^{ 2}
\right]
  =  &\int_{\Lambda ^{ 2}}\Delta f(z)\Delta f (w)\mathbf E[ \ln  | {\sf F}(z) | \ln  | {\sf F}(w) |] dzdw\\
 = &\int_{\Lambda ^{ 2}}\Delta f(z)\Delta f(w)\mathbf E[ \ln  | \tilde {\sf F}(z) | \ln |  \tilde {\sf F}(w) | ]dzdw + \int_{}\Delta f(z)  \Delta f(w) \ln(\kappa (z))\ln(\kappa (w)) dzdw + 0 + 0\\
  = &\int_{}\Delta f(z)\Delta f(w) \textrm{Cov}\left( \ln  | \tilde {\sf F}(z) | ,\ln |  \tilde {\sf F}(w) | \right)dzdw + 0 + 0 + \int_{}\Delta f(z)  \Delta f(w) \ln(\kappa (z))\ln(\kappa (w)) dzdw \\
\end{align*}which yields  \eqref{eq:var-gaf}.
Let us apply this to $  \GAF $. The final idea is that $ | \tilde  {   \mathsf F}^{ \text{\rm{Pl}}}(z) |,  |  \tilde  {   \mathsf F}^{ \text{\rm{Pl}}}(w) |  $ have a small correlation when $ z,w$ are far away.  We have the general inequality (\cite[Lemma 3.5.2]{BKPV})   for 
$ \N_{  \mathbb C }(0,1)$ variables  $ Z,Z'$, $  \textrm{Cov}\left(\ln |  Z | ,\ln |   Z' | \right)\leqslant  \frac{ 1}{ 2}    | \mathbf E   Z  \overline{ Z'} | ^{ 2} $, hence
$$ \textrm{Cov}\left( \ln  | \tilde  {   \mathsf F}^{ \text{\rm{Pl}}}(z) |, \ln |  \tilde  {   \mathsf F}^{ \text{\rm{Pl}}}(w) | \right)\leqslant \frac{ 1}{2} | \mathbf E       \tilde {    \mathsf F}^{ \text{\rm{Pl}}}(z)   \overline{  \tilde  {   \mathsf F}^{ \text{\rm{Pl}}}(w)}  |  ^{ 2}.$$  
By  \eqref{eq:gaf-cov}, the right hand side is $   \frac{ 1}{2}e^{ - | z-w | ^{ 2}} =: \sigma (z-w)$.
Hence by Cauchy-Schwarz inequality
\begin{align*}
4\pi ^{ 2} \textrm{Var}\left(\P^{ \Gaf}(f)\right)  = & \int_{}\Delta f(z)\Delta f(w)\sigma (z-w)dzdw  
 \leqslant \|  \Delta f\|_{ L^{ 2}(  \mathbb C )} \|    \Delta f  \ast  \sigma \|_{ L^{ 2} (  \mathbb C )}\leqslant  \| \Delta f\|_{ L^{ 2}(  \mathbb C )}^{ 2}\| \sigma \|_{ L^{ 1}( \mathbb C )}^{ 2}
\end{align*}
which concludes the proof.
%
%
%
%
 
  \end{proof}
\subsection{Other hyperuniform Gaussian nodal measures}

 We conclude the study of planar GAF zeros by showing that they form the unique such stationary set  up to a rescaling, by a rigidity principle discovered by Sodin:
 
\begin{theorem}[\cite{Sodin-Calabi}]
 Let $  {   \mathsf  F,   \mathsf G}$  two GAFs on some domain $ D\subset  \mathbb C $, such that their respective zero sets $ \P^{  {   \mathsf  F}}, \P^{  {   \mathsf  G}}$ have the same intensity: $ \rho ^{ (1)}_{ \P^{  {\bf F}}} = \rho ^{ (1)}_{ \P^{  {\bf G}}}$. Then there exists a deterministic function $ \varphi :D \to  \mathbb C $ not vanishing on $ D$ with $  {   \mathsf F} \equiv\varphi   {   \mathsf G}$ a.s..
 \end{theorem}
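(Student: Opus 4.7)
The plan is to construct $\varphi$ explicitly from the diagonal of the covariance kernels and then bootstrap from the diagonal to full equality of kernels by a polarization argument. Write $K_{\mathsf F}(z,w) = \mathbf E \mathsf F(z)\overline{\mathsf F(w)}$ and $K_{\mathsf G}(z,w) = \mathbf E \mathsf G(z)\overline{\mathsf G(w)}$. Since the law of a centered GAF is determined by its complex covariance (as emphasised around \eqref{eq:gaf-cov}), the conclusion $\mathsf F \equiv \varphi \mathsf G$ (understood in law) reduces to producing a deterministic, analytic, non-vanishing $\varphi$ on $D$ such that
\begin{align*}
 K_{\mathsf F}(z,w) = \varphi(z)\overline{\varphi(w)}\,K_{\mathsf G}(z,w),\quad (z,w)\in D\times D.
\end{align*}

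The first step is the Edelman--Kostlan identity $\rho^{1}_{\P^{\mathsf H}}(z) = \frac{1}{4\pi}\Delta \log K_{\mathsf H}(z,z)$ valid for any nondegenerate GAF $\mathsf H$. This is essentially already contained in the proof of Lemma~\ref{lm:gaf-Delta}: applying \eqref{eq:claim1-gaf} to $\mathsf H$ and taking expectations yields $2\pi\mathbf E \P(f) = \int \Delta f(z)\log \kappa_{\mathsf H}(z)\,dz$ with $\kappa_{\mathsf H}^{2} = K_{\mathsf H}(z,z)$, and integration by parts gives the formula. Equality of first intensities then forces
\begin{align*}
 h(z) := \log K_{\mathsf F}(z,z) - \log K_{\mathsf G}(z,z)
\end{align*}
to be harmonic on $D$.

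Assume $D$ is simply connected and let $\psi$ be an analytic function on $D$ with $\mathrm{Re}\,\psi = h/2$; set $\varphi := e^{\psi}$, deterministic, analytic, nowhere zero, with $|\varphi(z)|^{2}= K_{\mathsf F}(z,z)/K_{\mathsf G}(z,z)$. The random function $\mathsf H := \mathsf F/\varphi$ is then again a GAF with covariance $K_{\mathsf H}(z,w) = K_{\mathsf F}(z,w)/(\varphi(z)\overline{\varphi(w)})$, and by construction $K_{\mathsf H}(z,z) = K_{\mathsf G}(z,z)$ on $D$. The heart of the argument is to upgrade this diagonal equality to equality on $D\times D$. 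Both $K_{\mathsf H}$ and $K_{\mathsf G}$ are holomorphic in $z$ and anti-holomorphic in $w$, hence near any $z_{0}\in D$ their difference $H(z,w) = K_{\mathsf H}(z,w)-K_{\mathsf G}(z,w)$ admits a convergent expansion $\sum_{i,j\ge 0} a_{ij}(z-z_{0})^{i}\,\overline{(w-z_{0})}^{\,j}$. Vanishing on the diagonal reads $\sum_{i,j} a_{ij} u^{i}\bar u^{j} = 0$ for all $u$ near $0$; decomposing $u = re^{i\theta}$ and matching $\theta$-Fourier modes and then $r$-Taylor coefficients forces every $a_{ij}$ to be $0$. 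Connectedness of $D$ propagates $H\equiv 0$ globally, giving $K_{\mathsf H}\equiv K_{\mathsf G}$, whence $\mathsf H \stackrel{(d)}{=} \mathsf G$ and $\mathsf F \stackrel{(d)}{=} \varphi \mathsf G$.

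The main obstacle is the construction of $\varphi$ when $D$ is multiply connected: a harmonic $h$ on such a $D$ need not be the real part of a single-valued holomorphic function, so one must verify a period (monodromy) condition for $\partial h$ in order to exhibit $\varphi$ globally. This is forced by the structural constraint $|\varphi|^{2} = K_{\mathsf F}/K_{\mathsf G}$; concretely I would either pass to the universal cover, construct $\tilde\varphi$ there, and descend using the unicity built into the polarization step, or build $\varphi$ locally on a simply connected cover of $D$ and patch on overlaps, where two local candidates can only differ by a unimodular constant that the polarization argument above forces to be $1$. The rest of the proof is robust and purely algebraic once the Edelman--Kostlan identity and the holomorphic/anti-holomorphic polarization lemma are in hand.
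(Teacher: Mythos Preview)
The paper does not give its own proof of this theorem; it merely cites \cite{Sodin-Calabi} and moves on. Your argument is essentially the standard one (Edelman--Kostlan to get harmonicity of $h$, construct $\varphi=e^{\psi}$ with $\mathrm{Re}\,\psi=h/2$, then polarize from the diagonal), and for simply connected $D$ it is correct and complete.

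There is however a genuine gap in your treatment of the multiply connected case. You claim that on an overlap two local choices $\varphi_1,\varphi_2$ differ by a unimodular constant that ``the polarization argument above forces to be $1$''. This is false: if $\varphi_1=c\varphi_2$ with $|c|=1$, then $\varphi_1(z)\overline{\varphi_1(w)}=c\bar c\,\varphi_2(z)\overline{\varphi_2(w)}=\varphi_2(z)\overline{\varphi_2(w)}$, so the identity $K_{\mathsf F}=\varphi\bar\varphi K_{\mathsf G}$ cannot distinguish $\varphi_1$ from $\varphi_2$. The same objection kills the ``descend from the universal cover by unicity'' variant. The clean fix, which is what is done in the reference, is to bypass patching entirely: once the local identity $K_{\mathsf F}(z,w)=\varphi(z)\overline{\varphi(w)}K_{\mathsf G}(z,w)$ is established near some $w_0$, solve for $\varphi$ \emph{globally} via
\[
 \varphi(z)=\frac{K_{\mathsf F}(z,w_0)}{K_{\mathsf G}(z,w_0)\,\overline{\varphi(w_0)}},
\]
which is holomorphic in $z$ on the (open, dense) set where $K_{\mathsf G}(\cdot,w_0)\neq 0$; varying $w_0$ and using your polarization lemma on overlaps then yields a single-valued non-vanishing holomorphic $\varphi$ on all of $D$.
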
  
 Hence for $ \lambda >0$, there is a unique set of GAF zeros $ \P$ with $ \rho _{ \P}^{ (1)} = \lambda \Leb$, obtained by properly rescaling $ \P^{ \Gaf}.$ 
To the author's knowledge, no other Gaussian field with stationary hyperuniform zeros has been found,  such a phenomenon is known to be impossible for one-dimensionnal stationary fields  \cite{Lac20}, or for isotropic higher-dimensional stationary fields  \cite{Gass-cancellation}. The complex version of Berry's Gaussian random wave model, experiencing many cancellation phenomena, has hyperfluctuating zeros \cite{NPR}.  The general cancellation phenomena for Gaussian random measures and their chaotic projections has been analysed by Gass  \cite{Gass-cancellation}.

In  \cite{KolianderEtAl}, inspired by the Euclidean GAF and motivated by signal processing, the authors exhibit   charged particles located on  the zero set of the  {\it  Gaussian Weyl-Heisenberg field}  $   {   \mathsf  F}$, i.e. a random measure of the form 
\begin{align*}
 \M = \sum_{z:  {   \mathsf F}(z) = 0}\delta _{ z}\kappa _{ z},
\end{align*}
where $ \kappa _{ z}$ is the sign of the Jacobian determinant $ \det D  {   \mathsf F}(z)$. Such measures (with or without charges) are stationary, and the authors give their intensity and prove that under some assumptions, the net charge experiences  {\it screening}, which implies a  hyperuniform behaviour of class I (Section \ref{sec:hyp-expo}), see their Theorem 1.14.

 \subsection{An invariant deterministic dynamics for GAF zeros}
 
 It is customary and convenient for analysis to identify a dynamics that leaves invariant in law a given probabilistic model. There is a long chain of results deriving dynamics that leaves invariant the distribution of random matrix eigenvalues, random polynomial zeros, or interacting particles, with a seminal work by Dyson, see for instance recent works of Osada  \cite{Osada-isde} and references therein. These dynamics always include an independent Gaussian drift for each particle, compensated by force fields generated by the rest of the system.
 
Recent findings from Hall et. al  \cite{Jalowy-GAF} indicate that the GAF zeros actually admit such dynamics, with the peculiarity that no  additional randomisation is needed. Let $ \P^{ \Gaf}_{ t} = \{z_{ i}^{ (t)};i\geqslant 0\}$ be a random evolution where the initial law is $ \P^{ \Gaf}_{ 0} = \P^{ \Gaf}$, and each particle follows the dynamics
\begin{align*}
 \frac{ d}{dt} z_{ i}^{ (t)} = \sum_{j\neq i}\frac{ 1}{z_{ i}^{ (t)}-z_{ j}^{ (t)}}.
\end{align*}
The right-hand side is actually the Coulomb force field (or electro-magnetic field in dimension $ 2$) generated by other particles. Regarding hyperuniformity of 2D systems, this field is connected to Coulomb energy and optimal transport  (see Section \ref{sec:finite-Coulomb}) and to fair partitions of the space (see Section \ref{sec:coulomb-alloc}).
 Then \cite[Prop. 2.12]{Jalowy-GAF} show that these dynamics are well defined and that $ \P^{ \Gaf}_{ 0} \smallequlaw   \P^{ \Gaf}_{ t}$ for $ t>0.$
  
 \section{Determinantal point processes and random matrices} 
 \label{sec:dpp}
 
 We provide here partial proofs of Theorems    \ref{thm:sine-beta} and   \ref{thm:ginibre}. It is the occasion to make more explicit the fundamental link between random matrix models and particle systems such as log gases and OCPs, through astute changes of variables.
Determinantal processes in the continuous space arise mainly as eigenvalues of random matrix models. We are interested here  in stationary examples (GUE, Ginibre) where the key point is to prove that they are projector DPPs.

\subsection{Change of variables}
\label{sec:change-of-variables}

It is claimed in the introduction  that the $ \beta $-ensembles for $ \beta  = 1,2$ in dimension $ d = 1$ and the Ginibre ensemble can be written out as the set of eigenvalues of resp. the GOE, GUE, and Ginibre random matrix models. As an elementary and prototypical example, we provide here an argument for the GOE ensemble, similar reasonings are possible for other matrix models, but more involved. We refer to the monographs  \cite{GuionnetBook,Mehta,BKPV} for other models.\\

\begin{proof}[Proof that the eigenvalues   of $ \M^{ 1,(n)}$ have density \eqref{eq:beta-ensemble} for $ \beta  = 1$] 
Since $ \M^{ 1,(n)}$ is a symmetric matrix, there is a.s. a random  matrix $ \Q_{ n} $  in the orthogonal space $ \mathscr O_{ n}$ and a random matrix $ \D_{ n} =\text{\rm{diag}} (\Lambda _{ 1},\dots ,\Lambda _n)$ in the space $  \mathscr  D _{n}$ of diagonal matrices   such that $ \M^{ 1,(n)} = \Q_{ n}\D_n\Q_{ n}^{ T}$. $  \mathscr  D_{ n}$ is assimilated to $ \mathbb{R}^{n}$  with the Euclidean metric and endowed with $ \Leb[n].$

 The fact that the eigenvalues of  $ \M^{ 1,(n)}$ are a.s. distinct  is a basic result of random matrices. Briefly, it is a consequence of the fundamental theorem of symmetric polynomials: the discriminant $ \Delta (\Lambda _{ 1},\dots ,\Lambda _{ n}) =  \prod_{i\neq j}(\Lambda _{ i}-\Lambda _{ j})$ is symmetric in the $ \Lambda _{ i}'s$, hence it can be written as a deterministic polynomial in the coefficients of the random  polynomial $ \lambda \mapsto \det(\M^{ 1,(n)}_{ n}-\lambda I_{ n})$, and the  latter coefficients are themselves polynomials in the entries of the matrix. The conclusion comes from the fact that the zero set of a polynomial in several variables is Lebesgue-negligible, see for instance  \cite{Mehta,GuionnetBook} or the brief lecture notes  \cite{Lr-DPPs} for details.  
 Hence $ \M ^{ 1,(n)}$ is .a.s
 in  $  \mathcal S_{ n}^{ *} $ the set of real symmetric matrices with distinct eigenvalues, and  $ \D_n $  is a.s. in $  \mathcal D_{ n}^{ *}$  the class of diagonal  matrices with distinct eigenvalues.
 
Our interest is the exact density of $ \D_n $. 
There is no unicity for the law of $ \Q_{ n}$. Let us first prove that it can be chosen to be $ \sigma $,  the unique probability measure on $  \mathscr O_{ n}$ that is invariant under the action of the multiplication by any  $ Q\in  \mathscr O_{n},$ or  {\it Haar measure}.
 Let $ \Q$ with law $ \sigma  $  and independent of $(\M^{ 1,(n)}, \Q_{ n},\D_n)$. 
The expression \eqref{eq:gue-particle} yields that the law of $\M^{ 1,(n)}$ is invariant under the conjugation action $
 M  \mapsto  Q MQ^{ T} $
for $ Q\in  \mathscr O_{ n}$, hence
 $ \Q\Q_{ n}$ is another possible random orthogonal matrix in the decomposition of $ \M^{1,(n)}$. Since   $  \Q \Q_{n}$ also has law $ \sigma $ and is independent of $ \D_{ n}$, it proves the claim, and we indeed choose $ \Q_{ n}$ with law $ \sigma $ independent of $ \D_{ n}$. Hence the law of $ (\Q_{ n},\D_{ n}) $ is $  \sigma \times \mu _{ 1}$ for some measure $ \mu _{ 1}$ on $  \mathcal D_{ n}^{ *}$ that we seek to explicit.

The idea of the proof is to compute the Jacobian of the mapping $(Q,D)\mapsto M  = QDQ^{ T}$ for  $ (Q, D)\in  \mathscr  O_{n}\times  \mathscr  D_{n}^{*}.$ It is tricky to directly perform a change of variables on the non-Euclidean manifold $  \mathscr  O_{ n}$, we therefore locally linearise it first
with the space of antisymmetric matrices $  \mathscr  A_{ n}$, assimilated to $ \mathbb{R}^{n(n-1)/2}$ and endowed with $ \Leb[n(n-1)/2]$. Recall that the exponential function induces a bijective mapping from some neighbourhood $ U$ of $ 0$ in $  \mathscr  A_{ n}$ to $ \exp(U)$ in $  \mathscr  O_{ n}$. Let $ \nu $ the associated pullback measure, i.e. for a test function $ \psi $ supported by $\exp(U)$
\begin{align}
\label{eq:nu-sigma}
\mathbf E \psi (\Q) = \int_{}\psi  (Q)\sigma (dQ) = \int_{U}\psi  (\exp(A))\nu (dA).
\end{align}
 We introduce the  $ \CC^{ \infty }$ mapping $ \Gamma :  \mathscr  A_{ n}\times  \mathscr  D_{ n}^{ *}\to  \mathscr  S_{ n}^{ *}$
\begin{align*}
 \Gamma (A,D): = \exp (A)D\exp (A)^{ T}.
\end{align*}
Let us compute the absolute  Jacobian determinant $ J_{ \Gamma }(\cdot,\cdot  )$ in $ (0,D_{ 0})$.

\begin{lemma}
\label{lm:jacobian}
For $ D_{0}= \text{\rm{diag}}(\lambda _{1},\dots ,\lambda _{n})\in  \mathscr  D_{n}^{*}$, the  Jacobian matrix of $ \Gamma $ has absolute determinant in the Vandermonde form $$  J_{\Gamma }(0, D_{0})=c_{n}\prod_{i<j} | \lambda _{i}-\lambda _{j} |\text{\rm{ for some $ c_{n}>0.$}}$$
\end{lemma}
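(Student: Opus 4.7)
Since $\mathscr{A}_n\times\mathscr{D}_n^*$ and $\mathscr{S}_n^*$ both have dimension $n(n+1)/2$, it suffices to write $d\Gamma_{(0,D_0)}$ in a convenient basis and read off its determinant. I would first compute the two partial differentials of $\Gamma(A,D)=\exp(A)\,D\,\exp(A)^T$ at $(0,D_0)$. In the $D$-direction, $\Gamma(0,D_0+\delta D)=D_0+\delta D$, so $\partial_D\Gamma(0,D_0)$ is the inclusion $\mathscr{D}_n\hookrightarrow\mathscr{S}_n$. In the $A$-direction, using $\exp(\delta A)=I+\delta A+O(\|\delta A\|^2)$ together with $\exp(\delta A)^T=\exp(-\delta A)$ (as $\delta A$ is antisymmetric), one obtains
\[
\Gamma(\delta A,D_0)=D_0+[\delta A,D_0]+O(\|\delta A\|^2),
\]
so $\partial_A\Gamma(0,D_0)\cdot\delta A=\delta A\,D_0-D_0\,\delta A$.

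Next, I would choose adapted bases and read off the Jacobian. Writing $E_{ij}$ for the matrix with a single $1$ at position $(i,j)$, take $\{A_{ij}:=E_{ij}-E_{ji}\}_{i<j}$ as a basis of $\mathscr{A}_n$, $\{E_{ii}\}_i$ as a basis of $\mathscr{D}_n$, and $\{E_{ii}\}_i\cup\{S_{ij}:=E_{ij}+E_{ji}\}_{i<j}$ as a basis of $\mathscr{S}_n$. The elementary identities $E_{ij}D_0=\lambda_j E_{ij}$ and $D_0E_{ij}=\lambda_i E_{ij}$ give $[A_{ij},D_0]=(\lambda_j-\lambda_i)\,S_{ij}$. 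In these bases the matrix of $d\Gamma_{(0,D_0)}$ is therefore diagonal, sending $E_{ii}\mapsto E_{ii}$ (contributing $1$) and $A_{ij}\mapsto(\lambda_j-\lambda_i)S_{ij}$ (contributing $\lambda_j-\lambda_i$). The absolute determinant in these bases is thus $\prod_{i<j}|\lambda_i-\lambda_j|$.

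Finally, to match the formula in the lemma I would reconcile these bases with the actual reference measures: Lebesgue measure on $\mathscr{A}_n\simeq\mathbb{R}^{n(n-1)/2}$ in the coordinates given by independent strictly upper entries, and the Lebesgue measure on $\mathscr{S}_n\simeq\mathbb{R}^{n(n+1)/2}$ used in \eqref{eq:gue-particle} (independent diagonal and strictly upper entries). Converting from the basis $\{A_{ij}\}$ to the upper-entry coordinates of $A$, and from $\{S_{ij},E_{ii}\}$ to the upper-entry coordinates of a symmetric matrix, each off-diagonal pair picks up a fixed factor (a power of $\sqrt{2}$, depending on the normalisation), independent of $(\lambda_i)$; these factors, together with the density of the pullback measure $\nu$ in \eqref{eq:nu-sigma} at $0$, are all absorbed into the single constant $c_n>0$. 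The only real subtlety is this bookkeeping of normalisations between $\nu$, Haar measure, and the Lebesgue measure on symmetric matrices; the eigenvalue-dependent content is entirely in the commutator computation $[A_{ij},D_0]=(\lambda_j-\lambda_i)S_{ij}$, which delivers the Vandermonde factor.
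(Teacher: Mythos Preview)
Your proposal is correct and follows essentially the same route as the paper: expand $\Gamma$ to first order to obtain the differential $(\delta A,\delta D)\mapsto \delta D+[\delta A,D_0]$, observe that in the natural entrywise coordinates this linear map is block-diagonal with an identity block on $\mathscr D_n$ and a diagonal block with entries $\lambda_j-\lambda_i$ on the off-diagonal part, and read off the Vandermonde product. Your basis $\{A_{ij}\}$, $\{E_{ii}\}$, $\{S_{ij}\}$ coincides with the paper's coordinate basis (upper entries of $H$, diagonal entries of $D$, upper-plus-diagonal entries of a symmetric matrix), so in fact no $\sqrt 2$ factors appear and $c_n=1$ in these coordinates; your remark about absorbing such factors and the density of $\nu$ into $c_n$ is harmless but unnecessary, since the lemma concerns only the Jacobian of $\Gamma$ and no reference measure enters.
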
 This lemma is proved later. In particular, the determinant does not vanish.
By the inverse function theorem, $ \Gamma $ is a $ \CC^{ \infty }$-diffeomorphism on a neighbourhood $ V\times U_{ D_{ 0}}$ of $ (0,D_{ 0})$ in $   \mathscr  A_{ n}\times \mathscr  D_{ n}^{ *}$, and for $M\in \Gamma (V\times U_{ D_{ 0}})$, there is a unique $ (A_{ M},D_{ M})\in V\times U_{ D_{ 0}}$ satisfying $ M = \Gamma (A_{ M},D_{ M})$ (there is no such unicity on  $  \mathscr  S_{ n}^{ *}$). Up to reducing $ V$, we assume $ A_{ M}\in U.$
 We can therefore perform a change of variables, for $ \varphi $ supported by $\Gamma ( V\times U_{ D_{ 0}}),$ \begin{align}
 \label{eq:CdV-Sn}
 \int_{\mathscr  S_{ n}^{ *}}\varphi (M) dM = \int_{V\times U_{ D_{ 0}}}\varphi (\Gamma  (A,D))J_{\Gamma }(A,D)d\Leb[n(n-1)/2](A)d\Leb[n](D).
\end{align}

On the other hand, by  \eqref{eq:gue-particle} and  \eqref{eq:nu-sigma},
\begin{align*}
 \mathbf E \left[
\varphi (\M^{ 1,(n)})
\right] \propto& \int_{  \mathscr  S_{ n}^{ *}}\varphi (M)e^{ -\text{\rm{Tr}}(MM^{ T})/4}dM \\
 =& \int_{}e^{ - \text{\rm{Tr}}(DD^{ T})/4}\int_{}\varphi (QDQ^{ T})\sigma (dQ)d\mu _{ 1}(D) \\=& \int_{}\varphi ( \Gamma (A,D))e^{ - \text{\rm{Tr}}(DD^{ T})/4}d\nu (A)d\mu _{ 1}(D).
\end{align*}
We can therefore identify  with  \eqref{eq:CdV-Sn}
\begin{align*}
 e^{ - \text{\rm{Tr}}(DD^{ T})/4}J_{ \Gamma }(A,D)d\Leb[n(n-1)/2](A)d\Leb[n](D) \propto d\nu (A) e^{ - \text{\rm{Tr}}(DD^{ T})/4}d\mu _{ 1}(D).
\end{align*}
It yields that
 $ \mu _{ 1}$ has a density $ f$ around $ D_{ 0}$. With Lemma  \ref{lm:jacobian}, we have $ f (D_{0})\propto \prod_{i<j} | \lambda _{i}-\lambda _{j} | $, and this Jacobian form is valid for any $ D \in U_{ D_{ 0}}$.  For a test function $ \tilde \varphi $ on $ U_{D_{0}},$ and $ \varphi (M):= \tilde \varphi (D_{ M}), M\in \Gamma (V\times U_{ D_{ 0}})$,
\begin{align*}
\mathbf E\left[
 \tilde \varphi ( \D_{n})
\right]=&\mathbf E\left[
 \varphi (\M^{1,(n)})
\right]\\
\propto &\int_{} \tilde \varphi (D)e^{- \text{\rm{Tr}}(DD^{T})}d\nu (A)  d\mu _{1}(D)\\
\propto& \int_{} \tilde \varphi (\lambda _{1},\dots ,\lambda _{n})e^{-\sum_{i}\lambda _{i}^{2}/4}  \prod_{i<j} | \lambda _{ i}-\lambda _{ j} | d\lambda _{1}\dots d\lambda _{n},
\end{align*}
which concludes the proof.

Let us finally  prove Lemma  \ref{lm:jacobian}. 
We see $ \Gamma $ as a function from $ \mathbb{R}^{n(n-1)/2}\times \mathbb{R}^{n}$ to $ \mathbb{R}^{n(n+1)/2}$, and denote its components $ \Gamma _{ i,j},1\leqslant i\leqslant j\leqslant n$.
Let $ \delta = \text{\rm{diag}}(\lambda _{1},\dots ,\lambda _{n})\in  \mathscr  D_{n}$, $ H=(H_{i,j})_{1\leqslant i,j\leqslant n}\in U.$
\begin{align*}
 \Gamma ( H,D_{ 0} + \delta ) = &(I+H+o(H))(D_{0}+\delta )(I-H+o(H))\\
 =&\Gamma (0,D_{0})+\delta +HD_{0}-D_{0}H+o((H,\delta )).
 \end{align*}
 Note that $ HD_{0}-D_{0}H=(H_{i,j}(\lambda _{j}-\lambda _{i}))_{1\leqslant i,j\leqslant n}$ vanishes on the diagonal, and at the opposite $ \delta $ is supported by the diagonal.
For $ i<j$, we can read the partial derivatives on the lower diagonal
\begin{align*}
\frac{\partial \Gamma _{ i,j}}{\partial H_{k,l}}(0,D_{0})= & \lambda _i-\lambda _{j}\text{\rm{ iff }}(i,j)=(k,l)\text{\rm{, for }}k<l\\
\frac{\partial \Gamma _{ i,j}}{\partial \lambda _{k}}(0,D_{0})= & 0,1\leqslant k\leqslant n,
\end{align*}
and for $ 1\leqslant i\leqslant n$
\begin{align*}
\frac{\partial \Gamma_{i,i}}{\partial \lambda _{ k}} (0,D_{0})=  &\delta _{ k,i}\\
\frac{\partial \Gamma _{i,i}}{\partial H_{ k,l}} (0,D_{0})= &0, k<l.
\end{align*}
Seeing the Jacobian matrix $ \nabla \Gamma $ as blocks of dimension $ n(n-1)/2$ or $ n$, the $ n\times n$ block gives determinant $ 1$, the $ n(n-1)/2$ block is diagonal and gives $  \prod_{i<j}(\lambda _{ i}-\lambda _{ j})$, and the other blocks vanish; this gives the desired expression.
 
    \end{proof}
\begin{longversion}{}{
Let us indicate how these work for the two other models of interest:}

  {\bf GUE Case} 
 Differences: $ X = U(\Lambda  + T)U^{ *}$ (Schur) where\begin{itemize}
 \item $ T$ is strictly upper triangular
\item $ U\in  \mathcal U_{ n} $ is complex unitary.

This time $ X$ is invariant under the application of $ U_{ 0}\in  \mathcal U_{ n}$, hence as before we can choose $ \Lambda ,U$ independent?
\begin{align*}
 (U + dU)(U + dU)^{ *} =U +  dU + dU^{ *} = 0
\end{align*}hence $ dU$ is skew Hermitian, i.e. $ dU = O_{ \mathbb{R}} + iO_{  \mathbb C }$ with $ O_{ \mathbb{R}}^{ T} =  O_{ \mathbb{R}},O_{  \mathbb C }^{ T} = -O_{  \mathbb C }$.
\item Even if eigenvalues are distinct and ordered, the decomposition is unchanged by the application $ V  \mapsto  \theta V,T  \mapsto  \theta ^{ *}T\theta $ where $ \theta  = \text{\rm{diag}}(z_{ 1},\dots ,z_{ n})$ where the $ z_{ i}$ have modulus $ 1$. To have uniqueness, one can require $ V_{ ii}\geqslant 0.$
\end{itemize}

 {\bf Ginibre case:} non-Hermitian matrix with density 
\begin{align*}
 \exp(-Tr(XX^{ *})).
\end{align*}
 Remark that $ \exp(-Tr(XX^{ *}))$ cannot be expressed solely in terms of eigenvalues of $ X$. 
 
 The proof is complicated, the simplest way seems to be that of Dyson [from GAF] to use the Schur representation $ X = V(Z + T)V^{ *}$ where $ V$ is unitary, $ Z$ is triangular, $ T$ is strictly upper triangular. To have unicity of the representation, we must impose $ V_{ ii}\geqslant 0.$ The law of $ X$ is again invariant under $ X  \mapsto  VXV^{ *}$ for any unitary $ V$. As before, it means $ V$ is independent from $ X,Z,T$ in such a decomposition, and it is enough to investigate the case $ V = { n}$; i.e. we investigate  the Jacobian $ J(Z,T)$ satisfying 
\begin{align*}
 \mathbf E \varphi (eig(X)) = \int_{}\varphi (eig(X))e^{ -Tr(XX^{ *})}dX = \int_{}\varphi (Z)e^{ -Tr((Z + T)(Z + T)^{ *})}J(Z,T)dZdT.
\end{align*}
We have then from [GAF] with $ V = { n}$
\begin{align*}
 dX = dV(Z + T)-(Z + T)dV + dZ + dT,
\end{align*}
as before $ dV^{ *} = -   \widebar { dV}$ is skew-Hermitian. 
 
 Instead, [Mehta] does writes $ X = UTU^{ *}$ for unitary $ U$ and upper triangular $ T$, again we take $ U = { n}$, and it yields 
\begin{align*}
 dX = dT + i(dHT-TdH)
\end{align*}
where $ dH = -iU^{ *}dU.$ He says he can impose $ n$ constraints 
\begin{align*}
 dH_{ ii} = 0,
\end{align*}
etc... not necessarily clearer.

\end{longversion}

 \subsection{Determinantal processes}
 \label{sec:dpp}  
 
 In order to prove results for the GUE and Ginibre process, we must derive some basic definitions and concepts related to the theory of determinantal processes. Even though instances of determinantal processes occur throughout probability theory and statistical physics, the general concept in the continuous space was introduced by Macchi  \cite{Mac75}, and then Soshnikov  \cite{soshnikov}, see references in  \cite{BKPV}. See also the lecture notes  \cite{Lr-DPPs} for more details, in particular for the Ginibre and GUE ensembles.
 
 \begin{definition}
Let $ K: \mathbb{R}^{ d}\times \mathbb{R}^{ d} \to  \mathbb C $ measurable. A  point process $ \P$ on $ \mathbb{R}^{ d}$ is a DPP with kernel $ K$ if it admits factorial moments densities   of the form, for $ m\geqslant 1,$
\begin{align}
\label{eq:def-dpp}
 \rho _{ K}^{(m)}(x_{ 1},\dots ,x_{ m}): = \det((K(x_{ i},x_{ j}))_{ 1\leqslant i,j\leqslant m}), x_{ 1},\dots ,x_{ m}\in \mathbb{R}^{ d}.
\end{align}
 \end{definition}  
The first example, sometimes considered degenerate, is the homogeneous Poisson process with intensity $ \lambda >0,$ for which $ K(x,y) = \lambda {\bf 1}\{ x = y \}.$
More generally, $ \rho _K^{ (1)}(x) = K(x,x)\in \mathbb{R}_{  + }$ on $ \mathbb{R}^{ d}$.  For existence and unicity questions, it is enough to perform a local analysis, in the sense that  $ \P$ is a DPP with kernel $ K$  if and only if for each bounded $ B,$ $ \P1_{ B}$ is a DPP with kernel $ K1_{ B\times B}$, and unicity and existence of $ \P$ need   only be solved  on such $ B$. 
Implicitly in the definition, $ \P$ should have local moments of every order, and we shall henceforth assume that $ K$ is locally square integrable as it will be fruitful to consider the associated operator in $ L^{ 2}(B)$
\begin{align*}
   \mathsf L_{ K}f(x): = \int_{}f(y)K(x,y)dy.
\end{align*}
 Remark that by  \eqref{eq:def-dpp}, each minor of $ K$ has a positive determinant, hence each submatrix  $ (K(x_{ i},x_{ j}))$ is necessarily semi-definite positive. Therefore, Hadamard's inequality yields for compact measurable $ B\subset \mathbb{R}^{ d}$
 $$  | \rho _{ K}^{(m)}(B^{ n}) | \leqslant \int_{B^{ n}}  \prod_{i}K(x_{ i},x_{ i})dx_{ 1}\dots dx_{ m} = \left(
\int_{B}K(x,x)dx
\right)^{ m}\leqslant \Leb(B)^{ m/2} \left(
\int_{B}K(x,x)^{ 2}
\right)^{ m/2}<\infty .$$
 By  \eqref{eq:exp-moments} and Proposition  \ref{prop:charact-factorial}, such a DPP has exponential moments and is uniquely defined by equations  \eqref{eq:def-dpp}. 
This is not an obligation, but in general, to have tractable existence and unicity results, one requires $ K$ to be Hermitian, i.e. for $ x,y\in \mathbb{R}^{ d}$, $ K(y,x) =  \bar K(x,y)$.


A particularly important class  is that of  {\it canonical kernels}, defined to be of the form 
\begin{align}
\label{eq:canonical-K}
K(x,y) =  \sum_{k\geqslant 1}a_{ k}\varphi _{ k}(x)  \bar \varphi _{ k}(y)
\end{align}
 where the $ \varphi _{ k},k\geqslant 1$ form an orthonormal family. The semi-definite positiveness of $ K$ implies that $ a_{ k}\in \mathbb{R}_{  + }$, and it is also necessary that $ a_{ k}\leqslant 1$, see  \cite{Mac75} or  \cite[Th. 4.5.5]{BKPV}.

 \begin{longversion}{} [proof - see  \cite[Th. 4.5.5]{BKPV} - requires Th.4.5.3]
 
\begin{proof}Let us reproduce for completeness the proof of ... Let $ a_{ n}$ the largest eigenvalue, assume $ a_{ n}>1$.
Let $ \P'$ the thinned version where each point is deleted with probability $ 1-1/a_{ n}$, in particular $ \mathbf P (\P' = 0)>0$. Then we can show that $ \P'$  is a DPP with kernel 
\begin{align*}
 K'(x,y) = \varphi _{ n}(x)  \bar \varphi _{ n}(y) + \sum_{k<n}\frac{ a_{ k}}{a_{ n}}\varphi _{ k}(x)  \bar \varphi _{ n}(y)
\end{align*}

\end{proof} 
   It is also possible to see arising in the limit kernels under the integral form 
\begin{align*}
 K(x,y) = \int_{}a_{ x}\varphi _{ x}(y)  \bar \varphi _{ x}(y)dxdy
\end{align*}
with $ \varphi _{ x}(y) = e^{ ix\cdot y}$ and some measurable integrable function $ x\to a_{ x}$. This is for instance the case for the $ \text{\rm{Sine}}$ kernel 
\begin{align*}
 K_{ \text{\rm{Sine}}}(x,y) = \frac{ sin(x-y)}{x-y} = \int_{[-1,1]}e^{ i(x-y)t}dt
\end{align*}that will emerge in the asymptotic study of the GUE process. As whos the Ginibre process, not all asymptotic  kernels need to be under this integral form.

 The positivity of the intensity $ \rho _{ \P}^{ (1)}$ yields that $ \sum_{k}a_{ k} | \varphi _{ k}(x) | ^{ 2}\geqslant 0$, hence each $ a_{ k}$ must be non-negative??

and the semi-definite positiveness of covariances, i.e. $ \rho _{ \P}^{(2)}\geqslant 0$, implies 
\begin{align*}
  \left|\begin{array}{cc}K(x,x) & K(x,y) \\K(y,x) & K(y,y)\end{array}\right|\geqslant 0.
\end{align*}
Integrating out only one variable yields 
\begin{align*}
 \sum_{k,l}a_{ k}a_{ l} | \varphi _{ k}(x) | ^{ 2}(1-\delta _{ k,l}),
\end{align*}
hence all $ a_{ k}$ are of the same sign, that is non-negative. \end{longversion}
 
 Those are typically the form of kernels for finite point processes $ \P_{ n}$ coming from finite matrix models. 
 When all (non-zero) $ a_{ k}$'s equal $ 1$, the kernel is easily seen to enjoy the  {\it reproducing} property:
 \begin{definition}
 Say that $ K$ is reproducing if for  $ x,y\in  \mathbb{R}^{ d},$
\begin{align*}
 K(x,y) = \int_{\mathbb{R}^{ d}}K(x,z)K(z,y)dz.
\end{align*}
A canonical kernel with all $ a_{ k}$'s equal to $ 1$ is a  {\it projection canonical kernel}.
 \end{definition}  
 This property is more conceptually seen as a projection property in the $ L^{ 2}$ space. If  \eqref{eq:canonical-K} is satisfied,  for $ f\in L^{ 2}(\mathbb{R}^{ d})$ with compact support, $   \mathsf L_{ K}f$ is the projection of $ f$ onto the space spanned by the $ \varphi _{ k}$. For a general reproducing kernel that induces an operator on $ L^{ 2}(  \mathbb R^{ d})$
 \begin{align*}
   \mathsf L_{ K}(   \mathsf L_{ K}f)(x) = \int_{}K(x,z)(\int_{}f(y)K(z,y)dy)dz = \int_{}f(y)\int_{}K(x,z)K(z,y)dz = \int_{}f(y)K(x,y)dy =   \mathsf L_{ K}f(x).
\end{align*}
 As a counterexample, the Poisson kernel $ K(x,y) = {\bf 1}\{ x = y \}$ neither satisfies  \eqref{eq:canonical-K} nor is reproducing.

   The fact that the defining DPP property \eqref{eq:def-dpp}  holds for $ \rho _{ \P}^{(m)}$ does not imply automatically that it holds for $ \rho _{ \P}^{ (k)},k\leqslant m$, except for reproducing kernels.

  {
 \begin{proposition}[Dyson identity  \cite{dyson}]
 \label{prop:reprod-kernels}
 For a  reproducing kernel $ K$ on $ \mathbb{R}^{ d}\times \mathbb{R}^{ d}$ such that 
\begin{align*}
 I = \displaystyle\int_{\mathbb{R}^{ d}}K(x,x)dx<\infty ,
\end{align*}
then for $n\in \mathbb{N}, x_{ 1},\dots ,x_{ n-1}\in \mathbb{R}^{ d},$
\begin{align*}
 \displaystyle\int_{}\rho _{ K}^{ (n)} (x_{ 1},\dots, x_{ n})  dx_{ n}= (I-(n-1))\rho _{ K}^{ (n-1)}(x_{ 1},\dots ,x_{ n-1}).
\end{align*}
 \end{proposition}

\begin{longversion}

 \begin{exercise}
In fact, for a canonical kernel, one can represent the number of points as a sum of independent Bernoulli variables with parameters $ a_{ k}$ (see  \cite[Theorem 4.5.3]{BKPV}).

\end{exercise} 
\end{longversion}
In this case, the determinantal property can therefore by passed on to lower order FMMs by induction.
 Hence the reproducing property saves us a lot of effort in the proof that a point process is determinantal, as we will see with the GUE and Ginibre ensembles.

\begin{longversion}

  \begin{exercise}
 \begin{itemize}
\item Let $ K(x,y) = \hat  \varphi (x-y)$ where  $ \varphi  = 1_{ A}$ for some bounded symmetric $ A$. Prove that $ \P_{ K}$ is reproducing.
 \item Prove that the sine kernel $ K(x,y) = \frac{ \sin(\pi( x-y))}{\pi( x-y)}$ is reproducing. This is the kernel of the limit GUE process.
\end{itemize}

 \end{exercise} 
 
\end{longversion}

\begin{proof}[Proof of Proposition  \ref{prop:reprod-kernels}]
 
  We must come back to the representation of the determinant with permutations:
\begin{align*}
 \det(M) = \sum_{\sigma \in \Sigma _{ n}}\varepsilon (\sigma )  \prod_{i = 1}^{ n}M_{ i,\sigma (i)}.
\end{align*}

We decompose the set of permutations $ \Sigma _{ n}$ in $ \Sigma _{ n}^{ 1}$ for which $ \sigma (n) =n$ and $ \Sigma _{ n}^{ *}$ the 
complement. For $ \sigma \in \Sigma _{ n}^{ 1}$, let $ \tilde \sigma \in \Sigma _{ n-1}$ the restriction of $ \sigma $ to $ \llbracket 1,n-1 \rrbracket.$ For $ \sigma \in \Sigma _{ n}^{ *}$, call $ c_{ n}(\sigma )$ the cycle containing $ n$, and $ \tilde \sigma \in \Sigma _{ n-1}$ bypassing $ n$ (i.e. $ \tilde \sigma (\sigma ^{ -1}(n)): = \sigma (n)$, others values do not change). Denote by $ \varepsilon (\sigma )$ the signature of a permutation $ \sigma $.  
 Then for  fixed $ x_{ 1},\dots ,x_{ n-1}\in \mathbb{R}^{ d},$ by the projection property of factorial moment measures for a system with a.s. $ n$ points:
\begin{align*}
\int_{\mathbb{R}^{ d}}\rho _{ K}^{ (n)}&(x_{ 1},\dots ,dx_n)dx_n =   \int_{\mathbb{R}^{ d}}\det((K(x_{ i},x_{ j}))_{ i,j\leqslant n})dx_n \\
 =& \sum_{\sigma \in \Sigma _{ n}^{ 1}}\varepsilon (\sigma )  \prod_{i\leqslant n-1}K(x_{ i},x_{ \sigma (i)})\int_{}K(dx_n,dx_n)dx_n
  + \sum_{\sigma \in \Sigma _{ n}^{ *}}\varepsilon (\sigma )    \prod_{i\notin c_{ n}(\sigma )}K(x_{ i},x_{ \sigma (i)}) \int_{}\prod_{i\in c_{ n}(\sigma )}K(x_{ i},x_{ \sigma (i)})dx_n\\
  = &I\sum_{\sigma \in \Sigma _{ n}^{ 1}}\varepsilon ( \tilde \sigma )  \prod_{i\leqslant n-1}K(x_{ i},x_{ \tilde \sigma (i)}) \\ &\hspace{1cm}  
  + \sum_{\sigma \in \Sigma _{ n}^{ *}}\varepsilon (\sigma )\prod_{i\notin c_{ n}(\sigma )}K(x_{ i},x_{ \sigma (i)})  \prod_{i\in c_{ n}(\sigma )  \setminus \{n,\sigma ^{ -1}(n)\}}K(x_{ i},x_{ \sigma (i)})\times \int_{}K(x_{ \sigma ^{ -1}(n)},dx_n)K(dx_n,x_{ \sigma (n)})dx_n\\
  = &I\sum_{ \tilde \sigma \in \Sigma _{ n- 1}}\varepsilon ( \tilde \sigma )  \prod_{i\leqslant n-1}K(x_{ i},x_{ \tilde \sigma (i)})  \\
  &\hspace{2cm}  + \sum_{\sigma \in \Sigma _{ n}^{ *}}\underbrace{\varepsilon (\sigma )}_{-\varepsilon ( \tilde \sigma )}\prod_{i\notin c_{ n}(\sigma )}K(x_{ i},x_{ \tilde \sigma (i)}) \underbrace{ \prod_{i\in c_{ n}(\sigma )  \setminus \{n,\sigma ^{ -1}(n)\}}K(x_{ i},x_{ \sigma (i)})\times K(x_{ \sigma ^{ -1}(n)},x_{ \sigma (n)})}_{ =   \prod_{i\in c_{ n}(\sigma )  \setminus \{n\}}K( x_{i }, x_{  \tilde \sigma (i)})}\\
   = &I \det((K(x_{ i},x_{ j}))_{ i,j\leqslant n-1})- \sum_{  \sigma' \in \Sigma _{ n-1}}\#\{\sigma\in \Sigma _{ n}^{ *} : \tilde \sigma  = \sigma' \}\varepsilon (\sigma ')  \prod_{i\leqslant n-1}K(x_{ i},x_{ \sigma '(i)}).
\end{align*}
To conclude, notice that for each $   \sigma' $, there are $ n-1$ ways to choose where to insert index $ n$ in permutation $   \sigma' $ to obtain $ \tilde \sigma \in \Sigma _{ n}^{ *}$, in particular it does not depend on $ \sigma '.$ It gives the conclusion that $$ \displaystyle\int_{}\rho _{ K}^{ (n)}(x_{ 1},\dots ,x_{ n})dx_{ n}= (I-(n - 1)) \rho _{ K}^{ n-1}(x_{ 1},\dots ,x_{ n-1}).$$ \end{proof}

 \begin{exercise}
\begin{enumerate}
\item Prove Cauchy-Binet formula 
 \[
\det(\Phi\Phi^*) = \sum_{I\subset\{1,\dots,n\},\,|I|=m} |\det(\Phi_I)|^2,
\]
\item Give another proof of Dyson's identity.
\end{enumerate}
 \end{exercise}

 \begin{longversion}

 Step 1: Write the kernel as a Gram matrix

For a projection kernel of rank \(n\),
\[
K(x,y)=\sum_{\ell=1}^n \varphi_\ell(x)\overline{\varphi_\ell(y)}.
\]

Fix \(x_1,\dots,x_m\) and define the \(m\times n\) matrix
\[
\Phi_{i\ell} := \varphi_\ell(x_i).
\]

Then
\[
(K(x_i,x_j))_{i,j=1}^m = \Phi \Phi^*.
\]

So
\[
\det(K(x_i,x_j)) = \det(\Phi\Phi^*).
\]

---

 Step 2: Use Cauchy Binet

Cauchy Binet gives:
\[
\det(\Phi\Phi^*) = \sum_{I\subset\{1,\dots,n\},\,|I|=m} |\det(\Phi_I)|^2,
\]
where \(\Phi_I\) is the \(m\times m\) submatrix with columns indexed by \(I\).

This representation is the real simplifier.

---

 Step 3: Integrate one variable

Now integrate in \(x_m\). The key observation:

Each term \(|\det(\Phi_I)|^2\) is the squared volume of \(m\) vectors
\[
(\varphi_\ell(x_1),\dots,\varphi_\ell(x_m)) \quad \ell\in I.
\]

When you integrate in \(x_m\), orthonormality gives:
\[
\int \varphi_\ell(x_m)\overline{\varphi_k(x_m)}\,dx_m = \delta_{\ell k}.
\]

What happens is:

- If you expand \(\det(\Phi_I)\) along the last row, every cross-term vanishes after integration.
- Only diagonal terms survive.
- You effectively remove one column from \(I\).

Result:
\[
\int |\det(\Phi_I)|^2 dx_m
= \sum_{\ell\in I} |\det(\Phi_{I\setminus\{\ell\}})|^2.
\]

---

 Step 4: Count how many times each subset appears

Now sum over all \(I\) with \(|I|=m\):

Each subset \(J\subset\{1,\dots,n\}\) with \(|J|=m-1\) appears exactly \(n-(m-1)=n-m+1\) times (you can add one element to \(J\) in \(n-m+1\) ways).

So:
\[
\int \det(K(x_i,x_j))_{i,j=1}^m dx_m
= (n-m+1)
\sum_{|J|=m-1} |\det(\Phi_J)|^2.
\]

By Cauchy - Binet again, the RHS is:
\[
(n-m+1)\det(K(x_i,x_j))_{i,j=1}^{m-1}.
\]

---

 Final identity (Dyson identity)

\[
\boxed{
\int \det(K(x_i,x_j))_{i,j=1}^m\,dx_m
=
(n-m+1)\det(K(x_i,x_j))_{i,j=1}^{m-1}
}
\]

 \end{longversion}
  
  \newcommand{\x}{   \mathsf x}  
  
}
  
The projection property also has consequences for a stationary DPP $ \P$.
Assume up to applying a rescaling that the intensity is $ \lambda  = 1,$ i.e. $ \rho _{ \P}^{ (1)}(x) = K(x,x) = 1.$ 
Since by \eqref{eq:def-dpp}, we have the density $ \rho _{K}^{(2)}(x,y)=(1- | K(x,y) | ^{2})$, stationarity yields that it only depends on $ x-y$, we therefore define $ \kappa (x)= | K(0,x) | $.  A direct computation combining \eqref{eq:fact-cov} and \eqref{eq:structure-def} gives the spectral measure \begin{align}
\label{eq:dpp-SF}
\S= \Leb- \F{ \kappa^{2} }.
\end{align}
As a consequence, if $ \kappa $ is square integrable,
 $ \widehat{ \kappa^{2}} (u)\leqslant 1$, $ \s(0)=1- \widehat{ \kappa ^{2}}(0)$ and  reproducing kernels yield hyperuniform DPPs:
\begin{proposition}
\label{prop:proj-DPP-HU}
Let $ \P$ be a  stationary DPP with   reproducing kernel $ K$ such that $ K(0,x)$ is square integrable. Then $ \S$ has a density $ \s$ satisfying $ \s(0)=0$ and $ \P$ is  hyperuniform. Furthermore, $ \s(u)\geqslant \sigma \|u\|^{2}$ for some $ \sigma >0$, hence the hyperuniformity exponent is at most 2.
\end{proposition}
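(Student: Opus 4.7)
The plan is to reduce everything to the explicit formula $\s(u) = 1 - \widehat{\kappa^2}(u)$ implicit in \eqref{eq:dpp-SF} and then exploit the reproducing property of $K$. I normalise so that $K(x,x) = 1$ (unit intensity) and write $\kappa(x) = |K(0,x)|$, which is in $L^2$ by assumption.

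First, I would show that $\int_{\mathbb{R}^d}\kappa^2 = 1$. Evaluating the reproducing identity $K(x,y) = \int K(x,z)K(z,y)\,dz$ at $x = y = 0$ and using the Hermitian relation $K(z,0) = \overline{K(0,z)}$ gives $1 = K(0,0) = \int|K(0,z)|^2\,dz = \int\kappa^2$. Since $\kappa^2 \in L^1$, its Fourier transform is continuous, hence $\s$ is continuous at the origin with $\s(0) = 1 - \widehat{\kappa^2}(0) = 1 - \int\kappa^2 = 0$. Consequently $\s(B_\varepsilon) \leq \kappa_d \varepsilon^d \sup_{B_\varepsilon}\s = o(\varepsilon^d)$, and Theorem \ref{thm:general-hu}-(iii) delivers hyperuniformity.

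For the quadratic lower bound, I would rewrite $\s(u) = \int(1 - e^{-iu\cdot x})\kappa^2(x)\,dx$ using $\int\kappa^2 = 1$, and invoke Lemma \ref{lm:quadra-spectral} with the probability measure $\mu(dx) = \kappa^2(x)\,dx$. Three hypotheses must be verified: finiteness is immediate from $\int\kappa^2 = 1$; symmetry of $\kappa^2$ follows from stationarity, which forces $K(x,y)$ to depend only on $x-y$, combined with the Hermitian property $K(y,x) = \overline{K(x,y)}$, giving $\kappa(-x) = |K(x,0)| = |\overline{K(0,x)}| = \kappa(x)$; and non-degeneracy, i.e.\ $\mu$ not being supported by a hyperplane, is automatic because $\kappa^2$ is a genuine $L^1$ density of total mass $1$ and cannot concentrate on a Lebesgue-null set. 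The lemma then yields $|\s(u)| \geq \sigma\|u\|^2$ on a neighbourhood of $0$, and since $\s\geq 0$ (being a spectral density) this is the pointwise inequality claimed. Integrating gives $\s(B_\varepsilon) \geq c\,\varepsilon^{d+2}$, which forbids any hyperuniformity exponent strictly greater than $2$ in the sense of Section \ref{sec:hyp-expo}.

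The argument is essentially mechanical once the identification $\s = 1 - \widehat{\kappa^2}$ and $\int\kappa^2 = 1$ are secured. The only step where one can slip is the verification of the hypotheses of Lemma \ref{lm:quadra-spectral}, and in particular the symmetry of $\kappa$, which genuinely uses both stationarity and Hermitianness of $K$ rather than either one in isolation.
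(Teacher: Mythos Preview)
Your proof is correct and follows the same route as the paper: compute $\widehat{\kappa^2}(0)=1$ from the reproducing identity at $x=y=0$, deduce $\s(0)=0$ and hyperuniformity via Theorem~\ref{thm:general-hu}, then apply Lemma~\ref{lm:quadra-spectral} to $\mu=\kappa^2\,dx$ for the quadratic lower bound. The paper's own proof is terser and does not spell out the verification of the lemma's hypotheses; your added detail there is welcome. One wording slip: stationarity does \emph{not} force $K(x,y)$ to depend only on $x-y$ (think of the Ginibre kernel $e^{z\bar w}$), only $|K(x,y)|$ does, via the stationarity of $\rho_\P^2(x,y)=1-|K(x,y)|^2$. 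Your computation $\kappa(-x)=|K(x,0)|=|\overline{K(0,x)}|=\kappa(x)$ is nonetheless valid, since the first equality only uses that $|K|$ is a function of the difference.
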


\begin{proof}
$ \s = 1- \widehat{ \kappa ^{ 2}}$ is uniformly continuous as $ \kappa^{ 2} $ is integrable by assumption.
The reproducing and Hermitian properties yield
\begin{align*}
\widehat{  \kappa^{ 2}} (0)=\int_{}\kappa^{ 2}  = 
\int_{}K(0,x)K(x,0)dx
=K(0,0)=1
\end{align*}which yields hyperuniformity by Theorem \ref{thm:general-hu}. The lower bound is a consequence of  Lemma \ref{lm:quadra-spectral}.
\end{proof}

\begin{longversion}{}
 direct proof - converse?
\begin{proof}Remark that the quantity 
\begin{align*}
 \int_{\mathbb{R}^{ d}}K(x,y)K(y,x)dy=\int_{\mathbb{R}^{d}} | K(x,y) | ^{2}dy
\end{align*}does not depend on $ x$ because the measure $ \rho _{2}(dx,dy)=(1- | K(x,y) | ^{2})dxdy$ is invariant under shifts, denote by $ \kappa $ this quantity.  
We have
\begin{align*}
 \textrm{Var}\left(\P(B_{ R})\right) = &\mathbf E \sum_{x, y\in \P}{\bf 1}\{ x,y\in B_{ R} \}-\left(
\mathbf E \sum_{x\in \P}{\bf 1}\{ x\in B_{ R} \}
\right)^{ 2}\\
 = &\mathbf E \sum_{x}{\bf 1}\{ x\in B_{ R} \} + \mathbf E \sum_{x\neq y}{\bf 1}\{ x, y\in B_{ R}\}-\left(
\int_{}{\bf 1}\{ x\in B_{ R} \}dx
\right)^{ 2}\\
 = &\int_{}{\bf 1}\{ x\in B_{ R} \} + \int_{B_{ R}^{ 2}}\rho _{ \P}^{(2)}(x,y)dxdy-\Leb(B_{ R})^{ 2}\\
  = &\Leb(B_{ R}) + \int_{B_{ R}^{ 2}}(1-K(x,y)K(y,x))dxdy-\Leb(B_{ R})^{ 2}\\
   = &\Leb(B_{ R})-\int_{B_{ R}}\left(
\kappa -\int_{\mathbb{R}^{ d}  \setminus B_{ R}}K(x,y)K(y,x)dy
\right)dx\\
 = & \int_{B_{ R}}(1-\kappa )dx + \int_{B_{ R}}\underbrace{ {\int_{\mathbb{R}^{ d}  \setminus B_{ R}} | K(x,y ) | ^{ 2}dy}}_{u_{ R}(x)}dx.\end{align*}
using Cauchy-Schwarz inequality and the Hermitian property. For each $ x$, $ u_{ R}(x)\to 0$ by the local square integrability property, which yields that the second term is in $ o(\Leb(B_{ R}))$. As a result, $ \P$ is  hyperuniform  if and only if $ \kappa =1,$ which is the case if   $ K$ is reproducing. For the converse,   {\color{red} ???}

\end{proof}

\end{longversion}
 
 \subsection{Ginibre ensemble and random matrices}     
 \label{sec:ginibre}
  
Recall from Chapter \ref{ch:intro} that $ \Gin_{ n}$ is the random element of $  \mathscr  M_{ n}(  \mathbb C )$ with  i.i.d. complex Gaussian entries, equivalently its density in the space of matrices is  proportional to $\exp(-Tr(HH^{ *})), H\in  \mathscr  M_{ n}(  \mathbb C ).$   Let $ \P_{ n} = \P_{ n}^{ \Gin}$ the finite Ginibre ensemble of order $ n$, containing these eigenvalues.

\begin{theorem}
\label{lm:finite-ginibre}
The point process $ \P_{ n}$  is a DPP  with reproducing kernel 
\begin{align*}
 K_{ n}(z,w)  = \frac{ 1}{\pi } \sum_{k = 0}^{ n-1}\frac{(z  \bar w)^{ k}}{k!}e^{ -\frac{  | z | ^{ 2} +  | w | ^{ 2}}{2}}.
\end{align*}\end{theorem}

\begin{remark}
We will see that
 $ K_{ n}$ is of the canonical form 
\eqref{eq:canonical-K} with $ n$ terms and all $ a_{ k}$ equal to $ 1$ because the $ \varphi _{ k}(z): = \frac{ 1}{\sqrt{\pi k!}}z^{ k-1}e^{ - | z | ^{ 2}/2}$ are orthonormal. For $ m>n$, the matrix $ (K_{ n}(z_{ i},z_{ j}))_{ 1\leqslant i,j\leqslant m}$ is a sum of $ n$ rank-1 matrices, hence it has rank $ n$, and $ \rho _{ \P_{ n}} ^{ (m)} \equiv 0$, confirming that $ \P_{ n}$ has never more than $ n$ points, and 
\begin{align*}
 \mathbf E \#\P_{ n} = \int_{}K(z,z)dz = n
\end{align*}confirms $ \#\P_{ n} = n$ a.s.. If for some other point process $ \P$ one of the $ a_{ k} $ was in $ (0,1)$, we would have a random number of points because $ \mathbf E \#\P<n$ but $\rho _{ \P}^{ (n)} \not\equiv 0. $ In fact, for a canonical kernel, one can represent the number of points as a sum of independent Bernoulli variables with parameters $ a_{ k}$ (see  \cite[Theorem 4.5.3]{BKPV}).

\end{remark}

\begin{remark}
\label{rk:dpp-phi}
We only consider  DPPs with some kernel $ K$ on $ \mathbb{R}^{ d}$ endowed with Lebesgue measure but it is easily seen that one can equivalently consider a DPP with kernel $ \tilde K(x,y) = K(x,y) \varphi (x)  \bar \varphi (y) $ for some $ \varphi :\mathbb{R}^{ d}\to  \mathbb C $ on $ \mathbb{R}^{ d}$ endowed with $  | {\varphi (x)} | ^{ 2}dx$, in the sense that for $ x_{ 1},\dots ,x_{ m}\in \mathbb{R}^{ d}$
\begin{align}
\det((K(x_{ i},x_{ j}))_{ 1\leqslant i,j\leqslant m})   | \varphi (x_{ 1}) | ^{ 2} \dots |   \varphi (x_{ n})  | ^{ 2}
  = \det((\tilde K(x_{ i},x_{ j}))_{ 1\leqslant i,j\leqslant m}).
\end{align}
\end{remark}

 \newcommand{\z}{   \mathsf z}  
\begin{proof}We admit here the density representation \eqref{eq:ginibre}, it can be proved in a similar (but more intricate) manner than for the GOE, see Section \ref{sec:change-of-variables}. Let us start by showing that $ \rho _{ \P_{ n}}^{ (n)} = \rho _{ K_{ n}}^{ (n)}$,  with the canonical form \eqref{eq:canonical-K} with	   $ \varphi _{ k}(z) = \alpha _{ k}z^{ k-1}e^{ - | z | ^{ 2}/2},\alpha _{ k} = ((k-1)!\pi )^{ -1/2}$.
The starting point is the Vandermonde determinant. For $ \z = (z_{ 1},\dots ,z_{ n})\in  \mathbb C ^{ n},$
\begin{align*}
  \prod_{i<j} (z_{ i}-z_{ j}) = \det((z_{ i}^{ k-1})_{ 1\leqslant i,k\leqslant n}).
\end{align*} Let us multiply each column by the scalar $ \alpha _{ k}$: let  $ M(\z) =( \alpha _{ k}z_{ i}^{ k-1})_{ 1\leqslant i,k\leqslant n}$, then
\begin{align*}
\prod_{i<j} | z _{ i}-z_{ j} | ^{ 2}   
 \propto  | \det(M(\z)) | ^{ 2} = \det(M( \z)M(\z)^{ *})  = \det(  \tilde K_{ n}(z_{ i},z_{ j}))\end{align*}
 with   $$ \tilde K_{ n}(z,w) := \sum_{k = 1}^{ n} | \alpha _{ k} | ^{ 2}z^{ k-1}  \bar w^{ k-1}.$$
 Using Remark  \ref{rk:dpp-phi}, it proves that   $ \rho _{ \P_{ n}}^{ (n)} = \lambda _{ n}\rho _{ K_{ n}}^{ (n)}$ for some constant $ \lambda _{ n}$ with the kernel $ K_{ n}(z,w) = \tilde K_{ n}(z,w)e^{ - \frac{ | z | ^{ 2} +  | w | ^{ 2}}{2}}.$  Let us check that the $ \varphi _{ k}$ form an orthonormal family:
\begin{align*}
 \int_{  \mathbb C }\varphi _{ k}(z)\varphi _{ j}(z)dz = \alpha _{ k}  \bar \alpha _{ j}\int_{}z^{ k-1}  \bar z^{ j-1}e^{ - | z | ^{ 2}}dz =  \alpha _{ k}  \bar \alpha _{ j} \int_{0}^{ \infty }\int_{0}^{ 2\pi }\rho^ { k + j-2}e^{ i\theta (k-j)}e^{ -\rho ^{ 2}}\rho d\rho d\theta 
\end{align*} 
in polar coordinates. We see in particular that it vanishes for $ k = j$ due to the angular integral, and for $ k = j$, it gives  with the change of variables $ u = \rho ^{ 2}$
\begin{align*}
  2\pi  | \alpha _{ k} | ^{ 2} \int_{0}^{ \infty }\rho ^{2k-1 }d\rho e^{ -\rho ^{ 2}} = \pi  | \alpha _{ k} | ^{ 2}\Gamma (k) = 1.
\end{align*}
 We also saw that $ \rho _{ \P}^{(m)} \equiv 0$ for $ m>n$, and for $ m< n$ we wish to apply Proposition \ref{prop:reprod-kernels} inductively.

    {
One can interate backwards on $ k<n:$
\begin{align*}
 \displaystyle\int_{(\mathbb{R}^{ d})^{ n-k}}\rho _{ K}^{ (n)}(z_{ 1},\dots ,z_{ n})dz_{ k + 1}\dots dz_{ n} =& (I-(n-1))(I-(n-2))\dots (I-k)  \rho _{ K}^{ (k)}(z_{ 1},\dots ,z_{ k}) \\
 =& (n-k)!\rho _{ K}^{ (k)}(z_{ 1},\dots ,z_{ k}).
\end{align*}

It implies with Lemma \ref{lm:projection-FMM} that up to the constant $ \lambda _{ n}$ Ginibre is the $ K_{ n}$-DPP:
\begin{align*}
 \rho _{ \P_{ n}}^{ (k)}(z_{ 1},\dots ,z_{ k}) = \frac{ 1}{(n-k)!}\displaystyle\int_{}\lambda _{ n}\rho _{ K}^{ (n)}(z_{ 1},\dots ,z_{ n})dz_{ k + 1}\dots dz_{ n} = \lambda _{ n}\rho _{ K}^{ (k)}(z_{ 1},\dots ,z_{ n}).
\end{align*}

For $ k = 1$, integrating gives 
\begin{align*}
 n = \mathbf E \P_{ n}(\mathbb{R}^{ d}) = \displaystyle\int_{}\rho _{ \P_{ n}}^{ (1)} = \lambda _{ n}\displaystyle\int_{}K_{ n}(z,z) = n\lambda _{ n}.
\end{align*}
Hence $ \lambda _{ n} = 1.$
This concludes the proof that Ginibre is the $ K_{ n}$-DPP.

}
 
 \end{proof}

\begin{proof}[Proof of Theorem  \ref{thm:ginibre} and $ 2$-hyperuniformity for the Ginibre ensemble.]
 We have the pointwise convergence   
\begin{align}
 \label{eq:ginibre-kernel}
K_{ n}(z, w) \to K(z,w): = \pi ^{ -1}e^{ z   \bar w}\end{align}
Let $ z,w\in  \mathbb C ,r = \max( | z | , | w | )$. We have the bound 
\begin{align*}
  | K_{ n}(z,w)-K(z,w) | \leqslant \sum_{k\geqslant n}\frac{  | z  \bar w | ^{ k}}{
\pi k!}e^{ - | z | ^{ 2}/2- | w | ^{ 2}/2} \leqslant \sum_{k\geqslant n}\frac{ r^{ 2k}}{\pi k!}e^{ - | z | ^{ 2}/2- | w | ^{ 2}/2}.
\end{align*}
Therefore the convergence is
 uniform on each compact.
From there it is not difficult to prove that for each $ m$, $ \rho _{ K_{ n}}^{ (m)}\to \rho _{ K}^{ (m)}$ uniformly on each compact.
 Hence 
by Proposition  \ref{prop:cvg-cumulant}, $ \P_{n}^{ \Gin}$ converges to $ \P^{ \Gin}$, the DPP with kernel $ K$, weakly in the vague topology (the existence of $ \P ^{ \Gin}$ comes from the existence of the limit). The projection property passes to the limit: for $ f\in \CC_{c}^{ \infty }(  \mathbb C )$, the corresponding operators satisfy
\begin{align*}
 L_{ K}(L_{ K}f) = \lim_{ n}L_{ K_{ n}}(L_{ K_{ n}}f) = \lim_{ n}L_{ K_{ n}}f = L_{ K}f.
\end{align*}

The invariance of $ \P^{ \Gin}$ under rotations is inherited from the invariance of $ \P_{ n}^{ \Gin}$ under rotations, because the random matrix $ \Gin_{ n}$ is invariant under the action of the orthogonal group.

Finally, for the stationarity of $ \P$, we recall Lemma  \ref{lm:conjugate-expo-kernel} (which also reproves isotropy):
\begin{align*}
 K(z + v,w + v) = e^{ i\varphi (z,v)}K(z,w)e^{ -i\varphi (w,v)}
\end{align*}for some real-valued function $ \varphi .$
As already observed for the planar GAF, the kernel $ K$ is not invariant under the action of shifts, but it does not prevent $ \P^{ \Gin}$ to be stationary because for each $ m\in \mathbb{N},$
\begin{align*}
 \rho _{ \P^{ \Gin}} ^{ (m)}(z_{ 1} + w,\dots ,z_{ m} + w) = \det( K(z_{ i} + w,z_{ j} + w)_{ i,j\leqslant m}) = \sum_{\sigma }\varepsilon (\sigma )  \prod_{i} K(z_{ i} + w,z_{ \sigma (i)} + w)
\end{align*}
where the sum is over permutations of $ \{1,\dots ,m\}$,
and  with $ i' = \sigma (i)$
\begin{align*}
  \prod_{i} K(z_{ i} + w,z_{i'} + w) = & \prod_{i}e^{ i\varphi (z_{ i},w)} K(z_{ i},z_{ i'})e^{ -i\varphi (z_{ i'},w)}\\
   = & \exp\left(
i\sum_{i}\varphi (z_{ i},w)-i\sum_{i}\varphi (z_{ i'},w)
\right)   \prod_{i} K(z_{ i},z_{ i'})
\end{align*}
and the first exponential equals $ 1.$
Therefore the kernels $ \rho _ { \P}^{ (m)}$ are invariant under the action of shifts, which proves by Proposition  \ref{prop:charact-factorial} that the point processes $ \tau _{ w}\P^{ \Gin} $ and $ \P^{ \Gin}$ have the same law, i.e. $ \P^{ \Gin}$ is stationary.

Then the  hyperuniformity comes from Proposition  \ref{prop:proj-DPP-HU}. Since $ \kappa $ has a light tail, Proposition  \ref{prop:proj-DPP-HU} yields that $ \s$ is smooth and therefore that the hyperuniformity exponent is $ \alpha  = 2$ (Lemma \ref{lm:quadra-spectral}).
\end{proof}

\subsection{The GUE as a $ 1$-hyperuniform DPP}
\label{sec:gue-as-dpp}

We see from \eqref{eq:beta-ensemble} that the GUE has the same density form as Ginibre, but restricted to $ \mathbb{R}$. Everything works the same in the previous proof, except that   the $ \varphi _{ k}$ are not orthogonal on $ \mathbb{R}$. Going back to the Vandermonde determinant, we have another unexploited freedom: we can substract from each column a linear combination of columns with a smaller index: with the appropriate $ \alpha _{ k}$ (different from the Ginibre case),
\begin{align*}
 \det \left(
\left(
\alpha _{ k}\lambda _{ i}^{ k-1}
\right)
_{ 1\leqslant i,k\leqslant n}\right) = \det\Bigg(
\Big(\underbrace{\alpha _{ k}\lambda _{ i}^{ k-1} + \sum_{j = 1}^{ k-1}a_{ j,k}\lambda _{ i}^{ j-1}}_{ = :P_{ k}(\lambda _{ i})}\Big)_{ 1\leqslant i,k\leqslant n}
\Bigg).
\end{align*}
We  apply a Gram-Schmidt orthonormalisation procedure to ensure that the $ P_{ k}$ are orthonormal on $ \mathbb{R}$, i.e. 
\begin{align*}
 \displaystyle\int_{\mathbb{R}}P_{ k}(x)P_{ j}(x)e^{ -x^{ 2}/2}dx = \delta _{ k = j},
\end{align*} this yields that $ \P_{ n}^{ GUE}$ is a $ K_{ n}-$DPP with $ K_{ n}$ a canonical projection kernel. This characterisation clearly yields that they are a renormalised version of the Hermite polynomials. It then proves with the same computations that $ \P_{ n}^{ \text{\rm{GUE}}}$ is a DPP with kernel 
\begin{align*}
 K_{ n}(\lambda ,\lambda ') = \sum_{k = 1}^{ n}P_{ k}(\lambda ) P_{ k}(\lambda ').
\end{align*} 
 We leave as a black box the following result: Uniformly on each compact of $ \mathbb{R},$
\begin{align*}
 K_{ n}(\lambda ,\lambda ')\to K(\lambda ,\lambda '): =   \frac{ \sin(\pi (\lambda -\lambda '))}{\pi (\lambda -\lambda ')}.
\end{align*}
One can consult for instance  \cite{GuionnetBook}.
Hence using Proposition \ref{prop:cvg-cumulant} and the exact same reasoning than for the Ginibre process, $ \P_{ n}^{ \text{\rm{GUE}}}$ converges to an infinite stationary  hyperuniform process called the $ \text{\rm{Sine}}$ process (remark that here the kernel itself is invariant under shifts, not only the factorial moment measures). We recover the unit intensity 
\begin{align*}
 \lambda  = K(0,0)  = 1.
\end{align*}

  {
  
  This time, the reduced kernel 
\begin{align*}
 \kappa (x) =  | K(0,x) |  = \frac{ |  \sin(\pi x) | }{\pi x}
\end{align*}
does not have a light tail, and we can compute the spectral measure with    \eqref{eq:dpp-SF}: $ \S = \s \Leb$ with
\begin{align*}
\s(u) = 1- \widehat{ \kappa ^{ 2}}  (u)= 
\begin{cases} 
 | u | /2\pi $  if $ | u | <1\\
 1$  if $u\geqslant 1.
 \end{cases}
\end{align*}
which   yields that   this DPP is $ 1$-hyperuniform.

 \subsection{Random matrices universality}
 \label{sec:random-matrices}

 Let us give a very brief introduction to random matrices universality  with respect to  hyperuniformity, most results can be found in the standard texbooks  \cite{Forrester,GuionnetBook,Mehta}.
Let \(G_N\) be a non-Hermitian  random matrix,   filled with  i.i.d. entries, sometimes called a Girko matrix, whose entries may be real or complex and need not be Gaussian. The Ginibre ensemble is the special instance obtained when the entries are  i.i.d. Gaussian. If the common law has a density, then \(G_N\) has simple spectrum almost surely. The global empirical spectral measure of \(G_N\) converges, under standard moment assumptions, to the continuous  {\it  circular law}, i.e. the uniform distribution on $ B_{ 1}$. Moreover, after zooming in at a point in the bulk of the limiting spectrum, the local eigenvalue process is universal and converges, under suitable hypotheses, to another point process, the infinite Ginibre ensemble, generalising the result of the Ginibre model, as seen in  the previous sections.

A Wigner matrix is a Hermitian random matrix \(H_N\) whose entries above the diagonal are independent and centred. Under mild regularity assumptions, its empirical spectral measure converges at the macroscopic scale to the semicircle law. In the bulk, after suitable rescaling, the spectrum  converges to the universal sine process in the space of configurations: to the determinantal sine-kernel process in the complex Hermitian/GUE case, as seen just above, and to the Pfaffian \(\mathrm{Sine}_1\) process in the real symmetric/GOE case. More generally, these local limits are often referred to as instances of bulk universality.

Wishart or sample covariance matrices, typically of the form \(XX^*\) with \(X\) of the Girko form, are also Hermitian. Their global empirical eigenvalue distribution converges to the Marchenko--Pastur law, while their bulk local statistics are again universal and given by sine-type point processes, with different universal limits such as Bessel or Airy processes appearing at the hard and soft edges.\\

The Gaussian ensembles GOE, GUE and Ginibre, mentioned in Section~\ref{sec:intro-examples}, are the Gaussian special cases of the real Wigner, complex Hermitian Wigner and complex non-Hermitian  i.i.d.  models, respectively. In these Gaussian cases, many properties of the limiting point processes, including hyperuniformity, are explicit and well understood.
We proved that 1D examples (GOE, GUE) are  1-hyperuniform, and 2D Ginibre ensembles are 2-hyperuniform. It is in general a challenge to prove similar results for non-Gaussian models.

For random Hermitian matrices, such as Wigner and Wishart ensembles, results on linear statistics are typically formulated under fractional Sobolev regularity assumptions on the kernel. The natural threshold corresponds to functions $f \in H^{1/2+\varepsilon}, \varepsilon > 0$, which in Fourier space amounts to the condition
$$
\int_{\mathbb{R}} |\hat f(u)|^2 |u|^{1+2\varepsilon} du < \infty.
$$
If one assumes a power-law decay $ |\hat f(u)| = O(|u|^{-(1+\gamma)/2})$, i.e. $ f\in \F_{ \gamma },$ this condition is satisfied whenever $\gamma > 1$.  The borderline case $\varepsilon = 0$ corresponds to $\gamma = 1$; canonical examples are interval  indicators  $f = \mathbf{1}_{[a,b]}$. For a $ 1$-hyperuniform system like the GOE or GUE models on the real line, also known as  {\it Class II system} (Section \ref{sec:hyp-expo}), it is expected that the variance of linear statistics is bounded for $ \varepsilon  >0$ (i.e. $ \gamma  >1$) and logarithmic in $ N$ for $ \varepsilon  = 0$ (i.e. $ \gamma  = 1$).
A series of works over the last decade, notably   \cite{BEY,HeKnowles}, show that, for broad classes of Hermitian random matrix models, the variance of linear statistics $\P_N(f)$ indeed remains bounded for all $f \in H^{1/2+\varepsilon}, \varepsilon > 0$, and that in some cases the variance is logarithmic for $ f$ an indicator function; this confirms the $ 1$-hyeruniformity behaviour. 
 We mention for completeness that the GOE is part of the class of   Pfaffian processes, sharing some features with DPPs. We shall not develop this theme here, see for instance   \cite{BufPFaff}, or references in \cite[Section 3.3]{Coste}.}

The situation for non-Hermitian ``Girko'' random matrices is less clear. Recently, Cipolloni, Erd\"os and Schr\"oder  \cite{CES} proved bounded variance for smooth $H_{ 0}^{ 2}$ kernels at all scales, i.e. admitting a square integrable gradient a.e.. This suggests with Proposition \ref{prop:hyp-expo} or Definition   \ref{eq:def-finite-HU} that such systems are at least $ 2$-hyperuniform, and in general the variance does not go to $ 0$, which means $ 2$ should be the optimal exponent.
 According to what we observe for translation-invariant kernels on $ \mathbb{R}^{ d}$ (Proposition  \ref{prop:hyp-expo}) or $  \mathbb  T  ^{ d},\lambda >0$ (Theorem  \ref{thm:main-periodic}), we should observe a surface order number variance bound for ball indicators, i.e.
\begin{align*}
  \textrm{Var}\left(\P_{ N}(B(x,R))\right) = O(R)
\end{align*}for $ B(x,R)$ in the bulk.
The first sub-volumic bound was obtained very recently by  Cipolloni, Erdos and Kolupaiev, giving bounds in $ O(R^{ 2-\varepsilon })$ for the indicators of smooth shapes, including the ball, with $ \varepsilon \leqslant \frac{ 1}{20}$ in a very involved work, but this is still far from the expected $ O(R)$ bound.    In  \cite{Lr-HU-finite}, the strategy of Section \ref{sec:hu-finite-asymp} is applied, and the optimal rate $O(R)$ is retrieved. The caveat of the method  is that it  only gives results on a reduced scale range $ I_{ N} =  [1,N^{ 1/8}]$ and  for a locally averaged version: for $ R\in I_{ n},$ there is $ \delta >0$ such that, with $ \varepsilon _{ N} = N^{ -\delta },$ for all $ \eta >0,$ and $ B(x,R)$ contained in the bulk $ B(0,(\pi -\eta)\sqrt{N} ),$
\begin{align*}
  \frac{ 1}{\Leb(\T[\varepsilon _{ N}\sqrt{N}])}\displaystyle\int_{\T[\varepsilon _{ N}\sqrt{N}]}  \textrm{Var}\left(\P_{ N}(B(x,r))\right)\leqslant CR.
\end{align*}
In any case, it formally prevents the number variance in balls to have a power law growth $ N^{ b},b>1$ outside a vanishing set of centers $ x.$ This result extends to less regular kernels of $ \F_{ \gamma },\gamma \in (0,1),$ with a rate in $ O(R^{ 2-\gamma }).$
\\

 \section{Gibbs measures}
 \label{sec:gibbs}
 
 \draft{Ideally $ n\to N$}
  
The $ \beta $-ensembles, GOE, GUE, and Ginibre models, descend from a more general class of models in statistical physics defined through a Hamiltonian.
 Consider a pairwise potential, i.e. a function $ \varphi : (\mathbb{R}^{ d})^{ 2}\to \mathbb{R},$ and the energy function 
\begin{align*}
 H^{ \circ }(x_{ 1},\dots ,x_{ n}) := \sum_{i< j}\varphi (x_{ i}-x_{ j}).
\end{align*}
A  {\it Gibbs measure} with energy $ H^{ \circ }$ is roughly speaking a system of particles that tend to arrange themselves randomly 
while keeping a low $ H^{ \circ }$-energy configuration. Since the more the particles shift apart at infinity the lowest is, in general, the energy, one must add a confinement term to ensure the particles have an antagonising force to localise them. This could be a hard confinement in a ball (or another shape), with radius $ \sim n^{ 1/d}$ so that each particle has approximately a constant volume for itself. There 
are also smoother ways to confine the particles, we will typically consider an energy term of the form  
\begin{align*}
 H(x_{ 1},\dots ,x_{ n}) = H^{ \circ }(x_{ 1},\dots ,x_{ n}) + \sum_{i}V_{ n}(x_{ i})
\end{align*}
and $ V_{ n}$ is called a confinement potential, it is supposed to have compact sublevel sets. A hard confinement  penalisation consists formally in choosing  $ V_{ n}(x_{ i}) = \infty \times {\bf 1}\{ \|x_{ i}\|>n^{ 1/d} \}$ (with $ \infty \times 0 = 0$). A frequent and convenient choice is $ V_{ n}(x_{ i})  =  \;\| n^{ -1/d}x_{ i}\|^{ 2}$, and it emerges naturally in the theory of random matrices as we saw with previous models. We indicate  \cite{Lewin,serfaty} as an introduction to the mathematical aspects of such topics.

The  deterministic configurations minimising this energy are called  {\it ground states}, or  {\it optimal configurations}, and are highly ordered, similar to the crystalline structure of a lattice. 
To add some randomness among low energy configurations and reflect disordered states of matter such as gases and liquids, one balances the energy   by an entropy term, favoring more disordered random states, parametrized by some temperature $ T>0$, or the inverse temperature $ \beta  = 1/T$. A probability law minimising  this combined quantity  called  {\it free energy} is called a Gibbs  measure at inverse temperature $ \beta >0$. Its density  with respect to $ (\Leb)^{ n}$ is\begin{align}
\label{eq:boltz}
\rho _{ H,\beta }^{ (n)} (x_{ 1},\dots ,x_{ n})\propto \exp(-\beta H(x_{ 1},\dots ,x_{ n})).
\end{align}
The point process with this density is denoted by $ \P_{ n}^{ H,\beta }$, its particles tend to approach global minimisers of the energy in the low temperature regime $ \beta \to \infty $, and at the opposite converge towards  independent ``totally disordered'' processes with density $\propto  \prod_{i}e^{ -\sum_{i} V_{ n}(x_{ i}) }$  when $ \beta \to 0$ in the ``high temperature regime''.

A popular example is $ s$-Riesz gases, $ s\in \mathbb{R},$ where $ \varphi (x-y) = \|x-y\|^{ -s}$ (or $- \ln(\|x-y\|)$ for $ s = 0$). When $ s>d$, $ \varphi $ is integrable at $ \infty $ and the model is said to be  {\it short range}. It seems that  hyperuniformity cannot happen as the energy is extensive, i.e. proportional to the volume, see Ginibre inequality  \cite[(28)]{Lewin} for finite systems. 
In this case, there is an unambiguously defined infinite limiting  point process $ \P^{ H,\beta }$, i.e. 
\begin{align*}
 \P_{ n}^{  H,\beta } \xrightarrow[ n\to \infty ]{}\P^{ H,\beta }
\end{align*} where $ \P^{H,\beta }$ can also be described locally through DLR equations,
and Dereudre and Flimmel  \cite{DereudreFlimmel} confirms that  stationary  Gibbs measures with a ``local interaction'' (not necessarily through a pairwise potential) are never  hyperuniform.

 \newcommand{\Coul}{\text{\rm{Coul}}}   
A particular case is obtained with  {\it Coulomb gases}, also known under the terminology  {\it jellium}, or  {\it One Component Plasmas} (OCPs). Call $ \varphi _{ d}$ the Coulomb potential in dimension $ d$, i.e. 
\begin{align*}
 \varphi _{ d}(x) = 
 \begin{cases} 
 -\ln( \|x\|)$  if $d = 2\\
 \frac{1}{d-2}\|x\|^{ 2-d}$ if $d = 1$  or $d\geqslant 3,
  \end{cases}
\end{align*}
which satisfies in the distributional sense  \begin{align}
\label{eq:coul-laplace}
 \Delta \varphi _{ d}(x) =-d \kappa _{ d} \delta _{ 0},
\end{align} 
where $ \kappa _{ d} = \Leb(B_{ 1}).$ Recall that $ d\kappa _{ d}$ is also the $  \mathscr  H^{ d-1}$-Hausdorff measure of $  \mathbb  S  ^{ d-1}$.
In dimension $ 2$ this fact has already been useful for GAF zeros at  \eqref{eq:harmonic-ln}, and it is again useful for gravitational allocation at Section \ref{sec:coulomb-alloc}. 
 Call $ \P_{ n}^{ d,\beta }\in \N(\mathbb{R}^{ d})$ the simple point process with exactly $ n$ points and inverse temperature $ \beta\geqslant 0 $  whose density is 
\begin{align*}
\rho _{ d,\beta }^{(n)}(x_{ 1},\dots ,x_{ n}) \propto  \exp(-\beta \sum_{i\neq j}\varphi _{ d}(x_{ i}-x_{ j})) \exp(-\beta \sum_{i}\|x_{ i}\|^{ 2}).
\end{align*}
It indeed corresponds to  equation  \eqref{eq:boltz} with potential $ \varphi _{ d}.$
We recover some previously encountered examples: In dimension $ 2$, for $ \beta  = 2$, we exactly have the Ginibre distribution. There are unfortunately no convenient random matrix interpretations for other values of $ \beta $. Since the potential $ \varphi _{ 2}$ is the logarithm, such models are sometimes also called  {\it log gases}. This terminology   is also used in dimension $ 1$ with $ \varphi (x) = -\ln( | x | )$, but this is not a Coulomb gas since $ \varphi _{ 1}(x) = - | x | $, it is rather a $ 0$-Riesz gas. The one-dimensional log gases, also called $ \beta $-ensembles, are mentioned at Section \ref{sec:intro-examples}. They are the topic of a substantial literature, due to their connection with random matrices and statistical physics. 
 
 Hence the remaining ``interesting'' problem is wether long range Riesz gases (i.e. for $ s\leqslant d$) are  hyperuniform, which is, in general, expected  \cite{Lewin}, and sometimes proved in dimension $ 1$  \cite{ValkoVirag,boursier,Lewin}.   The long range interactions make it complicated to even define unambiguously an infinite model. 
 
  {
 \subsection{Hyperuniformity of Coulomb gases}  
\label{sec:coulomb}

 Let us give several results in dimension $ 2$ or $ 3.$ 
 The first result concerns a finite set of $ n$ particles $ \P^{ d,\beta }_{ n}$ obeying  \eqref{eq:boltz} with Coulomb interaction potential and  quadratic confinement, but the result would hold with other types of confinement according to  \cite[Appendix A]{Leble}. It is known in this case that the $ n$ particles follow at the macroscopic level a circular law, like for the Ginibre ensemble ($ \beta  =d =  2$): with some $ \rho _{  + }>0$ depending on $ d,$ we have the vague convergence in law 
\begin{align*}
 \frac{ 1}{n}\sum_{x\in \P^{ d,\beta }_{ n}}\delta _{ n^{ -1/d}x}\to \frac{ 1}{\rho _{  + }^{ d}\kappa _{ d}} 1_{ B_{ \rho _{  + }}}\Leb.
\end{align*}
The more general phenomenon of macroscopic convergence to equilibrium for Coulomb gases is described in  \cite[Chapter 2]{serfaty}. In general, particles close to the boundary have a different behaviour than particles close to the center. We usually call the  {\it bulk} particles from $ B(0,\rho _{ -}^{ (n)})$ where $ \rho _{ -}^{ (n)} \ll \rho _{  + }n^{ 1/d}$ is ``sufficiently far'' from $ \rho _{  + }n^{ 1/d}$.
The first hyperuniformity result in dimension $ d\geqslant 2$ is by Leblé, with $ \rho _{ -}^{ (n)} = (\rho _{  + }-\eta )\sqrt{n}$ for some $\eta  >0$:

\begin{theorem}[\cite{Leble}]
\label{thm:Leble}
There is $ c>0$ such that for $ n,R$ large enough   and $  x_{ n}\in \mathbb{R}^{ 2}$ such that   $   B(x_{ n},R)\subset B(0,\sqrt{n/ \pi }(1-\eta  ))$,
\begin{align*}
  \textrm{Var}\left(\P_{ n}^{ 2,\beta }(B{( x_{ n},R)})\right) \leqslant cR^{ 2}\ln(R)^{ -0.6}.
\end{align*}
 \end{theorem}  
 Such a rate indeed means hyperuniformity since $ \ln(R)^{ -0.6}\to 0$, but this does not mean $ \alpha $-hyperuniformity for some $ \alpha >0$.
 Serfaty  \cite{serfaty-HU} gives polynomially reduced variance rates for smooth kernels in dimension $ 2$ and $ 3$:
 \begin{theorem} [ \cite{serfaty-HU}, Corollary 2.2]Let $ d\geqslant 2.$
  Let $ f\in \CC_{ c}^{ 3}(B_{ \rho _{  + }-\eta  })$. Then for $ R\in [1,n^{ 1/d}]$
\begin{align*}
  \textrm{Var}\left(\P^{ d,\beta }_{ n}(f_{ R})\right)\leqslant c R^{ 2d-4}.
\end{align*} 
  \end{theorem}  
 
  In view of Proposition \ref{prop:hyp-expo} and Theorem  \ref{thm:main-periodic}, this would mean $ 2$-hyperuniformity in dimension $ 2$, and $ 1$-hyperuniformity in dimension $ 3$.  The rate is valid for $ d\geqslant 4$ but does not give the expected  reduced variance rate. Such models are expected to be $ 2$-hyperuniform in any dimension, see the variance renormalisation in the central limit theorem of  \cite{serfaty-HU}. Leveraging the previous result and applying  the strategy of  \cite{Lr-HU-finite} (see Section  \ref{sec:hu-finite-asymp}), it is possible to give optimal variance for number variance over balls, up to the caveats explained at  Section  \ref{sec:hu-finite-asymp}.
  \begin{theorem}
\label{thm:3D-Coul-finite} Let $d\in \{2,3\}, \beta^{ *}>0, b,\delta  $ such that $ (d + 2)\beta  + \delta <\frac{ 1}{d}$.   Let $ n\geqslant 1, R\in [1,n^{\maxscale }],\varepsilon _{ n} = R^{ -\delta },\beta _{n}\geqslant \beta ^{*}.$
Then   
\begin{align*}
  \sup_{ x \in B(0,\rho ^{-}_{n})}   \overline{  \textrm{Var}}^{ \varepsilon _{ n}}\left(\P_{ n}^{ d,\beta  }(B(x,R)\right)\leqslant C
  \begin{cases} 
  R $  if $d = 2\\
  R^{ 2}\ln(R) $  if $d = 3.
   \end{cases}\\
   \end{align*}
   where $C$ does not depend on $ R,n.$
 \end{theorem}  
 
  This allows in particular to give results for  {\it infinite volume Coulomb gases}, i.e. limit points  $ \P^{ d,\beta }$ of subsequences in the space of random measures.  Note that by  \cite{ArmSerf} such limit points exist, and by  \cite{leble-stationary} they are necessarily stationary in dimension $ 2.$
\begin{theorem}[Asymptotic  hyperuniformity for $ d = 2,3$, \cite{Lr-HU-finite} ]
\label{thm:3d-coul}
Let $ x_{ n}\in B(0,\rho _{ -}^{ (n)}),n\geqslant 1$, $ \P^{ d,\beta }$ a limit point of $   \{  \tau _{ -x_{ n}}\P^{ d,\beta }_{ n};n\geqslant 1\}$ in the vague topology. Let $ \gamma>0,\varphi \in \F_{ \gamma }$ $ \Leb$-a.e. continuous.
\label{thm:Coul-infinite} 
\begin{itemize}
\item  {\bf 2D:}  $  \P^{ 2,\beta }$ is stationary (by  \cite{leble-stationary})   and $ 2$-hyperuniform: as $ R\to \infty $ 
\begin{align*}
  \textrm{Var}\left(\P^{ 2,\beta }( \varphi _{ R})\right)  = O(R^{ 2-\min(\gamma,2) }\ln(R)^{ {\bf 1}\{ \gamma  = 2 \}}),
 \end{align*}
\item  {\bf 3D:} There is $ R^{ \circ}\geqslant 1$ such that for $R\geqslant R ^{\circ }, L \geqslant  { R^{ 2d + 1-\min(1,\gamma )}},$  
\begin{align}
\label{eq:concl-asymp-main}
\sup_{ x_{ 0}\in \mathbb{R}^{ d}}\displaystyle\int_{ [-L/2,L/2 ]^{ d}}  \textrm{Var}\left(\P^{ 3,\beta }(\tau _{ x_{ 0} + x}\varphi _{ R})\right)\frac{ dx}{L^{ d}}\leqslant C_{ \varphi }R^{ 3-\gamma }\ln(R)^{ {\bf 1}\{ \gamma  = 1 \}}.
 \end{align}
 If $ \P^{ 3,\beta }$ is stationary, it is $ 1$-hyperuniform.
\begin{align*}
  \textrm{Var}\left(\P^{ 3,\beta }(\varphi _{ R})\right) = O(R^{ 3-\gamma }\ln(R)^{ {\bf 1}\{ \gamma  = 1 \}}).
\end{align*}
\end{itemize}
 \end{theorem}

 As we will see later, it implies that the 2D Coulomb gas is number rigid  (Theorem \ref{thm:rigid-simple-nsc}),  has finite Coulomb energy and can be represented as a $ 2$-perturbed lattice (Theorem  \ref{thm:infinite-wass-d3}-(i)), see also  \cite[Section 4.1.2]{Lr-HU-finite}.
}

 {
 
 \subsection{Periodic Riesz gases}
 \label{sec:riesz}
 
Riesz gases are Gibbs measures generalising Coulomb gases. They are obtained through Equation  \eqref{eq:boltz} with an interaction in $ \|x-y\|^{ -s}$ (or $ -\ln(\|x-y\|)$ if $ s = 0$). We consider a version on the torus, hence without confinement ($ H = H^{ \circ}$), but with a periodised version of the potential. We work on the one-dimensional torus $  \mathbb  T  ^{ 1}_{ n} $ with $ n$ particles. 
 The periodic Riesz potential with exponent $ s$ is  $ \psi = \psi _{ n,s} $, defined by
\begin{align*}
 \psi_{ n,s}(x) = \lim_{ q\to \infty }\left(
\sum_{k = -q}^{ q} \frac{ 1}{ | kn + x | ^{ s}}-\frac{ 2q^{ 1-s}}{n^{ s}(1-s)}
\right) .
\end{align*}
It is also characterised in the distributional sense with the fractional Laplacian $$ 
\begin{cases} 
\displaystyle\int_{}\psi _{ n,s} = 0,\\ (-\Delta )^{ \frac{ 1-s}{2}}\psi _{ n,s} = c_{ s}(\delta _{ 0}-\frac{ 1}{n})
 \end{cases}$$ for some constant $ c_{ s}>0$, see  \cite{boursier} for details. Consider the random configuration $ \P_{ n}^{ s,\beta }$ with a.s. $ n$ points on $   \mathbb  T  _{ n}$.

  \begin{theorem}
  \label{thm:riesz}
  Let $ s\in (0,1),\beta >0.$   
  \begin{itemize}
\item Let $ \gamma \in (0,1],\varphi \in \F_{ \gamma }^{ *}$. Then there is $ C_{ \varphi }$ not depending on $ n,R$ such that
\begin{align*}
  \textrm{Var}\left(\P_{ n}^{ s,\beta }(\varphi _{ R})\right) \leqslant C_{ \varphi }R^{ 2-\min(1-s,\gamma )}\ln(R)^{ 1-s = \gamma }.
\end{align*}
\item  Any  limit point $ \P^{ s,\beta }$ of $ \P_{ n}^{ s,\beta }$ is stationary and  $ (1-s)$-hyperuniform, hence for $ \gamma >0,\varphi \in \F_{ \gamma }$, 
\begin{align*}
  \textrm{Var}\left(\P^{ s,\beta }(\varphi _{ R})\right) = O(R^{ 2-\min(1-s,\gamma )}\ln(R)^{ 1-s = \gamma }
).
\end{align*}
\end{itemize}
 
  \end{theorem}  
This result is essentially contained in  \cite{boursier}, but the application of Theorem \ref{thm:main-periodic} allows  more kernels with singularities from $ \F_{ \gamma }^{ *}$, see  \cite{Lr-HU-finite}.

  }

\begin{longversion}{}[Discussion]

 The fact that ... has an infinite (unique?) local limit is established in 1D (only?), and several possible limits might exist depending on the boundary conditions, or confining potential imposed. Another way to characterise infinite Riesz gases are through DLR equations , and a stationary process $ \P$ is said to be DLR-Riesz if the ratio is satisfied for local restrictions, i.e. for any bounded domain $ \Omega $, an external configuration $ P^{ \text{\rm{ext}}}$, and two internal configurations $ P_{ 1}^{ \text{\rm{in}}}\subset \Omega ,P_{ 2}^{ \text{\rm{in}}}\subset \Omega $, 
\begin{align*}
 \frac{ \P(P_{ 1}^{ \text{\rm{in}}} | P^{ \text{\rm{ext}}})}{\P(P_{ 2}^{ \text{\rm{in}}} | P^{ \text{\rm{ext}}})} \propto \exp(H(-\beta P_{ 1}))/\exp(-\beta H(P_{ 2})).
\end{align*}
\end{longversion}

\begin{longversion}{}{JLM etc}
 
   \section{Hole probabilities and large deviations}
   
   When one observes  hyperuniform samples, the arrangement of points seems

\end{longversion}

  \section{Quasi-periodic models and quasicrystals }   
  
  \label{sec:quasicrystals}
  
   Hyperuniformity  is important because it emerges in many different phenomena, especially in condensed matter physics and statistical physics. Mathematically,   disordered particle systems have attracted most of the activity, but many other models, e.g. hard spheres, quasicrystals, or others, would deserve more attention.
   To introduce the topic of aperiodic order, let us start with the following toy example, which bears some similarity with subsequent more physical examples.
   
   \begin{example}[ Irrational Dirac combs]
   
   \label{ex:irrational-lattices}
 Let some positive numbers $ \a = \{a_{ m}>0,m\geqslant 1\}$, pairwise irrational, and i.i.d. variables   $ U_{ m}\sim  \mathscr  U_{ [0,1]^{ d}},m\geqslant 1.$  Let the model obtained by summing independent rescaled versions of the shifted lattice
\begin{align*}
 \P^{ \a} = \sum_{m\geqslant 1}\sum_{\m\in \mathbb{Z} ^{ d}}\delta _{ a_{ m}(U_{ m} + \m)}.
\end{align*}
Let us assume that $ \sum_{m}a_{ m}^{ -d}<\infty $ to have finite intensity and local square integrability. Since the spectral measure is linear in the random measure, we obtain the spectral measure of $ \P^{ \a}$ by using  Example  \ref{ex:shifted-lattices} at the proper scale:
\begin{align*}
 \S^{ \a} =\sum_{m}  a_{ m}^{ -2d}\sum_{\m\in 2\pi \mathbb{Z} ^{ d}  \setminus \{0\}}\delta _{ a_{ m}^{ -1}\m} .
\end{align*}
Remark that by carefully choosing the $ a_{ m}$, one can have an arbitrary decay of $ \S^{ \a}(B_{ \varepsilon })$ as $ \varepsilon \to 0$, and hence an arbitrary exponent of hyperuniformity (or a subpolynomial reduction), see Section \ref{sec:hyp-expo}.
   
   \end{example}

Quasicrystals are broadly speaking non-periodic atomic measures whose spectrum is purely atomic, their study emerged after experimental discoveries in physics in the 80s and is related to many fields, including crystallography, aperiodic tilings, almost periodicity, see the mathematical monograph  \cite{baake-book} and references therein. Such objects are traditionally assumed to be homogeneous in space, and it is thus natural to consider random constructions that are invariant under translations  (\cite{HartBjo,torquato-quasi}).

  Let us introduce the  {\it cut-and-project processes}, also called  {\it model sets},  introduced in  \cite{meyer72}, before the actual discovery of quasicrystals in physics   \cite{schetchman}. They turned out to encompass many known quasicrystal models, such as Penrose tilings. Such a process is  obtained, for some $ d,d'\in \mathbb{N}^{ *}$, from a higher dimensional tilted lattice $ \Gamma _{ 0}$ in $ \mathbb{R}^{ d  + d'}$ that is projected onto a $ d$-dimensional band with a finite width.    By introducing a uniform shift $ U$ in the fundamental cell of $ \Gamma _{ 0}$, and projecting onto a unit ball $ W$ of dimension $ d $ one obtains a random stationary measure  \begin{align*}
\P^{ \Gamma_{ 0} ,d}  = \{x: (x,y)\in \tau _{ U}\Gamma_{ 0} ,y\in W \}.
\end{align*}   

Those models possess the following properties, defining a  {\it mathematical quasicrystal}:\begin{itemize}
\item   {\it Uniformly discrete}, i.e. $ \inf_{ x\neq y\in \P^{ \Gamma _{ 0},d}}\|x-y\|>0$ a.s.,
\item  {\it relatively dense}, i.e. for some $ r>0$, $ \cup _{ x\in \P^{ \Gamma _{ 0},d}}B(x,r) = \mathbb{R}^{ d}$ a.s.,
\item  The spectral measure $ \S$ is purely atomic with a dense support.
\end{itemize}
 Bj\"orklund and Hartnick \cite{HartBjo} show that $ \P^{ \Gamma _{ 0},d}$ is in general  hyperuniform, depending on the diophantine properties of $ \Gamma $.
We refer to  \cite{HartBjo} for a precise definition, but an interesting class of lattices is that of  {\it arithmetic lattices}, whose elements are badly approximable by rational tuples. A typical example is $ \Gamma_{ 0}  = \{(a + b\sqrt{2},a-b\sqrt{2});a,b\in \mathbb{Z} ^{ 2}\}\subset \mathbb{R}^{ 2}$, in the same way that algebraic numbers (i.e. solutions of polynomial equations with integer coefficients, such as $ \sqrt{2},\frac{ 1 + \sqrt{5}}{2}$, etc...) are the worst approximable numbers of $ \mathbb{R}$. The idea is that the diophantine properties of the dual of a lattice will determine its  hyperuniformity behaviour, and bad approximation properties give a more  hyperuniform behaviour. Interestingly, it was established in   \cite{Lr21} the same phenomenon for nodal domains of Gaussian fields whose spectral measure has irrational atoms.
 Below we   call $ \S$ the structure factor of $ \P^{ \Gamma _{ 0},d}$.

\begin{theorem}[\cite{HartBjo}]
\begin{itemize}
\item If the dual lattice of $ \Gamma _{ 0}$ is arithmetic,
\begin{align*}
 \lim_{ \varepsilon \to 0}\frac{ \S(B_{ \varepsilon })}{\varepsilon ^{ d}} = o(\varepsilon ^{ \frac{ d}{d'}}).
\end{align*}
\item For $ \Leb[ (d + d')^{ 2}]$-a.e. $ (d + d')\times (d + d')$ real matrix $ A$, the lattice $ \Gamma_{ 0}  = A\mathbb{Z} ^{ d + d'}$ yields a  hyperuniform process such that  for each given $ \delta >0$
\begin{align*}
\frac{  \S (B_{ \varepsilon })}{\varepsilon ^{ d}} = o(\varepsilon  ^{ \frac{ d}{d'}-\delta }).
\end{align*}
\item For $ d = d' = 1$, there exists a lattice  $ \Gamma_{ 0} $ such that for every $ \alpha >0$, 
\begin{align*}
 \limsup_{ \varepsilon \to 0}\frac{  \textrm{Var}\left(\S (B_{ \varepsilon })\right)}{\varepsilon ^{ \alpha }} = \infty .
\end{align*}
It implies by Theorem  \ref{thm:general-hu}-(iii) that $ \P^{ \Gamma_{ 0},1}$ is not  hyperuniform.
\end{itemize}
 \end{theorem}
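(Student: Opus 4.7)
The plan is to first identify the spectral measure $\S$ of the cut-and-project process $\P^{\Gamma_0,d,B_1}$ explicitly, then translate the three statements into quantitative counting problems on the dual lattice. Following the computation of Example \ref{ex:shifted-lattices} (lifted to dimension $d+d'$), I would write a smooth linear statistic as $\P^{\Gamma_0,d,B_1}(f) = \Gamma(f\otimes 1_{B_1})$, expand the variance via Poisson summation on $\Gamma_0$, and identify that $\S$ is atomic and supported on $\pi_1(\Gamma_0^*)$ with weights determined by the Fourier transform of the window. Concretely, up to a normalising constant,
\[
\S \;=\; \sum_{0\neq (\xi,\eta)\in \Gamma_0^*} |\widehat{1_{B_1}}(\eta)|^2\,\delta_\xi,
\]
with the convention that contributions with $\xi=0$ but $\eta\neq 0$ contribute to an atom at the origin that needs to be analysed separately. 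By Lemma \ref{lm:fourier-ball}, the weight decays as $|\widehat{1_{B_1}}(\eta)|^2 = O(\|\eta\|^{-(d'+1)})$. Hence estimating $\S(B_\varepsilon)$ reduces to bounding a weighted count of dual lattice points whose $\xi$-coordinate lies in $B_\varepsilon$, with decreasing weight as $\|\eta\|$ grows.

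For (i), the arithmeticity of $\Gamma_0^*$ should imply a diophantine inequality of the form $\|\xi\|\cdot \|\eta\|^{d/d'} \geq c$ for all nonzero $(\xi,\eta)\in \Gamma_0^*$ (this is the arithmetic analogue of the bad approximability of algebraic numbers, and ultimately comes from a norm form being bounded below on integer vectors). With this in hand, a point $(\xi,\eta)\in\Gamma_0^*$ with $\|\xi\|<\varepsilon$ must satisfy $\|\eta\|\geq c\varepsilon^{-d'/d}$. A dyadic decomposition in the $\eta$-variable, combined with the fact that each dyadic shell $\|\eta\|\asymp 2^k$ contains $O(2^{kd'})$ dual lattice points once $k$ is large enough, yields a geometric sum dominated by its first term, of order $\varepsilon^{d+d/d'}$ up to constants. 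This gives the required $o$-decay of $\S(B_\varepsilon)/\varepsilon^d$, which by Theorem \ref{thm:general-hu}-(iii) is equivalent to hyperuniformity with exponent at least $d/d'$.

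For (ii), the diophantine inequality underlying (i) is not available, but I would  trade it for a probabilistic statement on the space of unimodular lattices in $\mathbb{R}^{d+d'}$ equipped with its Haar measure. The key tool is a Siegel-type mean value formula, which converts expected counts of lattice points in prescribed regions into Lebesgue volumes; together with a Borel--Cantelli argument along dyadic scales $\varepsilon_n=2^{-n}$ and any fixed loss $\delta>0$, this rules out, for almost every matrix $A$, too many short vectors in each relevant region, and delivers the weaker bound $\S(B_\varepsilon)/\varepsilon^d = o(\varepsilon^{d/d'-\delta})$. For (iii), one instead \emph{constructs} a bad lattice in dimension $d=d'=1$ by a Liouville-type procedure: choose the slope of the lattice to be extremely well approximable by rationals, so that for every $\alpha>0$ one finds infinitely many $(\xi,\eta)\in\Gamma_0^*$ with $\|\xi\|$ microscopically small while $\|\eta\|$ grows only moderately. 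Choosing $\varepsilon$ along this subsequence and using the lower bound on $|\widehat{1_{B_1}}(\eta)|^2$ from \eqref{eq:bessel} produces the stated explosion of $\S(B_\varepsilon)/\varepsilon^\alpha$, forbidding hyperuniformity by Theorem \ref{thm:general-hu}-(iii).

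The main obstacle is number-theoretic rather than analytic: all three statements reduce, after the spectral computation, to controlling simultaneous smallness of $\|\xi\|$ and $\|\eta\|$ for $(\xi,\eta)\in\Gamma_0^*$, and each part requires a different input (uniform diophantine inequality for arithmetic lattices in (i), equidistribution / Siegel-type mean value for random lattices in (ii), explicit anti-diophantine construction in (iii)). A subsidiary technical point worth care is the treatment of dual lattice elements $(0,\eta)$, which contribute only to $\S(\{0\})$ and thus drop out of $\S(B_\varepsilon)$ for $\varepsilon$ small, provided the intersection $\Gamma_0^*\cap(\{0\}\times\mathbb{R}^{d'})$ is discrete; verifying this non-degeneracy condition should be a direct consequence of the assumed irrationality/genericity of $\Gamma_0$.
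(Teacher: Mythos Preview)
The paper does not give its own proof of this theorem: it is stated as a result of Bj\"orklund and Hartnick \cite{HartBjo} and followed only by an informal paragraph explaining the analogy with the three regimes of diophantine approximation (badly approximable / almost-every / Liouville). So there is no argument in the paper to compare your proposal against.

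That said, your outline is the natural one and is consistent with the heuristic the paper does offer. The spectral measure identification via Poisson summation on $\Gamma_0$ and the reduction of $\S(B_\varepsilon)$ to a weighted lattice-point count on $\Gamma_0^*$ is exactly the mechanism the paper alludes to when it says ``the diophantine properties of the dual of a lattice will determine its hyperuniformity behaviour''. Your three inputs --- a uniform lower bound on $\|\xi\|\cdot\|\eta\|^{d/d'}$ for arithmetic lattices, a Siegel mean-value plus Borel--Cantelli argument for generic $A$, and a Liouville-type slope for the counterexample --- match the three bullet points of the paper's post-theorem discussion, and are indeed the ingredients used in \cite{HartBjo}. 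One small caution: in (i) the precise diophantine inequality available for arithmetic lattices is not exactly of the product form you wrote, and making the dyadic summation work requires some care with the lattice-point count in thin slabs; but this is a technical refinement, not a gap in the strategy.
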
 
 
This spectrum of different second order behaviours in fact reflects the spectrum of diophantine properties of real numbers, for which we include a refresher (see  \cite{bugeaud}):
\begin{itemize}
\item the worst approximable numbers, such as the algebraic numbers, are characterised as those $ x\in \mathbb{R}$ such that for some $ c>0$
\begin{align*}
 \forall  q\in \mathbb{N}^{ *}, \inf_{ p\in \mathbb{Z} }| x-\frac{ p}{q^{ 2}} | >\frac{ c}{q^{ 2}}
\end{align*}they form a negligible set of $ \mathbb{R}^{ d}$.
\item For $ \delta >0$, $ \Leb[1]$-a.e. $ x\in \mathbb{R}$ is $ (2 + \delta )$-approximable, in the sense that for some $ c>0$, for infinitely many $ q\in \mathbb{N}^{ *},$
\begin{align*}
 \inf_{ p\in \mathbb{Z} } | x-\frac{ p}{q} | <cq^{ -2-\delta }.
\end{align*}
\item There is a negligible set of numbers $ x$ with good approximation properties, such as the  {\it Liouville numbers}, i.e. such that for every $ \delta >0$, there are infinitely many $ q\in \mathbb{N}^{ *}$ with 
\begin{align*}
  | x-\frac{ p}{q} | <cq^{ -2-\delta }.
\end{align*}
\end{itemize}

We describe further general random quasicrystals at Section  \ref{sec:QC-rigid}, we show in particular that they have an extremely behaviour, in that they can be totally reconstructed from the knowledge on an arbitrarily small portion.
 
\begin{longversion}{}
Let us describe a similar organisation for some quasi periodic Gaussian fields.

       \subsection{Almost periodic Gaussian zeros}   
\end{longversion}

        \chapter{Perturbed lattices, matching and optimal transport}   
 \label{chap:transport}

We saw that an easy way to produce  hyperuniform samples is to start from a lattice and perturb its points by i.i.d.   variables $ U_{ \k},\k\in \mathbb{Z} ^{ d}$, but the underlying periodic structure remains detectable in the resulting point process (see Example \ref{ex:IPL}), leaving artifacts and aliasing defects. A softening of this procedure is to allow for  dependency between the perturbations $ U_{ \k}$, and in this chapter we first explore how much dependent  can these variables be and still maintain  hyperuniformity. To maintain  stationarity, we assume that the field $ \{U_{ \k};\k\in \mathbb{Z} ^{ d}\}$ is   invariant under $ \mathbb{Z} ^{ d}$-translations; in particular the $ U_{ \k}$ are identically distributed.

In the previous chapter, we saw   hyperuniform models who have in appearance nothing to do with a lattice structure, they seem to form a class fundamentally disconnected from the class of perturbed lattices. We will see that, somehow surprisingly (at least in dimension $ 2$), these disordered processes can in most cases be written as dependently perturbed lattices, forming in some sort a converse to the assertion that perturbed lattices are hyperuniform. To formulate and prove this result, we  invoke  the theory of optimal transport. It ultimately proves that perturbed lattices and disordered  hyperuniform processes are not a dichotomy, they form two sides of a continuous class, except in dimension $ 1$ where a stronger requirement than  hyperuniformity is required.

This is another illustration of the   formula   {\it global order and local disorder}, in that a  hyperuniform point process, even when it seems locally disordered, such as for a DPP or the zeros of a random function,  exhibits at large scale the same order as a lattice.\\ 

{We also explore the deep connections between hyperuniformity and fair partitions of the space, also rooted in optimal transport, as an optimal transport between a point process and Lebesgue measure, also called  {\it semi-discrete transport,} is equivalently described by a fair partition of the space.}

    \section{Perturbed lattices are  hyperuniform  point processes}  
     \label{sec:perturbed-is-hu}

     We already saw that when a lattice undergoes i.i.d.   perturbations, it is  hyperuniform (Example \ref{ex:IPL} and Theorem \ref{thm:general-hu}-(ii)). When the perturbations are dependent, Dereudre et al.  \cite{DFHL} proved that it still holds   as long as they have a finite second order moment in dimension $ 1$ or $ 2$. In the following,    $   \mathsf U =  \{U_{ \k};\k\in \mathbb{Z} ^{ d}\}$ is a stationary field over $ \mathbb{Z} ^{ d}$, independent of the shifted lattice $ \tau _{U}\mathbb{Z} ^{d}$ where $ U\sim  \mathscr  U_{[0,1]^{d}}$. Denote the resulting  point process by 
\begin{align*}
\Z^{d, \mathsf U}=\sum_{\k\in \mathbb{Z} ^{d}}\delta _{U+U_{\k}}.
\end{align*}
     
     \begin{theorem}[\cite{DFHL}]
     \label{thm:DFHL}
    Assume $ \mathbf E \|U_{ k}\|^{ 2}<\infty $. Then \begin{itemize}
\item In dimension $ d = 1$, $ \sup_{ R}  \textrm{Var}\left(\Z^{d, \mathsf U}(B_{ R})\right)<\infty $, i.e. $ \Z^{d, \mathsf U}$ is  {\it Class 1}  hyperuniform. 
\item In dimension $ d = 2$, $ \Z^{d, \mathsf U}$ is  hyperuniform, but the variance decay can be arbitrarily slow.
\item In dimension $ d \geqslant 3$, for every $ \varepsilon >0,$ there exists a stationary field $    \mathsf U$ with $ \|U_{ \k}\|<\varepsilon $ a.s. and $ R^{ -d}  \textrm{Var}\left(\Z^{d, \mathsf U}(B_{ R})\right)\to \infty .$
\end{itemize}
      
      \end{theorem}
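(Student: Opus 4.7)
The strategy is to apply the spectral characterisation Theorem~\ref{thm:general-hu}-(iii) after computing the structure factor $\S$ of $\Z^{d,\mathsf{U}}$. Extending Example~\ref{ex:IPL}, I write $\Z^{d,\mathsf{U}}(f) = \sum_{\k} f(\k + U + U_\k)$, apply Fourier inversion to obtain $\Z^{d,\mathsf{U}}(f) = (2\pi)^{-d}\int \hat f(u)\, e^{iu\cdot U}\, Y(u)\,du$ with $Y(u) = \sum_\k e^{iu\cdot(\k+U_\k)}$, and compute second moments. After Poisson summation over $\k$ and averaging over $U \sim \mathscr U_{[0,1]^d}$, one finds
\begin{align*}
\S(du) \;=\; s_{\mathsf{U}}(u)\,du \;+\; \sum_{\m \in 2\pi\mathbb{Z}^d \setminus \{0\}}|\phi(\m)|^2 \delta_\m,
\end{align*}
with $\phi(u) = \E\, e^{iu\cdot U_0}$, where the atomic part is inherited unchanged from the iid case but the continuous density $s_{\mathsf{U}}$ now carries the dependencies. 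When the $U_\k$ are independent, $s_{\mathsf{U}}(u) = 1-|\phi(u)|^2 = O(\|u\|^2)$; in general, an extra term appears.

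The dependent piece is controlled by the Bochner--Herglotz measure $\sigma_u$ on the dual torus $\mathbb{T}^d = \mathbb{R}^d/2\pi\mathbb{Z}^d$ of the complex $\mathbb{Z}^d$-stationary field $\{e^{iu\cdot U_\k}\}_\k$; its total mass is $\sigma_u(\mathbb{T}^d) = 1 - |\phi(u)|^2 = O(\|u\|^2)$ by the second-moment hypothesis. A direct manipulation shows that $s_{\mathsf{U}}(u)$ can be bounded above (up to lower-order terms) by a multiple of $\sigma_u(\{0\})$, the atomic mass at $0 \in \mathbb{T}^d$, which measures the macroscopic long-range order of $\mathsf{U}$ at frequency $u$. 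In dimensions $d = 1$ and $d = 2$, an $L^2$-bounded stationary field on $\mathbb{Z}^d$ cannot carry such a macroscopic atom at the trivial frequency at a rate comparable with $\|u\|^d$---a spectral rephrasing of the recurrence of random walks on $\mathbb{Z}^d$ for $d \leq 2$---so $\S(B_\varepsilon) = o(\varepsilon^d)$ and hyperuniformity follows. The sharpening to Class~I in $d = 1$ is an independent and more elementary step: a boundary-crossing decomposition expresses $\Z^{1,\mathsf{U}}([0,R]) - R$ as a sum of contributions indexed by lattice sites near the two endpoints, each $L^2$-bounded by $\E|U_0|^2$ uniformly in $R$. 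In $d = 2$ the suppression mechanism is only marginal, which leaves room to engineer the rate of variance decay arbitrarily slowly by tuning the long-range correlations of $\mathsf{U}$.

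For $d \geq 3$ I would construct a counterexample by exploiting transience. Take a stationary Gaussian field $X$ on $\mathbb{Z}^d$ with slowly decaying covariance (e.g. a lattice Green's function type kernel) and set $U_\k = \varepsilon\, g(X_\k)$ for some bounded Lipschitz $g$: then $\|U_\k\| < \varepsilon$ a.s., yet the complex field $\{e^{iu\cdot U_\k}\}$ inherits enough long-range dependence that $\sigma_u(\{0\})$ remains macroscopic while the continuous part of $\sigma_u$ is small, so that $s_{\mathsf{U}}(u)$ accumulates on a scale that violates $\S(B_\varepsilon) = o(\varepsilon^d)$; a quantitative tuning then forces $R^{-d}\textrm{Var}(\Z^{d,\mathsf{U}}(B_R)) \to \infty$. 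The main obstacle is twofold: first, extracting cleanly from the Fourier/Poisson computation a usable upper bound on $s_{\mathsf{U}}$ in terms of $\sigma_u(\{0\})$; and second, arranging the bounded field $\mathsf{U}$ in $d \geq 3$ so that the atomic component of $\sigma_u$ at the origin of $\mathbb{T}^d$ is genuinely non-negligible---this is the step where transience of $\mathbb{Z}^d$ random walks in $d \geq 3$ is essential, since it is what allows bounded increments to aggregate into macroscopic collective fluctuations.
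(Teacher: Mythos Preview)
The paper does not prove this theorem; it is quoted from \cite{DFHL} with no argument supplied, so there is no in-paper proof to compare your attempt against.

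On your plan itself: the $d=1$ boundary-crossing argument is correct and is the elementary route. The $d\geq 3$ counterexample is only sketched; the spirit (long-range Gaussian correlations exploiting transience) is right, but turning it into an example with $R^{-d}\textrm{Var}\to\infty$ rather than merely bounded away from zero needs the explicit construction from \cite{DFHL}.

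The genuine gap is in your $d=2$ argument. You assert that $s_{\mathsf U}(u)$ is bounded (up to lower order) by a multiple of $\sigma_u(\{0\})$, the atom at $0\in\mathbb T^d$ of the spectral measure of the centred field $\{e^{iu\cdot U_\k}-\phi(u)\}_\k$. But for any ergodic perturbation field $\mathsf U$ that atom vanishes identically, so your inequality would force $s_{\mathsf U}\equiv 0$---every dependently perturbed lattice would be stealthy, which is already false in the i.i.d.\ case \eqref{eq:IPL-SF}. What actually governs $s_{\mathsf U}(u)$ near $0$ is not the atom of $\sigma_u$ but its mass in a shrinking neighbourhood of the origin of $\mathbb T^d$; to leading order one finds $s_{\mathsf U}(u)\sim\|u\|^2$ times the spectral density at frequency $u$ of the scalar field $\hat u\cdot U_\k$, and turning the finiteness of that spectral measure (which is exactly $\E\|U_0\|^2<\infty$) into $\S(B_\varepsilon)=o(\varepsilon^d)$ for $d\leq 2$ is the substance of the argument in \cite{DFHL}. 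Your recurrence/transience heuristic is pointing at the right dichotomy---it enters through the (non)integrability of $\|u\|^{2-d}$ near the origin---but the step you call ``a direct manipulation'' is precisely where all the work lies, and as written it rests on a claim that is either vacuous or false.
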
  
 \cite{DFHL} also givew counter-examples showing that these results are sharp in general.
 The general idea is still that the more the perturbation is statistically disordered (extreme disorder being the independence), the more it has the chance to maintain the underlying lattice structure, but also the less likely is the obtained point process  to be mixing. In some sense the mixing properties of $   \mathsf U$ and $ \P$ go in reverse directions. This is confirmed by the results of  \cite{Flimmel,KLLY}, which ensures that  hyperuniformity is maintained in any dimension as long as the perturbating field $   \mathsf U$ is sufficiently mixing. Flimmel  \cite[Theorem 2]{Flimmel} uses the $ \alpha $-mixing coefficients, for $   \mathsf U$ a stationary field over $ \mathbb{Z} ^{ d}$,
\begin{align*}
 \alpha (n): = \sup \{| \mathbf P (\Omega )-\mathbf P (\Omega ') | 
\;  : \; {  \Omega \in \sigma (U_{ \k};\k\in A),\Omega '\in \sigma (U_{ \k'};\k'\in B)};\;A,B\subset \mathbb{R}^{ d}:d(A,B)\geqslant n   \},n\in \mathbb{N}.\end{align*} Klatt et al. \cite[Th. 5.5]{KLLY}     introduce in the flavor of $ \beta $-mixing, with $ \mathbf P  _{ X}$ the law of  $ X,$
\begin{align*}
 \beta (\m) = \sup_{ A\subset \mathbb{R}^{ d}\times \mathbb{R}^{ d}\text{\rm{ Borel}}} |\mathbf P _{ (U_{ 0},U_{ \m})}(A)-\mathbf P  _{ U_{ 0}}^{ \otimes 2}   (A) |,\m\in \mathbb{Z} ^{ d}.
\end{align*}

 \begin{theorem}\label{eq:mixing-perturbed}If  $\sum_{\m} \alpha (\|\m\|)<\infty $ or $ \sum_{\m}\beta (\m)<\infty $, then $ \Z^{ d,   \mathsf U}$ is  hyperuniform.
  \end{theorem}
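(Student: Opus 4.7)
The plan is to identify the spectral measure of $\Z^{d,\mathsf U}$ explicitly and then apply the spectral criterion of Theorem \ref{thm:general-hu}(iii). First I would compute, by conditioning on $U$ and exploiting the stationarity of the perturbation field, the covariance of two linear statistics: for $f,g\in \c_c^b(\mathbb{R}^d)$, letting $\phi_\m(\xi):=\mathbf E\, e^{-i\xi\cdot (U_\m-U_0)}$, the reduction
\begin{align*}
\mathbf E[\Z^{d,\mathsf U}(f)\overline{\Z^{d,\mathsf U}(g)}]=\sum_\m \int_{\mathbb R^d}\mathbf E[f(v)\overline{g(v+\m+U_\m-U_0)}]\,dv
\end{align*}
yields, via Plancherel on the inner integral,
\begin{align*}
\text{Cov}(\Z^{d,\mathsf U}(f),\Z^{d,\mathsf U}(g))=(2\pi)^{-d}\int \hat f(\xi)\overline{\hat g(\xi)}\,S(\xi)\,d\xi - \hat f(0)\overline{\hat g(0)},\qquad S(\xi):=\sum_{\m\in\mathbb Z^d}e^{-i\xi\cdot\m}\phi_\m(\xi).
\end{align*}

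Next I would extract the spectral measure from $S$. Write $\phi_\m(\xi)=|\psi(\xi)|^2+r_\m(\xi)$ with $\psi(\xi)=\mathbf E\,e^{-i\xi\cdot U_0}$, so that for $\m\neq 0$, $r_\m(\xi)=\text{Cov}(e^{-i\xi\cdot U_\m},e^{i\xi\cdot U_0})$ and $r_0(\xi)=1-|\psi(\xi)|^2$. Distributional Poisson summation applied to $|\psi(\xi)|^2\sum_\m e^{-i\xi\cdot\m}$ produces a Dirac comb on $2\pi\mathbb Z^d$ weighted by $|\psi|^2$; after the atom at $\n=0$ cancels the $-\hat f(0)\overline{\hat g(0)}$ term (since $|\psi(0)|=1$), comparison with \eqref{eq:structure-def} identifies
\begin{align*}
\S(d\xi)=\rho(\xi)\,d\xi+(2\pi)^d\sum_{\n\in 2\pi\mathbb Z^d\setminus\{0\}}|\psi(\n)|^2\delta_\n,\qquad \rho(\xi):=\sum_{\m\in\mathbb Z^d}e^{-i\xi\cdot\m}r_\m(\xi),
\end{align*}
which generalizes the IPL formula \eqref{eq:IPL-SF} and reduces to it when the $U_\k$ are independent.

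Now the mixing hypothesis enters. The standard covariance bound for bounded random variables yields $|r_\m(\xi)|\leqslant 4\alpha(\|\m\|)$ under the $\alpha$-mixing assumption, and $|r_\m(\xi)|\leqslant 2\beta(\m)$ under the $\beta$-mixing one, uniformly in $\xi$. Summability of $\alpha(\|\m\|)$ or $\beta(\m)$ therefore makes the series defining $\rho$ converge absolutely and uniformly; dominated convergence shows $\rho$ is continuous on $\mathbb R^d$. Finally, $\phi_\m(0)=1$ and $|\psi(0)|^2=1$ give $r_\m(0)=0$ for every $\m$, hence $\rho(0)=0$. For $\varepsilon>0$ smaller than the distance from $0$ to $2\pi\mathbb Z^d\setminus\{0\}$, the atomic part does not contribute to $\S(B_\varepsilon)$, and continuity of $\rho$ at the origin gives
\begin{align*}
\S(B_\varepsilon)=\int_{B_\varepsilon}\rho(\xi)\,d\xi = o(\varepsilon^d),
\end{align*}
so Theorem \ref{thm:general-hu}(iii) yields hyperuniformity.

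The main obstacle is the rigorous handling of the distributional identity (Poisson summation on $|\psi|^2\sum_\m e^{-i\xi\cdot\m}$ and the precise cancellation of the $\n=0$ atom). Once this decomposition of $\S$ into an atomic part away from the origin plus a continuous density vanishing at $0$ is in place, the implication from mixing to spectral hyperuniformity is immediate. A secondary point is that $\rho$ is \emph{a priori} a signed continuous function; its non-negativity off $2\pi\mathbb Z^d$ is forced a posteriori by Bochner's theorem, but is not needed for the argument since only $\S(B_\varepsilon)$ and not the absolute integral appears in Theorem \ref{thm:general-hu}(iii).
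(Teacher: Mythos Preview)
The paper does not prove this theorem; it simply attributes the $\alpha$-mixing statement to Flimmel and the $\beta$-mixing statement to Klatt--Last--Yogeshwaran and moves on. Your spectral argument is correct and is the natural direct route. Two remarks that may streamline it. First, the ``main obstacle'' you flag --- the distributional Poisson summation on the $|\psi|^{2}$ part and the cancellation of the $\n=0$ atom --- is exactly the computation already carried out in the paper for the independent case (Example~\ref{ex:IPL}, referred to Proposition~\ref{thm:cluster-process}); since your $|\psi|^{2}$ piece \emph{is} the IPL second moment, you can simply invoke \eqref{eq:IPL-SF} for that part and concentrate on the correction $\sum_{\m\neq 0}e^{-i\xi\cdot\m}r_{\m}(\xi)$, whose absolute and uniform convergence under either mixing hypothesis is exactly what you argue. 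This also takes care of local square integrability: the IPL contribution is finite by Example~\ref{ex:IPL}, and the correction is bounded by $\|\hat f\|_{2}^{2}$ times a finite constant. Second, a harmless normalization slip: with the paper's convention for $\S$ (see the derivation in Example~\ref{ex:shifted-lattices}, where $(2\pi)^{d}\langle\S,f\rangle=\langle\C,\hat f\rangle$), the atomic weights are $|\psi(\n)|^{2}$ rather than $(2\pi)^{d}|\psi(\n)|^{2}$; this does not affect the conclusion.
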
  
 \cite{Flimmel} contains a stronger version under some moment assumptions, and  \cite{KLLY} actually considers any stationary random measure (not only point processes), and any kind of perturbation, as long as stationarity is in order.     Those results were generalised in the recent work  \cite{LotzKlatt}.

\begin{remark}[A Poisson transport cannot be mixing]
\label{rk:non-mixing}
Consider an equivariant matching $ T$ between $ \Z^{ d} = \tau _{ U}\mathbb{Z} ^{ d}$ and a homogeneous Poisson process $\P =  \P^{ \Poi(1 )}$, or equivalently let us look $ \P = \Z^{ d,   \mathsf U}$ as a perturbed lattice, with $ U_{ \k} = T(\k + U)-(\k + U)$.
By the result above, the matching  cannot be quantitatively mixing, because $ \Z^{ d,   \mathsf U } = \P$ is not  hyperuniform. 

 \cite{KLLY} give more general similar results also applying to semi-discrete transport, which means that a transport between Poisson and Lebesgue cannot be mixing either. It does not prevent  such matchings to be well behaved in terms of moments  \cite{CPPR,HPPS}.
\end{remark} 
 
\begin{example}[
 {Lattices perturbed by clusters  and  arbitrarily high  hyperuniformity exponent}
] \label{sec:cluster-lattice}
 
 A similar class of dependent perturbations is obtained when each point is replaced by a whole finite point process, as considered in  \cite{Lr24}.  This is not formally a perturbed  lattice as studied above, but the resulting points can still be seen as the result of a perturbation applied to a lattice. We   refer the reader to \cite{KLLY, LotzKlatt} for a general study  of the hyperuniformity  of perturbed random measures. Let us give here a result where the point processes attached to different points are independent.
 
 Let $ \mu $ a probability distribution on $ \N(\mathbb{R}^{ d})$, and $ \P =\sum_{i}\delta _{ x_{ i}}$ a simple stationary point process. Let $ \P_{ i} ,i\geqslant 1,$ i.i.d.   point processes with law $ \mu $, and 
\begin{align*}
 \P^{ \mu }: = \sum_{i}\sum_{j} \tau _{ x_{ j}}\P_{ i}.\end{align*}

\begin{proposition}
\label{thm:cluster-process}
Let   $ \S$ the spectral measure of $ \P$ . Assume $\mathbf E \#\P_{ 1}^{ 2}<\infty  $. Let the random Fourier-Stieljes transform $ \varphi (u) = \int_{}e^{ iu\cdot t}d\P_{ 1}(t)$. Then $ \P^{ \mu }$ is locally square integrable and has the spectral measure 
\begin{align*}
 \S_{ \mu } = \left(
\mathbf E  | \varphi (u) | ^{ 2}- | \mathbf E \varphi (u) | ^{ 2}
\right)\Leb(du) + \mathbf E  | \varphi (u) |^{ 2} \S.
\end{align*}
 \end{proposition}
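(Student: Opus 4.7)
The plan is to apply the law of total variance to $\P^\mu(f)$ conditioning on $\P$, and translate each piece into Fourier language so as to read off $\S_\mu$ from the Plancherel identity \eqref{eq:structure-def}. Throughout, I work with a Schwartz test function $f$, then extend to the relevant admissible class. Writing $\P=\sum_i\delta_{x_i}$ and noting that given $\P$ the clusters $\P_i$ are i.i.d. with law $\mu$, we have
\begin{align*}
\P^\mu(f)=\sum_i\int f(x_i+t)d\P_i(t),\qquad
\mathbf E[\P^\mu(f)\mid\P]=\sum_i \bar f(x_i)=\P(\bar f),
\end{align*}
with $\bar f(x):=\int f(x+t)d\mu_1(t)$ and $\mu_1=\mathbf E\P_1$ its intensity measure, so that $\text{Var}(\P^\mu(f))=\mathbf E\,\text{Var}(\P^\mu(f)\mid\P)+\text{Var}(\P(\bar f))$.

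For the second (mean) term I would use a direct change of variable $y=x+t$ to obtain $\hat{\bar f}(u)=\hat f(u)\,\mathbf E\varphi(u)$. Plugging this into the Plancherel identity \eqref{eq:structure-def} applied to $\P$ yields $\text{Var}(\P(\bar f))=(2\pi)^{-d}\int|\hat f(u)|^2|\mathbf E\varphi(u)|^2\,\S(du)$, which produces the second summand of $\S_\mu$ with coefficient $|\mathbf E\varphi(u)|^2$ (one checks consistency against Example \ref{ex:IPL}, where $\P_1=\delta_{U_\k}$ gives $|\mathbf E\varphi|^2=|\psi|^2$ and the formula \eqref{eq:IPL-SF} is recovered).

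For the first (fluctuation) term, conditional independence of the clusters gives $\text{Var}(\P^\mu(f)\mid\P)=\sum_i V(x_i)$ with $V(x):=\text{Var}\left(\int f(x+t)d\P_1(t)\right)$, and Campbell's formula yields $\mathbf E\,\text{Var}(\P^\mu(f)\mid\P)=\lambda_\P\int V(x)dx$. To compute $\int V(x)dx$ I would Fourier-invert $f$ inside the integrand,
\begin{align*}
\int f(x+t)d\P_1(t)=(2\pi)^{-d}\int\hat f(u)e^{iu\cdot x}\varphi(u)du,
\end{align*}
expand the squared modulus into a double integral in $(u,v)$, and integrate in $x$ to generate a factor $(2\pi)^d\delta(u-v)$. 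This collapses the double integral onto the diagonal and produces $\int V(x)dx=(2\pi)^{-d}\int|\hat f(u)|^2\bigl(\mathbf E|\varphi(u)|^2-|\mathbf E\varphi(u)|^2\bigr)du$, which is the contribution of the absolutely continuous part of $\S_\mu$.

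Adding the two contributions gives the claimed decomposition. The main obstacle is technical rather than structural: justifying Fubini and the emergence of the Dirac mass $\delta(u-v)$ from the $x$-integration. Here the hypothesis $\mathbf E(\#\P_1)^2<\infty$ is essential, since $|\varphi(u)|\leqslant\#\P_1$ ensures $\mathbf E|\varphi(u)|^2$ is uniformly bounded in $u$, so that all integrals converge absolutely for Schwartz $f$; one then extends to the class of $\S$-admissible functions by density, using Lemma \ref{lm:S-tempered} to control the tail of $\S$. A minor caveat is that the formula as written implicitly normalises the intensity $\lambda_\P=1$ (otherwise the absolutely continuous term carries a prefactor $\lambda_\P$), which appears already in the comparison with Example \ref{ex:IPL}.
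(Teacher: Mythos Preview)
The paper states this proposition without proof, so there is nothing to compare against at the level of argument. Your approach via the law of total variance conditioning on $\P$ is the natural one and is carried out correctly.

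One point worth flagging: your computation of the second term yields $|\mathbf E\varphi(u)|^2\,\S$, not $\mathbf E|\varphi(u)|^2\,\S$ as printed in the statement. Your version is the correct one: as you verify, only $|\mathbf E\varphi|^2\,\S$ reproduces the atomic part $\sum_{\m}|\psi(\m)|^2\delta_{\m}$ of \eqref{eq:IPL-SF} when $\P_1=\delta_{U_\k}$ (since then $\mathbf E|\varphi|^2\equiv 1$). The paper itself asserts immediately after the proposition that one recovers \eqref{eq:IPL-SF} in that case, which is only consistent with your coefficient. So the printed formula carries a typo, and your sentence ``adding the two contributions gives the claimed decomposition'' should be read as giving the corrected version. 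Your observation about the implicit normalisation $\lambda_\P=1$ is also correct.
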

 Note the absence of assumption on the perturbation moments (only the number of points is constrained).  
 We recover   \eqref{eq:IPL-SF}   when $ \#\P_{ 1} = 1$ a.s.
 We see that in general, if $ \P$ is  hyperuniform, $ \S$ vanishes in $ 0$, hence also $ \S_{ \mu }$ vanishes in $ 0$, and by Theorem  \ref{thm:general-hu}$,  \P^{ \mu }$ is still  hyperuniform.  Remark that Lemma \ref{lm:quadra-spectral} does not always apply to this form of structure factor, and, in contrast to the  i.i.d. perturbation, we can indeed exploit this to have a point process with arbitrarily high hyperuniformity exponent.  {Lotz and Klatt  \cite[(5.30)]{LotzKlatt} give a general formula where a point process is perturbed by a random measure, not necessarily atomic, to build non-negative hyperuniform random measures.}

Among stationary models mathematically treated, the only exception so far to the behaviour $ \s (u)\geqslant \sigma \|u\|^{2}$ around $ 0$ are GAF zeros (Proposition \ref{lm:gaf-Delta}) and shifted lattices (Example \ref{ex:shifted-lattices}). For the latter, at the opposite, the spectral measure vanishes in a neighbourhood of the origin; it is a member of the class of  {\it stealthy processes}, which are the topics of Section \ref{sec:stealthy}. To obtain an intermediary model, that is not formally periodic, but still presents an arbitrarily high level of rigidity, one can use a union of irrational shifted lattices, see Example  \ref{ex:irrational-lattices}.  
Let us give another class of models, with slightly more disorder, in the spirit of perturbed lattices, where the points of a lattice are perturbed by randomly rotated clusters.  
   Let $ \rho >0, p\in \mathbb{N}^{ *}$. Let $ \P_{ i} = \sum_{j = 1}^{ p}\delta _{ 2i\pi j \rho  + \theta_{ i} }$ on $  \mathbb C $ where the $ \theta_{ i}$  are i.i.d.   uniform on $ { [0,2\pi ]}$,  and let $ \mu _{ p}$ the common law of the $ \P_{i}$, and let $ \P=\Z^{d}$, recall the construction of $ \P^{\mu _{ p}}$ from above.

 \begin{theorem} ( \cite{Lr24}, Theorem 3 and  \cite[Example 5.16]{LotzKlatt})
 \label{thm:expo-prime}
  The  point process $ \P^{ \mu_{p } }$ is  $ p$-hyperuniform. \end{theorem}

{On this question of point processes with high  hyperuniformity, let us  point  to Section  \ref{sec:fair-partitions}, where we use fair partitions to obtain higher orders of  hyperuniformity.  }

\end{example}
   
\begin{example}
 
 \label{sec:partial-matching}
 
 Klatt, Last and Yogeshwaran  \cite{KLY} consider a special construction that can be seen as a dependently perturbed lattice. Let $ \P = \P^{ \Poi(a)}$  with intensity $ a>1$, let $ \Z^{ d} = \tau _{ U}\mathbb{Z} ^{ d}$ the usual shifted lattice with $ U\sim   \mathscr  U_{ [0,1]^{ d}}$ independent from $ \P$ (Section \ref{sec:perturbed-intro}). Then build an injective mapping $ T: \Z^{ d}\to \P$ that is invariant under translations, i.e. $ \tau _{ x}T \smallequlaw   T,x\in \mathbb{R}^{ d}$, by performing a  {\it stable matching:}\begin{itemize}
\item First match $ x\in \Z^{ d},y\in \P$ if they are mutual nearest neighbours, i.e. $ y$ is the closest point from $ x$ in $ \P$ and vice-versa, and put $ y = T(x)$. Call $ \Z^{ d}_{ \text{\rm{match}}}$ such matched points from $ \Z^{ d}$ and $ \P_{ \text{\rm{match}}}$ the matched points from $ \P$.
\item Remark that $ \Z^{ d}  \setminus \Z^{ d}_{ \text{\rm{match}}},\Z^{ d}_{ \text{\rm{match}}},\P  \setminus \P_{ \text{\rm{match}}}, \P_{ \text{\rm{match}}}$ are still stationary point processes. Repeat the procedure by matching mutual nearest neighbours of $\Z^{ d}  \setminus  \Z^{ d}_{ \text{\rm{match}}}$ and $ \P  \setminus \P_{ \text{\rm{match}}}$, putting each time $ y = T(x)$ if $ y$ and $ x$ are matched.
\item Then repeat the procedure by matching iteratively mutual nearest neighbours that have not been matched previously.
\end{itemize}
 This  procedure never  terminates globally, but each point of $ \Z^{ d}$ is eventually matched after finitely many iterations, hence $ T$ is well defined, stationarity follows from the stationarity at each step. We then have the following result. For   $ x = \k + U\in \Z^{ d}$, define $ U_{ \k} = T(x)-x$ and $   \mathsf U = \{U_{ \k};k\in \mathbb{Z} ^{ d}\}$, so that indeed $ T(\Z^{ d}) = \Z^{   \mathsf U}$ is a dependently perturbed lattice. Since $ a>1$, a fraction $ (1-a)$ of the Poisson points remain unmatched.
 
 \begin{theorem}[\cite{KLY}]
 $ T(\Z^{ d})$ is a stationary hyperuniform perturbed lattice and the $ U_{ \k}$ have an exponential tail: for some finite $ c>0,$ 
\begin{align*}
 \mathbf P (\|U_{ 0}\|>r)\leqslant ce^{ -cr^{ d}}.
\end{align*}
  \end{theorem}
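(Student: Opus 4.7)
The plan is to verify the three assertions of the theorem in order: (a) that $T(\Z^d)$ is a stationary perturbed lattice with a well-defined $\mathbb{Z}^d$-stationary perturbing field $\{U_\k\}$; (b) that $\|U_0\|$ has a Gaussian/Poisson type tail in $r^d$; and (c) that hyperuniformity follows from (b) through a mixing-type argument. Step (a) is essentially immediate: the joint distribution of $(\Z^d,\P)$ is invariant under $\mathbb{R}^d$-translations, and at every round of the matching algorithm the notion of ``mutual nearest neighbour among still-unmatched points'' is an intrinsic, measurable function of the pair of configurations; hence $T$ is equivariant, which in particular implies that $\{\tau_\m U_\k;\k\in\mathbb{Z}^d\}\stackrel{(d)}{=}\{U_\k;\k\in\mathbb{Z}^d\}$ for $\m\in\mathbb{Z}^d$, and $T(\Z^d)=\Z^{d,\mathsf U}$ in the notation of Section \ref{sec:perturbed-is-hu}.

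The heart of the proof is the exponential tail (b), and I expect this to be the main obstacle. The idea, going back to Holroyd--Peres style arguments, is that the event $\{\|U_0\|>r\}$ forces a local density anomaly. Concretely, I would show that if a lattice point at $\k+U$ is sent by $T$ outside of $B(\k+U,r)$, then there must exist some radius $\rho\leqslant r$ such that the ball $B(\k+U,\rho)$ contains strictly more points of $\Z^d$ than of $\P$ (otherwise one could invoke Hall's marriage theorem locally to find a partial stable matching inside the ball, contradicting that $\k+U$ stayed unmatched). Since $\mathbf E \P(B_\rho)=a\kappa_d\rho^d$ while $\mathbf E \Z^d(B_\rho)=\kappa_d\rho^d$ with $a>1$, a union bound over dyadic radii combined with the Chernoff inequality for the Poisson variable $\P(B_\rho)$ gives
\begin{align*}
\mathbf P(\|U_0\|>r)\leqslant \sum_{k:\;2^k\leqslant r}\mathbf P\left(\P(B_{2^k})\leqslant \kappa_d 2^{kd}+O(1)\right)\leqslant c\exp(-c r^d),
\end{align*}
after absorbing the polynomial loss from discretising $\rho$ and from replacing $\Z^d$ by its deterministic volume upper bound. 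The delicate point is to rule out ``long chains'' in the iterative matching: one should check that being unmatched after finitely many rounds is equivalent to being unmatchable in some local pigeonhole sense, which is essentially the stable-matching property.

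For (c), hyperuniformity, I would combine (b) with the mixing criterion of Theorem \ref{eq:mixing-perturbed}. The point is that, granted exponential tails, the ``region of influence'' of $U_\k$ in the Poisson configuration is concentrated in a ball of radius $O(\log^{1/d}$scale$)$, so that $U_\k$ and $U_{\k'}$ become asymptotically independent as $\|\k-\k'\|\to\infty$. More precisely, on the event $\mathcal E_{\k,\k',n}=\{\|U_\k\|\vee\|U_{\k'}\|\leqslant n/3\}$ with $n=\|\k-\k'\|/2$, one can truncate the matching to balls of radius $n/3$ around $\k+U$ and $\k'+U$ and observe that the two truncated matchings use disjoint portions of $\P$; by (b), $\mathbf P(\mathcal E_{\k,\k',n}^c)\leqslant c e^{-c n^d}$, which controls $\beta(\k-\k')$ by the same bound. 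The series $\sum_{\m\in\mathbb{Z}^d}\beta(\m)$ then converges, and Theorem \ref{eq:mixing-perturbed} yields the hyperuniformity of $T(\Z^d)=\Z^{d,\mathsf U}$. The subtle technical step here is the truncation lemma, which requires that the stable matching be ``essentially local'': cutting off the configuration outside a large ball only affects the matching inside a slightly smaller ball on an event whose probability is controlled by (b).
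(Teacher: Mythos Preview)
The survey does not supply its own proof here; it cites \cite{KLY} for both the tail bound and the hyperuniformity, and only remarks that the latter can be recovered from the mixing criterion of Theorem~\ref{eq:mixing-perturbed}. Your route for part~(c) is thus exactly what the text suggests, and part~(a) is fine.

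The real problem is your deterministic lemma in~(b). As stated --- ``there exists $\rho\le r$ with $\Z^d(B_\rho)>\P(B_\rho)$'' --- it is trivially true and useless: take $\rho$ small enough that $B(\k+U,\rho)$ contains only the lattice point $\k+U$ and (a.s.) no Poisson point, so $1>0$. Your union bound over dyadic $2^k\le r$ therefore includes terms of probability close to~$1$ for small~$k$ and cannot produce $e^{-cr^d}$. The Hall-theorem justification is also off: Hall's condition on concentric balls does not imply Hall's condition on all subsets, and even if a perfect matching inside $B_r$ existed, the \emph{stable} matching need not use it. The correct input from stability is that $\|T(x)-x\|>r$ forces every $y\in\P\cap B(x,r)$ to be matched to some $z\in\Z^d\setminus\{x\}$ with $\|z-y\|<\|x-y\|$, giving an injection $\P\cap B(x,r)\hookrightarrow\Z^d\cap B(x,2r)$ and hence $\P(B(x,r))\le\Z^d(B(x,2r))$. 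Chernoff then yields the tail, but only for $a>2^d$; the full range $a>1$ claimed in \cite{KLY} needs a further multiscale or percolation-type argument that is missing from your sketch. A similar caveat applies to the truncation in~(c): the event $\{\|U_\k\|\le n/3\}$ does not by itself make $U_\k$ a measurable function of $\P$ restricted to $B(\k,n)$, so a genuine stabilisation lemma for the iterated mutual-nearest-neighbour matching is still required before the $\beta$-mixing bound goes through.
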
  
  The proof can be also be deduced from Theorem  \ref{eq:mixing-perturbed} by  the same authors. In  \cite{KLY}, they furthermore prove that $ T(\Z^{ d})$ is number rigid, a concept that is introduced at Chapter  \ref{chap:rigid}.
 
 \begin{remark}
 A similar procedure can be conducted with $ a = 1$, and in this case the points of $ \P$ are also eventually exhausted, so that $ T$ is a bijection between $ \Z ^{ d}$ and $ \P$, but the matching properties are much less agreable; in particular, $ \|U_{ 0}\|$ has an infnite $ d$-th moment, whereas other Poisson matchings perform much better \cite{hoffman2006stable,HPPS}.
 \end{remark}
 
\end{example}

 \section{  Hyperuniform point processes are perturbed lattices}   
 \label{sec:hu-is-lattice}

One of the motivations for studying  hyperuniform  point processes is the low variance for linear statistics (Proposition \ref{prop:hyp-expo}), and more generally  that they are ``evenly'' distributed across space, at least they seem so visually. We try to formalise here this impression with the concept of allocation, and more generally of transport.  As we will see, this   provides some kind of converse statement to Theorem  \ref{thm:DFHL}, in that many  hyperuniform processes can be seen as a perturbed lattices.\\

From the point of view of transport, a sample of points is well distributed if one can divide up the space in cells of equal volume such that each cell can be associated to a nearby  point of the process, the ``center of the cell'' (note that this ``center'' can end up being outside the cell). One can think for instance of points as school locations in a city $ \C_{ n}$, and Lebesgue measure represents the distribution of the population, the goal being that each school is associated with an equal volume of population and that each citizen should not have to travel too far to go to the school it has been assigned to (which is not necessarily the closest school, since each school has a limited capacity). The
ideal repartition occurs when points (or schools) are distributed periodically as in a  lattice, we rather investigate here disordered samples. We show in the figure below three samples of points and a corresponding partition of a sphere in cells of equal volume using the so-called gravitational allocation  \cite{NSV} (showed to be close to optimal in magnitude).

\begin{figure}[h!]
\begin{center}
\includegraphics[width = 10cm]{illust/jalowy}\caption{Sending kids to school.  {\it Left.} Ginibre eigenvalues.  {\it Middle}. Discrete random matrix eigenvalues.  {\it Right.}  i.i.d. points.  {\it Ilustration by J. Jalowy  \cite{jalowy}}}
 
\label{fig:hawat-allocations}
\end{center}
\end{figure}

As we see, The Poisson /  i.i.d.  cells seem more elongated, giving rise to a higher transport cost for the typical individual, compared to the  hyperuniform samples, which is what we try to formalise in the next sections.

\subsection{Optimal transport, matching and allocation}  

We propose in this chapter a quantification of this  concept in terms of matching and allocation.  Let $ p\geqslant 1.$

\begin{itemize}
\item Let $  \mathbb  T  _{ n}^{ d} = [0,n)^{ d}$, and let $ \P_{ n} $ be a sample of $ n^{ d}$ points in  $  \mathbb  T  _{ n}^{ d}$. Call   {\it allocation} a measurable mapping $ T: \mathbb  T  _{ n}^{ d}\to \P_{ n}$ such that a.s. the volume  of the set of points sent to each $ x\in \P_{ n}$ is exactly $ 1$, i.e. $ \Leb(T^{ -1}(\{x\})) = 1$, $ x\in \P_{ n}$. Call $ p$-allocation cost of $ T$
\begin{align*}
 \mathsf C_{p}(T) = \int_{ \mathbb  T  _{ n}^{ d}}\|x-T(x)\|^{ p}dx,
 \end{align*}
 and call optimal $ p$-allocation cost $ \mathsf W _{p,\text{\rm{alloc}}}^{ p}( \mathbb  T  _{ n}^{ d},\P_{ n})$ the infimum of $ \mathsf C_{p}(T)$ over such allocations $ T$, the notation $ \mathsf W _{p}^{ p}$ is explained below. 
\item  Let $ \Z_{ n} =  \mathbb  T  _{ n}^{ d}\cap \mathbb{Z} ^{ d}$ the   lattice points of $  \mathbb  T  _{ n}^{ d}$, note that $\# \Z_{ n} = n^{ d}$. Call $ p$-matching cost of a bijection $ \sigma :\Z_{ n}\to \P_{ n}$ the quantity 
\begin{align*}
 \mathsf C_{p}(\sigma ) = \sum_{\k\in \Z_{ n}}\|\sigma (\k)-\k\|^{ p}.
\end{align*}
Call optimal $ p$-matching cost $ \mathsf W _{p,\text{\rm{match}}}^{ p}(\Z_{ n},\P_{ n})$ of $ \Z_{ n}$ to $ \P_{n}$ the minimum of $ \mathsf C_{p}(\sigma )$ over all such bijections $ \sigma .$ \\
\end{itemize}

 The concept of matching is in fact very close to that of perturbed lattices, as it means that there are variable $ U_{ \k}: = \sigma (\k)-\k$ such that $ \P_{ n} = \{\k + U_{ \k};\k\in \Z_{ n}\}$.   In general, of course, such $ U_{ \k}$ would  not be independent.  
 
  The matching and allocation costs are obviously different in general, but in the large sample asymptotics, we will see that they have the same magnitude for a given stationary point process $ \P$ restricted to $  \mathbb  T  _{ n}^{ d}$. From this perspective, the process $ \P$ will be stationary, i.e. invariant under $ \mathbb{R}^{ d}$-translations, and accordingly the field $   \mathsf U: = \{U_{ \k};\k\in \mathbb{Z} ^{ d}\}$ will be required to be invariant under $ \mathbb{Z} ^{ d}$-translations.
To avoid some technicalities for now, we study more generally a family of  point processes $ \P_{ n},n\geqslant 1$, and study whether there is explosion of the mean $ p$-transport cost per particle.
  
\subsection{Linear cost for large finite samples}  
 \label{sec:linear-cost-Wp}
 
The concepts of matching and allocation costs above are both instances of the  concept of $ p$-Wasserstein distance in optimal transport. Given two non-negative measures  $ \mu ,\nu $ with same mass on $ \mathbb{R}^{ d}$, call  {\it coupling} of $ \mu ,\nu $ a measure $ M$ on $ \mathbb{R}^{ d}\times \mathbb{R}^{ d}$ with 
\begin{align*}
M(\cdot \times \mathbb{R}^{ d}) = \mu  ,\;\;M(\mathbb{R}^{ d} \times \cdot ) = \nu.
\end{align*} Then define the $ p$-th order Wasserstein distance by
\begin{align}
\label{eq:def-Wpp}
 \mathsf W _{p}^{ p}(\mu ,\nu ) =\inf_{ M\text{\rm{ coupling}}}\int_{}\|x-y\|^{ p}M(dx,dy).\end{align}
 Remark that this quantity is symmetric in $ \mu $ and $ \nu .$
 
Call $ \Leb_{n}$ the Lebesgue measure restricted to $  \mathbb  T  _{ n}^{ d}$. Note that $ \#\Z_{ n} =\# \P_{ n} = \Leb_{ n}( \mathbb  T  _{ n}^{ d})= n^{ d}$, which is the reason why we employ a straight cubic window in this chapter. An allocation $T : \mathbb  T  _{ n}^{ d}\to \P_{ n}$ induces the coupling 
\begin{align*}
  M(A\times B) := \sum_{y\in \P_{ n}\cap B}\Leb_{ n}(A\cap T^{ -1}(\{y\})),
\end{align*}
the mass of points from $ A$ sent to $ B.$
A matching $ \sigma :\Z_{ n}\to \P_{ n}$   induces similarly a coupling
\begin{align*}
 M(A\times  B) = \sum_{x\in \Z_{ n}\cap A,y\in \P_{ n}\cap B}{\bf 1}\{ y = T(x) \}.
\end{align*}
Hence the optimal matching and allocation costs correspond to infima taken over specific classes of couplings, hence
\begin{align*}
 \mathsf W _{p}^{ p}(\Leb_{ n},\P_{ n})\leqslant \mathsf W _{p,\text{\rm{alloc}}}^{ p}(\Leb_{ n},\P_{ n}),\;\; \mathsf W _{p}^{ p}(\Z_{ n},\P_{ n})\leqslant \mathsf W _{p,\text{\rm{match}}}^{ p}(\Z_{ n},\P_{ n}).
\end{align*}
We in fact have  the reverse inequalities
$$ \mathsf W _{p}^{ p}(\Z_{ n},\P_{ n}) = \mathsf W _{p,\text{\rm{match}}}^{ p}(\Z_{ n},\P_{ n}),\; W_{ p}^{ p}(\Leb_{ n},\P_{ n}) = W_{ p,\text{\rm{alloc}}}^{ p}(\Leb_{ n},\P_{ n}),$$
see for instance \cite[Thm. 1.7]{Sant}.
 We expect all those costs to be similar, which we formalise in the next result.
 
 \begin{proposition}
 \label{prop:match-alloc}
Let $ \P_{ n},n\geqslant 1$  be  point processes with resp. $ n^{ d}$ points in $  \mathbb  T  _{ n}^{ d}$, and $ p\geqslant 1.$ 
There is a universal constant $ c_{ d,p}>0$ such that 
\begin{align}
\label{eq:lower-bound-Wp}
   \mathsf W_{ p}^{ p}(\Leb_{ n},\P_{ n})\geqslant c_{ d,p}n^{ d}.
\end{align}
Hence the  {\it linear cost} (proportional to $ n^{ d}$), is the least one can expect.
Then the two following are equivalent:\begin{itemize}
\item [(i)]There is a constant $ c_{ \text{\rm{match}}}$ such that
 $
 \mathbf E \mathsf W _{p}^{ p}(\P_{ n},\Z_{ n}) \leqslant c_{ \text{\rm{match}}}n^{ d},$
\item [(ii)] There is a constant $ c_{ \text{\rm{alloc}}}$ such that 
 $
 \mathbf E \mathsf W _{p}^{ p}(\P_{ n},\Leb_{ n})\leqslant c_{ \text{\rm{alloc}}}n^{ d}.
 $
\end{itemize}
\end{proposition}

\begin{proof}The lower bound is a consequence of the isoperimetric inequality, which gives  for any domain $ \Omega $ with volume $ 1$, and any $ y\in \mathbb{R}^{ d}$
\begin{align*}
 \int_{\Omega }\|x-y\|^{ p}dx\geqslant c_{ d,p}: = \int_{B(y,\kappa _{ d}^{ -1/d})}\|x-y\|^{ p}dx\text{\rm{\color{black}    with }}\kappa _{ d} = \Leb(B_{ 1}).
\end{align*}Then just make the summation over the cells $ \Omega _{ i}: = T^{ -1}(x_{ i}).$

The equivalence comes from the general triangular inequality between finite measures: to prove that (i) implies (ii), use  
\begin{align*}
 W_{ p} (\P_{ n} ,\Leb_{ n})\leqslant W_{ p} (\P_{ n},\Z_{ n} ) + W_{ p} (\Z_{ n} ,\Leb_{ n}).
\end{align*}
This inequality is proved in all textbooks about optimal transport, see for instance  \cite{Sant,Vil03}. One can obtain the estimate 
\begin{align*}
 W_{ p}^{ p}(\Z_{ n},\Leb_{ n})\leqslant cn^{ d}
\end{align*}
using the following elementary allocation: cut $  \mathbb  T  _{ n}^{ d}$ into $ n^{ d}$ small cubes with sidelength $ 1$ and define $ T_{ 0}: \mathbb  T  _{ n}^{ d}\to\Z_{ n}$  by
\begin{align*}
 T_{ 0}(\k + [0,1)^{ d}) = \{\k\};\k\in \Z_{ n}.
\end{align*}
It gives $ W_{ p}^{ p}(\Z_{ n},\Leb_{ n})\leqslant C_{ p}(T_{ 0}) \leqslant cn^{ d}.$
Then use the inequality $ (a + b)^{ p}\leqslant c_{ p}(a^{ p} + b^{ p}),a,b\geqslant 0$ to have (i) implies (ii). The same inequality after switching the roles of $ \Z_{ n},\Leb_{ n}$ gives (ii) implies (i). 
 
%
 
\end{proof}

An example of paramount importance in geometric probability is when $ \P_{ n}$ consists in   i.i.d.   points uniformly distributed over $  \mathbb  T  _{ n}^{ d}.$
 
 \begin{theorem}[AKT Theorem  \cite{AKT}]  
 \label{AKT}
Let $ \P_{ n}$ made up of $ n^{ d}$ i.i.d.   points uniform in $  \mathbb  T  _{ n}^{ d}$, and $ p\geqslant 1$. Then  (i) and (ii) hold  if and only if  $ d\geqslant 3$  \end{theorem}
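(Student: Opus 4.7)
By Proposition~\ref{prop:match-alloc} the bounds (i) and (ii) are equivalent, so my plan is to control the expected allocation cost $\mathbf{E}\mathsf W_p^p(\P_n,\Leb_n)$ from above when $d\geqslant 3$ and from below when $d\leqslant 2$.

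For the upper bound in $d\geqslant 3$, the idea is that Poisson-type fluctuations of a uniform sample become mild enough that the transport can be made essentially local. I would use a dyadic hierarchical construction: partition $\mathbf{C}_n$ into nested cubes $Q$ of sidelength $\ell_k=2^k$ for $k=0,\dots,\log_2 n$, and at each scale build a transport equalising the discrepancies $\P_n(Q)-\Leb_n(Q)$ between sibling cubes by exchanging mass between them. Since $\P_n$ is made of $n^d$ i.i.d.\ uniform points, $\textrm{Var}(\P_n(Q))\asymp \ell_k^d$, so at scale $k$ one moves of order $\ell_k^{d/2}$ mass over distance $O(\ell_k)$ in each of the $(n/\ell_k)^d$ cubes, for a scale-$k$ contribution of order $n^d \ell_k^{p-d/2}$ to the $p$-cost. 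When $p<d/2$ the geometric series in $k$ is summable and yields the linear bound $O(n^d)$, which already covers the key case $p=1$ whenever $d\geqslant 3$. For $p\geqslant d/2$ one complements this with a local matching step at the unit scale, relying on the fact that the typical nearest-neighbour distance in $\P_n$ is of constant order with exponential tails (so all moments are finite), giving $O(1)$ cost per point and $O(n^d)$ in total.

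For the lower bound in $d\leqslant 2$, I would use Kantorovich--Rubinstein duality together with Jensen's inequality: since both measures have total mass $n^d$,
\begin{equation*}
\mathbf{E}\mathsf W_p^p(\P_n,\Leb_n)\geqslant n^{-d(p-1)}\bigl(\mathbf{E}\mathsf W_1(\P_n,\Leb_n)\bigr)^p,\qquad \mathsf W_1(\P_n,\Leb_n)=\sup_{\|f\|_{\mathrm{Lip}}\leqslant 1}\int f\,d(\P_n-\Leb_n).
\end{equation*}
Testing against a mean-zero $1$-Lipschitz bump such as $f(x)=(n/2-\|x-x_0\|)_+$ minus its average, centred in the cube, one has $\textrm{Var}\bigl(\int f\,d\P_n\bigr)=\int f^2\asymp n^{d+2}$ because the points are i.i.d.\ uniform. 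Anticoncentration (a central-limit or Paley--Zygmund argument) then gives $\mathbf{E}\mathsf W_1(\P_n,\Leb_n)\gtrsim n^{d/2+1}$, and inserting this into the Jensen bound yields $\mathbf{E}\mathsf W_p^p\gtrsim n^{d+p(1-d/2)}$, strictly superlinear when $d=1$ for every $p\geqslant 1$, which already rules out (ii).

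The genuine obstacle is the critical dimension $d=2$: the single test function above is tight only at the linear order $n^d=n^2$ and fails to detect the expected logarithmic correction. I plan to recover it by a multi-scale argument, summing $O(\log n)$ nearly independent variance contributions from Lipschitz test functions supported at geometrically spaced dyadic scales, or, equivalently, by estimating an $\dot H^{-1}$-type norm of $\P_n-\Leb_n$ through a Fourier computation in which the relevant radial integral diverges logarithmically precisely at $d=2$. This would yield $\mathbf{E}\mathsf W_p^p(\P_n,\Leb_n)\gtrsim n^2(\log n)^{p/2}$, slightly but strictly superlinear, and is, together with the refined upper bound covering all $p\geqslant 1$ in $d\geqslant 3$, where the main technical effort of the proof lies.
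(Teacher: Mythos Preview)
The paper does not prove this theorem: it is quoted with the citation [AKT] as a classical result, and the discussion moves directly to generalisations and to the hyperuniform case. There is thus no in-paper argument to compare your attempt against.

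On the sketch itself, the overall architecture is the standard one, and your lower-bound plan (single-scale duality in $d=1$, multi-scale test functions or an $\dot H^{-1}$ Fourier computation for the logarithm in $d=2$) is sound. The upper bound, however, has a genuine gap. Your dyadic scheme yields a scale-$k$ contribution $n^d\ell_k^{\,p-d/2}$ to the $p$-cost, and the sum over scales is $O(n^d)$ only when $p<d/2$; for $p\geqslant d/2$ it is dominated by the top scale and gives $n^{d/2+p}$, already superlinear for $d=3,\ p=2$. The proposed fix, a ``local matching step at the unit scale'', does not help: a local matching can only be completed once the cube discrepancies have been equalised, and equalising them via the dyadic transport is precisely the step whose cost blows up. Equivalently, the dyadic construction has displacement tail $\mathbf P(D>t)\asymp t^{-d/2}$, so its $p$-th moment diverges for $p\geqslant d/2$. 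Obtaining linear cost for \emph{all} $p\geqslant 1$ in $d\geqslant 3$ requires a transport with genuinely light displacement tails (e.g.\ gravitational allocation in $d\geqslant 3$, or Talagrand's generic-chaining refinement of the AKT matching), which is a different construction, not a patch on the naive dyadic one.
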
  

This result admits many generalisations, it is  in particular valid for processes with asymptotically integrable covariance, see for instance  \cite{LrY,BDG,HPPS}. 
Hence in dimension $ d\geqslant 3$,  hyperuniform or not, a standard  point process has a satisfying behaviour in terms of transport / allocation / matching.

  In dimension $ d = 2$, on the other hand, one cannot find a nice allocation from $  \mathbb  T  _{ n}^{ d}$ to $ \P_{ n}$, in the sense that, by the AKT theorem
\begin{align*}
\lim_{ n} \frac{ \mathbf E \mathsf W_{ 2,\text{\rm{alloc}}}^{ 2}(\P_{ n},\Leb_{ n})}{n^{ d}}  \to \infty .
\end{align*}
This can be observed on Figure \ref{fig:hawat-allocations}, the right sample is formed by independent points, and the cells are elongated, generating a large transport cost.  The other samples,  eigenvalues of random matrices, have more spherical cells, yielding a lower cost. We will see now that the reason is that such samples are asymptotically hyperuniform, as we saw previously.

Pioneer works in this direction are  \cite{jalowy,prod2021contributions}, giving results for the finite Ginibre ensembles.  
  Let us give a result unifying  the works  \cite{BDG,LrY}. 
As we saw at Section \ref{sec:hu-finite}, the  hyperuniformity of a finite sample is preferentially treated in the spectral domain with the discrete spectral measure $ \S_{ \P_{ n}}(u),u\in \mathbb{Z}_n  ^{ d}$, see Proposition  \ref{lm:spectral}.   
 The work  \cite{LrY} is based on Sobolev inequality, inspired by  \cite{BL21}, the method allows to derive  transport bounds for a measure $ \mu $ on $  \mathbb  T  ^{ d}$ in terms of $ \F\mu $ through smoothing and reverse Sobolev inequalities. The drawback is that this bound is valid for the toric Wasserstein distance $  \widetilde{   \mathsf W}_{ p}^{ p}$, i.e. when the distance $ \|x-y\|^{ p}$ is replaced by the toric distance $ d_{  \mathbb  T  _{ n}^{ d}}(x,y)^{ p} = \inf_{ \k\in \mathbb{Z} ^{ d}}\|x-y + \k n\|^{ p}$ in  \eqref{eq:def-Wpp}. When $ n$ goes to $ \infty $, the influence of boundaries in the toric distance will vanish under some additional assumptions (see the next section).  The work of Butez, Dallaporta,  and Garcia-Zelada \cite{BDG} gives a more direct geometric method to upper bound the optimal transport distance. We state the results for a general   random measure $ \M_{ n}$ as the methods generalise immediately.

  \begin{theorem}
  \label{thm:BDG}  Let $n\geqslant 1, \M_{ n}$ a non-negative    random measure on $  \mathbb  T  _{ n}^{ d}$ with $   \M_{ n}(  \mathbb  T  _{ n}^{ d}) = n^{ d}$ a.s.. 
  \begin{itemize}
\item {(a)} \cite{LrY} Assume $ \M_{ n}$ is $  \mathbb  T _{ n}^{ d}$-invariant,  and  that   for some finite  $ c,\varepsilon >0 $, \begin{itemize}
\item {(i)} for some $ f\in \mathscr B_{ c}( \mathbb{R}^{ d})$ such that $ \int_{}f\neq 0$, for $R\in [1,n]$,
$$  \textrm{Var}\left(\P_{ n}(f_{ R})\right)\leqslant c 
\begin{cases} 
R^{ d}$  if $d\geqslant 3\\
 R^{ d}\ln(R)^{ -1-\varepsilon }$  if $d = 1,2\end{cases}$$
\item or (ii) its spectral measure $ \S_{ n}$ satisfies  for  some $ \rho   _{ 0}>0$, for $ 0<\rho   <\rho   _{ 0}$,
\begin{align}
\label{eq:ass-BL}
\S_{ n}(B_{\rho   })\leqslant c\times 
 \begin{cases} 
\rho   ^{ d}$  if $d\geqslant 3\\
\rho   ^{ d}{ | \ln(\rho ) | ^{ -1 - \varepsilon }}$  if $d = 1,2
  \end{cases}
\end{align}
\end{itemize}
holds. Then  for some constant $ c'$  depending  on $ c,\rho _{ 0} ,d$
\begin{align}
\label{eq:linear-cost-finite}
 \widetilde{   \mathsf W}_{ 2}^{ 2}(\Leb_{ n},\M_{ n}) \leqslant c'n^{ d}.
\end{align} 
\item  {(b)} \cite{BDG}  Let $ p\geqslant 1$. Assume that $ \M_{ n}$ has intensity $ \Leb_{  \mathbb  T  _{ n}^{ d}}$, and   that for some finite $c, \varepsilon >0$, for any square $ B\subset  \mathbb  T  _{ n}^{ d}$ with $ \Leb(B)>1,$
\begin{align}
\label{eq:ass-variance-bdg}
\sup_{ n} \mathbf E  \left|
 {  \M_{ n}(B)-\Leb(B)}\right|  ^{ p}\leqslant  { c} \Leb(B)^{ p/2}\times 
 \begin{cases} 
{\ln(\Leb[2](B))^{ -p - \varepsilon }}$  if $d = 2.\\
1$  if $ d\neq 2.
  \end{cases}
\end{align}
\end{itemize}
Then   for some constant $ c'$ not depending  on $ n$
\begin{align*}
   \mathsf W_{ p}^{ p}(\Leb_{ n},\M_{ n})\leqslant c'n^{ d}.
\end{align*}
  
   \end{theorem}

  Point (b) generalises a purely geometric construction from  \cite{prodhomme} focused on the Ginibre process. For this reason, it does not depend  on $ p$ and gives a result in the non-toric metric. It  does not require invariance formally, but the  results require some form of asymptotic invariance up to the second order.
 Hence this result is more flexible, but in the case $ p = 2,d = 2$,  \eqref{eq:ass-variance-bdg} is a log order below (i), so that (b) is more useful for $ p\neq 2$.  Our formulation is not exactly the same as in  \cite{BDG}, but it can be obtained by choosing 
\begin{align*}
 g(t) = 
 \begin{cases} 
 \ln(t)^{ -1-\varepsilon/p }$  if $d = 2\\
 t^{  \frac{ 1}{ d}   -\frac{ 1}{2} }$  if $d\neq 2
  \end{cases}
\end{align*}
 
 {Note that  \cite{HPPS,HueLebl}, discussed at the next section, give results in infinite volume, but also need an intermediary finite volume bound at some point.} Let us also compare the assumptions with the definition of $ \alpha $-hyperuniformity for finite systems at  \eqref{eq:def-finite-HU}, because it is very similar. In fact, the assumptions above do  not even require $ \alpha $-hyperuniformity for some $ \alpha >0$, as only a logarithmic reduction is required.
    Let us give the main ideas for (a), from \cite{LrY}.

   \begin{proof}[Sketch of proof of (a)] Let us first prove (i) implies (ii). Assume (i) holds.
   Let $ \rho >0$ such that $ |  \hat f(u) | > \kappa : = \frac{ 1}{2} | \int_{}f | $ on $ B_{ \rho }$. With $ \hat f_{ R} = R^{ d} \hat f(R.)$, Lemma  \ref{lm:scaling} yields  
\begin{align*}
 \kappa \S_{ n}(B_{  \rho R^{ -1}})\leqslant R^{ -2d}\int_{B_{ \rho R^{ -1} }}  | \hat f_{ R} | ^{ 2}d\S_{  { n}} \leqslant R^{ -2d}\int_{\mathbb{R}^{ d}} | \hat f_{ R} | ^{ 2}d\S_{ { n}} = R^{ -2d}(2\pi )^{ d}  \textrm{Var}\left(\M_{ n}(f_{ R})\right) ,
\end{align*} hence   (ii) is a consequence of (i), so we only assume (ii) hereafter.
   
   The method consists first in smoothing the measure $ \M_{ n}$ on $  \mathbb  T  _{ n}^{ d}$ through a $  \mathbb  T  _{ n}^{ d}$-convolution (denoted $ \ast_{   \mathbb  T  }$) with a  probability distribution $ \mu $. We obtain a non-negative random measure  denoted by $ \M^{ \mu }_{ n} = \M_{ n} \ast_{  \mathbb  T  }  \mu $, still with mass $ n^{ d}.$ Standard considerations on distributional Fourier transform yields that its spectral measure is $ \S_{ { n}}^{ \mu } = \S_{ { n}} | \F\mu | ^{ 2} $ (\cite{Rudin2}). The main ingredient is the general inverse Sobolev bound  (\cite[(7)]{BL21})
\begin{align}
\label{eq:inv-sobolev-0}
  \widetilde{   \mathsf W}_{ 2}^{ 2}(\Leb_{ n},\M^{ \mu }_{ n}) \leqslant c_{ d}n^{ 2} \S_n ^{ \mu }( \| u \| ^{- 2}1_{ u\neq 0}) =  c_{ d}n^{  2}  \S_{ { n}}( \| u \|^{ -2} 1_{ u\neq 0}) | \F_{ \mu }(u)   |  ^{ 2}.
\end{align}
Apply this inequality for a particular choice of $ \mu $ where $ \F\mu $ is supported by $ B(0,1)$ and $ \mu $ has a finite second moment (use for instance $ \mu  = \F (1_{ B_{ 1/q}})^{ q}\Leb$ for sufficiently large $ q$ and Lemma \ref{lm:fourier-ball}). We obtain
\begin{align}
\label{eq:inv-sobolev}
  \widetilde{   \mathsf W}_{ 2}^{ 2}(\Leb_{ n},\M^{ \mu }_{ n}) \leqslant c'_{ d}n^{ 2} \S_{  { n}}(\|u\|^{ -2}1_{ 0<\|u\|\leqslant 1}).
\end{align}
Then the layer cake formula yields  with $ \varphi (u) = \|u\|^{ -2}1_{ 0<\|u\|\leqslant 1}$,
\begin{align*}
 \S_{ { n}}(\varphi ) = \int_{0}^{ 1    }\S(\{u:\varphi (u)>t\})dt = \int_{0}^{ 1    }\S(B_{ t^{ -1/2}})dt.
\end{align*}

Differentiating between the cases $ d = 1,d = 2,d\geqslant 3$, Assumption  \eqref{eq:ass-BL} yields that the right-hand term of  \eqref{eq:inv-sobolev} is in $ O(n^{ d})$.
To have a bound on $  \tilde \W_{ 2}^{ 2}(\Leb_{ n},\M_{ n})$, remark that the convolution defines naturally a transport between $ \M_{ n}$ and $\M^{ \mu }_{ n}   $ with total cost in $ O(n^{ d})$, and then apply the triangle inequality of Wasserstein distance.
   
   \end{proof}
   
   The quantity central in the proof is  $ \S_{ \M_{ n}}(\|u\|^{ -2}{\bf 1}\{ 0<\|u\|\leqslant 1 \})$, coming from the inverse Sobolev inequality. We will see that a similar quantity, related to   {\it  Coulomb energy} in dimension $ 2,$ is  crucial  in the continuous case.
    An issue is that if $ \M_{n}$ is the restriction of a stationary measure $ \M$, $ \M_{n}(  \mathbb  T  _{ n}^{ d})$ is random and the previous bound cannot apply directly, we give in the next section the results of \cite{LrY,HueLebl} where this problem is handled.

 \subsection{Infinite samples and Coulomb energy}  
 
\label{sec:finite-Coulomb}

 We investigate here how to translate the previous results to an infinite stationary    point process $ \P$ with unit intensity, to  obtain the representation  as a perturbed lattice, i.e. $ \P=\P^{   \mathsf U}$ for some $ \mathbb{Z} ^{ d}$-stationary   $   \mathsf U=\{U_{\k};\k\in \mathbb{Z} ^{d}\} $, as announced in the chapter introduction.
Previous results yield that for a Poisson (or  {\it Poisson-like}) process in dimension $ d\geqslant 3$, or a hyperuniform process in dimension $ 2$, on a large window $  \mathbb  T  _{ n}^{ d}$ where $ \P$ has approximately $ n^{d}$ points, there should be a nice matching between $ \P\cap  \mathbb  T  _{ n}^{ d}$ and $ \Z \cap \T[n]$, or a nice allocation from $  \mathbb  T  _{ n}^{ d}$ to $ \P\cap  \mathbb  T  _{ n}^{ d}$. The matching can equivalently be written as a bijection $ \sigma $ between $ \mathbb{Z} ^{d}$ and $ \P$, with $ \sigma (\k):=\k+U_{\k}$. The length  $\| U_{0}\|$ can be interpreted as the law of the distance between a typical point of $ \P$ and the point of $ \mathbb{Z} ^{d}$ it has been assigned to; a good matching is such that $ \mathbf E \|U_{0}\|^{p}<\infty $ for sufficiently high $ p$, we shall typically investigate this question for $ p=2$. 

In analogy with the finite case, we call invariant allocation to $ \P$ a random measurable mapping $ T:\mathbb{R}^{d}\to \P$ whose law is invariant under shifts, i.e. 
 $ 
\tau _{y}T \smallequlaw T,y\in \mathbb{R}^{ d}$, and such that a.s., the cells $ T^{-1}(\{x\});x\in \P$ form a partition of $ \mathbb{R}^{ d}$ by cells of volume $ 1$,   called a  {\it fair allocation}. As before, the law of $ \|T(0)\|$ and its moments are a good indicator of how good an allocation. It is possible to show under mild assumptions that there is a stationary allocation $ T$ with $ \mathbf E \|T(0)\|^{p}<\infty $ for some $ p>0$  if and only if  $ \P$ is a $ p$-perturbed lattice, i.e. $ \P=\Z^{   \mathsf U}$ where $   \mathsf U$ is an invariant perturbation $   \mathsf U$ with $ \mathbf E \|U_{0}\|^{p}<\infty .$ We shall state the next results in terms of matchings, but it could equivalently be stated in terms of allocations, recalling Proposition  \ref{prop:match-alloc}.

As a preliminary remark,  with or without  hyperuniformity, there always exist such a perturbation $   \mathsf U$ or   allocation $ T$ under the minimal assumption that $ \P$ is ergodic, see for instance  \cite{HPPS} where  a matching is built with the Gale-Shapley stable marriage algorithm (see a description at Section  \ref{sec:partial-matching}). This procedure might actually yield suboptimal matchings; it is proved in  \cite{HPPS} that  this stable marriage yields $ \mathbf E \|U_{0}\|^{ d}=\infty $ in dimension $ d\geqslant 3$, when other matching procedures admit exponential moments.

\begin{theorem}
\label{thm:infinite-wass-d3}
Let  $ \P$ be a \LSI stationary point process in $ \mathbb{R}^{ d},d\geqslant 2$.  \begin{itemize}
\item [(i)]  Assume one of the equivalent conditions
 \begin{align}
 \label{eq:coulomb-E-var}
  \int_{ }  R^{ -2d + 1}\textrm{Var}\left(\P(B_{ R})\right)dR<\infty   \\
\label{eq:cont-coulomb-energy}
 \S_{ \P}(\|u\|^{ -2}1_{ 0<\|u\|<1})<\infty .
\end{align}
 Then $ \P$ is a $ 2$-perturbed lattice
\item [(ii)]  Assume $  \textrm{Var}\left(\P(B_{ R})\right) = O(R^{ d})$. Then $ \P$ is a $ p$-perturbed lattice for $ p<d/2$.
\end{itemize}
 \end{theorem}  
 Let us see what result to use for each dimension, having in mind that a standard Poisson-like process satisfies $  \textrm{Var}\left(\P(B_{ R})\right) = O(R^{ d})$, and for  hyperuniform this is $ o(R^{ d}).$
\begin{itemize}
 \item $ d\geqslant 5$. The second result is stronger as it works for some $ p >2$.
\item If $ d  = 3,4$, The first point gives $ p = 2$.
\item If $ d = 2$, we need in both cases some reinforcment of  hyperuniformity, namely the integrability of $ R^{ -3}  \textrm{Var}\left(\P(B_{ R})\right)$, and point (i) can apply.
\end{itemize}

 Point (ii) results from a geometric construction from  \cite{HPPS},   written for Poisson processes. See  Theorem 1-(ii) and \cite[Remark 1]{LrY} for a generalisation. For $ d = 2,3,4$, \cite[Theorem 1]{LrY} gives the result if $ \C$ is integrable, which is a slightly stronger assumption. 
  For $ d = 2$, the first connection with optimal transport was done in 
  \cite{LrY}, who gave  the sufficient condition  \begin{align*}
\int_{\mathbb{R}^{ 2}  \setminus B_{ 1}}  \ln(\|x\|) | \C | (dx) <\infty,
\end{align*}  stronger than  \eqref{eq:coulomb-E-var} but of the right magnitude.
Then \cite{HueLebl} noticed the link with the Coulomb energy and established  that condition  \eqref{eq:cont-coulomb-energy},  i.e. {\it finite Coulomb energy}, is sufficient.
This condition appeared first in   \cite{SodinElectric} in relation to the existence of an electric field generated by $ \P$ (see Section \ref{sec:coulomb-alloc}).     The study of the Coulomb energy is ubiquitous in statistical physics  \cite{serfaty,Lewin}. \cite{SodinElectric}  and \cite{HueLebl} proved also the equivalence with the condition  $$ \sum_{n\geqslant 0} 4^{ -n} \textrm{Var}\left(\P(B_{ 2^{ n}})\right)<\infty, $$ called condition $ (   \mathsf {HU}_{ \ast})$ in \cite{HueLebl}, as a reinforcement of the plain hyperuniformity condition (Theorem \ref{thm:general-hu}-(i)).
  \cite{DFHL} give counter-examples showing that  $ \P$ being a $ L^{ 2}$-perturbed lattice does not imply  \eqref{eq:coulomb-E-var}-\eqref{eq:cont-coulomb-energy} in dimension $ 2$, but it implies plain  hyperuniformity, see Theorem  \ref{thm:DFHL}-2.  


\begin{proof}[Partial proof of Theorem  \eqref{thm:infinite-wass-d3}]

To prove the existence of a global matching with finite expected square distance, one must prove first that finite restricted samples have a good matching distance to Lebesgue measure
as in \eqref{eq:linear-cost-finite}, and then let the window grow to $ \mathbb{R}^{ d}$. Let us examine more closely Equation  \eqref{eq:inv-sobolev}, bounding the Wasserstein distance for a finite sample, recalling that  $ \F\mu $ can be chosen to be bounded and supported by $ B(0,1)$. The general necessary condition to have a linear transport cost  as in  \eqref{eq:linear-cost-finite} for the restriction  $ \P_{ n} = \P\cap \T[n]
$ is  that
\begin{align}
\label{eq:finite-coulomb-discrete}
 \sup_{ n}\S_{ \P_{ n}}(\|u\|^{ -2}1_{ 0<\|u\|\leqslant 1})<\infty ,
\end{align}  analogue to  \eqref{eq:cont-coulomb-energy} in the continuous space.
Let us decompose the proof in lemmae.
\begin{lemma}
\label{lm:4-10}
 \eqref{eq:finite-coulomb-discrete} implies that $ \P$ is a $ L^{ 2}$-perturbed lattice.
\end{lemma}

The second step of the proof is to establish that if  \eqref{eq:coulomb-E-var}
-  \eqref{eq:cont-coulomb-energy} hold, then  so does \eqref{eq:finite-coulomb-discrete}. Surprisingly, starting from  \eqref{eq:coulomb-E-var} gives a more direct argument.

\begin{lemma}
\label{lm:2-transport}
\eqref{eq:coulomb-E-var} implies  \eqref{eq:finite-coulomb-discrete}.
\end{lemma}

We will finally have to prove the equivalence \eqref{eq:coulomb-E-var}
-  \eqref{eq:cont-coulomb-energy}.
 
 \begin{proof}[Sketch of proof of Lemma  \ref{lm:4-10}]
 
  {
 
Once \eqref{eq:linear-cost-finite} holds for all $ n\geqslant 1$, it remains to prove that it implies the existence of an infinite matching.
 One of the difficulties is that $N: =  \#\P_{ n}$ is not equal to  $ n^{ d}$ a.s., hence one applies Theorem  \ref{thm:BDG}  to a renormalised lattice restriction $ \tilde \Z_{ n} $   with random mass $ N$, and shows  it  admits a good matching with $ \P_{ n}$. Then one uses the compacity of the space of matchings between atomic measures with same intensity to show that asymptotically, we have a $ L^{ 2}$ matching between $ \Z^{ d}$ and $ \P.$
 
Let us therefore start by normalising the mass. Denote by $ \tilde \M_{n}$  the renormalised version of $ \P_{n}$, i.e. $$ \tilde \M_{n}=\frac{n^{d}}{\P_{n}(  \mathbb  T  _{n}^{d})}\P_{n}$$ with $ \tilde \M_{n}=\Leb_{n}$ (arbitrarily) if $ \P_{n}(  \mathbb  T  _{n}^{d})=0$. In particular, $ \tilde \M_{n}$ and $ \Leb_{n}$ have the same mass a.s..
  An optimal transport cost for each $ n$  is always achieved by a random measure $ M_{ n}$ on the product space $ \mathbb{R}^{ d}\times  \mathbb{R}^{ d}$ whose marginals are $ \tilde \M_{ n}$ and   $ \Leb_{ n}$,   $$   \widetilde{   \mathsf W}_{ 2}^{ 2}(\Leb_{ n}, \tilde \M_{ n})  = \int_{ ( \mathbb  T  _{ n}^{ d})^{ 2}}d_{  \mathbb  T  _{ n}^{ d}}(x,y)^{ 2}M_{ n}(dx,dy).$$
Since $ \sup_{ n}\mathbf E\left[
 M_{ n}([0,1]^{ 2})
\right]<\infty $, the tightness result from \cite[Thm. 14.16-(iv)]{kallenberg2002foundations} yields a infinite measure $ M$ on $ \mathbb{R}^{ d}\times \mathbb{R}^{ d}$ limit of a subsequence $ M_{ n_{ j}}\to M$. It is not hard to show that as $ n_{ j}\to \infty $, $ M$ inherits the following properties from $ M_{ n}$, see  \cite[Proposition 11]{LrY}:\begin{itemize}
\item $ M$ is stationary
\begin{align*}
 M(\tau _{ x}A,\tau _{ x}B) \equlaw  M(A,B),x\in \mathbb{R}^{ d}.
\end{align*}
\item $ M$ is a  transport between (i.e. a coupling of)  $ \P$ and $ \Leb$: $ M(\cdot ,\mathbb{R}^{ d}) = P, M(\mathbb{R}^{ d},\cdot ) = \Leb$.
\item The assumption  \eqref{eq:cont-coulomb-energy} on the structure factor validates  \eqref{eq:inv-sobolev-0} and yields    by  \eqref{eq:inv-sobolev} a linear transport cost for $ \P_{ n}$, which can be transferred to $ \tilde \M_{ n}$, and passes to the limit: for $ B\subset \mathbb{R}^{ d}$ bounded
\begin{align*}
 \int_{B\times \mathbb{R}^{ d}}\|x-y\|^{ 2}M(dx,dy)<\infty .
 \end{align*}
\end{itemize} 
Some infinite triangle inequality can be applied to show that there is a stationary coupling $ M'$ between $ \P$ and $ \Z^{ d}$ with finite local cost also satisfying $ \mathbf E M'(B,\Z^{ d})<\infty .$ The final point is to show that $ M'$ can be chosen to be an actual matching, i.e. there is a stationary one-to-one mapping $ T:\P_{ n}\to \Z^{ d}$ such that 
\begin{align*}
 M'(A,B) = \#\{x\in \P_{ n}\cap A:T(x)\in \Z^{ d}\cap B\}.
\end{align*}
This is in fact a tedious step because of the renormalisation procedure,  one can use  \cite{erbar2023optimal}.
 More details can be found in  \cite[Proposition 11]{LrY}.
}

 \end{proof}

\begin{proof}[Proof that  \eqref{eq:cont-coulomb-energy} $  \;\Leftrightarrow \;$  \eqref{eq:coulomb-E-var}]
 It is based on the same considerations than  in the proofs of Theorems \ref{thm:general-hu} and  \ref{thm:main-periodic}. We also use the identity
\begin{align*}
  \int_{B_{ 1}}\|u\|^{ -2}\S(du) = \int_{1}^{ \infty }
 \S(B_{ t^{ -1/2}})dt  =  &\int_{1}^{\infty } \S(B_{ R^{ -1}})2RdR.
\end{align*}

 {Use Lemma \ref{lm:new-Adhik-SF}-1 with $ f = 1_{ B_{ 1}},\gamma  = 1 $. The identity above yields\begin{align*}
 \int_{B_{ 1}}\|u\|^{ -2}\S(du)   \leqslant   c\int_{1}^{ \infty }R^{ -2d + 1}  \textrm{Var}\left(\P(B_{ R})\right)dR 
\end{align*}
which gives the direct implication.
For the converse statement, let $ \xi (R) = R^{ d}\S(B_{ R^{ -1}})$. We use Lemma \ref{lm:new-Adhik-SF}-2. Define $
\sigma  (R): = R^{ -1} + \xi  (R) + \sigma _{ \xi  ,1}(R).$ Then   
\begin{align*}
\int_{}  \textrm{Var}\left(\P(B_{ R})\right)R^{ -2d + 1}dR \leqslant &c \int_{}R^{ d}\sigma  (R)R^{ -2d + 1}dR\\
\leqslant & c\int_{1}^{ \infty }R^{ -d}dR + \int_{1}^{ \infty }R^{ -d + 1}\xi (R)dR+ \int_{1}^{ \infty }R^{ -d + 1}\int_{0}^{ 1    }\xi  (Rt^{ \frac{ 1}{d + 1}})t^{ -\frac{ d}{d + 1}}dtdR\\
 \leqslant  &c'  + \int_{ 1}^{ \infty }R \S(B_{ R^{ -1}})dR+ \int_{0}^{ 1    }\left(
\int_{0}^{ \infty }T^{ -d + 1}t^{ \frac{ d-1}{d + 1}}\xi (T)t^{ -  \frac{ d-1}{d + 1}} t^{ -\frac{ 1}{d + 1}}dT
\right)dt\\
\leqslant &c' +  \frac{ 1}{ 2}   \int_{ B_{ 1}}\|u\|^{ -2}\S(du) + \int_{ 0}^{ 1    }t^{ -1 + \frac{ d-1}{d + 1}}dt\int_{ 0}^{ \infty }T^{ -d + 1}\xi (T)dT<\infty 
\end{align*}
which concludes the proof.}

\end{proof}

\begin{proof}[Proof of Lemma   \ref{lm:2-transport}]

Recall that $ \S_{ \P_{ n}}$ is supported by $ \mathbb{Z}^{ d} _{ n}$, so that $ \S_{ \P_{ n}}(B_{ \varepsilon }  \setminus \{0\}) = 0$ for $ \varepsilon <2\pi n^{ -1}$. For $ \varepsilon \geqslant 2\pi n^{ -1}$, recall that with  \eqref{eq:half-max-inequality}, we have $  | \widehat{ 1_{ B_{ 1}}}(u) | \geqslant \frac{ \kappa _{ d}}{2}1_{ B_{ \rho _{ d}}}(u)$, hence   
\begin{align*}
 \S_{ \P_{n}}(B_{ \rho _{ d}\varepsilon }) = \S_{ \P_{n}}(  | 1_{ B_{ \rho _{ d}\varepsilon }} | ^{ 2})\leqslant \frac{ 4}{\kappa _{ d}^{ 2}} \varepsilon ^{2d}\S_{\P_{n}}(  | \widehat{ 1_{ B_{  \varepsilon ^{ -1}}}} | ^{ 2}) = c_{ d}\varepsilon ^{ 2d}  \textrm{Var}\left(\P_{ n}(B_{ \varepsilon ^{ -1}})\right) =  c_{ d}\varepsilon ^{ 2d}  \textrm{Var}\left(\P (B_{ \varepsilon ^{ -1}})\right)
\end{align*}because the restriction of $ \P$ to $ B_{ \varepsilon ^{ -1}} \subset  \mathbb  T  _{ n}^{ d}$ coincides with $ \P_{ n}$. Finally, for some $ c<\infty ,$
\begin{align*}
 \S_{ \P_{n}} (\|u\|^{ -2}1_{ 0<\|u\|<1}) =& \int_{1}^{ (2\pi )^{-2}n^{ 2}}\S_{ n}(B_{ t^{ -1/2}})dt\\
 & \leqslant c \int_{1}^{ \infty }t^{ -d}  \textrm{Var}\left(\P(B_{ t^{  1/2}})\right)dt\\
   =& c\int_{}R^{ -2d + 1}  \textrm{Var}\left(\P(B_{ R})\right)dR<\infty .
\end{align*}

\end{proof}
\end{proof}

In conclusion, the dichotomy between perturbed lattices and disordered  hyperuniform processes seems rather to be a continuum.
 In dimension $ 1$,  hyperuniformity is not enough to guarantee linear $ \W_{2}^{2}$ cost, but we do not investigate this question as 1D optimal transport tools are pretty specific, see  \cite[Th.2-(ii)]{LrY}.\\

  \section{Hyperuniform point processes and fair partitions}

  A countable random collection of closed sets $  \mathscr  P = \{B_{ i};i\geqslant 1\}$  partitionning $ \mathbb{R}^{ d}$ a.s. and whose joint law is invariant under translations is  called an  {\it equivariant tessellation of the space}. We implicitly assume throughout that the $ B_{ i}$ have a negligible boundary. We do not introduce the formalism for random partitions, and explore some examples below; it is a by-product of the theory of random closed sets, see details in  \cite{Mol05}.  If the sets have a.s. the same volume, their union is a.s. $ \mathbb{R}^{ d}$, and their pairwise intersections are a.s. all negligible, they are said to form a  {\it fair equivariant partition}.   
A transport plan between Lebesgue measure on $ \mathbb{R}^{ d}$ and a unit intensity stationary point process $ \P$, also called  {\it semi-discrete transport}, is a random mapping $ T:\mathbb{R}^{ d}\to \P$ assigning to each point of $ \P$ a cell with unit volume, i.e. a.s. $ \Leb(T^{ -1}(y)) = 1,y\in \P.$ In this case, $ T$ is called a  {\it fair allocation} from $ \Leb$ to $ \P$. Equivalently, the $ T^{ -1}(y),y\in \P$ form a fair partition of the space.  The equivariance property in this case means $$ \{T(x + y),y\in \mathbb{R}^{ d}\} \stackrel{(d)}{=}  \{T(y) + x,y\in \mathbb{R}^{ d}\},  \forall  x\in \mathbb{R}^{ d},$$ or for short $ \tau _{ x}T \smallequlaw   T.$    
The relation between  fair partitions and  hyperuniform point processes goes both ways:\begin{enumerate}
\item First, we show how to exploit fair partitions to build  hyperuniform processes, based on  \cite{torquato-tilings,KLLY,LotzKlatt}.
\item Then, we show how to build such an allocation from a point process $ \P$, and in dimension $ 2$, this procedure is more successful if $ \P$ is  hyperuniform.
\end{enumerate}
We can actually cycle through this procedure. Many random partitions of the space by finite-volume cells are built from stationary point processes (typically Poisson). Then one obtains a  hyperuniform process as in item 1, and from there one can build a new partition more efficiently, and sample a new point process, etc... We do not investigate such iterations theoretically, but it is a numerically efficient method to produce  hyperuniform samples, see Chapter \ref{ch:numerics}.  {We also discuss a method to build an equivariant fair partition not originating from a point process, based on  \cite{LotzKlatt}.}

    {

\subsection{Hyperuniform constructions based on fair partitions}
\label{sec:fair-partitions}

Fair partitions seem to offer the most flexible framework to build $ \alpha $-hyperuniform processes with arbitrarily high $ \alpha .$ Gabrielli, Joyce and Torquato  \cite{torquato-tilings} have proposed a general heuristics to build such  process, starting from an equivariant fair partition $  \mathscr  C = \{B_{ i};i\geqslant 1\}$,  and attaching to each cell a cluster of points with the right moment properties. Assume hereafter up to rescaling that all $ B_{ i}$ have volume $ 1$ for notational simplification. We will see that the constructions proposed here are not modified if the $ B_{ i}$ are modified on a negligible set, hence we more generally call fair partition a collection of unit volume sets $ B_{ i}$ with negligible boundary whose interior sets are pairwise disjoint and whose union is $ \mathbb{R}^{ d}$ (note that the $ B_{ i}$ need not be bounded).

 Recently, Klatt et al.  \cite{KLLY} and Lotz and Klatt  \cite{LotzKlatt} have given formal and general frameworks and proofs for the procedure of Gabrielli et al.  \cite{torquato-tilings}, their construction allows furthermore non-convex cells and more randomisation than in   \cite{torquato-tilings}. They claim that the construction of a fair disordered partition based on the STIT model  \cite{NagWei05} works with this construction; yielding  disordered  hyperuniform processes with a high exponent. Let us present this construction in the framework of tilings, i.e. with a  fair partition $  \mathscr  C$ where the $ B_{ i}$ are all unit volume compact convex polyhedra.

For a finite measure $ \mu $  on $ \mathbb{R}^{ d}$, $\m = (m_{ 1},\dots ,m_{ d})\in \mathbb{N}^{ d}$, denote  its $ \m$-th order moment by
\begin{align*}
 \mo_{ \m}(\mu ) = \displaystyle\int_{C}x^{ \m}\mu (dx)\text{\rm{ where }}x^{ \m} = x_{ 1}^{ m_{ 1}}\dots x_{ d} ^{ m_{ d}},
\end{align*} if it exists. For instance, the  {\it barycenter}, also called  {\it centroid} or  {\it center of gravity} of a set $ C$ is defined by the mean of the corresponding uniform random variable 
\begin{align*}
 \text{\rm{centr}}(C) = \mo_{ (1,\dots ,1)}(1_{ C}\Leb) = \displaystyle\int_{C}xdx.
\end{align*}
 {For $ m\in \mathbb{N}$, introduce the tuple of all moments of order $ m$ or less $$ \mo_{ m}(\mu ) = (\mo_{ \m}(\mu ), | \m | \leqslant m)$$ where $  | \m | : = \sum_{i}m_{ i}$ if these moments are well defined and finite. }Given a finite set of points seen as an atomic measure $ P =\sum_{i}\delta _{ x_{ i}}$ and a bounded set $ C\subset \mathbb{R}^{ d}$, say that $ P$ is  a  {\it $ m$-averaging set} for $ C$   if they share all moments up to order $ m$, i.e. $ \mo_{ m}(P) = \mo_{ m}(1_{ C}\Leb)$.  $ P$ is also called a   {\it $ t$-design}, see   \cite{LotzKlatt}  and references therein. They discuss the existence of such sets and show in Proposition C.10 that for $ m\in \mathbb{N}$, there is $ n_{ m,d}$ such that for $ n\geqslant n_{ m,d}$ and $ C\subset \mathbb{R}^{ d}$ a convex polyhedron, there exists an averaging set of cardinality $ n$ for $ C$, the important point being that $ n_{ m,d}$ does not depend on $ C.$
Now, assume one has for some $ m\in \mathbb{N},n\geqslant n_{ m,d}$ and each unit volume convex polyhedron $ C$ a  finite point process $ \P_{ m,n,C}$ with a.s. $ n$ points which is a.s. a $ m$-averaging set of $ C$ (this procedure is also detailed in  \cite{LotzKlatt}). Then the construction of the stationary point process goes as follows:\begin{itemize}
\item For each cell $ B_{ i}$ of $  \mathscr  C$, draw $ \P_{ m,n,B_{ i}}$ independently of   the others $B_{ j}$  and $ \P_{ m,n,B_{ j}},j\neq i$.
\item Define the random counting measure  $
 \P: = \sum_{i}\P_{ m,n,B_{ i}}. $
 
 \end{itemize}
 See  \cite[Figure 1]{LotzKlatt} for a visual illustration of this procedure.
We give below conditions under which $ \P$ is hyperuniform with exponent $ \alpha  \geqslant  2(m + 1)-d$. Let us give a few attention points first:\begin{itemize}
\item It is not required that $ \P_{ m,n,B_{ i}}\subset B_{ i}$, so that only the information $ \{\mo_{ m}(B_{ i}),i\geqslant 1\}$ is needed to implement the construction of  the point process $ \P_{ m,n,B_{ i}}$.   
\item If the points of $ \P_{ m,n,B_{ i}}$ are not contained in $ B_{ i}$, $ \P$ might not correspond to a simple point process, in the sense that points of $ \P_{ m,n,B_{ i}}$ and $ \P_{ m,n,B_{ j}}$ might fall on the same location for some $ i\neq j.$
\end{itemize}
Since the $ B_{ i}$ have negligible boundaries and form a partition, we can introduce the zero cell $ B_{ *}$ as the a.s. unique cell containing   $ 0.$    They introduce the following concept: for $ \alpha \geqslant 0,$ a stationary $ L^{ 2}_{ loc}$ random measure is  {\it beyond} $ \alpha $-hyperuniform, that we denote by $ \alpha ^{  + }$-hyperuniform, if the structure factor satisfies as $ \varepsilon \to 0$
\begin{align*}
 \S(B_{ \varepsilon }) = o(\varepsilon ^{ d + \alpha }).
\end{align*}
In particular, $ 0^{  + }$-hyperuniformity is plain hyperuniformity (Theorem \ref{thm:general-hu}).
\begin{theorem} [\cite{LotzKlatt}, Th. 5.2 and Ex. 5.7]
\label{thm:LotzKlatt}
 Let $  \mathscr  C = \{B_{ i};i\geqslant 1\}$ a random equivariant   fair partition such that each $ B_{ i}$ is a unit volume convex polyhedron. Let $ m\in \mathbb{N},n\geqslant n_{ m,d},\alpha  = 2(m + 1)-d$. Assume that 
\begin{align*}
 \mathbf E \left[
\text{\rm{diam}}(B_{ *})^{ \max(2 (m + 1) ,d)}
\right]<\infty ,
\end{align*}
and for each $ i$ let $ \P_{ m,n,B_{ i}}, i\geqslant 1$  be a point process that is a.s. a $ m$-averaging set for  $ B_{ i}$, independent conditionally on $  \mathscr  C$. Then, provided this construction is measurable,
\begin{align*}
 \P = \sum_{i}\P_{ m,n,B_{ i}}
\end{align*}
is a $ \alpha ^{  + }$-hyperuniform $ L^{ 2}_{ loc}$ stationary random measure, and corresponds to a hyperuniform simple point process   if a.s. $ \P_{ m,n,B_{ i}}\cap \P_{ m,n,B_{ j}} = \emptyset $ for all $ i\neq j$.
 \end{theorem} 
 
 We insist that  hyperuniformity holds even if $ \alpha  = 0$.
 One can notice that any method to draw a point $ Y_{ B_{ i}}$ independently in each cell  $ B_{ i}$ provides a ``cluster'' $ \P_{ 0,1,B_{ i}} = \{Y_{ B_{ i}}\}$, which ultimately yields, in dimension $ 1$ and $ 2$,  a  hyperuniform point process $ \P$. As noted   in  \cite{KLLY}, some choices might actually improve the  hyperuniformity. If for instance $ Y_{ B_{ i}}$ has the uniform distribution in $ B_{ i}$, then the method yields a  hyperuniform process in any dimension, whose  hyperuniformity exponent is $ 2$ under some moment assumption on the diameter of $ B_{ *}$  (this requires a specific proof  \cite{KLLY}, see Theorem  \ref{thm:KLLY} below).  Without entering in the details, the fact that all $ B_{ i}$ have a.s. identical volume yields that $ B_{ *}$ is also the law of the  {\it typical cell}  \cite{skm}.
 
  If on the other hand $ Y_{ B_{ i}}$ is deterministic conditionally on $ B_{ i}$ and placed  at the barycenter, i.e.
\begin{align*}
 Y_{ B_{ i}}: = \mo_{ (1,\dots ,1)}(B_{ i}),
\end{align*}
then $ \P_{ 1,1,B_{ i}} = \{Y_{ B_{ i}}\}$ is a $ 1$-averaging set, which provides a  hyperuniform process, in dimension $ d\leqslant 4$, with exponent $ \alpha \geqslant 4-d$. When one controls the correlation of some geometric quantities between distant cells, one can show that the  hyperuniformity exponent is higher, possibly up to $ 4$  \cite{torquato-tilings}. It is not clear in general what is the optimal  hyperuniformity exponent for this type of construction for a given original point process.

\begin{theorem} {  \cite[Ex. 4.7]{KLLY}}
\label{thm:KLLY}
   Let $  \mathscr  P = \{B_{ i};i\geqslant 1\}$ an equivariant fair partition of $ \mathbb{R}^{ d}$ with unit volume, and conditionally to $  \mathscr  P$, let $ X_{ i},i\geqslant 1$ independent points where $ X_{ i}\sim  \mathscr  U_{ B_{ i}}$. Then $ \P: = \{X_{ i};i\geqslant 1\}$ is a stationary hyperuniform point process.  Furthermore, its spectral measure is
\begin{align}
\label{eq:SF-partition-allocation}
 \S(du) = (1- \mathbf E  | \widehat{ 1_{ B_{ *}}}(u) | ^{ 2})du
\end{align}
where $ B_{ *}$ is the a.s. unique cell containing $ 0$, and $ \P$'s  hyperuniformity exponent is at most $ 2.$ $ \P$ is $ 2$-hyperuniform if furthermore 
\begin{align*}
 \mathbf E \left[
\displaystyle\int_{B_{ *}}\|x\|^{ 2   }dx<\infty 
\right].
\end{align*} \end{theorem}    
   
 We give the proof for completeness.

\begin{proof}
Let $ f$ a non-negative test function. Then the conditional variance formula gives
\begin{align*}
  \textrm{Var}\left(\P(f)\right) = &\mathbf E  \textrm{Var}\left(\sum_{i}f(X_{ i})\Big|  \mathscr  P\right) +  \textrm{Var}\left(\mathbf E\left[
 \sum_{i}f(X_{ i}) \Big| \mathscr  P
\right] \right).
\end{align*}
For the second term, we have conditionally on $  \mathscr  P,$
\begin{align*}
 \mathbf E\left[
 \sum_{i}f(X_{ i})
\right] = \mathbf E  \left[
\sum_{i}\underbrace{\textrm{Vol}(B_{ i})}_{ = 1} \int_{B_{ i}}f(x)dx
\right] = \int_{\mathbb{R}^{ d}}f
\end{align*}which has $ 0$ variance. For the first term, write $ x\sim y$ if $ x,y$ are in the same cell  of $  \mathscr  P.$ The first term gives, using stationarity and conditional independence
\begin{align*}
 \textrm{Var}\left(\P(f)\right) = &\sum_{i}  \textrm{Var}\left(f(X_{ i})\Big |  \mathscr  P)\right)\\
  = &\mathbf E \left[
\sum_{i}\int_{B_{ i}}f(x)^{ 2}dx
\right]-\mathbf E \sum_{i}\left(
\int_{B_{ i}}f
\right)^{ 2} \\
 = &\int_{}f^{ 2}-\mathbf E \left[
\sum_{i}\int_{B_{ i}\times B_{ i}}f(x)f(y)dxdy
\right]\\ 
  = &\int_{}f^{ 2}-\mathbf E \left[
\int_{\mathbb{R}^{ d}\times \mathbb{R}^{ d}}{\bf 1}\{ x\sim y \}f(x)f(y)dxdy
\right]\\
   = &\int_{}f^{ 2}-\int_{\mathbb{R}^{ d}\times \mathbb{R}^{ d}}\mathbf P (x\sim y)f(x)f(y) dxdy\\
    = &\int_{}f^{ 2}-\int_{\mathbb{R}^{ d}\times \mathbb{R}^{ d}}\mathbf P (0\sim z)f(x)f(x + z) dxdz.
\end{align*}
Hence with Definition \eqref{def:covariance}, and $ \mathbf P (0\sim z) = \mathbf P (z\in B_{ *}),$
\begin{align*}
 \C(dz) = \delta _{ 0}-\mathbf P (z\in B_{ *})dz
\end{align*}
which gives the announced spectral measure. 
 We have with  Lemma \ref{lm:quadra-spectral}, given that $ B_{ *}$ has volume $ 1$ a.s., the spectral density
\begin{align*}
\s(u) =  1- \widehat{ \mathbf E \left[
1_{ B_{ *}}
\right]}(u) = &\mathbf E \int_{B_{ *}}(1-e^{ -iu\cdot x})dx, u\in \mathbb{R}^{ d}\\
  = &\int_{\mathbb{R}^{ d}}(1-e^{ -iu\cdot x})\mathbf P (x\in B_{ *})dx\\
  \geqslant & \sigma \|u\|^{ 2}
\end{align*}
for some $ \sigma >0$, using the symmetry $ \mathbf P (x\in B_{ *}) = \mathbf P (-x\in B_{ *})$, provided by translation invariance. When 
\begin{align*}
 \mathbf E \displaystyle\int_{B_{ *}}\|x\|^{ 2   }dx<\infty ,
\end{align*}
doing a Taylor expansion and applying Lebesgue's theorem yields $ \s(0) = 0, \nabla \s(0) = 0$ by symmetry of $ \s$, hence 
\begin{align*}
 \s(u) =\mathbf E  \left[
\displaystyle\int_{B_{ *}}(u\cdot x)^{ 2}dx
\right](1 + o(1)),\end{align*}
indeed the dominating term does not vanish and the exponent is $ 2.$
\end{proof}

 \draft{
 The second procedure, by Gabrielli et al.   \cite{torquato-tilings}, is more powerful, as it potentially gives a  process with  arbitrarily high exponent. This procedure consists in placing a cluster of points in a specific way 
 independently in each cell of a fair allocation. Let us first give a self-contained result for when  one point is placed in each cell. In this case, to produce hyperuniformity, the point has to be placed at the gravity center of the cell. Note that, if cells are not convex, the gravity center might not be contained in the cell.
 
  For $ C\subset \mathbb{R}^{ d}$, if $ \int_{C}\|x\|dx<\infty $, introduce its  {\it centroid} 
 $ \text{\rm{\color{black} centr}}(C) = \int_{C}xdx$.
 For $ x\in \mathbb{R}^{ d}$, denote by $  { i_{ x}} = i_{ x}(  \mathscr  P)$ the (random) index with $x\in B_{ i_{ x}}$, and let $ D_{ x} = X_{ i_{ x}}-x$.  {\color{red} To be able to formulate a sufficient condition, denote the joint law of $ (D_{ x},D_{ y})$ by $ \mu _{0, x-y}$, and that of $ D_{ 0}$ by $ \mu _{ 0}$.}
 
  {\color{red} leave??}
 
  \begin{theorem}
  \label{thm:gravity-center}
  Let $  \mathscr  P =  \{B_{ i};i\geqslant 1\}$ an equivariant fair partition with unit cell volume. Assume \begin{itemize}
\item [(i)] $ \mathbf P (\|D_{ 0}\|>t) = O(t^{ -d-4})$
\item[(ii)]  {
$\displaystyle
 \int_{\mathbb{R}^{ d}}\displaystyle\int_{(\mathbb{R}^{ d} \times \mathbb{R}^{ d})^{ 2}}\|a\|^{ 2}\|b\|^{ 2} | \mu _{ z}-\mu _{ 0} \otimes \mu _{ 0} | (da,db)dz<\infty .
$ }
\end{itemize}
Then the (non-necessarily simple) point process $ \P = \sum_{i}\delta _{ X_{ i}}$ is  hyperuniform with exponent $ \alpha  \geqslant  4.$
  \end{theorem}  
Remark that the exponent can be in theory arbitrarily high, as $ \P$ can be for instance the shifted lattice, but it requires some unexcpected cancellations.

\begin{remark}[Discussion of (i) and (ii)]
\label{rk:poisson-centroids}
Many works give a bound on the tail of the cell's diameter  \cite{HPPS,CPPR,NSV}, see also a moment bound in Theorem \ref{thm:BDG}, from  \cite{BDG}, hence condition (i) seems possible to achieve. Condition (ii) is related to $ \beta $-mixing. It is more complicated to study correlations between cells at distant locations. A construction based on a non-hyperuniform point process $ \P_{ 0}$ is actually unlikely to satisfy (ii), as by Remark  \ref{rk:non-mixing}  (see  \cite{KLLY}) a transport from Poisson (or Poisson-like) to Lebesgue cannot be mixing, and one can see $ \P = \sum_{i}\delta _{ X_{ i}}$ as a perturbation of $ \P_{ 0}$. This procedure is still probably powerful,  \cite{torquato-tilings} argue that   the exponent is $ \alpha \in (4-d,4)$. Adapting the proof below, one can likely get an index $ \alpha >2$ depending on the growth rate of the covariance if (ii) is not satisfied.
\end{remark}

\begin{proof}First note that (i) implies that  a.s., for all $ i$, $ \int_{ B_{ i}}\|x\|dx<\infty $ hence $ X_{ i} = \text{\rm{\color{black} centr}}(B_{ i})$ is well defined.

We use the same method as previously, with $ f$ a test function with compact support.
This time, though, conditionaly on $  \mathscr  P$, the $ f(X_{ i})$ are deterministic. Assume without loss of generality $  { \rm supp}(f)\subset B_{ 1}   $. For a scaling $ R>1, $ let $ U_{ i}^{ R} := {\bf 1}\{ X_{ i}\in B_{ 2R}    \}$.   For a symmetric matrix $ H$ and $ v\in \mathbb{R}^{ d}$, write $ Hv^{ 2} = vHv^{ T}$.

By Taylor formula, since $  { \rm supp}(f_{ R})\subset B_{ R},   $\begin{align*}
\P(f_{ R}) = & \sum_{i} f_{ R}(X_{ i}) = \sum_{i}U_{ i}^{ R}f_{ R}(X_{ i})\\
  = &\sum_{i}U^{ R}_{ i}\int_{B_{ i}}f_{ R}(x)dx + \sum_{i}U^{ R}_{ i}\int_{B_{ i}}(f_{ R}(X_{ i})-f_{ R}(x))dx\\
   = &\int_{\mathbb{R}^{ d} }f_{ R} + \Delta _{ R} + \sum_{i}U^{ R}_{ i}\nabla f_{ R}(X_{ i})\cdot \underbrace{\int_{B_{ i}}(X_{ i}-x)dx}_{ = 0} + \sum_{i}U_{ i}^{ R}\int_{B_{ i}}\int_{0}^{ 1    }t\text{\rm{\color{black} Hess}}_{ f_{ R}}(x + t(X_{ i}-x))(X_{ i}-x)^{ 2}dxdt
\end{align*}
where
\begin{align*}
  | \Delta _{ R} |  = \left|
\sum_{i}U_{ i}^{ R}\int_{B_{ i}}f_{ R}-\sum_{i}\int_{B_{ i}}f 
\right|=\left|
   \int_{B _{ R}}  1_{X_{ i_{ x}}\notin B_{ 2R} }f_{ R}(x)dx  
\right| \leqslant  \|f\|\int_{B_{ R}}{\bf 1}\{ X_{ i_{ x}}\notin B_{ 2R} \}dx.
\end{align*}
Hence with Cauchy-Schwarz inequality and assumption (i)
\begin{align*}
 \mathbf E \Delta _{ R}^{ 2}\leqslant \|f\|^{ 2}\int_{B_{ R}^{ 2}}\sqrt{\mathbf P (\|D_{ x}\|>R)\mathbf P (\|D_{ y}\|>R)}dxdy\leqslant \|f\|^{ 2}\Leb(B_{ R})^{ 2}O(R^{ -d-4}) = O(R^{ d-4}).
\end{align*}

Regarding the other term, recall that $ \text{\rm{\color{black} Hess}}_{ f_{ R}} = R^{ -2}\text{\rm{\color{black} Hess}}_{ f}(R^{ -1}\cdot )$.  Let $ H_{ f}^{ R} (x) =  \text{\rm{\color{black} Hess}}_{ f}(R^{ -1}x)$. Define 
\begin{align*}
 \Xi^{ R}_{ x} = U_{ i_{ x}}^{ R}\int_{0}^{ 1    }t H_{ f}^{ R}(x + tD_{ x})D_{ x}^{ 2}dxdt.
\end{align*}
Then with the previous expansion
\begin{align*}
  \textrm{Var}\left(\P(f_{ R})\right) =& R^{ -4}  \textrm{Var}\left(\int_{\mathbb{R}^{ d}}  \Xi_{ x}^{ R}dx\right) + O(R^{d -4})\\
   = &R^{ -4} \int_{\mathbb{R}^{ d}\times \mathbb{R}^{ d}}  \textrm{Cov}\left(\Xi_{ x}^{ R},\Xi^{ R}_{ y}\right)dxdy + O(R^{d -4}).
\end{align*}
It remains to show that this integral is in $ O(R^{ d}).$

\begin{align*}
  \textrm{Cov}\left(\Xi_{ x}^{ R},\Xi^{ R}_{ y}\right)  = &\int_{0}^{ 1    }\int_{0}^{ 1    }ts \int_{\mathbb{R}^{ d}\times \mathbb{R}^{ d}}1_{x +  a\in B_{2 R}}1_{ y + b\in B_{ 2R}}H_{ f}^{ R}(x + ta)a^{ 2} H_{ f}^R{ (y + sb)}b^{ 2}(    d\mu _{ 0,x-y} -d\mu _{ 0}\times \mu _{ 0} )(a,b)   dtds\\
\int_{\mathbb{R}^{ d}\times \mathbb{R}^{ d}} \textrm{Cov}\left(\Xi^{ R}_{ x},\Xi^{ R}_{ y}\right)dxdy  
  \leqslant  &\|H_{ f}\|^{ 2}\int_{(\mathbb{R}^{ d}\times \mathbb{R}^{ d})^{ 2}}\|a\|^{ 2}\|b\|^{ 2} 
1_{x +  a\in B_{2 R}}1_{ y + b\in B_{ 2R}}
 | d\mu _{ 0,x-y} -d\mu _{ 0}\times \mu _{ 0} |(a,b)  dxdy\\
  \leqslant  &\|H_{ f}\|^{ 2}\int_{(\mathbb{R}^{ d}\times \mathbb{R}^{ d})^{ 2}}\|a\|^{ 2}\|b\|^{ 2} 
1_{x\in B(-a,2R) }   
 | d\mu _{ 0,z} -d\mu _{ 0}\times \mu _{ 0} |(a,b)  dxdz\\
\leqslant &\|H_{ f}\|^{ 2}\Leb(B(0,2R))\int_{\mathbb{R}^{ d}}{\int_{\mathbb{R}^{ d}\times \mathbb{R}^{ d}}\|a\|^{ 2}\|b\|^{ 2} | d\mu _{ 0,z} -d\mu _{ 0}\times \mu _{ 0} |(a,b)  }dz = O(R^{ d}).
\end{align*}
Hence we have $  \textrm{Var}\left(\P(f_{ R})\right) = O(R^{ d-4})$, thereby concluding the proof.
\end{proof}

Without any decay assumption like in (ii), one can still prove that the process is $ \alpha $-hyperuniform for $ \alpha \geqslant 4-d$ (interesting only if $ d = 2,3$).
 }

 \begin{exercise}
 Replace a random partition by a set of probability measures $ \pi  _{ i}$ such that $ \M: = \sum_{i}\pi  _{ i}$ is a \wsrm, then one obtains a WSSPP $ \P = \{Z_{ i}\}$, where $ Z_{ i}\sim \pi _{ i}.$ If $ \M$ is  hyperuniform, then so is $ \P.$
 \end{exercise}
 
 \begin{exercise}
 Instead of putting one $ Z_{ i}$ in each cell $ C(x_{ i})$, put a whole point process $ \{Z_{ i,1},\dots ,Z_{ i,m}\}$. Then it sill gives a  hyperuniform  point process . Or the $ Z_{ i,j}$ have to be independent for a given $ j?$
 \end{exercise}

  We present now a few methods to divide the space into random cells of equal volume, i.e. produce a fair partition. Along the same principles as for  {\it disordered} point processes (Section  \ref{sec:disordered}), we are looking for disorered partitions, not periodic tilings. There are other partitions  that we shall also try to avoid,    {\it  aperiodic tilings}, such as the  Penrose tiling  \cite{penrose-tilings},   also expected to yield a  hyperuniform random measure. In some constructions, we start from a unit intensity point process $ \P$, often Poisson, and build a fair partition adapted to $ \P.$ We present here two procedures where rigourous results were obtained, and give other possible constructions at Chapter  \ref{ch:numerics}.
  
  \subsection{Stable marriage}
  
  There is a simple and general way to build a partition from a stationary point process $ \P$. The construction is based on some parameter $ R$ going from $ 0$ to $ \infty $, which can be seen as the time of evolution, see Figure  \ref{fig:mariage}:\begin{itemize}
\item Grow a ball simultaneously and progressively $ B(x,R)$  around each point $ x\in \P$ as $ R$ grows from $ 0$ to $ \infty $.
\item The territory is progressively allocated to the first point claiming it. Therefore, even when $ R $ is such that $ \Leb(B(0,R)) = 1$, there might be points $ x\in \P$ with a territory volume $ <1$, because of other  (close-by) points getting first some territory contained in $ B(x,R)$.
\item Continue the growth and allocation in this way with $ R\to \infty $ until each point has an effective territory of volume $ 1.$
\end{itemize}
  This process is studied in  \cite{hoffman2006stable}. It is called Gale-Shapley algorithm in a non-spatial context, or  {\it stable marriage}, because it satisfies the following property: for any $ y\in \mathbb{R}^{ d}$ allocated to $ x\in \P$, the match $ y\to x$ is the best possible in the sense that  for any other $ y'\in \mathbb{R}^{ d},x'\in \P$, 
\begin{align*}
  | x-y | \leqslant \min(  | x'-y | , | x-y' | ).
\end{align*}
Note that there is a negligible set of unallocated points that we do not consider. Also, in general, many cells will not be topologically connected.
  The good news is that it converges under minimal assumptions. 
  
  \begin{figure}[h!]
  \begin{center}
  \includegraphics[width = 10cm]{illust/stable-holroyd}\caption{  \cite{hoffman2006stable}, Fig.2. The construction of the stable marriage allocation for two different values of $ R.$}
  \label{fig:mariage}
  \end{center}
  \end{figure}

  \begin{theorem}[  \cite{hoffman2006stable}, Th. 5] For $ \P$ a stationary point process, almost surely, in the stable mariage allocation, all points of $ \P$ are allocated a territory of volume $ 1$, forming a fair partition. Furthermore, a.s.,\begin{itemize}
\item Each territory is a finite union of bounded open connected sets,
\item Any bounded subset of $ \mathbb{R}^{ d}$ only intersects finitely many territories.
\end{itemize}

   \end{theorem}  
   
   As already explained, a partition is  {\it well-behaved} when one can obtain a good bound on the diameter of the typical cell, ideally when one can prove exponential decay.
   It has been proved that the stable mariage partition is badly behaved, even for a Poisson point process, and even when there are other well-behaved partitions.
 
  \begin{theorem}[  \cite{HPPS}] Let $ \P = \P^{ \Poi(1)}$, and $ X$  the element of $ \P$ to which $ 0$ has been allocated (well defined a.s.). Then \begin{itemize}
\item For $ d = 1,2, \mathbf E \|X\|^{ d/2} = \infty $
\item For $ d \geqslant 3, \mathbf E \|X\|^{ d} = \infty .$
\end{itemize}
   
   \end{theorem}  
   
   The results for $ d = 1,2$ actually align with what is known for optimal transport to Poisson points, see Theorem  \ref{AKT}, but it also known that in dimension $ d\geqslant 3$, there are well-behaved transport for Poisson point processes, see  \cite{AKT,CPPR} and the next subsection. 
   The results of  \cite{HPPS} are actually stated for a one-to-one matching between two independent realisations, but considerations of optimal transport yield that the result holds for allocations, see Proposition  \ref{prop:match-alloc}, or  \cite[(ii), p.270]{HPPS}, \cite{erbar2023optimal} for a general approach to optimal transport questions for stationary random measures.

  \subsection{Optimal transport Laguerre cells / power diagrams}
  \label{sec:laguerre-theor}
  
  Given some (finite or infinite) point configuration $ P = \{x_{ i};i\geqslant 1\}$, and some weights $ \mu  = (\mu _{ i},i\geqslant 1)$, the Laguerre cell of point $ x_{ i}$ within some window $ B\subset \mathbb{R}^{ d}$ is defined as 
\begin{align*}
 \text{\rm{\color{black} Lag}}(x_{ i}; P,\mu ) = \{y\in B : \|y-x_{ i}\|-\mu _{ i}<\min_{ j\neq i}(\|y-x_{ j}\|-\mu _{ j})\}
\end{align*}
the collection of all those cells form a partition of $ \mathbb{R}^{ d}$ up to a negligible set, and is the  {\it Laguerre tessellation} with centers $ x_{ i}$ and weights $ \mu _{ i}$,  {sometimes also called  {\it power diagrams}}. As already noticed, the ``centers'' $ x_{ i}$ need not belong to their own cell.
 For $ \mu _{ i} = 0$, it retrieves the well-known Voronoi tessellation based on $ P$ (in the Voronoi case, $ x_{ i}$ always belongs to its cell).

 A  non-intuitive fact is that the allocation obtained from $ \W_{ p}$-optimal transport, $ p>1$ (see Section  \ref{sec:linear-cost-Wp}), can be written as a Laguerre tessellation. 
For $ P = \sum_{i = 1}^{ n}\delta _{ x_{ i}}$   and a window $ B\subset \mathbb{R}^{ d}$ with volume $ n$, there exists a transport map $ T:W\to P$, unique up to a negligible set, such that 
\begin{align*}
 \W_{ p}^{ p}(\Leb1_{ W},P) = \sum_{i = 1}^{ n}\int_{B_{ i}}\|x-x_{ i}\|^{ p}dx
\end{align*}where $ B_{ i} = T^{ -1}(\{x_{ i}\})$ is the cell of points allocated to $ x_{ i}$, see  \cite{Sarrazin} and references therein.  {This procedure is illustrated with the two first images of Figure  \ref{fig:sarrazin} with $ p = 2$ for a non-homogeneous initial sample $ P$ on a cube $ B$.} The fact that $ T$ is a transport means that it induces a fair partition, i.e. $ \Leb(B_{ i}) = 1,i = 1,\dots ,n.$ 
 It is a consequence of the dual formulation of optimal transport    that, for some weights $ \mu  = (\mu _{ i},i = 1,\dots ,n)$,
\begin{align*}
 B_{ i} = \text{\rm{\color{black} Lag}}(x_{ i};P,\mu ).
\end{align*}
There is no tractable expression for the weights $ \mu _{ i}$, each weight depends on all the points of $ P$, to satisfy the constraint $ \Leb( \text{\rm{\color{black} Lag}}(x_{ i};P,\mu )) = 1$, and in general this induces long range dependency. We will see at Chapter  \ref{ch:numerics} that the cells can be found by gradient descent, or another iterative procedure.  It is also easy to prove that for the $ 2$-Wasserstein distance,  the cells $ B_{ i}$ are separated by hyperplanes, hence they are polygonal (Figure    \ref{fig:sarrazin}).   {When the $ x_{ i},i = 1,\dots ,n$ form a finite random sample, rigourous results on the cells shapes are  corollaries of previous results. In particular, Theorem  \ref{AKT} for  i.i.d. samples in dimension $ d\geqslant 3$ and  Theorem \ref{thm:BDG} for  hyperuniform samples in dimension $ 2$ give conditions under which the $ p$-th moment of the typical cell is finite.}

 Defining the infinite volume $   \mathsf W_{ 2}^{ 2}$-Laguerre tessellation for a stationary point process $ \P = \P^{ \Poi(1)}$ is far from trivial. When the linear transport cost is finite, e.g. for Poisson in dimension $ d\geqslant 3$ (Theorem  \ref{AKT}) or  hyperuniform processes in dimension $ 2$, such a construction is well defined  \cite{erbar2023optimal}. 
Since the $   \mathsf W_{ 2}^{ 2}$-optimal transport cost per unit volume between a Poisson point process in dimension $ 2$ and Lebesgue measure is infinite (Theorem  \ref{AKT}), such a construction is more uncertain. This does not prevent the image processing and optimal transport communities to obtain very good hyperuniformlity experimental results for the $ \W_{ 2}^{ 2}$-Laguerre tessellations for a large number $ n$ of  i.i.d points in a window of volume $ n$  \cite{Sarrazin,deGoes}.
  
  \subsection{The Coulomb fair partition} 
  \label{sec:coulomb-alloc}
  
  After an idea of Tsirelson, Nazarov, Sodin and Volberg \cite{NSV} introduced the idea of building a fair partition based on the harmonicity of Coulomb force fields (also said  {\it gravitational field}, or  {\it electrical field} in dimensions 
   $ 2$ and $ 3$) based on a stationary process $ \P$. We fix here the dimension $ d\geqslant 2$. Given a point  $ x\in \mathbb{R}^{ d}$, the repulsive Coulomb force field generated by $ x$ is $$ F_{ \{x\}}(y) = \tau _{ x}  \nabla \varphi _{ d} = \frac{ y-x}{\|y-x\|^{ d}},y\neq x,$$  where $ \varphi _{ d}$ is the Coulomb potential, see  \eqref{eq:coul-laplace}. The crucial mathematical aspect of this field is that it stems from a harmonic potential on $  \mathbb R  ^{ d}  \setminus  \{x\}$, more precisely, in the distributional sense, 
\begin{align}
 \label{eq:div-Fx}\text{\rm{div}}  F_{ \{x\}} =  \tau _{ x}\Delta \varphi _{ d}  = -d\kappa _{ d} \delta _{ x}
\end{align}  where $ \kappa _{ d} = \Leb(B_{ 1})$. Then, for a point configuration $ P$, the Coulomb force field generated by $ P$ is formally 
\begin{align*}
 F_{ P}(y) = \sum_{x\in P}F_{ \{x\}}(y),y\in \mathbb{R}^{ d}  \setminus P.
\end{align*}
Giving a sense to this summation for an infinite point configuration is far from obvious. For a (non-empty) stationary point process $ \P$, this sum does not converge absolutely, and not even sometimes in a weaker sense.  For instance if $ \P$ is Poisson, the sum converges   only in dimension $ d\geqslant 3$, in a sense described below, as first observed by Chandresakar  \cite{chandresakar}, further studied by Chatterjee, Peled, Peres, Romik   \cite{CPPR}. 
They show that the following limit exists a.s. and defines a $ \CC^{ 1}$ field:\begin{align}
\label{eq:def-electric-field}
F_{ \P}(y): =  \lim_{ R\to \infty }\sum_{x\in B(y,R)}F_{ \{x\}}(y), y\in \mathbb{R}^{ d}.
\end{align}
We would like to pass Property  \eqref{eq:div-Fx} to the sum, i.e. show that $ \text{\rm{div}}\sum_{x}F_{ \{x\}} = \sum_{x} \text{\rm{div}}F_{ \{x\}} = -d\kappa _{ d}\sum_{x}\delta _{ x}$, but this  turns out to be false. The moving center of summation makes the differentiation delicate, and  \cite{CPPR} show the following representation with a fixed summation center and an additional linear term
\begin{align}
\label{eq:fixed-repr-FP}
 F_{ \P}(y) = \lim_{ R\to \infty }\sum_{x\in B(0,R)}F_{ \{x\}}(y) + d\kappa _{ d}y.
\end{align}
This   reveals that in fact the divergence is a.s. in the distributional sense
\begin{align}
\label{eq:div-FP}
 \text{\rm{div}}F_{ \P} = -d\kappa _{ d}\sum_{x\in \P}\delta _{ x} + d\kappa _{ d}\Leb.
\end{align}
Even though the article  \cite{CPPR} is about Poisson processes, the method seems to be generalisable under mild assumptions in dimension $ d\geqslant 3.$
As we saw at chapter \ref{chap:transport}, dimensions $ 1$ and  $ 2$ have a peculiar behaviour  with respect to  hyperuniformity.  In dimension $ 2,$ Sodin, Wennman and Yakir \cite{SodinElectric} investigate the question of the existence of a field satisfying  \eqref{eq:div-FP} in all generality on $  \mathbb C $. Basic complex analysis allows to tackle the question under mild assumptions, but the requirement to have a field $ F_{ \P}$ that is stationary brings   additional difficulties. For the summation to make sense, a quantitative version of the  hyperuniform property is required for $ \P$, namely the  {\it finite Coulomb energy}, also related to properties of optimal transport, see Section \ref{sec:finite-Coulomb}.  Let us summarise the results in all dimensions:

\begin{theorem} \begin{itemize}
\item \cite{chandresakar} , \cite[Proposition 1]{CPPR}: Let $ \P = \Poi(1 )$   in dimension $ d\geqslant 3$. Then  the limits 
 \eqref{eq:def-electric-field} or  \eqref{eq:fixed-repr-FP}
 define a.s. a stationary field of class $ \CC^{1 }$ on $ \mathbb{R}^{ d}  \setminus \P$ whose distributional divergence is given by  \eqref{eq:div-FP}.
\item  \cite{SodinElectric} In dimension $ d = 2$, let $ \P$ a   weakly  stationary point process on $  \mathbb C $. Then there is a stationary field $ F_{ \P}$ satisfting \eqref{eq:div-FP} if $ \P$ has finite Coulomb energy, i.e. if \eqref{eq:cont-coulomb-energy} is satisfied. \end{itemize}
 
 \end{theorem} 
   On $  \mathbb C $,   Sodin et al.
  \cite{SodinElectric} show that the finite Coulomb energy condition is in fact equivalent to the existence of a random stationary meromorphic function $ F _{ \P}$  with poles at $ \P$ with unit residues and such that $ F _{ \P}(z) +2  \pi    \bar z$ is a random constant field with finite first moment, and that it is also equivalent to the a.s.  convergence of $ \sum_{z\in \P}\frac{ 1}{z}$. \\

  To extract a partition from this field, the central object is the trajectory of fictitious mass-free particles moving according to this force field, through the random differential equation
\begin{align}
\label{eq:coulomb-pde}
  \dot Y = -F_{ \P}(Y).
\end{align}
 Cauchy-Lipschitz conditions are satisfied on $ \mathbb{R}^{ d}  \setminus \P$ a.s., hence  there is   local unicity of the solution. One sees that a maximal trajectory of this PDE   necessarily ends at a singularity of the field, i.e. at a particle $ x$ of $ \P$, we say points of this trajectory are ``attracted'' by $ x$. The surprising fact is that if one regroups points of $ \mathbb{R}^{ d}  \setminus \P$ according to the endpoint of their trajectory, we obtain a fair partition.

  \begin{proposition}\label{prop:general-grav}  Let $ \P = \sum_{i}\delta _{ x_{ i}}$ a simple stationary point process of $ \mathbb{R}^{ d}$ with unit intensity for which some stationary $ \CC^{ 1}$ field $ F_{ \P}$ satisfies a.s.  \eqref{eq:fixed-repr-FP}. Call $ B_{ i}$ the  {\it attraction basin} of $ x_{ i}$, i.e. the set of points attracted to $ x_{ i}$
\begin{align*}
 B_{ i} = \{Y (s),s\geqslant 0:Y\text{\rm{\color{black}  satisfies  \eqref{eq:coulomb-pde} and }}\lim_{ t\to \infty }Y(t) = x_{ i}\}.
\end{align*}
 Then a.s. the $ B_{ i}$ all have volume $ 1$ and form an equivariant fair partition.
  \end{proposition}  
  The corresponding allocation $ T:B_{ i}\to \{x_{ i}\}$ is called  {\it Coulomb allocation} based on $ \P.$
  See the illustration of this construction at Figure  \ref{fig:loic-grav}. 
  \begin{figure}[h!]
  \begin{center}
  \includegraphics[width = 8cm]{illust/loic-grav}
  \caption{trajectories of Coulomb allocation in dimension $ 2$ for some zeros of Weyl polynomials ({\it simulation by L. Thomassey}).}
  \label{fig:loic-grav}
  \end{center}
  \end{figure}  The proof is at Section 4 of  \cite{CPPR},  based on Liouville's theorem. They also give the following heuristic explanation.
  \begin{proof}[Proof heuristics] Assume that the boundary of each attraction basin is of class $ \CC^{ 1}$, or at least piecewise $ \CC^{ 1}$. The key is to consider 
\begin{align*}
 a_{ i} := \displaystyle\int_{\partial B_{ i}} F_{ \P}(y)\cdot   \mathsf n_{ B_{ i}}(y)  \mathscr  H^{ d-1}(dy)
\end{align*} where $ \mathscr  H^{ d-1}(dy)$ is the $ (d-1)$-dimensional Hausdorff measure and $   \mathsf n_{ B_{ i}}(y)$ is the vector orthogonal to the boundary on the essential boundary $ \partial B_{ i}$ at $ y$, pointing towards the exterior of $ B_{ i}.$ The force flow $ F_{ \P}(y)$ cannot cross the boundary of $ B_{ i}$, otherwise it means that some particles could cross the border, which contradicts the definition of the $ B_{ i}.$ Therefore $ F_{ \P}(y)$ is always tangent to $ B_{ i}$ on the boundary, as illustrated by Figure  \ref{fig:loic-grav}, and $ F_{ \P}(y)\cdot   \mathsf n_{ B_{ i}}(y) = 0$, which implies that $ a_{ i} = 0$. On the other hand, Green formula gives with  \eqref{eq:div-FP}
\begin{align*}
a_{ i} = \displaystyle\int_{B_{ i}} \text{\rm{div}}(F_{ \P})(y)dy  =- \kappa _{ d}\P(B_{ i}) + \kappa _{ d}\Leb(B_{ i}) = -\kappa _{ d} + \kappa _{ d}\Leb(B_{ i}) ,
\end{align*}
since $ B_{ i}$ contains exactly one point of $ \P,$
which concludes with $ \Leb(B_{ i}) = 1.$

  The proof that the complement of the $ B_{ i}'s$, formed by all particles not ending up in a point of $ \P,$ is negligible, relies on a simple application of the mass transport principle: for $A,B\subset \mathbb{R}^{ d}$ introduce the quantity of mass sent from $ A$ to $ B$, additive in $ A$ and $ B:$
\begin{align*}
   M(A,B)  =   
\sum_{i}\delta _{ x_{ i}}(B)\Leb(B_{ i}\cap A).
\end{align*}
We have $ M(\tau _{ x}A,\tau _{ x}B) \smallequlaw M(A,B)$ for $ x\in \mathbb{R}^{ d}$ exploiting the stationarity of $ F_{ \P}.$ Since each $ B_{ i}$ has volume $ 1$, we have a.s. 
\begin{align*}
 \P(B) = \sum_{i}\delta _{ x_{ i}}(B) = M(\mathbb{R}^{ d},B) .
\end{align*}
Denoting by $ Q_{ \m} = \tau _{ \m}[0,1)^{ d}$,   the expected mass starting in $ Q_{ \m}$ does not depend on $ \m$ by stationarity and is $$ \lambda  := \mathbf E\left[
 M(Q_{ \m},\mathbb{R}^{ d})
\right]  =
\sum_{\k\in \mathbb{Z} ^{ d}} \mathbf E \left[M(Q_{ \m},Q_{ \k})
\right].$$ We can use the mass transfer principle:  for $ \k\in \mathbb{Z} ^{ d}$, using the stationarity of  $ \P$ and $ F_{ \P},$
\begin{align*}
 1 = \mathbf E \P(Q_{ \k}) = \sum_{\m\in \mathbb{Z} ^{ d}}\mathbf E M(Q_{ \m},Q_{ \k}) = \sum_{\m\in \mathbb{Z} ^{ d}}\mathbf E M(Q_{ 0},Q_{ \k-\m}) = \sum_{\k'\in \mathbb{Z} ^{ d}}\mathbf EM(Q_{ 0},Q_{ \k'}) = \lambda . 
\end{align*}
Hence $ \lambda  = 1$ and the average mass sent to $ \infty $ (i.e. not on a point $ x_{ i}$) in a unit cube is $1-\lambda  =  0$, and   the total mass of the set of all points sent to $ \infty $ is $ 0$ as well.
  \end{proof}

 We therefore know that a.s., for a.e.  $x\in  \mathbb{R}^{ d}$ the maximal trajectory going through $ x$ terminates in $ \P$, but this does not guarantee that the set of all trajectories ending up in some $ B_{ i}$ is bounded, i.e. there is no guarantee that $  { \rm diam}(B_{ i})<\infty $ a.s., this is a difficult question in general.  
 Let us give the main results of  \cite{CPPR,NSV}, which prove cells boundedness, and furthermore give exponential tails for the   diameter of the typical cell  for two particular models. Recall that $ B_{ *} $  is the cell containing $ 0$ a.s.

\begin{theorem}
\label{thm:grav-GAF-poisson-diameters}
Let $  \mathscr   C= \{B_{ i};i\geqslant 1\}$ the Coulomb partition associated to a point process $ \P.$ There exist $ C_{ -},C_{  + }<\infty $  depending on the dimension such that the following holds.
\begin{itemize}
\item  Case $\P =  \P^{ \Gaf}$  on $  \mathbb C $ \cite{NSV}:    Then a.s. the $ B_{ i}$ are bounded and 
\begin{align*}
C_{ -}\exp(-C_{  + }R\ln(R)^{ 3/2})\leqslant  \mathbf P (  { \rm diam}(B_{*})>R)\leqslant C_{  + }\exp(-C_{ -}R\ln(R)^{ 3/2})
\end{align*}
\item  Let $ \P = \P^{ \Poi(1)}$ in dimension $ d\geqslant 3$   \cite{CPPR}. Then a.s. the $ B_{ i}$ are bounded and
\begin{align*}
 \mathbf P (  { \rm diam}(B_{ *})>R)\leqslant C_{  + }\exp(-C_{ -}R(\ln(R))^{ \alpha _{ d}})
\end{align*}
where $ \alpha _{ 3} > \frac{ 4}{3}$ and $ \alpha _{ d} = \frac{ d-2}{d}$ for $ d\geqslant 4.$
\end{itemize}
 \end{theorem}  
This result hence proves that this fair allocation is well behaved, and confirms that in dimension $ d\geqslant 3$, the Poisson process is close to Lebesgue measure in terms of optimal transport, for any reasonable transport cost (see Theorem  \ref{AKT}). Note that for Poisson points, this rate is not optimal, in the sense that Marko and Timar  \cite{MarkoTimar} give a bound   in $ \exp(-cR^{ d})$, and it is easy to prove that this one is optimal, in the sense that such a bound is impossible if $ c$ is too large, using a nearest-neighbour argument.



%
%

\subsection{The hyperuniform Coulomb partition point process}  

One difficulty of building a  hyperuniform process with the procedure of Theorem \ref{thm:KLLY}   is that determining exactly the cells and drawing a point uniformly inside can be a daunting task in practice, especially for Coulomb partitions where cells are arbitrary non-convex sets with piecewise $  \mathscr  C^{ 1}$ boundary. Yet, there is a very simple procedure to simulate random variables with the uniform distribution in the Coulomb cells, based again on  Liouville's theorem. 
To that end, define for $ i\geqslant 1$  and $\theta \in   \mathbb  S  ^{ d-1}$ the repulsive trajectory $Y_{ i,\theta } (t),t\geqslant 0$, i.e.  satisfying $ \dot Y = F_{ \P}(Y)$, such that \begin{itemize}
\item  $ Y_{ i,\theta }$ starts in $ x_{ i}$: $ Y_{ i,\theta }(0) = x_{ i}$,
\item and in direction  $  \theta _{ i}$: $ \theta _{ i} = \| \nabla Y_{ i,\theta }(0)\|^{ -1} \nabla Y_{ i,\theta }(0) $
\end{itemize} see \cite[Section 8.1]{NSV} for details.
\begin{theorem}
 
\label{thm:PCPM}

Let $ \P = \{x_{ i}:i\geqslant 1\}$ a stationary point process for which a Coulomb partition $  \mathscr  C = \{B_{ i}\}$ as in Proposition  \ref{prop:general-grav}  is well defined.
 \begin{itemize}
\item Let $ \theta _{ i}$ i.i.d.   variables distributed according to the Haar measure $ \sigma _{ d}$ on the sphere $  \mathbb  S  ^{ d-1},i\geqslant 1,$
\item Let $ T_{ i}$ i.i.d.   exponential variables of parameter $ 1,i\geqslant 1$,
\item Let $ Z_{ i} = Y_{ i,\theta _{ i}}(T_{ i}).$ 
\end{itemize} Then, conditionaly on $ \P$, the $ Z_{ i}$ are independent and  $ Z_{ i}$ is uniformly distributed in $ B_{ i}$. By Theorem  \ref{thm:KLLY}, the point process $ \P': = \{Z_{ i};i\geqslant 1\}$ is stationary and hyperuniform.  If $ \P = \P^{ \Gaf}$  on $  \mathbb C $ or $\P = \P^{ \text{\rm{Poi}}(1)}$ in dimension $ d\geqslant 3$, $ \P$ is $ 2$-hyperuniform.

  \end{theorem}   
  
  The fact that  $ Z_{ i}$ is uniformly distributed in $ B_{ i}$
 results from the analysis at  \cite[Section 8.1]{NSV}. It is written for the GAF zeros, but the proof generalises easily. Then one can directly apply Theorem  \ref{thm:KLLY}.
In the Poisson case, the diameter bound from  Theorem  \ref{thm:grav-GAF-poisson-diameters} yields that $ \P'$ is also $ 2$-hyperuniform. 

}

   \chapter{Rigidity and stealthiness}
   \label{chap:rigid}
   
Another intriguing phenomenon has been noticed for some seemingly disordered  hyperuniform processes, that of rigidity. Ghosh and Peres
 \cite{GP17} have shown that, for either $ \P = \P^{ \Gaf}$ or $ \P = \P^{ \Gin}$, we have  {\it number rigidity}: the number of points in a ball can be a.s. guessed from the outside configuration, i.e. for $ R>0$, 
\begin{align}
\label{eq:intro-rigid}
 \P(B_{ R})\in \sigma (\P\cap B_{ R}^{ c}).
\end{align}
   Note that this property seems unrealistic for a standard process; the prototypical example is the Poisson process, where the independence of $ \P 1_{ B_{ R}}$ and $ \P1_{ B_{ R}^{ c}}$ makes  \eqref{eq:intro-rigid} impossible. This number rigidity is once again reminiscent of lattices, in the sense that for ``small enough'' perturbations $ U_{ \k},\k\in \mathbb{Z} ^{ d}$ with law $ \mu $, one expects that $ \Z^{ d,\mu } $ is number rigid. The shifted lattice (obtained for $ \mu  = \delta _{ 0}$) is an extreme example, as not only the number of points, but in fact all of $ \Z^{ d,\delta _{ 0} }1_{ B_{ R}}$ can be inferred from $ \Z^{ d,\delta _{ 0}}1_{ B_{ R}}^{ c}.$
   
   The oldest result in this line of research might be  by Aizenman and Martin \cite{AizMart}, about 1D Coulomb gases, but the systematic study of rigidity really started with the introduction of the notion of  {\it tolerance}   \cite{LyonsSteif,HolSoo}. As it turns out, the two rigid processes $ \P^{ \Gin},\P^{ \Gaf}$ investigated in  \cite{GP17} possess  also the property of  {hyperuniformity}.  Many models have been proven to be rigid since then, mostly in the realm of hyperuniformity: the most investigated class clearly is that of  determinantal point processes   \cite{BufAiry,BufBalanced, BufLinear,Buf-conditional,GhoshComplete,Ghosh-conditional}, due to its importance in many fields, in particular random matrices (see Section  \ref{sec:dpp}). Rigidity of  Coulomb and Riesz systems  has been investigated in \cite{DHLM,ChhaibiNaj,Cha17,DereudreVasseur,AizMart,Thoma}, and   quasicrystals  in  \cite{HartBjo,Lr24}. There are also many works on eigenvalues of random operators  \cite{rigidity-schrodinger}, Pfaffian processes  \cite{BufPFaff}, zeros of Gaussian   processes \cite{GP17,GK21,Lac20}, stable matchings  \cite{KLY},  and others  \cite{GL18,KlattLast}. In fact, the rigidity terminology is  more generally used to describe a strong form of hyperuniformity, i.e. a system of  particles  where the hyperuniformity exponent is one or several orders  below the Poisson behaviour  \cite{Cha17,BBNY,Ganguly-Sarkar}.
Besides the striking nature  of rigidity and its link with hyperuniformity, it has proven to be a useful property in other types of problems, such as continuous percolation  \cite{GKP}, or to establish the singularity of conditional measures  \cite{Ghosh-conditional}.  Osada \cite{OsadaRigid} recently proved a relation with diffusive dynamics of particle systems, Ghosh \cite{GhoshComplete} also exploited number rigidity to show the completeness of random sets of exponential functions, an interesting property in signal processing, Lyons \& Steif \cite{LyonsSteif} study this property to show phase uniqueness for some models from statistical physics.


Stronger forms of rigidity have also emerged. For instance, for the $ \P_{\textrm{GAF}}$ model,  Ghosh and Peres \cite{GP17} showed that not-only the number of particles in $ B(0,1)$ can be determined, but also the first moment, or center of mass $ \sum_{x\in \P_{\textrm{GAF}}\cap B(0,1)}x$, corresponding to {\it $ 1$-rigidity}, and that   no further moment can be determined from the observation of $ \P_{\textrm{GAF}}\cap B(0,1)^{c}$.    They proved  similarly that for the Ginibre process no moment can be determined beyond order $ 0$. Another such result appears in  \cite{DHLM} for Sine$ _{\beta }$ processes above order $ 0.$ Ghosh \& Krishnapur \cite{GK21} generalise this concept to that of $ k$-rigidity, where the $ k$-th order moment is determined, for some non-stationary models of GAF zeros.

We give in this chapter the heuristics behind the link between rigidity and  hyperuniformity, and give the necessary and sufficient conditions for $ k$-rigidity, in terms of the spectral measure. We finally explain some results around stealthy processes and quasicrystals and the phenomenon of  {\it maximal rigidity}, which means $ k$-rigidity for every $ k\geqslant 0.$

   \section{Heuristics and linear rigidity}
   \label{sec:heuristics-rigidity}
   
   Let us see how we can guess $ \P(B_{ R})$ for a  point process with low variance for linear statistics. Let $ f$ smooth with compact support and  $ f(0)=1$. Then as $ R\to \infty $, $ f_{ R}(x) \approx 1$ on $ B(0,1)$, hence  Bienaym\'e-Tchebyshev inequality  yields
\begin{align*}
 \P(B_{ 1}) \approx \P(1_{ B_{ 1}}f_{ R})  = \P(f_{ R})-\P(f_{ R}1_{ B_{ 1}}^{ c}) \approx \mathbf E \P(f_{ R}) \pm \sqrt{  \textrm{Var}\left(\P(f_{ R})\right)}-\P(f_{ R}1_{ B_{ 1}^{ c}}).
\end{align*}
For   $ \P$ sufficiently  hyperuniform (see Proposition  \ref{sec:hyp-expo}), $  \textrm{Var}\left(\P(f_{ R})\right)$ goes to $ 0$ and the right hand side gets arbitrarily $ L^{ 2}(\mathbf P )$-close to an element of $ \sigma (\P1_{ B_{ 1}^{ c}})$. This method is not optimal, in the sense that one need also to optimise on the function $ f$ simultaneously, as in  \cite{GP17,GL-sufficient} in the case where it is assumed that $ \S(du) \sim \sigma  \|u\|^{ 2}du$ as $ u\to 0$ for some finite $\sigma  $. This allows also for each $ R$ to give a consistent estimator of $ \P(B_{ 1})$ based only on finite data from $ \P1_{ B_{ R}  \setminus B_{ 1}}$ with a controllable error. Interestingly, this estimator only depends on $ \sigma  $, and $ \sigma ^{ 2}  = \lim_{ R\to \infty }  \textrm{Var}\left(\P(B_{ R})\right)$ (see  \eqref{eq:structure-def}).

   For $ \P^{ \Gaf}$,  the  hyperuniformity exponent is larger (see Proposition  \ref{lm:gaf-Delta}), Ghosh and Peres \cite{GP17}  show that one can furthermore guess  the first moment of $ \P1_{B_{R}}$ with a similar reasoning and $ \tilde f(x) = xf(x)$:
\begin{align*}
 \mo_{ 1}(\P^{ \Gaf}\cap B_{ 1}) = \int_{B_{ 1}}xd\P^{ \Gaf} \approx R\mathbf E[ \P( \tilde f_{ R})] \pm R\sqrt{  \textrm{Var}\left(\P( \tilde f_{ R})\right)}-R\P( \tilde f_{ R}1_{ B_{ 1}^{ c}}).
\end{align*}
   This phenomenon is called $ 1$-rigidity, since one can infer the moment of order $ 1$ based on the outside configuration. From this perspective, number rigidity can also be called $ 0$-rigidity. 
   
   In both examples, the rigidity can be  described as  {\it linear}, in the sense that the quantity of interest (the moment of order $ 0$ or $ 1 $ on the unit ball) can be approximated by an external linear statistic $ \P(f_{R}1_{B_{1}^{c}})$. Generalising to higher moments leads to the following definitions. Define 
\begin{align*}
 x^{ \k} = x_{ 1}^{ k_{ 1}}\dots x_{ d}^{ k_{ d}},x\in \mathbb{R}^{ d},\k\in \mathbb{N}^{ d}.
\end{align*}
   
 \begin{definition}Let $ R>0$ fixed.
 Say that a  \LSI random measure $ ~\M$ is $ k$-rigid on $ B\subset \mathbb{R}^{ d}$ for $ k\in \mathbb{N}$ iff
\begin{align*}
 \mo_{ k}(\M1_{ B})
\in \sigma (\M1_{ B}^{ c}).
\end{align*}
Say that it is furthermore {\it linearly} $ k$-rigid on $ B$ if for each   $ \k\in \mathbb{N}^{ d}$ such that  $ \sum_{i}k_{ i}\leqslant k$,  some kernels $ h_{ n}\in\mathscr C_{ c}^{ \infty }(B ^{ c})$ give the $ L^{ 2}(\mathbf P )$-approximation  \begin{align*}
\M(h_{ n})\xrightarrow[ n\to \infty ]{} \displaystyle\int_{B}x^{ \k}d\M .
\end{align*}
 \end{definition}

   \subsection{Necessary and sufficient conditions for linear $ k$-rigidity}  
   
   It turns out that linear rigidity on a compact subset $ B$ of $ \mathbb{R}^{ d}$ involves the tempered distributions whose spectrum is supported by $ B$, and those are by the Paley-Wiener theorem a subclass of the  analytic functions of exponential type. Exploiting this connection and the good properties of this class of functions, the following results are derived in  \cite{Lr24}.
   
   Let us first give a condition easy to state in some structurally more simple cases.  We consider hereafter a \wsrm~$ ~\M$. Decompose its  spectral measure $ \S$ according to the Radon-Nykodym theorem: there is a non-negative measurable symmetric function $ \s:\mathbb{R}^{ d}\to \mathbb{R}_{  + }$, with $ \S = \s\Leb + \S_{ s}$ and $ \S_{ s}$ is singular  with respect to $ \Leb, $
 we call $ \s$ the  {\it spectral density} of $ \M$. 
   \begin{theorem}
   \label{thm:rigid-simple-nsc}
    Let   $ k\in \mathbb{N}$.
    Assume that one of the following holds:\begin{itemize}
\item (i) $ k = 0$
\item (ii) $ \s$ is invariant under rotations
\item (iii) $ \s$  is  {\it separable}, i.e. $ \s(u)=\s_{1}(u_{1})\dots \s_{d}(u_{d}),u\in \mathbb{R}^{d}$, for some measurable symmetric $ \s_{i}\geqslant 0.$

\end{itemize}
Then for $ R>0,$ $ \M$ is linearly $ k$-rigid on $ B_{ R}$ if for all $ \varepsilon >0$
\begin{align}
\label{eq:simple-condition-k-rigidity}
 \int_{B(0,\varepsilon )}\frac{ \|u\|^{ 2k}}{\s(u)}du=\infty .
\end{align}
 {As noted in  \cite[Prop. 3.17]{LotzKlatt},  \eqref{eq:simple-condition-k-rigidity} is also implied by the $ (2k + d)$-hyperuniformity of $ \M$.} In particular, a $ d$-hyperuniform process is always number rigid.
    \end{theorem}  
    
    \begin{remark}
    \label{rk:rigid}
\begin{itemize}
\item This condition only bears on the continuous part of the spectral measure. It is reminiscent of Wiener, Kolmogorov and Krein's theorems on time series  \cite{Kolmo-rigid,Wiener,Krein}, and there is indeed a relation, as we shall see at Section  \ref{sec:stealthy}.
\item  We can show in greater generality that rigidity obeys some rules of monotonicity:  for $ \s'\leqslant \s$, $ k$-rigidity for $ \s$ implies $ k$-rigidity for $ \s'$, and for $ k'\leqslant k$, $ k$-rigidity for $ \s$ implies $ k'$-rigidity for $ \s.$ 
\end{itemize}
    \end{remark}

    \begin{example}
    We have built at Sections
 \ref{sec:cluster-lattice} and  \ref{sec:fair-partitions}   classes of models  which can have an arbitrarily high hyperuniformity exponent. Therefore these models can also exhibit an arbitrarily high order of rigidity.    \end{example} 
   
   When none of (i),(ii),(iii) holds, one can still state a more algebraic necessary condition, see  \cite{Lr24}. It is difficult to give in general a necessary condition, there is at \cite[Section 4]{Lr24} the example of a non-atomic $ 0$-rigid random measure which is not  hyperuniform, i.e. whose spectral density does not vanish in $ 0$; wether this is possible for a point process is an open question.
Let us give a framework in which we can assess the necessity of the previous conditions.
 
 \begin{proposition}[  \cite{Lr24}, Proposition 2]
 \label{prop:necessary-rigid}
 Assume the spectral density $ \s$ satisfies for some $ c,p,\varepsilon >0$:\begin{itemize}
\item $ \s$ has finitely many zeros $ u_{ 1},\dots ,u_{ m}\in \mathbb{R}^{ d}$ and they have finite order, i.e. 
\begin{align*}
 \int_{B(u_{ i},\varepsilon )}\frac{ \|u-u_{ i}\|^{ p}}{\s(u)}du<\infty 
\end{align*}
\item $ \s$ does not vanish too fast at $ \infty $: for $ u \notin \bigcup _{ i}B(u_{ i},\varepsilon )$, $ 
 \s(u)\geqslant c(1 + \|u\|)^{ -p} $.
\end{itemize}
Then $ \M$ is not maximally rigid on $ B_{ R}$ for $ R>0$, and for $ k\in \mathbb{N}$, $ \M$ is not linearly $ k$-rigid if  \eqref{eq:simple-condition-k-rigidity} does not hold. 
 \end{proposition}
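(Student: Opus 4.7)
I would prove the contrapositive: assuming $\int\|u\|^{2k}/\s(u)\,du<\infty$, I would exhibit a multi-index $\k$ with $|\k|=k$ such that $X_{\k}:=\int\prod_ix_i^{k_i}\,d\M$ cannot even be $L^{2}$-approximated by $\M(h_n)$ for $h_n\in\c_c^\infty(B_1^c)$, which already rules out linear $k$-rigidity. Setting $g(x)=\prod_i x_i^{k_i}\,\mathbf 1_{B_1}(x)$, we have $X_{\k}=\M(g)$, and since $g$ is bounded with compact support it is $\S$-admissible, so by \eqref{eq:structure-def} together with the stationary mean identity,
\[\|\M(h)-\M(g)\|_{L^{2}}^{2}=(2\pi)^{-d}\int|\hat h-\hat g|^{2}\,d\S+\lambda^{2}|\widehat{h-g}(0)|^{2}.\]
Thus the $L^{2}$-approximation question becomes: is $\hat g$ in the $L^{2}(\S)$-closure of $\{\hat h:h\in\c_c^\infty(B_1^c)\}$? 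The scalar mean condition at $u=0$ is a separate codimension-one issue that is harmless.

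\textbf{Duality witness.} Working under $\S=\s\,\Leb$ (the hypotheses effectively exclude a singular part; any residual singular mass supported away from the zeros of $\s$ can be handled by an additional mollification), Hahn--Banach turns non-approximability into the existence of $\psi\in L^{2}(\S)$ orthogonal to every $\hat h$ with $h\in\c_c^\infty(B_1^c)$ yet satisfying $\int\hat g\,\bar\psi\,d\S\neq0$. Writing $\psi=\hat\phi/\s$, Parseval's identity $\int\hat h\,\overline{\hat\phi}\,du=(2\pi)^d\int h\,\bar\phi\,dx$ converts the orthogonality into ``$\phi$ is supported in $\overline{B_{1}}$'' and the non-orthogonality into $\int_{B_{1}}\prod x_i^{k_i}\,\bar\phi(x)\,dx\neq0$. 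The $L^{2}(\S)$-membership becomes $\int|\hat\phi|^{2}/\s\,du<\infty$. The entire proof therefore reduces to constructing
\[\phi\in\c_c^\infty(B_{1}),\qquad \int\frac{|\hat\phi(u)|^{2}}{\s(u)}\,du<\infty,\qquad \int_{B_{1}}\prod_i x_i^{k_i}\,\phi(x)\,dx\neq 0.\]

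\textbf{Construction and main obstacle.} I would split the spectral integral into three regimes matched to the three hypotheses. Outside a small neighbourhood of the zeros $u_1,\dots,u_m$ of $\s$, the polynomial lower bound $\s\gtrsim(1+\|u\|)^{-p}$ and the Schwartz decay of $\hat\phi$ trivialise the estimate. Near each nonzero $u_i$, I impose that $\hat\phi$ vanishes to order $\lceil p/2\rceil$ at $u_i$, so that $|\hat\phi|^{2}\lesssim\|u-u_i\|^{p}$ locally and the finite-order hypothesis closes the bound. Near $0$ (if $\s(0)=0$), I require $\partial^{\alpha}\hat\phi(0)=0$ for $|\alpha|<k$, equivalently $\int\phi(x)\,x^{\alpha}\,dx=0$; then $|\hat\phi(u)|^{2}\lesssim\|u\|^{2k}$ near the origin, and the hypothesis $\int\|u\|^{2k}/\s<\infty$ closes the estimate. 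Finally I ask $\partial^{\k}\hat\phi(0)\neq0$ for some $|\k|=k$, i.e.\ $\int_{B_{1}}x^{\k}\,\phi\neq0$. The hard part, and the only nontrivial step, is showing that these finitely many linear constraints on the infinite-dimensional space $\c_c^\infty(B_{1})$---Taylor-jet evaluations at the distinct points $0,u_1,\dots,u_m$, equivalently integration of $\phi$ against the linearly independent functions $x^{\alpha}e^{-iu_j\cdot x}$ by Paley--Wiener/analyticity---leave the non-vanishing target functional free. The hypothesis $\int\|u\|^{2k}/\s<\infty$ is used precisely here: it guarantees that the forced vanishing order at $0$ does not subsume the target order $k$, so that the target functional is linearly independent from the vanishing constraints and a suitable $\phi\in\c_c^\infty(B_{1})$ exists.
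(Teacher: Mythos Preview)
The paper does not prove this proposition; it is quoted from \cite{Lr24}, so there is no proof in the present text to compare against.

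Your argument is correct and is the natural Hahn--Banach witness construction. Two simplifications are worth noting. First, the singular part of $\S$ needs no mollification: extend the witness $\psi=\hat\phi/\s$ by zero on the singular support; then $\int\hat h\,\bar\psi\,d\S=\int\hat h\,\overline{\hat\phi}\,du=(2\pi)^d\int h\,\bar\phi\,dx=0$ by Parseval and disjoint supports, and the non-orthogonality with $\hat g$ is likewise unaffected. Second, you impose unnecessary vanishing of $\hat\phi$ at the nonzero zeros $u_i$: since $\|u\|^{2k}$ is bounded below by a positive constant on $B(u_i,\varepsilon)$ when $u_i\neq 0$, the standing hypothesis $\int\|u\|^{2k}/\s<\infty$ already forces $\int_{B(u_i,\varepsilon)}1/\s<\infty$, so the Schwartz decay of $\hat\phi$ alone handles those neighbourhoods. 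The only genuine linear constraints on $\phi\in\c_c^\infty(B_1)$ are then $\int x^\alpha\phi=0$ for $|\alpha|<k$ together with $\int x^{\k}\phi\neq 0$ for a single $|\k|=k$, and their compatibility is immediate.
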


  This yields the following characterisation of rigidity for DPPs, using  \eqref{eq:dpp-SF} and Lemma \ref{lm:quadra-spectral}:
 
 \begin{theorem}[\cite{Lr24}, Theorem 5]
 \label{Thm:DPP-rigid}
  Let $ \P$ a stationary DPP with   Hermitian kernel $ K$ such that $ x  \mapsto   K(0,x)$ is square integrable. 
  Then $ \P$ is  number rigid in dimension $ d=1,2$  if and only if  $ \P$ is  hyperuniform and for all $ \varepsilon >0,$
\begin{align*}
\int_{B_{\varepsilon }}\frac{1}{\s(u)}du=\infty ,
\end{align*}and is not linearly $ k$-rigid for $ d\geqslant 3$ or $ k>0$.
  \end{theorem}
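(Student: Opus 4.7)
The plan is to deduce this DPP dichotomy by specializing the general spectral characterization of linear $k$-rigidity in Theorem \ref{thm:rigid-simple-nsc}(i), the necessary condition of Proposition \ref{prop:necessary-rigid}, and the monotonicity in $k$ recalled in Remark \ref{rk:rigid}, to the DPP spectral density given by \eqref{eq:dpp-SF}.

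First I would set up the DPP spectral side. Let $\kappa(x) := |K(0,x)|$, so that $\kappa^2 \in L^1$ by the square-integrability assumption, and by \eqref{eq:dpp-SF} the spectral density equals $\s(u) = 1 - \widehat{\kappa^2}(u)$. The stationary Hermitian property of $K$ gives $\kappa(-x) = \kappa(x)$, so $\kappa^2 dx$ is a symmetric finite non-negative measure; Lemma \ref{lm:quadra-spectral} applied to it yields, in a neighborhood of $0$,
\begin{align*}
\widehat{\kappa^2}(0) - \widehat{\kappa^2}(u) \;\geq\; \sigma \|u\|^2.
\end{align*}
When $K$ is reproducing one has $\widehat{\kappa^2}(0) = K(0,0) = 1$ and this reads $\s(u) \geq \sigma \|u\|^2$ near $0$; otherwise $\s(0) = 1 - \widehat{\kappa^2}(0) > 0$, i.e.\ the process is not hyperuniform. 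Moreover, Riemann--Lebesgue and $\kappa^2 \in L^1$ give $\widehat{\kappa^2}(u) \to 0$ at infinity, so $\s$ is continuous on $\mathbb{R}^d$ with $\s \to 1$ at infinity; in particular $\s \geq 1/2$ off a large ball.

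The sufficiency direction is then essentially immediate: if $\P$ is hyperuniform then $\s(0) = 0$ by Theorem \ref{thm:general-hu}(iii), and the hypothesis $\int_{B_\varepsilon} 1/\s(u) du = \infty$ for every $\varepsilon > 0$ is exactly the local divergence needed by the criterion of Theorem \ref{thm:rigid-simple-nsc}(i) with $k = 0$, yielding linear $0$-rigidity, hence number rigidity of $\P$. For the necessary direction, suppose that $\P$ fails to be hyperuniform or that $\int_{B_\varepsilon} 1/\s du < \infty$ for some $\varepsilon > 0$. In the first case, $\s(0) > 0$ together with continuity gives $1/\s$ bounded on a neighborhood of $0$, so the local integral is automatically finite; both cases thus reduce to local integrability of $1/\s$ near $0$. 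I would then verify the hypotheses of Proposition \ref{prop:necessary-rigid}: the quadratic lower bound at $0$ shows the zero there has finite order; any other zero $u_0$ of $\s$ satisfies $\widehat{\kappa^2}(u_0) = \widehat{\kappa^2}(0)$, and repeating the argument of Lemma \ref{lm:quadra-spectral} centered at $u_0$ shows such a zero is also of at least quadratic order; combined with $\s \geq 1/2$ near infinity, the zero set of $\s$ is finite. Proposition \ref{prop:necessary-rigid} with $k = 0$ then gives that $\P$ is not linearly $0$-rigid, and the monotonicity in $k$ from Remark \ref{rk:rigid} yields that $\P$ is not linearly $k$-rigid for any $k \geq 0$.

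The restriction to $d \in \{1,2\}$ enters naturally because the quadratic lower bound $\s(u) \geq \sigma \|u\|^2$ forces $1/\s(u) \leq C\|u\|^{-2}$ near $0$, and $\int_{B_\varepsilon} \|u\|^{-2} du = \infty$ precisely when $d \leq 2$; in higher dimensions no DPP with $\kappa \in L^2$ can satisfy the divergence hypothesis, consistent with the scope of the statement. The main technical obstacle I anticipate is the control of zeros of $\s$ away from the origin when invoking Proposition \ref{prop:necessary-rigid}: establishing that these nonprincipal zeros are finite in number and of finite order requires either a strict-inequality argument $\widehat{\kappa^2}(u) < \widehat{\kappa^2}(0)$ for $u \neq 0$ under a mild nondegeneracy assumption on the support of $\kappa$, or a careful relocalization of Lemma \ref{lm:quadra-spectral} near each such zero; everything else is bookkeeping.
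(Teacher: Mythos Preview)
Your approach is correct and coincides with the paper's: the survey proves this theorem only by pointing to \eqref{eq:dpp-SF} and Lemma \ref{lm:quadra-spectral}, and the remark following the statement confirms that the necessity part is meant to go through Proposition \ref{prop:necessary-rigid}, exactly as you outline.

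Two small points. First, your necessity argument via monotonicity only covers the case where $0$-rigidity fails; you should also note explicitly that when $\P$ \emph{is} number rigid (so $d\in\{1,2\}$, hyperuniform, integral divergent) it is still not linearly $k$-rigid for $k\geq 1$: the same quadratic bound $\s(u)\geq\sigma\|u\|^{2}$ near $0$ makes $\|u\|^{2k}/\s$ locally bounded for $k\geq 1$, and Proposition \ref{prop:necessary-rigid} applies directly. Second, your flagged obstacle about nonprincipal zeros is sharper than you state: if $\s(u_{0})=0$ for some $u_{0}\neq 0$, then $\kappa^{2}$ is supported on $\{x:u_{0}\cdot x\in 2\pi\mathbb{Z}\}$, which forces $\s(u_{0}+v)=\s(v)$ for all $v$, so $\s$ is periodic and has \emph{infinitely} many zeros, putting you outside the literal hypotheses of Proposition \ref{prop:necessary-rigid}. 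The clean resolution is the nondegeneracy route you mention (the support of $\kappa$ not lying in any such discrete family of hyperplanes, which holds whenever $\kappa$ is nonzero on a set of positive measure not contained in such a union); the full treatment is deferred to \cite{Lr24}.
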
  
  The square integrability condition is only useful for the necessity part, to be able to use the conclusion of Proposition  \ref{Thm:DPP-rigid}.
As seen at Sections  \ref{sec:ginibre} and \ref{sec:gue-as-dpp}, the $ \text{\rm{Sine}}_{1}$ process is a DPP with kernel $$ K(x,y)=\frac{\sin(\pi (x-y))}{\pi (x-y)}$$ on $ \mathbb{R}$, and the Ginibre processes with kernel $$ K(z,w)=\pi ^{ -1}e^{z  \bar w- | z | ^{2}/2- | w | ^{2}/2}$$ on $  \mathbb C .$ Hence one can show with  \eqref{eq:dpp-SF} that they are both number rigid.
An example of a non-rigid  hyperuniform DPP is given by the tensor product of the cardinal sine kernel 
\begin{align*}
 K(x,y) = \text{\rm{sinc}}(x_{ 1}-y_{ 1})  \text{\rm{sinc}}(x_{ 2}-y_{ 2}),(x_{ 1},x_{ 2}),(y_{ 1},y_{ 2})\in \mathbb{R}^{ 2},
\end{align*}
because the spectral measure is continuous with density
\begin{align*}
 \s(u_{ 1},u_{ 2}) 
 =   | u_{ 1} |  +  | u_{ 2} |  + o(u_{ 1} ) + o(u_{ 2} )\text{\rm{ as }}u\to 0,
\end{align*}
hence  $$ \int_{B_{ \varepsilon }}\|u\|^{ -1}du<\infty .$$

   The main reason why DPPs cannot be more then number rigid,  and only in dimension $ 1$ or $ 2$, and why it is in general difficult to find such processes in general, is Lemma  \ref{lm:quadra-spectral}, that yields $ \s(u)\geqslant \sigma \|u\|^{ 2}$ as $ u\to 0$; a similar phenomenon occurs for independently perturbed lattices.\\

 The proof of the above results rely on an analysis in the Hilbert space $ L^{2}(\S)$, and   on the relation with analytic functions of exponential type through Paley-Wiener theorem. In \cite{Lr24}, we derive a general condition for linear $ k$-rigidity in terms of the local behaviour of the spectral measure $ \S$ around $ 0$.

  \subsection{Reconstruction procedure}
  
Linear rigidity on $ A$ implies that a linear statistic supported by $ A$, such as the number of points in $ A,$ can be approximated by linear statistics supported by $ A^{ c}$. Finding them is a matter of Hilbert analysis in $ L^{ 2}(\S)$. Say for instance one wishes to know the mass $ \M(B_{ 1})$ based on the information contained in $ B_1^{ c}$. Then one must project $ { 1_{ B_{ 1}}}$ in a Hilbert basis $ (h_{ k})_{ k\geqslant 1}$ of functions  supported by $ B_{ 1}^{ c}$, where the scalar product is $$ \langle h,h' \rangle _{ \S}=  \textrm{Cov}\left(\M(h),\M(h')\right) = (2\pi )^{ -d}\int_{ } \hat h \hat h' d\S.$$ Then the reconstruction is 
\begin{align*}
 \M(1_{ B_{ 1}}) = \sum_{k} \langle 1_{ B_{ 1}}, h_{ k} \rangle \M(h_{ k}).
\end{align*}
In many cases, it mostly depends on the behaviour of $ \S$ around $ 0$, see Section  
   \ref{sec:heuristics-rigidity}. So, if one assumes $ \S(du)\sim \kappa \|u\|^{ \alpha }$ as $ u\to 0$ for some $ \alpha >0$, one can find the estimator with a more simple form of $ \S$ having the same behaviour around $ 0$.
   \subsection{Proof heuristics}
   \label{sec:prf-rigidity}

The heuristics of Section \ref{chap:rigid} only work under some structural assumptions on $ \S$. We prove here that $ 0$-rigidity on 
$ B_{ 1}$ is equivalent to $ \int_{B(0,\varepsilon )}\s^{ -1} = \infty $ for all $ \varepsilon >0$, under the sole assumption that $ \M$ is not maximally rigid. Maximal rigidity is further studied below, it is quite an extreme phenomenon that can be discarded pretty easily for most standard models, and it does not hold under the assumptions of  Proposition  \ref{prop:necessary-rigid}.
   
Predicting $ \M(B_{ 1})$ by a linear statistic of $ \M1_{ B_{ 1}^{ c}}$       is a problem of linear interpolation, we seek for $ h\in \mathscr B_{ c}(B_{ 1}^{ c})$ minimising 
\begin{align}
\label{eq:1}
 \mathbf E \left[
 | \M( h)-\M(B_{ 1}) | ^{ 2}
\right] = (2\pi )^{- d}\int_{\mathbb{R}^{ d}} | \hat h- \widehat{ 1_{ B_{ 1} } }| ^{ 2}d\S.
\end{align}
In the following, we replace $ 1_{ B_{ 1} }$ by a Schwartz function $ \gamma $ taking value $ 1$ on $ B_{ 1}$; knowing that $ \M1_{ B_{ 1}^{ c}}$ approximates $ \M(\gamma )$ or $ \M(1_{ B_{ 1}})$ is strictly equivalent. To prove that the infimum is $ 0$, we must prove that   $ \hat \gamma $ is in the $ L^{ 2}(\S)$-closue of such $ \hat h$, or equivalently that $ \hat \gamma $ is orthogonal to any $ \varphi \in L^{ 2}(\S)$ that is itself orthogonal to all such $ \hat  h$. Let therefore such a $ \varphi $, meaning for $ h\in \mathscr B_{ c}(B_{ 1}^{ c})$, 
\begin{align*}
 \int_{\mathbb{R}^{ d}} \hat h\varphi d\S = 0.
\end{align*}
With the Parseval formula, it means $ \psi : = \varphi \S$ has a spectrum supported by $ B_{ 1}$. The very important point is Schwartz'Paley-Wiener theorem (\cite[Th. 19-(c)]{Rudin2}), that states that a tempered distribution with bounded spectrum is in fact analytic, i.e. $ \varphi \S = \psi \Leb$ where $ \psi $ is an analytic function, and that furthermore it is of type $ 1$, i.e. for some $ C,N>0,$
\begin{align*}
  | \psi (z) | \leqslant C\|z\|_{  \mathbb C ^{ d}}^{ N}\exp(\|z\|_{  \mathbb C ^{ d}}),z\in  \mathbb C ^{ d},
\end{align*}where $ \|(z_{ 1},\dots ,z_{ d})\|^{ 2}_{  \mathbb C ^{ d}} = \sum_{i}|z_{ i} | ^{ 2}$. Remark that since $ \varphi \S$ is a continuous measure, it does not put mass on the singular part of $ \S$, if it exists, hence we equivalently write $ \psi  = \varphi \s$. The second point is that $ \psi \in L^{ 2}(\s^{ -1})$ because 
\begin{align*}
 \int_{} | \psi  | ^{ 2}\s^{ -1} = \int_{}\varphi ^{ 2}\s <\infty .
\end{align*}

The assumption \eqref{eq:simple-condition-k-rigidity} therefore yields $ \psi (0) = 0.$ Finally, we have
\begin{align*}
\langle \hat \gamma , \psi  \rangle  = \int_{} \gamma  \hat \psi = \int_{B_{ 1}} \gamma \hat \psi  = \int_{} \hat \psi   = \psi (0) = 0,
\end{align*}       
using that $ \hat \psi $ is supported by $ B_{ 1}$. $ \hat \psi $ is indeed orthogonal to all such $ \psi $, meaning the infimum in  \eqref{eq:1} is indeed $ 0.$

\begin{longversion}{The necessity part will be proved in a future version.}
 
For the converse statement, assume $ \int_{B(0,\varepsilon )}\s^{ -1}<\infty $ for some $ \varepsilon >0$. Let us prove non-$ 0$-rigidity. By the discussion below  \eqref{eq:1}, it suffices to find $ \psi \in L^{ 2}(\s^{ -1})$ such that $ \psi (0)\neq 0.$ Since we assumed that there is no maximal rigidity, there exists $ \psi \in L^{ 2}(\s^{ -1})$ 

\end{longversion}
\begin{remark}
Establishing the conditions for $ \k$-rigidity might get more technical. Let us simply mention that under assumption \eqref{eq:simple-condition-k-rigidity}, $ \psi \in L^{ 2}(\s^{ -1})$ implies $ \partial ^{ \k}\psi (0) = 0$, and for $ \gamma(u) $ a Schwartz function coinciding with $ u^{ \k}$ on $ B_{ 1}$, 
\begin{align*}
 \langle \gamma ,\psi  \rangle = (-i)^{ k}\partial ^{ \k}\psi (0) = 0,
\end{align*}
this allows to show the  sufficiency of the condition.
\end{remark}

   \subsection{Linear and non-linear rigidity of perturbed lattices}

   We can deduce from Example  \ref{ex:IPL}, Lemma  \ref{lm:quadra-spectral} and Theorem  \ref{thm:rigid-simple-nsc}  the following.
For $ \mu $ a non-negative measure on $ \mathbb{R}^{ d}$,   call $ \mu _{ c}$ its component continuous  with respect to Lebesgue measure and say $ \mu $ has a continuous part if $ \mu _{ c}$ is not the null measure.

\begin{proposition}
\label{cor:linear-rigid-lattices}
    Let $ \mu $ a symmetric probability measure on $ \mathbb{R}^{ d}$ with a continuous part. Assume furthermore that $ \mu $ is symmetric or has a finite second moment.
   Let $ \Z^{ d,\mu }$ the corresponding perturbed lattice (Example \ref{ex:IPL-intro}). Then $ \Z^{ d,\mu }$ is not linearly  $ k$-rigid if $ k\geqslant 1$ or if $ d\geqslant 3$.
    
   For $ k = 0,d\in \{1,2\}$,  let  $ \psi (u) = \int_{\mathbb{R}^{ d}}e^{- iu\cdot t}\mu (dt)$. 
Then $ \Z^{ d,\mu }$ is linearly number rigid  iff  for all $ \varepsilon >0$
\begin{align*}
 \int_{B_{ \varepsilon }}\frac{ 1}{1- | \psi (u) | ^{ 2}}du = \infty .
\end{align*}  
\end{proposition}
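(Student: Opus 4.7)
The plan is to apply Theorem~\ref{thm:rigid-simple-nsc} (case (i), $k=0$) and Proposition~\ref{prop:necessary-rigid} (for $k\geq 1$) to the IPL, once its spectral density has been identified and controlled near its zeros. From Example~\ref{ex:IPL}, the spectral measure decomposes as $(1-|\psi(u)|^2)\,du$ plus an atomic part supported on $2\pi\mathbb{Z}^d\setminus\{0\}$, so the density part is $\s(u)=1-|\psi(u)|^2$. The assumption that $\mu$ has an absolutely continuous component forces $|\psi(u)|<1$ for every $u\neq 0$ (equality would require $e^{-iu\cdot t}$ to be $\mu$-a.e.\ constant, impossible against a Lebesgue-positive set of $t$'s), so $\s$ vanishes only at $u=0$; moreover Riemann--Lebesgue applied to the absolutely continuous component of $\mu$ furnishes a positive lower bound for $\s$ outside any large ball.

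The central local estimate I would establish is $\s(u)\gtrsim \|u\|^2$ near $0$. Lemma~\ref{lm:quadra-spectral} supplies $|1-\psi(u)|\geq \sigma\|u\|^2$ under either of the hypotheses in the proposition (in the finite second moment case, centring $\mu$ by a translation is harmless, affecting $\psi$ only by a unimodular factor). In the symmetric case $\psi$ is real and $\s=(1-\psi)(1+\psi)\geq 1-\psi\gtrsim \|u\|^2$. In the centred finite-second-moment case, the Taylor expansion $\psi(u)=1-\tfrac12 u^\top M u + o(\|u\|^2)$ together with $\mathrm{Im}\,\psi=o(\|u\|^2)$ yields $|\psi(u)|^2=1-u^\top M u+o(\|u\|^2)$, and positivity of $M$ (forced by the lemma's lower bound) gives $\s(u)\gtrsim \|u\|^2$.

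For $k=0$ and $d\in\{1,2\}$, Theorem~\ref{thm:rigid-simple-nsc}(i) directly yields the stated equivalence; since $\s$ is bounded away from $0$ outside any neighbourhood of the origin, the integrability question reduces to the behaviour on $B_\varepsilon$, giving the announced condition $\int_{B_\varepsilon}1/(1-|\psi(u)|^2)\,du=\infty$. For $k\geq 1$ or $d\geq 3$, the hypotheses of Proposition~\ref{prop:necessary-rigid} are verified (single zero of finite order at $0$, positive lower bound at infinity), and the local estimate gives
\[
\int_{B_\varepsilon}\frac{\|u\|^{2k}}{\s(u)}\,du \;\lesssim\; \int_{B_\varepsilon}\|u\|^{2k-2}\,du,
\]
which is finite whenever $2k+d>2$, i.e.\ in the exact announced range; Proposition~\ref{prop:necessary-rigid} then rules out linear $k$-rigidity.

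The one delicate point is the passage from $|1-\psi(u)|\gtrsim\|u\|^2$, which Lemma~\ref{lm:quadra-spectral} provides directly, to $\s(u)=1-|\psi(u)|^2\gtrsim\|u\|^2$: these two quantities are not comparable in general, as the imaginary part of $\psi$ near $0$ can a priori dominate the real contribution. The two hypotheses in the proposition (symmetry, or centred with finite second moment) are precisely what rule this out, which is what dictates the disjunction in the statement.
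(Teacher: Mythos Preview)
Your proof is correct and follows the same strategy as the paper's: identify the spectral density as $\s(u)=1-|\psi(u)|^{2}$ from Example~\ref{ex:IPL}, use Riemann--Lebesgue on the absolutely continuous part of $\mu$ together with $|\psi(u)|<1$ for $u\neq 0$ to verify the hypotheses of Proposition~\ref{prop:necessary-rigid}, invoke Lemma~\ref{lm:quadra-spectral} for the quadratic lower bound near the origin, and conclude via Theorem~\ref{thm:rigid-simple-nsc}(i) and Proposition~\ref{prop:necessary-rigid}. The paper simply asserts that Lemma~\ref{lm:quadra-spectral} gives $|\psi(u)|^{2}\leq 1-\sigma\|u\|^{2}$; you are more careful in bridging the gap between the lemma's literal conclusion $|1-\psi(u)|\gtrsim\|u\|^{2}$ and the needed bound $1-|\psi(u)|^{2}\gtrsim\|u\|^{2}$, handling the symmetric and centred cases separately, and your observation that centring is harmless (unimodular factor on $\psi$) is a welcome clarification.
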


\begin{proof}
We use the spectral measure expression at \eqref{eq:IPL-SF} with Theorem  \ref{thm:rigid-simple-nsc} and Proposition  \ref{prop:necessary-rigid}.
Let $ f$ the density of $ \mu _{ c}$, i.e. $ \mu  = f\Leb + \mu _{ s}$ for some singular measure $ \mu _{ s}$, and
 $ p := \mu  _{ s}(\mathbb{R}^{ d})<1$. The Riemann-Lebesgue lemma yields that 
\begin{align*}
\int_{}e^{ -iu\cdot x}f(x)dx\to 0
\end{align*}
as $ \|u\|\to \infty $, hence for $ \|u\|$ sufficiently large, $  | \psi (u) | ^{ 2}\leqslant p + \varepsilon <1$ for $ \varepsilon >0$ sufficiently small. Furthermore, $  | \psi (u) | \neq 1$ for $ u\in \mathbb{R}^{ d}$ because $ \mu $ cannot be concentrated on $ \{x: \langle x,u \rangle\in 2\pi \mathbb{Z} \}$ since the continuous part of $ \mu $ has a support with positive Lebesgue measure. Therefore, the spectral density $ \s$ satisfies the two  conditions in Proposition  \ref{prop:necessary-rigid}. 
 
  Lemma  \ref{lm:quadra-spectral} yields that for $ \|u\|$ sufficiently small, $  | \psi (u)  |^{ 2} \leqslant  1-\sigma \|u\|^{ 2}$ for some $ \sigma >0$. Hence 
\begin{align*}
 \int_{}\frac{ \|u\|^{ 2k}}{1- | \psi (u) | ^{ 2}}du <\infty 
\end{align*}
if $ k\geqslant 1$ or $ d\geqslant 3$.
Then for $ k = 0$, Theorem  \ref{thm:rigid-simple-nsc}-(i) yields number rigidity iff the integral is infinite.
\end{proof}

In disordered models, most rigidity proofs actually yield linear rigidity  \cite{GP17,ChhaibiNaj,GL-sufficient}, but other   arguments   use a different route. In   \cite{DHLM}, the authors prove number rigidity for Coulomb gases using DLR equations, and  \cite{ChhaibiNaj} proved that this rigidity is actually linear. Proving that a model does not experience  rigidity actually gives information on its covariance decay  \cite{Lr24}.

For perturbed lattices, conversely, many models seem to experience non-linear rigidity. Consider for instance in any dimension a probability measure $ \mu $ supported by $ B_{ 1/4}$. Then clearly the resulting process $ \Z^{d,\mu }$ is number rigid, as one can unambiguously recover the underlying structure and find the perturbed $ \mathbb{Z} ^{d}$-neighbour of each particle, and in dimension $ d\geqslant 3$, by the results above, this number rigidity is not linear.

There is also the notable example of  \cite{PS14} that proves with an ad-hoc argument that in dimension $ d = 3$, a lattice perturbed by Gaussian variables with sufficiently small variance is number rigid, and this rigidity is also necessarily non-linear by Proposition  \ref{cor:linear-rigid-lattices}.

\begin{theorem}[  \cite{GP17}]
Let $ \mu  = \N(0,\sigma ^{ 2}I_{ d})$
on $ \mathbb{R}^{ d}$. Then $ \Z^{ d,\mu }$ is number rigid  if and only if $ d\in \{1,2\}$ or if $ d = 3$ and $ \sigma <\sigma _{ c}$ for some $ \sigma _{ c}\in (0,\infty )$. 
 \end{theorem}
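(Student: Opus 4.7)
The plan is to combine the linear-rigidity framework from Proposition \ref{cor:linear-rigid-lattices} with a geometric matching argument specific to small-variance Gaussian perturbations. First I compute the relevant spectral data: the Gaussian characteristic function gives $\psi(u) = e^{-\sigma^2 \|u\|^2/2}$, so by Example \ref{ex:IPL} the continuous part of the structure factor of $\Z^{d,\mu}$ equals $1 - e^{-\sigma^2 \|u\|^2}$, behaving like $\sigma^2 \|u\|^2$ near the origin. Passing to polar coordinates,
\begin{align*}
\int_{B_\varepsilon} \frac{du}{1 - e^{-\sigma^2 \|u\|^2}} \;\asymp\; \int_0^\varepsilon r^{d-3}\,dr,
\end{align*}
which diverges iff $d \leq 2$. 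Proposition \ref{cor:linear-rigid-lattices} then yields linear number rigidity for every $\sigma > 0$ when $d \in \{1,2\}$, and conversely rules out linear rigidity in all higher dimensions.

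For $d = 3$ with small $\sigma$, linear rigidity is unavailable and I would proceed by explicit reconstruction. Given $\P \cap B_R^c$, the hypothesised shifted lattice $\mathbb{Z}^3 + U$ can in principle be recovered by a greedy matching assigning each observed external point to its nearest putative lattice site. When $\sigma$ is below some $\sigma_c$, a local-to-global argument controlling clusters of mis-matched indices by Gaussian tail bounds shows that this matching is almost surely eventually correct outside a finite random window, which identifies the shift $U$. Once $U$ is known, the count is reconstructed as
\begin{align*}
\P(B_R) = \#\{\k \in \mathbb{Z}^3 : \k + U \in B_R\} + N_{\text{in}} - N_{\text{out}},
\end{align*}
where $N_{\text{in}}$ (resp. $N_{\text{out}}$) counts the indices $\k$ such that $\k + U \notin B_R$ but $\k + U + U_\k \in B_R$ (resp. the reverse), and both are measurable in $\sigma(\P \cap B_R^c)$ via the matching.

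In the opposite direction, for $d = 3$ with $\sigma \geq \sigma_c$ and for every $\sigma > 0$ when $d \geq 4$, I would disprove rigidity by a tolerance-type argument: construct a measurable reconfiguration preserving $\P \cap B_R^c$ but altering $\P(B_R)$. Choose a large finite $K \subset \mathbb{Z}^d$ flanking $\partial B_R$ and resample $(U_\k)_{\k \in K}$ from its conditional Gaussian law given $(U_\k)_{\k \notin K}$, subject to the constraint that the external configuration $\P \cap B_R^c$ is unchanged. In the high-variance regime the set of admissible $(U_\k)_{\k \in K}$ has positive conditional mass on configurations producing two different internal counts, which forbids $\P(B_R)$ from being almost surely a function of $\P \cap B_R^c$.

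The main obstacle is establishing the sharp threshold $\sigma_c < \infty$ in dimension three. A naive single-site Gaussian bound is too crude --- $\sum_\k \exp(-c/\sigma^2)$ diverges over $\mathbb{Z}^3$ --- and the true criterion is geometric, requiring one to rule out long chains of mis-matchings in the displacement field $(U_\k)$ via a blockwise Gaussian estimate rather than spectral data. This is the heart of the Peres--Sly argument \cite{PS14} and is essentially non-Hilbertian, in contrast with both the spectral computation handling $d \in \{1,2\}$ and the tolerance argument handling $d \geq 4$.
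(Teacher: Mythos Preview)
The paper does not prove this theorem; it is stated with a citation to \cite{GP17} (and the $d=3$ small-$\sigma$ case is attributed just above to \cite{PS14} as an ``ad-hoc argument''). So there is no in-paper proof to compare against beyond the spectral machinery already developed.

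Your treatment of $d\in\{1,2\}$ is exactly what the paper's framework gives: the Gaussian $\psi(u)=e^{-\sigma^{2}\|u\|^{2}/2}$ makes $1-|\psi|^{2}\sim\sigma^{2}\|u\|^{2}$ near $0$, and Proposition~\ref{cor:linear-rigid-lattices} (whose hypotheses --- continuous part, symmetry --- are met) yields linear number rigidity precisely when $\int_{B_{\varepsilon}}\|u\|^{-2}du=\infty$, i.e.\ $d\le 2$. You also correctly extract from that proposition that \emph{linear} rigidity fails for every $\sigma$ once $d\ge 3$, which is as far as the paper's own tools go.

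For $d=3$ with small $\sigma$ your sketch is in the right spirit but has a gap you partly flag yourself. Knowing that the greedy matching is a.s.\ correct ``outside a finite random window'' is not enough to make $N_{\text{in}}$ and $N_{\text{out}}$ measurable in $\sigma(\P\cap B_{R}^{c})$ for the \emph{fixed} ball $B_{R}$: the sites that matter are exactly those near $\partial B_{R}$, which may lie inside that random window. The Peres--Sly argument does not proceed by first recovering $U$ and then counting crossings; it controls long chains of mis-matchings directly, and that chain analysis is what pins down both the existence of $\sigma_{c}$ and the measurability for every $R$. Your last paragraph acknowledges this is the crux, so the proposal is best read as a reduction to \cite{PS14} rather than an independent proof.

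For non-rigidity in $d\ge 4$ (and $d=3$, $\sigma\ge\sigma_{c}$), the tolerance-type resampling you outline is the standard route, but the step ``the set of admissible $(U_{\k})_{\k\in K}$ has positive conditional mass on configurations producing two different internal counts'' is precisely the content to be proved, and it is where the dimension and variance enter. As written this is a plan, not an argument; the paper likewise offers nothing here beyond the citation.
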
  
   \begin{question}
    What are the number rigid independently perturbed lattices in dimension $ d\geqslant 3?$
   \end{question}

   \section{  Stealthy processes and  maximal rigidity}  
   \label{sec:stealthy}
   
   We saw that the speed of decay in $ 0$ of the structure factor determines the degree of rigidity of the random structure. An extreme case is when the spectrum vanishes identically on a neighbourhood of the origin.  
   
   \begin{definition}
   A \wsrm~~is  {\it stealthy} if its spectral density vanishes on a non-empty open set.
   \end{definition}  
   We dropped the usual assumption that the gap contains $ 0$ as it might be non-relevant mathematically, but it is often present in the physics literature. Stealthy point processes have attracted considerable interest in physics, in particular for their peculiar optical properties, see the non-exhaustive bibliographic sample   \cite{TZS,ZST,ZST-I,Tor16b,Tor18,Morse,casiulis} . It bears relations with the concept of  {\it blue noise} in image analysis, numerical integration, optimal transport and quantization   \cite{Klatt-nature,deGoes,Sarrazin},  since they yield  superpolynomial decay for the variance of smooth linear statistics  by Proposition \ref{prop:hyp-expo}.

Stealthy random measures exhibit a very strong form of rigidity, called maximal rigidity, where the restriction of $ \M$ on a subset $ B\subset \mathbb{R}^{ d}$ can be completely inferred from values of $ \M$ outside $ B$ \cite{GL18}, it is also a consequence of Theorem \ref{thm:rigid-simple-nsc}.

\begin{definition}
Say that a \wsrm~$ ~\M$ is  {\it maximally rigid} on $ B\subset \mathbb{R}^{ d}$ if $ \M1_{ B}\in \sigma (\M1_{ B}^{ c})$.
\end{definition}  
  
  When $ B$ is bounded, maximal rigidity could be denoted by $ \infty $-rigidity in the sense that it corresponds to $ k$-rigidity for all $ k\in \mathbb{N}.$
Stealthy measures are actually even more rigid than that, in that the set $ B$ can be non-compact.
   Call strictly convex cone  a closed cone $ C\subset \mathbb{R}^{ d}$ which does not contain both $ x$ and $ -x$ for some $ x\neq 0$, or equivalently such that for some $ x_{0}\in C$, 
\begin{align*}
\inf _{x\in C,\|x\| = 1} x_{0}\cdot x>0.
\end{align*}
   \begin{proposition}
   A stealthy \wsrm~is maximally rigid on any strictly convex cone $ C$.
   \end{proposition}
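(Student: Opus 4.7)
The plan is to translate maximal rigidity into a Hilbert-space approximation problem in $L^2(\S)$ and then close it off with a Paley-Wiener uniqueness argument that exploits the spectral gap. By a monotone class argument, it suffices to prove $\M(\phi) \in \sigma(\M 1_{C^c})$ for every $\phi \in \c_c^b(\mathbb{R}^d)$ with compact support in $C$. Since $\sigma(\M 1_{C^c})$ is stable under $L^2$-limits, and every $\M(h)$ with $h \in \c_c^b(\mathbb{R}^d)$, $\mathrm{supp}(h) \subset C^c$, is $\sigma(\M 1_{C^c})$-measurable, the spectral identity \eqref{eq:structure-def} reduces the goal to showing that $\widehat{\phi}$ lies in the closed subspace
\[
\mathcal{H} := \overline{\mathrm{span}}\{\widehat{h} : h \in \c_c^b(\mathbb{R}^d),\ \mathrm{supp}(h) \subset C^c\} \subset L^2(\S).
\]

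I will actually prove the stronger statement $\mathcal{H} = L^2(\S)$. By Hilbert duality, pick any $g \in L^2(\S)$ orthogonal to $\mathcal{H}$: then $\int \widehat{h}\,\overline{g}\,d\S = 0$ for every such $h$, and by density the same identity holds for every Schwartz $h$ supported in the open set $C^c$. The complex measure $\overline{g}\,\S$ is tempered (this uses Lemma~\ref{lm:S-tempered} together with $g \in L^2(\S)$), so its inverse Fourier transform $T$ is a well-defined tempered distribution, and Fubini yields $\langle T, h \rangle = 0$ for every Schwartz $h$ supported in $C^c$. Hence $T$ is a tempered distribution supported in the strictly convex closed cone $C$.

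The conclusion then follows from the Paley-Wiener-Schwartz theorem for cones. Strict convexity of $C$ ensures that the interior $\Gamma^\circ$ of the polar cone $\Gamma := \{\xi \in \mathbb{R}^d : \xi \cdot x \geq 0 \text{ for all } x \in C\}$ is non-empty. A tempered distribution supported in $C$ admits a Fourier-Laplace extension $\Phi$ holomorphic on the tube $\mathbb{R}^d + i\Gamma^\circ$ with polynomial growth on compact subsets, whose distributional boundary value on $\mathbb{R}^d$ is precisely $\widehat{T} = \overline{g}\,\S$. The stealthy hypothesis forces $\S$, hence $\widehat{T}$, to vanish on a non-empty real open set $U$. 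Analytic continuation on the connected complex domain $\mathbb{R}^d + i\Gamma^\circ$ then yields $\Phi \equiv 0$, so $\widehat{T} = 0$ and $g = 0$ in $L^2(\S)$, as desired.

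The main obstacle will be to rigorously propagate the vanishing of $\Phi$'s distributional boundary value on $U$ to identical vanishing of $\Phi$ on the entire tube. I plan to handle this via a cone version of the edge-of-the-wedge theorem (cf. H\"ormander, \emph{The Analysis of Linear Partial Differential Operators I}, \S 3.1): for fixed $\xi_0 \in \Gamma^\circ$ the slice $x \mapsto \Phi(x + i\xi_0)$ is real-analytic of polynomial growth on $\mathbb{R}^d$, and the distributional convergence $\Phi(\cdot + i\xi) \to \widehat{T}$ as $\xi \to 0$ in $\Gamma^\circ$, localized on $U$, forces that slice to vanish on $U$ (the polynomial growth estimates, combined with holomorphicity in $\xi$, upgrade distributional to pointwise control on bounded pieces of $U$); real-analyticity then extends the vanishing to the full slice $\mathbb{R}^d + i\xi_0$, and the identity theorem applied on the connected complex manifold $\mathbb{R}^d + i\Gamma^\circ$ concludes.
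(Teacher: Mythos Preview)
Your strategy matches what the paper has in mind. The paper defers the actual proof to a companion article and only names the mechanism: the pair (strictly convex cone, complement of a spectral gap) is an \emph{annihilating pair}, meaning a tempered distribution supported in $C$ whose Fourier transform vanishes on a non-empty open set must be zero. Your $L^{2}(\S)$ duality reduces the proposition to exactly this statement about $T=\F(\overline g\,\S)$, and the Laplace transform in the tube over the open dual cone is the standard route to it. So through the reduction you are aligned with the paper's sketch.

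The last paragraph, however, contains a genuine gap. The slicing argument you give is not a proof of the edge-of-the-wedge theorem and is not valid as written: the distributional convergence $\Phi(\cdot+i\xi)\to\widehat T$ as $\xi\to 0$ carries no information about the fixed interior slice $x\mapsto\Phi(x+i\xi_{0})$, and nothing in ``polynomial growth plus holomorphy in $\xi$'' forces that slice to vanish on $U$. Even granting an upgrade to pointwise convergence $\Phi(x+i\xi)\to 0$ for $x\in U$, the map $\xi\mapsto\Phi(x+i\xi)$ is just a real-analytic function on $\Gamma^{\circ}$ tending to $0$ at one boundary point; this says nothing about its value at an interior $\xi_{0}$. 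What you should do is invoke the edge-of-the-wedge theorem itself rather than try to reprove it: pair $\Phi$ on the tube $\mathbb R^{d}+i\Gamma^{\circ}$ with the zero function on the opposite tube $\mathbb R^{d}-i\Gamma^{\circ}$; their distributional boundary values agree on $U$, so they glue holomorphically across a complex neighbourhood of $U$, the glued function vanishes on one side and hence on the whole connected tube by analytic continuation, giving $T=0$. Your reference to H\"ormander Section~3.1 is also off; the relevant boundary-value and edge-of-the-wedge material is in Chapter~8 of \emph{ALPDO~I}, or see Vladimirov's treatment of the Bogoliubov theorem for tubes over cones.
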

   This result is proved in    \cite{rigid-companion}, where it is also shown that maximal rigidity on a bounded set persists when the stealthiness hypothesis is relaxed to that of a deep zero (i.e. where all derivatives vanish).  It is argued there that maximal rigidity comes from an instance of the uncertainty principle in harmonic analysis called  {\it annihilating pairs}, or  {\it uniqueness pairs}, putting in relation a set $ B$ with a set $ \hat B\subset \mathbb{R}^{ d}$ such that if a test function $ f$ is supported by $ B$ and $ \hat f$ is supported by $ \hat B$, then $ f\equiv 0.$

   Stealthy processes have other striking features that distinguish them from the common point processes, even the perturbed lattices.
   The bounded holes property  \cite{ZST}, giving somehow an a.s. lower bound on the density,  was  proved in  \cite[Theorem 1.1]{GL18}. Formally it means that there is $ R>0$ such that a.s. in every ball with radius $ R$ there is at least one particle.  \cite[Theorem 1.3]{GL18} also give an upper bound on the density. Let us give a simple statement and proof.
   \begin{proposition}
   \label{prop:stealthy-density}
   Let $ \M$ a stealthy non-negative \wsrm. Then there exist finite $ A ,R,\eta >0 $  such that a.s., for every $ x\in \mathbb{R}^{d}$, $ \M(B(x,1))\leqslant A$ and $ \M(B(x,R))\geqslant \eta .$
   \end{proposition}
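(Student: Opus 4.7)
The plan is to construct a nonnegative Schwartz test function $\phi$ whose Fourier transform is supported in a set where the spectral measure $\S$ vanishes, so that $\M(\tau_x\phi)$ has zero variance and hence equals its deterministic mean $m$ almost surely, uniformly in $x$; the sought density bounds then follow by comparing $\phi$ to indicator functions of balls. Let $V$ be an open set on which $\S$ vanishes and assume first the convenient case $B(0,2\rho)\subset V$ for some $\rho>0$. Pick $\chi\in\mathcal{C}_c^{\infty}(\mathbb{R}^d)$ real, symmetric, and nonnegative, with support in $B(0,\rho)$ and $\int\chi>0$. Setting $\psi(x):=(2\pi)^{-d}\int\chi(u)e^{iu\cdot x}\,du$ makes $\psi$ a real, Schwartz, entire function of exponential type with $\psi(0)>0$, so $\psi^2\geq c$ on some ball $B(0,r_0)$ by continuity. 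Then $\phi:=\psi^2$ is nonnegative, Schwartz, and $\hat\phi$ is proportional to $\chi*\chi$, hence supported in $B(0,2\rho)\subset V$.

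With this $\phi$, Plancherel's identity \eqref{eq:structure-def} yields $\textrm{Var}(\M(\tau_x\phi))=0$ for every $x\in\mathbb{R}^d$, so $\M(\tau_x\phi)=m:=\lambda\int\phi$ almost surely for each fixed $x$. To extend this to a statement holding simultaneously in $x$, observe that $\mathbf{E}\int(1+\|y\|)^{-d-1}\M(dy)<\infty$ by stationarity and Lemma~\ref{lm:S-tempered}; hence the random function $x\mapsto\M(\tau_x\phi)=\int\phi(y-x)\M(dy)$ is almost surely continuous (dominated convergence, using Schwartz decay of $\phi$), and combined with the a.s.~equality on a countable dense set this gives $\M(\tau_x\phi)=m$ for all $x$ on a single almost-sure event.

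The upper bound is then immediate: $m=\M(\tau_x\phi)\geq c\,\M(B(x,r_0))$ implies $\M(B(x,r_0))\leq m/c$, and covering $B(x,1)$ by $N=O(r_0^{-d})$ balls of radius $r_0$ yields $\M(B(x,1))\leq A:=Nm/c$. For the lower bound, split
\begin{align*}
m\;=\;\int\phi(y-x)\M(dy)\;\leq\;\|\phi\|_\infty\,\M(B(x,R))\;+\;\sum_{k\geq 0}\Big(\sup_{\|z\|\in[2^kR,\,2^{k+1}R]}\!\phi(z)\Big)\M(B(x,2^{k+1}R)),
\end{align*}
use the Schwartz decay $\phi(z)\leq C_N(1+\|z\|)^{-N}$ together with the volume bound $\M(B(x,S))\leq A'(1+S)^d$ just established, and pick $N>d$: the tail becomes $O(R^{d-N})$, so for $R$ large enough it is $\leq m/2$, and one concludes $\M(B(x,R))\geq m/(2\|\phi\|_\infty)=:\eta>0$.

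The main obstacle is arranging $\phi\geq 0$ and $\mathrm{supp}(\hat\phi)\subset\{\S=0\}$ simultaneously. The square $\phi=\psi^2$ forces the Fourier support into the difference set $V-V$, which coincides with a neighborhood of the origin only when $V$ already contains one; otherwise one must exploit the symmetry of $\S$ (which makes it vanish on $-V$ as well) and modulate by $e^{iu_0\cdot x}$ around a point $u_0\in V$ to produce a nonnegative $\phi$ whose Fourier support lies in $V\cup(-V)$. This harmonic-analytic maneuver, closely related to the annihilating-pairs framework invoked in the preceding proposition on strictly convex cones, is the only substantive extra ingredient beyond the sketch above.
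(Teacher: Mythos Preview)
Your main argument (the case $B(0,2\rho)\subset V$) is essentially the paper's own proof: both construct a nonnegative Schwartz function as a square so that its Fourier transform is compactly supported in the spectral gap, deduce that the corresponding linear statistic is deterministic, and then sandwich indicators of balls. Your explicit continuity argument for passing from ``a.s.\ for each fixed $x$'' to ``a.s.\ for all $x$'' is a welcome addition that the paper glosses over; the tail estimates (your dyadic annuli versus the paper's lattice covering) are interchangeable.

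Your final paragraph, however, is wrong. You cannot produce a nontrivial $\phi\geq 0$ with $\mathrm{supp}(\hat\phi)\subset V\cup(-V)$ when $0\notin V$: any nonnegative integrable $\phi$ satisfies $\hat\phi(0)=\int\phi>0$, so the origin is \emph{always} in the Fourier support. No modulation trick can circumvent this. In fact the proposition itself fails under the general definition of stealthiness you are trying to cover: take $\M=c\,\Leb$ with $c\geq 0$ an unbounded random variable, so that $\S$ is a point mass at the origin and vanishes on every open set avoiding $0$, yet $\M(B(x,1))=c\,\kappa_d$ is a.s.\ unbounded. The paper's proof, like yours, only treats the case where the spectral gap contains the origin, and that hypothesis is genuinely needed.
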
In the case where $ \M$ is a point process, $ \M(B(x,R))\in \mathbb{N} $, hence the second statement gives the  {\it bounded holes property}: there is a.s. no ball with radius $ R$ without point.
   \begin{proof}Assume without loss of generality unit intensity.
The stealthiness yields  $ \varepsilon >0$ such that,  with  \eqref{eq:structure-def}, for $ \varphi $ a non-negative Schwartz function supported by $ B_{ \varepsilon }$,   $  \textrm{Var}\left(\M( \hat \varphi )\right)=0$. 
Hence a.s. 
\begin{align*}
\M( \hat \varphi )=\mathbf E \M(  \hat  \varphi )=\int_{} \hat \varphi.
\end{align*}
To make sure $ \hat \varphi \geqslant 0$, take $ \varphi  = \varphi _{ 0}  \ast  \varphi _{ 0}$ for some smooth $ \varphi _{ 0}$ supported by $ B_{ \varepsilon /2}$. Assume also $ \varphi _{ 0}\geqslant 0$ and $ \varphi (0)  >0$.  Also,  $ \hat \varphi (0) = \int_{}\varphi >0$ and there is $ \kappa ,a>0$ such that 
\begin{align*}
 \hat \varphi \geqslant \kappa 1_{ B_{ a}}.
\end{align*}If $ a <1,$
one can still shrink $ \varphi $ by factor $ a $, hence $ \hat \varphi  $ expands by a factor $ 1/a$, meaning we can assume without losing the previous sign properties that $ a= 1.$ 
By positivity of $ \M$ and $ \hat \varphi  =   | \hat \varphi _{ 0} | ^{ 2} $,
\begin{align}
\label{eq:bd-hole}
 \kappa \M(B_{ 1})\leqslant \M( \hat \varphi ) = \int_{} \hat \varphi <\infty 
\end{align}which yields the first part of the statement by stationarity. Since $ \varphi $ is non-negative, $ \sup\hat \varphi  =  \hat \varphi (0).$
We have for $ R>0$
\begin{align}
\label{eq:bd-hole2}
0<\int_{} \hat \varphi  = \M( \hat \varphi )\leqslant \hat \varphi (0) \M(B_{R})+\M( \hat \varphi 1_{B_{R}^{c}}).
\end{align}
Since $ \hat \varphi $ has fast decay and the $ B(\k,1) ,\k\in d^{ -1/2}\mathbb{Z} ^{ d},$ cover the space, we have for some $ c>0$
\begin{align*}
\hat \varphi 1_{B_{R}^{c}}(x)\leqslant  \sum_{\k\in d^{ -1/2} \mathbb{Z} ^{d}  \setminus B_{R}}c \|\k\|^{-d-1}1_{B(\k,1)}(x),x\in \mathbb{R} ^{d}.
\end{align*}
It finally yields with stationarity,  \eqref{eq:bd-hole} and  \eqref{eq:bd-hole2}
\begin{align*}
\int_{} \hat \varphi \leqslant  \hat \varphi (0)\M(B_{R})+ \sum_{\k\in d^{ -1/2} \mathbb{Z} ^{d}  \setminus B_{R}}c\|\k\|^{-d-1}\M(B(\k,1 ))\leqslant  \hat \varphi (0)\M(B_{R})+cA\sum_{\k\in d^{ -1/2} \mathbb{Z} ^{d}  \setminus B_{R}}\|\k\|^{-d-1}.
\end{align*}
The series converges, hence the rest goes to $ 0,$ and $ \M(B_{R})\geqslant \eta : =\int_{} \hat \varphi / 2\hat \varphi (0)$ for $ R$ large enough. 
   \end{proof}
   
      The proof can be refined to optimise the values of $ \eta ,R,A$, see  \cite{GL18,Coste}.
      
      \subsection{Further questions on stealthy point processes}  
      
      Spectral considerations therefore give the main properties of stealthy random measures. If on the other hand one focuses on the class of point processes, which is the prominent class studied in the literature,  deeper questions arise.
      
Experimenters in physics and image analysis have generated very large point samples that seem to be disordered and exhibit a stealthy behaviour, i.e. a flat spectrum at the origin  \cite{gaussian-blue-noise,Morse,casiulis}.
   The interest of practitionners is to put in evidence stealthy models of point processes which are also disordered, e.g. isotropic and mixing, through some simulation procedures. At Section \ref{sec:grad-desc}, we give a heuristic explanation why such disordered stealthy samples should exist, confirmed by  the results of numerical experiments.  
   To the author's knowledge, the only known mathematical examples of stealthy point processes on $ \mathbb{R}^{ d}$ are finite unions of shifted lattices as in Example \ref{ex:irrational-lattices} (with only finitely many $ a_{ m}$)  or perturbations thereof by stacked-sliders \cite{ZST-I}. 
   
      \begin{question}
 Are there disordered stealthy  hyperuniform processes in dimension $ d\geqslant 2?$.
\end{question}
Regarding the  meaning of {\it disorder}, see the discussion at Section \ref{sec:disordered}. Mixing would be already be very strong, even if of course Brillinger mixing would be ideal (see Section  \ref{sec:CLT-mixing}).   
Here again, this question is trivial if one considers  random measures in general. A Gaussian field whose spectral measure is a non-negative Schwartz function supported by, say, $ B_{ 2}  \setminus B_{ 1}$, is stealthy and as mixing as possible (it is actually $ 2$-dependent). There are also purely atomic non-negative  random measures  that also satisfy this property,     but they are likely not mixing, either by Proposition \ref{prop:mixing-spectral} because they have spectral atoms  \cite{KurSarnak}, or because the construction originates from a lattice  \cite{LotzKlatt}; still, the latter example has a continuous spectrum, but it is not isotropic.      

Another question concerns the transport properties of stealthy processes. Using the strategies of Section  \ref{sec:hu-is-lattice}, it should be possile to show that for each $ p\geqslant 1$, stealthy point processes are $ L^{ p}$ perturbed lattices, i.e. of the form $ \Z^{   \mathsf U}$ where $   \mathsf U = \{U_{ \k};\k\in \mathbb{Z} ^{ d}\}$ is stationary and $ \mathbf E \|U_{ 0}\|^{ p}<\infty $. 
 \begin{question}
Can a stealthy  point process always be represented as a $ L^{ \infty }$-perturbed lattice, i.e. with the $ U_{ \k}$ a.s. bounded by some finite $ A>0?$
\end{question}
This would corroborate the  upper and lower density bounds of Proposition  \ref{prop:stealthy-density}, and it is not contradicted by Example  \ref{ex:irrational-lattices} with a finite number of $ a_{ i}$, which is actually the more complex examples we have of a stealthy  point process to date.
 
 A final question is about the class of sets where stealthy processes are maximally rigid. We saw that they are maximally rigid on closed strictly convex cones, but they might be interpolable on larger classes, such as complements of convex cones, called major cones, like for quasicrystals (see below). For instance, stealthy processes from Example  \ref{ex:irrational-lattices} are maximally rigid on the complement of an infinite large strip, i.e. on $ (\mathbb{R}  \setminus [-A,A])\times \mathbb{R}^{ d-1}$, for $ A$ sufficiently large.
  \begin{question}
 Are stealthy point processes maximally rigid on major cones? Or on larger sets?
 \end{question}
       
       \section{Quasicrystals}  
       \label{sec:QC-rigid}
       We saw at Section  \ref{sec:quasicrystals} that quasicrystals provide a great deal of  hyperuniform  point processes, more on the ordered side. The cut-and-project models are in general not stealthy, as their spectral measure has a dense support. Nevertheless, as we saw before, it is often the continuous part of the spectral measure, i.e. the spectral density $ \s$, that bears informations on the rigidity behaviour. In this respect, quasicrystals are extremely rigid since their spectral density identically vanishes. We give the following result, which is applicable to random measures with zero spectral density. Note that such examples need not be  hyperuniform.
       \begin{theorem}[\cite{rigid-companion}] Let $ \M$ a \wsrm.   If $ \s$ is purely atomic, $ \M$ is maximally rigid on the complement of any cone with non-empty interior.        
        \end{theorem}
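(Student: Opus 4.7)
The plan is to reformulate maximal rigidity spectrally via the Plancherel identity and then reduce the problem to a rigidity property of almost periodic functions. Set $B := \mathbb{R}^{d} \setminus C$, where $C$ is a cone with non-empty interior. To show $\M 1_{B} \in \sigma(\M 1_{B^{c}})$, it suffices, by the linear Gaussian-type $L^{2}$ structure induced by $\S$, to prove that for every $f \in \c_{c}^{b}(B)$ the random variable $\M(f)-\mathbf{E}\M(f)$ lies in the $L^{2}(\Omega)$-closure of $\{\M(h)-\mathbf{E}\M(h) : h \in \c_{c}^{b}(B^{c})\}$. Identity \eqref{eq:structure-def} transfers this Hilbertian question into $L^{2}(\S)$: the task becomes showing that $\{\hat h : h \in \c_{c}^{b}(B^{c})\}$ is dense in $L^{2}(\S)$, or at least that its orthogonal complement contains no $\hat f$ with $f \in \c_{c}^{b}(B)$.

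By Hahn--Banach, the heart of the matter is to show that any $g \in L^{2}(\S)$ satisfying $\int \hat h\,\bar g\, d\S = 0$ for every $h \in \c_{c}^{b}(B^{c})$ must be zero. Writing $\S = \sum_{k} c_{k}\, \delta_{u_{k}}$ (pure atomicity) and applying Fubini, this orthogonality becomes $\int h(x)\, \overline{F_{g}(x)}\, dx = 0$ for all such $h$, where
\[
F_{g}(x) \;:=\; \sum_{k} c_{k}\, g(u_{k})\, e^{\,i x\cdot u_{k}}.
\]
Thus $F_{g}$ vanishes on the open cone $B^{c}=C$. The function $F_{g}$ is a generalised trigonometric series with purely imaginary frequencies, hence an almost periodic function (Bohr sense when the series converges uniformly, Besicovitch $B^{2}$-sense in general).

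The key analytic ingredient is that a non-zero almost periodic function cannot vanish on a cone $C$ with non-empty interior. Indeed $C \supset B(y_{0},r)$ for some $y_{0}\in\mathbb{R}^{d}$ and $r>0$, and by the cone property $C \supset B(ty_{0},tr)$ for every $t \geq 1$. Given $x_{0}\in\mathbb{R}^{d}$ and $\varepsilon>0$, the set $E_{\varepsilon}$ of $\varepsilon$-almost periods of $F_{g}$ is relatively dense, so some $L=L(\varepsilon)<\infty$ ensures that every ball of radius $L$ meets $E_{\varepsilon}$. Choosing $t$ large enough that $tr>L$, the ball $B(ty_{0}-x_{0},tr)$ contains some $\tau \in E_{\varepsilon}$, whence $x_{0}+\tau \in C$ and
\[
|F_{g}(x_{0})| \;\leq\; |F_{g}(x_{0})-F_{g}(x_{0}+\tau)| + |F_{g}(x_{0}+\tau)| \;<\; \varepsilon + 0 \;=\; \varepsilon.
\]
Letting $\varepsilon\to 0$ gives $F_{g}\equiv 0$, which by uniqueness of Fourier--Bohr coefficients forces $c_{k}\, g(u_{k})=0$ for all $k$, i.e.\ $g=0$ in $L^{2}(\S)$.

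The main obstacle is the technical handling of infinite atomic spectra: the series defining $F_{g}$ need not converge in a classical sense when $\sum_{k} c_{k}|g(u_{k})|=\infty$, a scenario compatible with $g \in L^{2}(\S)$ combined with the tempered growth from Lemma \ref{lm:S-tempered}. Two natural remedies present themselves: either truncate $\S$ to $\S_{R}:=\S\, 1_{B_{R}}$, prove the density statement for the truncated problem where $F_{g}$ is a genuine Bohr AP function, and pass to the limit $R\to\infty$ using that the $\hat h$'s separate atoms of $\S$; or work within the Besicovitch $B^{2}$-almost periodic framework, which accommodates square-summable coefficients and still enjoys relative density of approximate periods, sufficient to reproduce the argument above. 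Either route completes the proof.
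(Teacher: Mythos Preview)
The paper does not supply its own proof here; the result is attributed to the forthcoming companion work \cite{rigid-companion}. Your overall strategy --- reduce maximal rigidity to density of $\{\hat h: h\in\c_c^b(C)\}$ in $L^2(\S)$ via \eqref{eq:structure-def}, then exploit that an almost periodic function vanishing on a set containing arbitrarily large balls must vanish identically --- is sound and is almost certainly the intended route. The reduction from $\sigma$-algebra rigidity to the linear $L^2$ statement is legitimate: once every $\M(f)$ with $\mathrm{supp}(f)\subset B$ is an a.s.\ limit of $\M(h_n)$ with $\mathrm{supp}(h_n)\subset C$, measurability follows, and $\sigma(\M 1_B)$ is generated by such $\M(f)$.

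The gap lies in your two proposed repairs for the convergence of $F_g$. The truncation route fails: density of $\{\hat h\}$ in each $L^2(\S_R)$ does \emph{not} imply density in $L^2(\S)$ --- a proper closed hyperplane in $\ell^2$ can surject onto every finite-coordinate subspace --- and ``separation of atoms'' does not bridge this. The Besicovitch $B^2$ route also fails as written: $B^2$-functions are equivalence classes without pointwise values, so the inequality $|F_g(x_0)|\leq |F_g(x_0)-F_g(x_0+\tau)|+|F_g(x_0+\tau)|$ is meaningless there. The clean fix is to regularise in physical space: for $\varphi\in\c_c^\infty(B_\varepsilon)$, the convolution $F_g*\varphi$ (of the tempered distribution $F_g$ with a test function) is smooth and vanishes on the eroded cone $C_\varepsilon=\{x:B(x,\varepsilon)\subset C\}$, which still contains arbitrarily large balls. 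Its Fourier series has coefficients $c_k g(u_k)\hat\varphi(u_k)$, and Cauchy--Schwarz together with Lemma~\ref{lm:S-tempered} gives
\[
\sum_k c_k\,|g(u_k)|\,|\hat\varphi(u_k)|\;\leq\;\|g\|_{L^2(\S)}\Bigl(\int|\hat\varphi(u)|^{2}\,\S(du)\Bigr)^{1/2}<\infty,
\]
so $F_g*\varphi$ is a genuine Bohr almost periodic function. Your period-translation argument now applies verbatim and yields $F_g*\varphi\equiv 0$, hence $c_k g(u_k)\hat\varphi(u_k)=0$ for every $k$. Since $\hat\varphi$ is entire (compact support of $\varphi$), one may choose $\varphi$ with $\hat\varphi(u_k)\neq 0$ for any given $k$, whence $g=0$ in $L^2(\S)$.
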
  
        Hence in our current knowledge, for quasicrystals, the maximal rigidity situation is even more extreme than for stealthy processes: it is enough to know the process on an arbitrarily small convex cone with non-empty interior to uniquely determine its values on the whole space.  {See  \cite[Proposition 3]{Lr-max-rigid} for an interpretation of this property for  {\it deterministic} quasicrystals}. Once again, one might ask the question of the minimal class of sets ensuring maximal rigidity.

        \begin{longversion}
         
         Quasicrystals might provide models of stealthy measures that are not a finite union of shifted irrational lattices (example ...). Kurasov and Sarnak gave examples of point configurations $ \P  = \sum_{x\in \Lambda }\delta _{ x}$ whose spectrum $ \S$ is a locally finite measure and which are not of the form ... This was thouroughly investigated by Lev and Olevskii and Wayne M. Lawton1
, August K. Tsikh2. [Fourier quasicrystals and discreteness
of the diffraction spectrum] proved that if it is additionally assumed that  $ \P$ is uniformly discrete, then we have a union of shifted lattices, after a conjecture of Lagarias (and also in this case the spectrum is automatically UD).

This does not address the existence of quasicrystals with a continuous part in their spectrum.
         
        \end{longversion}
%
%

       \chapter{Quantitative aspects } \label{ch:numerics}
       
        We shall explore quantitative aspects of  hyperuniformity, such as the problem of estimating the structure factor and detecting mere  hyperuniformity, or simulating large samples. 
      {In this chapter,   we observe a finite measure $ \M_{ \lambda }$ on  the window $   \mathbb  T _\lambda  ^{ d} = [-\lambda /2,\lambda/2 )^{ d},\lambda >0$,   to exploit the spectral analysis of Section \ref{sec:hu-finite} on the dual space $ \mathbb{Z} _{ \lambda }^{ d}$. Even if the model is originally defined in another shape, such as the ball $ B(0,\lambda /2)$, it is still formally possible, and sometimes mathematically useful, to embed it in the torus $ \T$. Sometimes we assume that  $ \M_{ \lambda }$ is invariant under translations of $  \mathbb  T  _{ \lambda }^{ d}$ with periodic boundary conditions, called $  \mathbb  T  _{ \lambda }^{ d}$-invariance.  This is the case if for instance $ \M_{\lambda } = \M1_{  \mathbb  T  _{ \lambda }^{ d}}$ for some stationary process $ \M$ on $ \mathbb{R}^{ d}$, but for many large finite systems $ \{\M_{ \lambda },\lambda >0\}$, stationarity  is only true asymptotically.} We do not discuss the intensity estimation topic and assume that the intensty is known, we make the renormalisation assumption
\begin{align*}
 \mathbf E \left[
\M_{ \lambda }(\T)
\right] = \lambda ^{ d}.
\end{align*}
We give here a brief introduction, definition of main estimators and a basic study of the bias. We refer to  \cite{mastrilli-minimax} for a good introduction to the estimation of the structure factor and the hyperuniformity exponent,  and the related literature.
       
 \section{Spectrum estimation and the \texttt{structure-factor} package}

The objective of this section
is to estimate the second order properties of a random measure $ \M_{ \lambda }$, and the most basic tool is the  {\it scattering intensity}, already introduced at  \eqref{eq:def-scatt-intensity} $$ \hat \s_{ \M_{ \lambda }} (u) = \lambda ^{ -d} | \F\M_{ \lambda }(u) | ^{ 2}, u\in \mathbb{R}^{ d}.$$ 
It is clearly related to the spectral measure of  \eqref{eq:def-spectral-density}. In particular if $ \M_{ \lambda } $ is translation invariant, we have immediately on the dual space 
\begin{align*}
 \mathbf E \left[
\hat \s_{ \M_{ \lambda }}(u) 
\right]= \s_{ \M_{ \lambda }}(u),u\in \mathbb{Z} _{ \lambda }^{ d}  \setminus \{0\}.
\end{align*}
Also, in this case, from  \eqref{eq:coste-weak-cvg-SI}, when $ \M_{ \lambda }$ is the restriction of a stationary measure on $ \mathbb{R}^{ d}$, the scattering intensity is unbiased at the so-called  {\it allowed wavevectors} $ u$, i.e. $ u\in \mathbb{Z} _{ \lambda }^{ d}$. It is shown in  \cite{HGBLr} that it is actually enough to have   one coordinate on $\mathbb{Z} _{ \lambda }^{ 1} =  2\pi \lambda ^{ -1}\mathbb{Z} $, i.e. if $ u$ is in  the set 
\begin{align*}
   \mathbb{ A}_{ \lambda }: = \{u\in \mathbb{R}^{ d} :  \exists  i, u_{ i}\in \mathbb{Z} _{ \lambda }^{ 1}  \setminus \{0\}\}.
\end{align*}

 { \begin{proposition}[\cite{HGBLr}, Prop. 1]
  \label{prop:SI-grid}
Let $ \M$ a weakly stationary random measure  whose covariance measure $ \C$ is integrable, let $ \M_{ \lambda }$ be its restriction on $  \mathbb  T  _{ \lambda }^{ d}$. Then its spectral measure  $ \S$ has a density $ \s$ with respect to  $ \Leb $, and for $ u\in \mathbb{R}^{ d}$
\begin{align*} \left|
\mathbf E\left[
 \hat \s_{ \M_{ \lambda }}(u)
\right]-\s(u)
\right| =    \left | 
\prod_{i:u_{ i}\neq 0}\frac{e^{ iu_{ i}{ \lambda}}-1}{u_{i}\lambda}
\right | ^{2} + \varepsilon _{ \lambda }(u)
\end{align*}
with $ \lim_{ \lambda \to 0}\sup_{ u\in \mathbb{R}^{ d}}\varepsilon _{ \lambda } (u)= 0.$
In particular,
\begin{align*}
\sup_{ u\in    \mathbb{ A}_{ \lambda }}\left|
\mathbf E \hat \s_{ \M_{ \lambda }}(u)-\s (u)
\right|   \xrightarrow[ \lambda \to \infty ]{}0. 
\end{align*}
Also, for $ u\in \mathbb{R}^{ d}$ fixed,  $ \mathbf E \left[
\hat \s_{ \M_{ \lambda }}(u)
\right]\to \s(u)$ 
 if and only if  more than half of $ u$'s coordinates  do not vanish.
  \end{proposition}
  
Hence,   as noted at  \cite[8.4.11b]{DVj08}, the bias vanishes everywhere (except at $ u = 0$) in dimension $ 1.$  The estimation at $ u = 0$	requires a specific technique, and it is in general a bad idea to estimate the spectral density at $0<  \|u\|<2\pi \lambda ^{ -1}$ because the observation window $ \T$ is not large enough to reflect the behaviour at such large wavelengths.

  \begin{proof}  Define $$ f_{ u,\lambda}(x) = 1_{  \mathbb  T _\lambda ^{ d}}(x)e^{ iu\cdot x}$$ so that $   \hat \s_{ \M_{ \lambda}}(u) = \lambda^{ -d}  | \M(f_{ u,\lambda}) | ^{ 2}$.
Let us use   \eqref{def:covariance}: 
\begin{align*} 
\lambda^{ -d}\E | \M(f_{ u,\lambda}) | ^{ 2} = & \lambda^{ -d} \textrm{Var}\left(\M( {f_{ u,\lambda}})\right) + \lambda^{ -d}\left | 
\mathbf E \M(f_{ u,\lambda})
\right | ^{ 2}\\
 = &\lambda^{ -d}\int_{ }  \displaystyle\int_{}1_{ \T}(y)e^{ iu\cdot y}1_{ \T}(x + y)e^{- iu\cdot (x + y)} dy\C(dx) +\lambda^{ -d} \left | 
 \widehat{ 1_{  \mathbb  T  ^{ d}_\lambda}}(u)
\right | ^{ 2}\\
  =&\int_{} e^{ -iu\cdot x} \underbrace{\lambda^{ -d}\Leb( \tau _{ x}\T\cap \T) }_{1 + o(1)}\C(dx) +\lambda^{ -d} \left  |  
\lambda^{d} \widehat{ 1_{ \mathbb  T  ^{ d}_{ 1}}}(u\lambda  )
\right  |  ^{ 2}\\
 =  & \s(u)   + o\left(
\displaystyle\int_{}  | e^{ -iu\cdot x} |  | \C | (dx)
\right)+ \lambda^{  d}  \left | 
\prod_{i:u_{ i}\neq 0}\frac{e^{ iu_{ i}{ \lambda}}-1}{u_{i}\lambda}
\right | ^{2},
\end{align*}
where the $ o(\cdot )$ comes from Lebesgue's theorem, using $  | \C | (\mathbb{R}^{ d})<\infty $.
The last term vanishes if $ u\in    \mathbb{ A}_{ \lambda }$, and otherwise oscillates around $  \lambda^{ 2(d-m)}$ where $ m$ is the number of $ i$ such that $ u_{ i}\neq 0$.
  \end{proof}
  
In the physics literature, it is common to estimate the structure factor and assess hyperuniformity at such lattice points. In conclusion, if $ \M$ is  hyperuniform, $ \s_{ \M_{ \lambda}}(u)$ should be small for $ u\in  \mathbb A_{ \lambda }$ close to $ 0$, in particular for  $    u \in \mathbb{Z} _{ \lambda}^{ d}$.
}

 For pointwise estimation, this estimator yet suffers a few caveats, and  estimators are generally built from averaged smoothed versions of the scattering intensity.      For a more stable and accurate approximation, using a terminology coming from time series, it is recommanded to use  {\it tapered versions}  \cite{Percival+Walden:2020}, i.e. a square integrable properly renormalised function $ \varphi :   \mathbb  T _\lambda ^{ d}\to \mathbb{R}$, and 
\begin{align*}
  \hat \s^{ \varphi }_{ \M_{ \lambda}}(u) =  \left | \int_{  \mathbb  T  _{ \lambda}^{ d}}e^{ -ix\cdot u} \varphi (u)d\M_{ \lambda}(u)\right| ^{ 2}. 
\end{align*}We recover for instance $  \hat \s_{ \M_{ \lambda}} =  \hat \s^{ \varphi_{0}}_{ \M_{ \lambda}}$ with $ \varphi_{ 0} = \lambda^{ -d/2}1_{  { \mathbb  T  ^{ d}_{ \lambda}}}$ ($ \lambda $ is implicit in this notation). A more stable estimation is obtained with multitapered estimators, i.e.  averages of tapered estimators over a finite family of orthogonal tapers $ (\varphi_{ 1},\dots ,\varphi_{ q})$, 
\begin{align*}
  \hat \s_{\M_{ \lambda}}^{ \varphi_{ 1},\dots ,\varphi_{ q}} = \frac{ 1}{q}\sum_{j = 1}^{ q}  \hat \s^{ \varphi_{ j}}_{ \M_{ \lambda}}.
\end{align*}
This procedure requires debiasing, resp. inside or outside the $  | \cdot  | ^{ 2}$, yielding resp. a  {\it directly debiased tapered / multi-tapered estimator} (DDT or DDMT), or  {\it undirectly debiased} (UDT or UDMT).

See  \cite{hawat2023estimating,olhede,mastrilli-minimax} for further mathematical investigations of the convergence of this estimator and its variants. In particular \cite{mastrilli-minimax}  gives optimal minimax bounds for spectral measure estimation and shows that it is impossible to obtain a consistent estimator of $ \s(u)$ in a non-parametric setting.\\

 These estimators are implemented in the \texttt{structure-factor} package  \footnote{\url{https://pypi.org/project/structure-factor/}}, along with several built-in sample distributions to test them out, see the corresponding article  \cite{hawat2023estimating}, in particular Sections 5,6, for an in-depth discussion of their performances. We reproduce at Figure  \ref{fig:SF-package} the results of multi-tapered estimators  $ \hat \s^{ \varphi_{ 1},\dots ,\varphi_{ 4}}_{ \P_{ \lambda}}$  with 4 orthogonal sine tapers, for several models of point processes,  hyperuniform or not, isotropic or not. We show the radial profile of the estimator, i.e. each gray point $ (\rho ,s)$ represents a couple value $ (\|u\| , \hat \s_{ P}(u))$ for the sample $ P\in  \mathscr  N(  \mathbb  T  _{ \lambda}^{ d})$ represented in the first line, for some multi-tapered estimator $ \hat  \s$ applied to $ P$. The last three rows represent three different estimators. Note that, as one can visually detect from the top left square, the partial matching process  (see Section \ref{sec:partial-matching}) on the left is not isotropic, which likely yields some oscillations on the radial projection of the spectral measure.

\begin{figure*}[!ht]
  \vspace*{-0.2cm}
  \begin{tabular}{p{\dimexpr 0.05\textwidth-\tabcolsep}p{\dimexpr 0.22\textwidth-\tabcolsep}p{\dimexpr 0.22\textwidth-\tabcolsep}p{\dimexpr 0.22\textwidth-\tabcolsep}p{\dimexpr 0.22\textwidth-\tabcolsep}}
    \multirow{3}{*}{\rotatebox[origin=l]{90}{Sample}}                                  &
    \raisebox{-\height}{\includegraphics[width=0.9\linewidth]{illust/estim/kly_pp_box.pdf}}     &
    \raisebox{-\height}{\includegraphics[width=0.9\linewidth]{illust/estim/ginibre_pp_box.pdf}} &
    \raisebox{-\height}{\includegraphics[width=0.9\linewidth]{illust/estim/poisson_pp_box.pdf}} &
    \raisebox{-\height}{\includegraphics[width=0.9\linewidth]{illust/estim/thomas_pp_box.pdf}}
  \end{tabular}
  \vspace*{-0.2cm}
  \begin{tabular}{p{\dimexpr 0.05\textwidth-\tabcolsep}p{\dimexpr 0.22\textwidth-\tabcolsep}p{\dimexpr 0.22\textwidth-\tabcolsep}p{\dimexpr 0.22\textwidth-\tabcolsep}p{\dimexpr 0.22\textwidth-\tabcolsep}}
    \multirow{6}{*}{\rotatebox[origin=c]{90}{MT}}             &
    \raisebox{-\height}{\includegraphics[width=1\linewidth]{illust/estim/s_mtp_sine_taper_kly.pdf}}     &
    \raisebox{-\height}{\includegraphics[width=1\linewidth]{illust/estim/s_mtp_sine_taper_ginibre.pdf}} &
    \raisebox{-\height}{\includegraphics[width=1\linewidth]{illust/estim/s_mtp_sine_taper_poisson.pdf}} &
    \raisebox{-\height}{\includegraphics[width=1\linewidth]{illust/estim/s_mtp_sine_taper_thomas.pdf}}
  \end{tabular}
  \vspace*{-0.2cm}
  \begin{tabular}{p{\dimexpr 0.05\textwidth-\tabcolsep}p{\dimexpr 0.22\textwidth-\tabcolsep}p{\dimexpr 0.22\textwidth-\tabcolsep}p{\dimexpr 0.22\textwidth-\tabcolsep}p{\dimexpr 0.22\textwidth-\tabcolsep}}
    \multirow{6}{*}{\rotatebox[origin=c]{90}{DDMT}}             &
    \raisebox{-\height}{\includegraphics[width=1\linewidth]{illust/estim/s_ddmtp_sine_taper_kly.pdf}}     &
    \raisebox{-\height}{\includegraphics[width=1\linewidth]{illust/estim/s_ddmtp_sine_taper_ginibre.pdf}} &
    \raisebox{-\height}{\includegraphics[width=1\linewidth]{illust/estim/s_ddmtp_sine_taper_poisson.pdf}} &
    \raisebox{-\height}{\includegraphics[width=1\linewidth]{illust/estim/s_ddmtp_sine_taper_thomas.pdf}}
  \end{tabular}
  \vspace*{-0.2cm}
  \begin{tabular}{p{\dimexpr 0.05\textwidth-\tabcolsep}p{\dimexpr 0.22\textwidth-\tabcolsep}p{\dimexpr 0.22\textwidth-\tabcolsep}p{\dimexpr 0.22\textwidth-\tabcolsep}p{\dimexpr 0.22\textwidth-\tabcolsep}}
    \multirow{6}{*}{\rotatebox[origin=c]{90}{ UDMT}}             &
    \raisebox{-\height}{\includegraphics[width=1\linewidth]{illust/estim/s_udmtp_sine_taper_kly.pdf}}     &
    \raisebox{-\height}{\includegraphics[width=1\linewidth]{illust/estim/s_udmtp_sine_taper_ginibre.pdf}} &
    \raisebox{-\height}{\includegraphics[width=1\linewidth]{illust/estim/s_udmtp_sine_taper_poisson.pdf}} &
    \raisebox{-\height}{\includegraphics[width=1\linewidth]{illust/estim/s_udmtp_sine_taper_thomas.pdf}}    \\
    \caption*{}                                                                                         &
    \vspace*{-0.2cm}
    \caption*{{\fontfamily{pcr}\selectfont } Partial matching }                                               &
 
     \vspace*{-0.2cm}\caption*{{\fontfamily{pcr}\selectfont } Ginibre }                                          &
  \vspace*{-0.2cm}
    \caption*{{\fontfamily{pcr}\selectfont } Poisson }                                           &
    \vspace*{-0.2cm}
    \caption*{{\fontfamily{pcr}\selectfont } Thomas }
  \end{tabular}
  \vspace{-0.5cm}
  \caption{  \cite{HGBLr} Estimation of the structure factor of several models of point processes over  several samples with the \texttt{structure-factor} package. From left to right: Partial matching process  \cite{KLY} (Section  \ref{sec:partial-matching}), Ginibre model (Sections \ref{sec:intro-examples},\ref{sec:ginibre}), Poisson point process (Section \ref{sec:intro-pp}), Thomas cluster process (\cite[Section 5.3]{skm}).  {\it Three bottom rows}: multi-tapered estimator; directly debiased; undirectly debiased. Red bars indicate $ \pm 3$ times the standard deviation from the empirical mean.}
  \label{fig:SF-package}
\end{figure*}

We compared the estimators of Figure  \ref{fig:SF-package} numerically through the integrated mean square error over 50  i.i.d  samples $ \P_{ n}^{ (1)},\dots ,\P_{ n}^{ (50)}$, for $ n = 5800$ points, for each model. Specifically, given $ \rho _{ min} = 0.1, \rho _{ max} = 2.8,$ we compute the values $ \hat \s(u),u\in \mathbb{Z}_\lambda  ^{ d}$, for some estimator $ \hat \s$ as above. Then, we use the radial projection $ \hat \s_{ \text{\rm{\color{black} radial}}}(\|u\|) = \hat \s(u)$, and on $ [\rho _{ min},\rho _{ max}]$, $ \hat \s_{ \text{\rm{\color{black} radial}}}$ is interpolated linearly between the values projected from the grid $u\in  \mathbb{Z}_\lambda  ^{ d}$. We compare with the exact radial structure factor $\s_{ \text{\rm{\color{black} radial}}}(\|u\|)  $ that we derive from theoretical computations (e.g.  \eqref{eq:dpp-SF} and \eqref{eq:ginibre-kernel} for Ginibre). Then the integrated MSE is
\begin{align*}
\text{\rm{\color{black}  iMSE}}( \hat \s) = \int_{\rho _{ min}}^{ \rho _{ max}} | \hat \s_{ \text{\rm{\color{black} radial}}}(t)- \s_{ \text{\rm{\color{black} radial}}}(t) | ^{ 2}dt.
\end{align*}
 
 We give the variance  of $ \text{\rm{\color{black} iMSE}}( \hat \s)$ and a confidence interval   for $ \mathbf E [\text{\rm{\color{black} iMSE }}( \hat \s)]$ for the isotropic models investigated and several estimators in Table  \ref{tab:summary_statistics_DSE}.

\begin{table*}[!ht]
  \centering
   \caption{  \cite{HGBLr} variance and confidence interval for iMSE$ ( \hat \s)$
  }
  \label{tab:summary_statistics_DSE}
  \small
  \begin{tabular}{|l|c|c|c|c|c|c|c|}
    \hline
    \rule{0pt}{15pt}
    Estimators
     & Variance
     & confidence
     & Variance
     & confidence
     & Variance
     & confidence
    \\
    \hline
    $ \hat \s_{ \P_{ n}}$
     & 0.32
     & $0.32 \pm 0.02$
     & 1.31
     & $1.34 \pm 0.06$
     & 69.51
     & $70.71 \pm 17.95$
    \\
    \hline
    $\hat \s_{ \P_{ n}}^{ \varphi_{ 0}}$ (DDT)
     & 0.32
     & $0.33 \pm 0.03$
     & 1.44
     & $1.47 \pm 0.1$
     & 72.15
     & $73.63 \pm 26.12$
    \\
    \hline
    $\hat \s_{ \P_{ n}}^{ \varphi_{ 1}} $ (DDT)
     & 0.34
     & $0.35 \pm 0.06$
     & 1.47
     & $1.50 \pm 0.14$
     & 79.29
     & $80.51 \pm 27.20$
    \\
    \hline
    $\hat \s_{ \P_{ n}}^{ \varphi_{ 1},\dots ,\varphi_{ 4}}$
 (DDMT)    & 0.08
     & $ \mathbf{0.08} \pm 0.007$
     & 0.37
     & $\mathbf{0.38} \pm 0.02$
     & 17.90
     & $\mathbf{18.19} \pm 4.19$
    \\
    \hline
     & \multicolumn{2}{|c|}{Ginibre }
     & \multicolumn{2}{|c|}{Poisson }
     & \multicolumn{2}{|c|}{Thomas }
    \\
    \hline
  \end{tabular}
\end{table*}

Both visually and numerically, the directly debiased multi-tapered version seems to give the most accurate estimation of $ \hat \s$, at least in the isotropic case.

Other topics are discussed in  \cite{HGBLr} and implemented with  \texttt{structure-factor}, such as estimation of the covariance function, estimation of the spectral measure under isotropy conditions, test of hyperuniformity and estimation of the  hyperuniformity exponent, which is the topic  of the next section.

 \subsection{Exponent estimation}

 To estimate the  optimal hyperuniformity exponent (Section \ref{sec:hyp-expo}), i.e. $ \alpha $ such that $ \s(u)\sim C\|u\|^{ \alpha }$ as $ u\to 0,$ one can simply perform a regression on the estimated structure factors of Figure  \ref{fig:SF-package}, but a weakness of this approach is to require several  i.i.d samples for a consistent estimation.  This is natural since the spectral density is defined through the variance of a random variable  \eqref{eq:def-spectral-density}, which is in theory not available from a single realisation, but it is not adapted to the common experimental setup where a large  single observation is the only data available. 
 Mastrilli, Blaszczyszyn and Lavancier  \cite{MBL} propose an approach to circumvent this issue.   They  also use a family of orthogonal tapers $ \varphi _{ 1},\dots ,\varphi _{ q}$.
   In analogy to the classical estimation of the long-range memory exponent of time series, they  implement the idea to average over several scales. Namely, for $ J = \{j_{ 1},\dots ,j_{ m}\}\subset (0,1)$, they consider the linear statistics $ \P_{ \lambda }(\varphi _{ i,\lambda ^{ j}}),1\leqslant i\leqslant q,j\in J,$ for a given  point process $ \P$, where $ \varphi _{ i,R} = \varphi _{ i}(\cdot /R)$. Based on the dependency in the exponent  (Proposition  \ref{prop:hyp-expo}), they introduce the estimator of the  slope at scale $ j$ 
\begin{align*}
 \hat c_{ j}  =   \frac{ w_{ j}}{\ln(\lambda)} \ln\left(
\frac{ 1}{q}\sum_{i}\P_{ \lambda^{ j}}(\varphi _{ i})^{ 2}
\right),
\end{align*}and then the averaged estimator $ \hat \alpha  = \sum_{j\in J}w_{ j} c_{ j}$
where the weights $ w_{ j}$ are obtained from a least squares minimisation procedure, under the constraints $ \sum_{j}w_{ j} = 0, \sum_{j}jw_{ j} = 1.$ Then they derive results of consistency and asymptotic normality under integrability assumptions for the covariance. This estimator is implemented in the package \texttt{Estim\_Hyperuniformity}  \footnote{   \url{https://github.com/gabrielmastrilli/Estim_Hyperuniformity}}.
 It is interesting to visualise the influence of the scale $ j$  on the exponent. On Figure  \ref{fig:mastrilli}-left, one has a visual illustration of the orthogonal Hermite tapers related to the size of the sample depending on the scale $ j$. In the middle, they recover the exponent $ \alpha  \approx 2$ for the Ginibre ensemble, over intermediate scales (the scales $ j>1$ suffer from boundary influence, as is illustrated by the dashed line of the Poisson process). They also seem to recover several regimes for the real data of a small marine algae system ($ n = 900$) on the right panel, with $ \alpha  \approx 2.2$ for intermediate scales. The study of ``multi-hyperuniformity'', where the hyperuniformity exponent varies with the scale $ j$, is a rich and yet ill-posed statistical problem.

\begin{figure}[h!]
    \centering
    \begin{subfigure}[b]{0.28\textwidth}
        \includegraphics[width=\textwidth]{illust/mastrilli-hermite-tapers} 
    \end{subfigure}
    \hfill    
    \begin{subfigure}[b]{0.3\textwidth}
        \includegraphics[width=\textwidth]{illust/mastrilli-ginibre}
    \end{subfigure}
    \hfill  \hspace{1.5cm} 
    \begin{subfigure}[b]{0.28\textwidth}
        \includegraphics[width=\textwidth]{illust/mastrilli-algae}
    \end{subfigure}
 \caption{  {\it Left.} Hermite tapers at different scales $ j$.  {\it Middle.} Exponent estimation for the Ginibre process.  {\it Right.} Exponent estimation for the marine algae data.}
    \label{fig:mastrilli}
\end{figure}

  \section{Simulation of  hyperuniform samples}  
  
   We discuss the question of simulation of models which are proven to be hyperuniform, at least asymptotically. We assume here that the number of points is fixed $$ \#\P =N = \lambda ^{ d} .$$
   Several procedures are encoded in the package \texttt{blue\_sampler}   \footnote{\url{https://github.com/For-a-few-DPPs-more/rgbn}, install with \texttt{pip install blue\_sampler}}, developped by Armand de Caqueray, where the aim is to compare several simulation methods to evaluate their performances in the same set-up.
 
\subsection{Exact models}  
As already seen at Sections \ref{sec:intro-examples}, \ref{sec:ginibre}, \ref{sec:gue-as-dpp},  \ref{sec:random-matrices}, many hyperuniform processes are eigenvalues of a random matrix model.
The complexity of the implementation is essentially that of finding the eigenvalues of a large random matrix, there does not seem to be a trick beyond the known tricks about fast matrix diagonalisation,  it is in general a costly procedure. 
Simulating Coulomb gases / plasma / jelliums might be even trickier, and one often prefers the random matrix representations when they are available. The naive research of eigenvalues of a random Ginibre matrix (with \texttt{np.linalg.eigvals} in \texttt{Python}) on a personnal computer gives the following times, in seconds: \begin{itemize}
\item 3000pts in 17s
\item 4000pts in 35s
\item 5000pts in 62s
\end{itemize}
  Lavancier and Rubak  \cite{LavRubDPP} simulate the  2D $ \beta $-Ginibre ensemble, i.e. the DPP on $  \mathbb C $ whose kernel is of the form
\begin{align*}
 K_{ \beta }(z,w) = \rho \exp\left(\beta ^{ -1}(z  \bar w- \frac{ | z | ^{ 2} +  | w | ^{ 2}}{2})\right), 
\end{align*}hence the classical Ginibre process is obtained for $ \beta  = 1$ (see  \eqref{eq:ginibre-kernel}). Some performances are summarised in  table  \ref{fig:lavancier} ($ \rho $ is the average number of points).
\begin{figure}[h!]
\begin{center}
\includegraphics[width = 8cm,
                     trim=0cm 6cm 0cm 0cm,
                     clip]{illust/simu-time-ginibre}
\end{center}
\caption{Screen capture from \cite{LavRubDPP}, $ \rho $ represents the number of points sampled.}
\label{fig:lavancier}\end{figure}

Gautier et al. \cite{GautBardValko} report the following performances for simulation   2D Jacobi $ \beta $-ensembles with their package \texttt{DPPy}: \begin{itemize}
\item 1000pts: 2s
\item 2000pts: 14s
\item 2500pts: 30s 
\end{itemize}

\subsection{Fair partitions}
\label{sec:numerics-laguerre}

 It seems to have been exploited independently in the physics  \cite{torquato-tilings,Klatt-nature} and signal processing literature \cite{fiume,Balzer,deGoes,gaussian-blue-noise, Sarrazin}  that taking the centroids of a fair partition  conducts  to hyperuniformity and stealthiness (Section \ref{sec:stealthy}). See Theorem \ref{thm:LotzKlatt} for a theoretical justification. The more rigourous and general work in this direction is  \cite{LotzKlatt}, see Section \ref{sec:fair-partitions} for theoretical insights. We already discussed theoretical properties of the typical cell for stable marriage, power diagrams and Coulomb partitions. Let us discuss further their use to produce  hyperuniform samples.
 
 \subsubsection{Laguerre cells centroids and Lloyd's algorithm}   
 \label{eq:numeric-laguerre}

The terminology  {\it centroid} is often used instead of  {\it barycenter}, we recall that it is defined by 
\begin{align*}
 \text{\rm{centr}}(C) = \mo_{ 1}(C) = \displaystyle\int_{C}xdx.
\end{align*}
Recent works often start from optimal transport Laguerre cells, recall the definition  from Section  \ref{sec:laguerre-theor}.
 This procedure can be iterated: choose a Laguerre model of partition,  and denote by $  \mathscr  T$ the mapping   $ (  \mathbb  T  _{ n}^{ d})^{ N}  \mapsto  (  \mathbb  T  _{ n}^{ d})^{ N}$ which to $N$ points associates the centroids of the partition based on these $ N$ points.  See the illustration at Figure  \ref{fig:sarrazin}. Starting from some point configuration $ \P_{ 0}$, iterate this procedure with $ \P_{ n + 1} =   \mathscr  T \P_{ n},n\geqslant 0$. 
 
  {

  The resulting centroid process is called Centroidal Voronoi Tessellation (CVT) when the underlying tessellation is Voronoi, or Capacity Constrained Voronoi Tessellation (CCVT) for Laguerre cells with equal volume. The  importance  of having equal volume is clear from  \cite{torquato-tilings,LotzKlatt}, but iterative procedures such as in  \cite{Klatt-nature} starting from  Voronoi cells with different volumes tend to homogeneize the cells and give them equal volume asymptotically, saving the initial cost of producing cells with equal volumes from the start.  In image processing and optimal transport, quite often the target density is not homogeneous, i.e. we are not working with stationary point processes, especially in relation with the problem of optimal quantization of a given density  \cite{Sarrazin,deGoes,Balzer}, but we will not address this aspect here as the stationary case already allows an in-depth discussion that captures most fundamental aspects. 
 \renewcommand{\a}{  {\bf a}}

}

\begin{figure}[h!]
\begin{center}
\includegraphics[width = 15cm]{illust/sarrazin-lloyd}
\caption{  \cite[Fig. 1]{Sarrazin} {\it From left to right}: a point cloud $ \P_{ 0} = \{x_{ i},i = 1,\dots ,N\}$; the $ \W_{ 2}^{ 2}$-optimal transport power diagram $ \{B_{ i};i = 1,\dots ,N\}$; the displacement vectors towards centroids $ x_{ i}- \text{\rm{centr}}(B_{ i})$; the centroid process $ \{\text{\rm{centr}}(B_{ i});i = 1,\dots ,N\}$.}
\label{fig:sarrazin}
\end{center}
\end{figure}

 The centroid   $  x = \text{\rm{centr}}(B_{ i})$ minimises the inertia $ x  \mapsto  \int_{ B_{ i}}\|y-x\|^{ 2}dy.$  The procedure above is also called Lloyd's algorithm, and is related to the $ k$-means algorithm in statistical clustering, seeing each cell as a cluster among the points of $  \mathbb  T  _{ n}^{ d}$, and at each iteration the center of each cluster is updated to be at the position minimising the inertia of the point cloud.  
Lloyd / $ k$-means algorithm has been studied intensively, wether for Voronoi tessellation, or the  Laguerre tessellations.

\subsubsection{Entropic transport}
  
In practice, computing the optimal transport plan, or in our case the optimal transport allocation / Laguerre cells, can be  costly numerically, especially in dimension $ d>5$. Many cheaper procedures exist to build a transport plan that   is  close to optimality. A branch that is flexible and theoretically rich is that of entropic optimal transport, where an entropy term is added in the transport cost to minimise. See  \cite{PeyreCuturi} for a general introduction, giving rigourous results. One hence seeks for a coupling $ M$ minimising
\begin{align*}
\displaystyle\int_{}\|x-y\|^{ p}M(dx,dy)+ \varepsilon    \mathsf E(M | \Leb \otimes P)
\end{align*} where the second term is the entropy of $ M$  with respect to the product measure, computed in terms of the Radon-Nikodym derivative
\begin{align*}
     \mathsf E(M | \Leb \otimes P)
 = \displaystyle\int_{}\ln\left(
\frac{ dM}{d\Leb\otimes P}
\right)dM,
\end{align*} and $ \varepsilon >0$ is a parameter weighting between optimality and efficiency.

 The entropic regularization is implemented by the celebrated Sinkhorn algorithm for computation, which has much better computational complexity than linear programming approaches.
  In this case, an optimal transport plan $ M$ is not necessarily represented by an allocation, but more generally by non-negative kernels $ f_{ i}$ summing to unity:
\begin{align*}
 \sum_{i}f_{ i}(x) = 1, x\in W.
\end{align*} 
$ f_{ i}(x)dx$ represents $ M(dx,\{x_{ i}\})$, the proportion of mass of Lebesgue measure at location $ x$ that has to be sent to point $ x_{ i}.$   The kernels $ f_{ i}$
can be interpreted as "soft assignments" or "fuzzy allocations" as opposed to the hard assignments of classical  optimal transport. An advantage is that this transport plan is less costly to generate, and then a cloud of points can be attached to each kernel $ f_{ i}$. One can use the general Theorem 5.2  of  \cite{LotzKlatt} to make the last step rigourous.

\subsubsection{Fair STIT}  
The previous constructions of Laguerre tessellations, stable marriage, and gravitational allocations, might seem a bit impractical to produce, because of the long range interactions present in the construction and the complicated shape of the cells / kernels.   In dimension $ 2$, the situation is even worse because any tessellation based on a Poisson (or Poisson-like) process will be badly behaved, in the sense that the $ \W_{ 2}^{ 2}$ transport cost per unit volume of the corresponding allocation to the original Poisson points diverges (Theorem \ref{AKT}). Therefore, if one wished to build, say, a mixing  hyperuniform process, one has to start from a mixing  hyperuniform point process, and build the related partition, which seems a bit cyclic.

Lotz and Klatt  \cite{LotzKlatt} propose a finite volume direct construction not involving an original point process, in any dimension. They do not provide a detailed study yet, but this tessellation seems adapted  to build mixing hyperuniform processes with arbitrarily high exponent $ \alpha .$ The construction is inspired by the STIT tessellation introduced 20 years ago by Nagel and Weiss  \cite{NagWei05}. Let $ q\in \mathbb{N}^{ *}.$\begin{itemize}
\item Start from an initial window $ C$ of volume $ 1$,  and a deterministic distribution $ \nu $ on $  \mathbb  S  ^{ d-1}$, seen as the set of orthogonal directions of hyperplanes of $  \mathbb R  ^{ d}.$
\item Draw randomly a direction $ \theta(C)\sim \nu $, and chose a point $ x(C,\theta (C))\in C$ such that the unique hyperplane  containing $ x(C)$ and orthogonal to $ \theta (C)$ divides $ C$ in two cells with equal volume $ C_{ 0},C_{ 1}$.
\item Iterate this procedure with $ C = C_{ 0} $  and $C = C_{ 1}$, and iteratively for each cell obtained.
\item Stop the subdivision at cells that are obtained as the result of $ q$ successive divisions of the initial cell $ C$, i.e. cells which have exactly volume $ 2^{ -q}$.
\end{itemize}In finite volume, one  obtains $ 2^{ q} $ cells. The hope is that, if one calls $  \mathscr  C_{ q} = \{C_{ 1},\dots ,C_{ 2^{ q}}\}$  the cells obtained with this procedure, the rescaled partition
\begin{align*}
\overline{\mathscr  C}_{ q} = \{2^{ q/d}C_{ 1},\dots ,2^{ q/d}C_{ 2^{ q}}\},
\end{align*}
made up of unit volume cells, converges weakly in law to some infinite equivariant fair partition $  \mathscr  C$ by convex polyhedra. We furthermore hope that this random tessellation will satisfy the assumptions of Theorem  \ref{thm:LotzKlatt}. 
The original STIT construction from Nagel and Weiss is similar, except that there is not the constraint that when a cell is divided in two, the two new cells are required to have the same volume.  See Figure 1 in  \cite{LotzKlatt} for $ m$-averaging sets attached to each cell, giving a $ \alpha $-hyperuniform sample with $ \alpha  = 2(m + 1)-d$ (see Section  \ref{sec:fair-partitions}).

\begin{figure}[h!]
\begin{center}
\includegraphics[width = 5cm]{illust/fair-stit}
\caption{A simulation of a fair STIT tessellation based on an initial square window in $ \mathbb{R}^{ 2}$, with $ q = 10$ levels of subdivisions, hence $ 2^{ q} = 1024$ cells with equal volume.}
\label{fig:fair-stit}
\end{center}
\end{figure}

A very important point is that the whole procedure is linear in the number of points produced, provided the number of cells is a power of $ 2$, i.e. of the form $ 2^{ q}$ for some $ q\in \mathbb{N}.$
{Let us give a point sample generated by the package \texttt{blue\_sampler}\footnote{\url{https://github.com/For-a-few-DPPs-more/rgbn}, install with \texttt{pip install blue\_sampler}},with  a variant of the method of \cite{LotzKlatt}, see Figure  \ref{fig:armand}.  For a simplified procedure, each polygonal cell is required to be quadrilateral, in the sense that a cell subdivision is discarded  if it does not produce two quadrilaters,  and the algorithm tries again until it succeeds producing two quadrilaters. The aim of the procedure is to exploit Theorem \ref{thm:LotzKlatt} with $ m = 2,n = 6$, i.e. to produce in each cell  $ B_{ i}$ independently and randomly a cluster $ \P_{ m,n,B_{ i}}$ of $ 6$ points matching the moments of $ B_{ i}$ up to order $ 2$, in order to produce a $ \alpha $-hyperuniform process with exponent $ \alpha  = 2(m + 1)-d = 4 .$}

 \begin{figure}[h]
  \centering
 \begin{subfigure}{0.3\textwidth}  
 \centering
\includegraphics[width=\linewidth]{illust/fairSTIT-by-armand-tessel}
   \end{subfigure}
     \centering
 \begin{subfigure}{0.3\textwidth}  
 \centering
\includegraphics[width=\linewidth]{illust/fairSTIT-by-armand}
   \end{subfigure}
\hfill
    \begin{subfigure}{0.3\textwidth}     \centering \includegraphics[width=\linewidth,
                     trim=.7cm 1cm 0cm 0cm,
                     clip]{illust/fairSTIT-by-armand-SF}    
    \end{subfigure}    \hfill
    \begin{subfigure}{0.4\textwidth}
        \centering 
      
      \end{subfigure}
      \caption{  {\it Left:} The result of the quadrilater-forced fair STIT tessellation. Inside each cell, a cluster of 6 points matching the moments of the cell up to order $ 3$ is produced.  {\it Middle:} Same picture without the quadrilaters.  {\it Right:} Scattering intensity of this sample. A manual linear regression on the portion of curve between frequencies $ 2.10^{ 1}$ and $ 7.10^{ 1}$ (symbolised by the green segment) produces an estimation $ \alpha \approx 3.68$, probably due to the  finite volume approximation.}
      \label{fig:armand}
      \end{figure}

      \section{Simulation of empirically stealthy processes and blue noise by gradient descent}

\label{sec:grad-desc}

  As suggest many examples from this book, when there is a repelling force acting among particles, they are likely to arrange themselves in a globally ordered manner, lowering fluctuations. See for instance the example of  Gibbs measures (Theorems \ref{thm:3d-coul}, \ref{thm:Leble}, \ref{thm:riesz}, \ref{thm:Coul-infinite})  or the effect of the Coulomb force on Poisson samples  \cite{HBLr}, diminishing the variance of smooth linear statistics. We discuss here the iterative application of such forces to produce hyperuniform systems, which can be more generally formulated as a gradient descent.

     Up to applying an independent shift, for a convenient mathematical treatment we assume $ \P_{ n}$ is invariant under translations of $  \T  $  to exploit Fourier duality. It is recommanded to use periodic boundary conditions in numerical experiments to not suffer from noise caused by boundary effects.
 We also assume that the number of particles is fixed,  up to rescaling we assume unit intensity  $ \#\P_{ n} = \lambda ^{ d}$ a.s. with $ \lambda  = N^{ 1/d}$ for some $ N\in \mathbb{N}$. We apply here recursively a transformation 
\begin{align*}
 \P_{ n  +  1} =  \mathscr  T \P_{ n }
\end{align*}
that preserves shift invariance.  Even if it is applied to an unordered set of particles, $  \mathscr  T$ can be formally defined as a mapping from $ (\T )^{N}$ to $ (\T) ^{N}$ symmetric in its arguments, i.e. invariant under arguments permutation, and naturally extended to $ N$-point configurations.
The basic idea is  to  {see $ \P $ as a vector of size $ Nd$ and }define a differentiable energy function 
\begin{align*}
  \mathscr  E: (\T )^{N}  \mapsto  (\T )^{N}.
\end{align*}Start from an initial random configuration $ \P = \P_{ 0}$   in $  { \T  }$ and iterate with 
\begin{align*}
\P_{ n + 1} = \P_{ n } -   \nabla  \mathscr  E( \P_{ n} ).
\end{align*}
This procedure is sometimes adapted with a  {\it learning rate} $ \tau >0$, i.e. 
\begin{align*}
 \P_{ n + 1} = \P_{ n}-\tau  \nabla  \mathscr  E(\P_{ n}),
\end{align*}
but this amounts to multiplying $  \mathscr  E$ by $ \tau $ and we do not include $ \tau $ in  the mathematical discussions.

    The   non-periodic hyperuniform point processes provided so far have a finite exponent $ \alpha \geqslant 0$, meaning the spectral measure $ \S(du)$ vanishes in $ 0$ at speed $ \|u\|^{ \alpha }$. The iterative procedures discussed in this section  seem to converge to stealthy processes, i.e. with a gap in the spectrum, corresponding to $ \alpha  = \infty $. The involved construction are only in finite volume, i.e. 
provide after $ n$ iterations a point process $ \P_{ n} \subset  \T   , $ for some $n, \lambda  \gg 1 $,  whose expected scattering intensity  $ \S_{ \P_{ n}}$ seems to vanish around $ 0$ in numerical experiments for large $ n$ (see Section \ref{sec:hu-finite} for spectral considerations  related to finite samples).
    This experimental field of research has developped independently in physics on one hand, and in the more   numerical fields of image processing / optimal transport / numerical integration / machine learning, with interesting progress and results from all fields.   As it turns out, most successful methods rely at some point on a gradient descent.

\subsection{Laguerre centroid iteration}  
The Laguerre centroid construction  described at Section  \ref{sec:numerics-laguerre} turns out to be a gradient descent.
In this case, the ``energy'' to minimise  over the $ N$ particles is the  squared $ 2$-Wasserstein distance $$ {  \mathscr  E(\P)} =  \W_{ 2}^{ 2}(\P ,\Leb 1_{ \T})$$ or preferentially the toroidal version, to avoid boundary effects $$   \widetilde{\mathscr  E}(\P) =  \tilde \W_{ 2}^{ 2}(\P ,\Leb 1_{ \T})$$where in $  \tilde \W_{ 2}^{ 2}$ the cost function  is the square toric distance between two points. Recall that here $ \P = \sum_{i = 1}^{ N}\delta _{ x_{ i}}$ is both seen as a (atomic) measure (in $ \W_{ 2}^{ 2}$) and a vector of $ Nd$ real ordered components,  (in $  \mathscr  E$).

 This problem is studied by Mérigot and Mirebeau \cite{Merigot}, where it is proved that this function is not convex in the $ N$ points $ x_{ 1},\dots ,x_{ N}$, but it is  semi-concave, i.e. $  \mathscr  E-\kappa\|\cdot \|^{ 2} $ is concave for some $ \kappa >0$. Recall that we study the mapping $  \mathscr  T$, associating the Laguerre centroids: for $ \P = \sum_{i}\delta  _{ x_{ i}},$
\begin{align*}
  \mathscr  T(x_{ 1},\dots ,x_{ N}) = ( \text{\rm{centr}}(B_{ i}))_{ i = 1,\dots ,N},
\end{align*} where the $ B_{ i}$ are the $ \W_{ 2}^{ 2}$-Laguerre centroids.  It is   remarkable  this procedure corresponds exactly to the gradient descent of the energy.

\begin{proposition}[\cite{Sarrazin}, Proposition 1]
For $ x_{ 1},\dots ,x_{ N}\in  \mathbb  T  _{ n}^{ d}$ pairwise distinct,
\begin{align*}
  \mathscr  T(x_{ 1},\dots ,x_{ N}) = (x_{ 1},\dots ,x_{ N})-  \frac{ 1}{ 2}   \nabla  \mathscr  E(x_{ 1},\dots ,x_{ N}).
\end{align*}
\end{proposition}
See the third image of Figure  \ref{fig:sarrazin} for the visualisation of the gradients, i.e. the ``displacement vectors''. We do not provide a proof, but it is easy to check directly if for instance the $ x_{ i}$ are a small perturbation of a perfect grid on $ \T.$
It implies in particular that the more a particle is ``misplaced'', i.e. far from the centroid of its cell, the higher is the gain by moving it to its center. Mérigot, Santambrogio and Sarrazin \cite{Sarrazin} prove that a.s. this procedure converges to a ``critical'' configuration where each point is the centroid of its cell. They prove furthermore that if the initial configuration is not too cluttered, the minimisation is very effective. Denote the  {\it exclusion radius} $$\text{\rm{\color{black} radius}}(x_{ 1},\dots ,x_{ N}) = \sup\{\rho: \|x_{ i}-x_{ j}\|\geqslant \rho ,i\neq j \}.$$ 

\begin{theorem}[\cite{Sarrazin}, Thm. 3]  Let $ P$ a point configuration on $ \T$ with $ N$ distinct points.
\begin{align*}
  \mathscr  E(  \mathscr  T P)\leqslant  Nc_{ d}\text{\rm{\color{black} radius}}(P)^{ 1-d}
\end{align*}
with the constant $ c_{ d} = \frac{ 2^{ 2d-1}}{\kappa _{ d-1}} d^{ \frac{ d + 1}{2}}.$  

 \end{theorem}  
For comparison, recall from Theorem \ref{AKT}  that if $ \P $ consists in $ N$  i.i.d points uniform over $  \T  $, $  \mathscr  E(\P )\sim N\ln(N)$, and by  \eqref{eq:lower-bound-Wp}, the minimum of $  \mathscr  E$ is larger than $ c_{ d,2}N$.  

\  

 
 \subsection{Pairwise potential}
   
Another method  consists in choosing a radius $ \rho>0 $ and minimise the energy on the spectral domain over frequencies in $ B(0,\rho )  $: 
\begin{align*}
  \mathscr  E(\P) =   \S_{ \P}(1_{ B_{ \rho }}),
\end{align*}
where the spectral measure is defined at  \eqref{eq:def-spectral-density}. Recall that in all the section the intensity is fixed at $ 1$, which will induce constraints on $ \rho $ to have a flat spectrum on $ B_{ \rho }$. In any case, minimising $  \mathscr  E$ should lead to a spectrum as flat as possible on $ B_{ \rho }$. More generally, one can choose a bounded function $ \psi $ with fast decay on $ \mathbb{R}^{ d}$ and define the energy
\begin{align}
\label{eq:gauss-energy}
  \mathscr  E_{ \psi }(\P ) = \S_{ \P }(   \psi  ).
\end{align} 
We know from Proposition \ref{lm:spectral} and  \eqref{eq:exact-spectr} that  
\begin{align*}
\lambda ^{ 2d}\mathscr  E_{ \psi }(\P ) =\lambda ^{ 2d}\S_{ \P}(\psi ) = &\sum_{u\in \mathbb{Z} _{ \lambda }^{ d}}\psi (u)\lambda ^{ 2d}\S(\{u\})\\
 = &\sum_{u\in \mathbb{Z} _{ \lambda }^{ d}} \psi (u)\left(
\mathbf E \left[
 | \F \P(u) | ^{ 2}
\right] - \left | 
\mathbf E \left[
\F \P(u)
\right]
\right | ^{ 2}
\right)\\
 = &\sum_{u\in \mathbb{Z} _{ \lambda }^{ d}}\psi (u)\mathbf E\left[
 \sum_{x, y\in \P}e^{ iu\cdot (x-y)}
\right] - \sum_{u\in \mathbb{Z} _{ \lambda }^{ d}}\psi (u)\left|
\displaystyle\int_{\T}e^{ iu\cdot x}dx
\right|^{ 2}\\
 = &\mathbf E \left[
\sum_{x,y\in \P} \hatn{ \psi }(x-y)
\right] -\lambda ^{ 2d} \psi (0)\end{align*}
with the  notation 
\begin{align*}
 \hatn{\psi }(x) = \sum_{u\in \mathbb{Z} _{ \lambda }^{ d}}\psi (u)e^{ iu\cdot x}.
\end{align*}
Hence, minimising $  \mathscr  E_{ \psi }$ amounts to having particles $ x\neq y$ interact pairwise through the force $ \hatn{\psi }(x-y)$.  Seeing $  \mathscr  E_{ \psi }$ as a function symmetric in $ N$ particles $ x_{ i}$, the gradient can then be written out explicitly as a force acting on the particles:  $$ \P_{ n + 1}  = \P_{ n} -  \nabla  \mathscr  E(\P_{ n})= \{x_{ i} -   \sum_{j\neq i} \nabla \hatn{ \psi }(x_{ i}-x_{ j}),1\leqslant i\leqslant N\}.$$ Let us present two examples from the literature. \\

\draft {\color{red} Massoulié discussion - on graph $ \mathbb{Z} ^{ d}$. Let $ G$ a subset of $ \mathbb{Z} ^{ d}$.
 
 Regularity on a graph is if the variances are small 
\begin{align*}
  \textrm{Var}\left(G(f_{ R})\right).
\end{align*}
 
 Particles on a graph have a mutual repulsion $ \hat \psi (i-j)$. Using Fourier transform on $ \mathbb{Z} ^{ d}$, it gives target energy
\begin{align*}
\sum_{}| \psi  | ^{ 2}\S_{ G}
\end{align*}
 
 What about edges $ A_{ x,y}$? Repulsion 
\begin{align*}
 \sum_{x,y}A_{ x,y} \hat \psi (x-y)
\end{align*}
Then take Fourier transform on whatever sense. We abstractly need 
\begin{align*}
  \textrm{Var}\left(\sum_{x\sim y} f(x,y)\right) = \int_{} | \hat f | ^{ 2}S
\end{align*}

Can give something for a stationary model?
 }

 {\bf  Target a stealthy spectrum.}
In a vast literature going back to Batten et al.  \cite{stealthy08}, members of the Princeton team apply the previous procedure with $ \psi (u) = 1_{ B_{\rho  }}$ for some parameter radius $\rho >0$. They derive an important parameter $ \chi $, monitoring the ratio between the number of constraints  and degrees of freedom. The constraints correspond to frequencies set to $ 0$, roughly of the order $  \frac{ 1}{ 2}   \#\mathbb{Z} _{ \lambda }^{ d}\cap B_{ \rho }\sim \kappa _{ d}\rho ^{ d} (2\pi \lambda ^{ -1})^{ -d}$, and the number of degrees of freedoms is $ d\times N$, where the number of particles satisfies $ N\sim \lambda ^{ d}$, assuming intensity is fixed to $ 1$. Therefore, after removing dimensional constants, the ratio is proportional to $ \rho ^{ d}$, i.e. $ \chi  = c_{ d} \rho ^{ d}$ for some explicit $ c_{ d}>0$. 
  \begin{figure}[h!]
  \begin{center}
  \includegraphics[width = 14cm]{illust/morse-samples}
  \label{fig}
  \end{center}\caption{  \cite[Figure 1]{Morse}Samples of $ 2\times 10^{ 6}$ particles with different values of $ \chi .$ Each image only shows $ 1/16$th of all the data for better visualization. The data is available at \url{https://doi.org/10.34770/e49n-r807}}
  \label{fig:morse}
  \end{figure}
In  \cite{stealthy09}, the same authors 
identify experimentally three regimes, separated by thresholds $ 0<\rho _{ d,1}<\rho _{ d,2}$:
\begin{itemize}
\item At $\rho <\rho  _{ d,1} $, we have regular disordered samples as in Figure  \ref{fig:morse}.  Morse et al. \cite{Morse} attain with this method the smallest attainable value within machine precision, with  $ |  \s_{ \P_{ n}}(u) | $ on $ B_{ \rho }$ as small as $ 10^{ -46}$, for $ n^{ d} \approx 2\times 10^{ 6}$ in dimension $ d = 2.$ The fact that the number of degrees of freedom minus the number of constraints grows as $ \rho ^{ d}$ pleads for the existence of a very large class of infinite volume homogeneous stealthy systems.
 Physicists claim that the manifold of minimisers is connected, and that exploring its topology is an  {\it outstanding open problem}  \cite{Tor16}. 
\item At intermediate values $ \rho  _{ d,1}<\rho <\rho _{ d,2}$, a  {\it wavy crystal} phase emerges, also called  {\it stacked-slider phase}, already noticeable in the five years older study  \cite{wavy-cryst} of the same team,  see in particular their Figures 5,7. This phase  seems to be noticeable only in dimension $ d\geqslant 2$ (i.e. $ \rho _{ 1,1} = \rho _{ 1,2}$). It is reported in the physics literature that $ \rho _{ d,2}$ corresponds to $ \chi  = 1/2$, i.e. the transition occurs when the number of degrees of freedom is twice the number of constraints, even if the experimantal values seem to differ slightly ($ \chi  \approx  0.57655$ if $ d = 2$, $ \chi \approx 0.50066$ if $ d = 3$).
\item At values $ \rho _{ d,2}<\rho <1$, the sample becomes crystalline, i.e. periodic, recalling that we have by convention a unit intensity process.
\item For $ \rho >1$, it is mathematically impossible to obtain stealthy samples with these constraints (see the theoretical justification with Proposition \ref{prop:stealthy-density}), the limit case  is actually obtained with a periodic lattice configuration. 
\end{itemize}

\draft{For a mathematical treatment, it might be wise to smoothen $ \psi $ at the corners to avoid aliasing effects, take for instance
 $ \psi (u) = 1_{ B_{ \chi  }} \ast \exp(-\|u\|^{ 2}/\sigma ^{ 2})$ with a small $ \sigma $, 
\begin{align*}
 \hat \psi (x) \sim (1 + \chi \|x\|)^{ -(d + 1)/2}\exp(-\sigma ^{ 2}x^{2}).
\end{align*}
To keep a compact spectrum, one can replace $ \exp(-t^{ 2}/\sigma ^{ 2})$ by a Schwartz function supported by $ [-\sigma ,\sigma ]$ (with superpolynomial decay). Other potentials are considered in  \cite[III]{TZS}.}

The team of   Martiniani (NYU) refined this approach in  \cite{casiulis} by targetting any kind of spectrum (not only flat on a ball) with an approach based on non-uniform fast Fourier transform (algorithm  {\bf FReSCo}, Fast Reciprocal-Space Correlator). They prioritize generating large particle sets over achieving perfectly flat spectra, enabling the creation of the largest stealthy hyperuniform samples  to date: \(10^{9}\) particles in 2D and \(10^{7}\) in 3D, with spectral precisions in the range \(10^{-20}\) to \(10^{-25}\). They argue that this level of precision is  adequate for most applications, since even a positional uncertainty on the order of \(10^{-9}\) on the particles already disrupts the effective accuracy.\\

\begin{figure}[h!]
\begin{center}
\includegraphics[width = 15cm]{illust/casiulis-QC}
\label{fig:casiulis-QC}\caption{  \cite{casiulis}: { Density fields of 403x403 are generated. Samples are generated to exhibit varying degrees of stealthiness, as demonstrated in the varying radius  in the structure factor. The larger the radius, the more uniform the system appears. UwNU (right) systems are generated to exhibit
sixfold symmetry, imposing only the peaks of the innermost hexagon (marked with red circles). As the radius of the innermost hexagon or
peaks changes, we sometimes get a Kagome-like tiling instead of a triangular lattice.}}
\end{center}
\end{figure}

  {\bf Gaussian blue noise.}  
A parallel branch of literature, more involved with image processing, optimal transport, numerical integration and machine learning, is concerned with producing  {\it blue noise} samples, which corresponds to the same requirements as for  { stealthy / hyperuniform processes}. We described above some algorithms based on underlying Laguerre-type tessellations, but the state of the art seems to be currently the so-called  {\it Gaussian blue noise}, which corresponds to taking pairwise repulsive Gaussian forces, i.e.  \eqref{eq:gauss-energy}  with a Gaussian kernel (recall that the Fourier transform of a Gaussian kernel is another Gaussian kernel).  The separable form of the Gaussian $ \exp(-\|u\|^{ 2}) =  \prod_{i = 1}^{ d} \exp(-u_{ i} ^{ 2})$ allows a parallelisation and an efficient scalability.  It is an old idea, already present in  \cite{DitheringBlueNoise}, and it resurfaces regularly, see the last contribution  \cite{gaussian-blue-noise} and references therein. In the latter article, they produce stealthy samples with precision in the range $ 10^{ -10}$ to $ 10^{ -15}$, much below the previous method  \cite{deGoes}, but they rather focus on a balance with the radius  of the spectral gap, not just the spectrum flatness like in physics works.  In numerical experiments, $ \exp(-\|x\|^{ 2}/\sigma ^{ 2})\leqslant 10^{ -11}$ for $ \|x\|\geqslant 5\sigma $, hence the procedure  amounts to a smooth kernel with compact support. The discussion about the right choice of $ \sigma $ is the same as the right choice of $ \chi $ / $ \rho $ above, balancing between a not too small $ \sigma $ (too much disorder) and a too large one (samples become periodic), the authors actually recommand $ \sigma  = 1$ at unit intensity.  They also demonstrate the possibility to use their method up to dimension $ 24$, whereas previous works in physics are mostly in dimension $ d = 2,3$, and the centroid algorithms below are not implemented for $ d>5$.

   \draft{
   \subsection{Discussion}

 The problem of making converge a gradient descent is crucial in many applied fields, in particular machine learning, and such algorithms have been overpolished in the last decades. Practitionners have exploited this technology to produce  hyperuniform samples $ \P_{ n}^{ \infty }$ effectively.

  {\color{blue} An interesting and deep question is to what extent the limit process obtained is indeed  {\it random}. What is looked for in practice is a  {\it disordered sample}, and a visual inspection often seems to validate this property. As in Section ..., other quantitative properties are used, in particular boundedness and isotropy of the empirical spectrum.  
The convergence of such algorithms is a very deep question of mathematics that we should not discuss here. In general, it is hoped to converge to a local minimum $ \P_{ n}^{ \infty }$, and by rotational invariance, there is no unicity as any rotation of $ \P_{ n}^{ \infty }$ is also a local minimum.
The question of wether $ \P_{ n}^{ \infty }$ is  {\it genuinely} random is also pretty deep.
 The underlying ideas is the following. The sample minimises some energy function $  \mathscr  E$, hence it is suppose to reach a minimum. Let us discuss some alternatives. \begin{itemize}
\item [(a)]When the set of constraints is not too large with respect to the number of points, typically when one only imposes  frequencies to vanish in  a ball $ B_{ \rho }$ for $ \rho $ sufficiently small, the set of global minima $  \mathscr  M_{ n}: =  \{ \mathscr  E =  \min _{  (  \mathbb  T  _{ n}^{ d})^{ n^{ d}}} \mathscr  E\}$ is large and described as the  {\it ground-state manifold}, and apparently is large enough so that the final state $ \P_{ n}^{ \infty } $ satisfies the constraints above. The whole procedure can be seen as a random sampling in $   \mathscr  M_{ n}.$ A quantitative assertion would be that $ \text{\rm{\color{black} dim}}(  \mathscr  M_{ n})\to \infty .$ It is interesting to notice that even in this large space, there might  be a universal unique model living in $  \mathscr  M_{ n}$, which means that even if the initial positions are dependent (determinantal, hard-sphere, hyperfluctuating, ...), the limit $ \P_{ n}^{ \infty }$ converges to some unique infinite stationary point process $ \P$, see  \cite{Klatt-nature} for an illustration with the cellular Lloyd algorithm (Section ...), or at least its second order structure, i.e. the spectral measure $ \S$ is unique.
\item [(b)] When the set of constraints becomes too large, e.g. when $ \rho $ increases, the manifold becomes too small, say with a bounded dimension over $ n$, and the limit configuration $ \P_{ n}^{ \infty }$ has a crystalline ``parametric'' law, similar to a lattice. 
\item [(c)] With more constraints, e.g. for  larger $ \rho $, the local minima might not be reachable (see for instance Theorem ...) and the behaviour is more unpredictable.
\end{itemize}
This heuristic has been illustrated by the team of Torquato as early as 2009?, see also Figure ... We describe in more detail below several procedures and numerical results, and leave some (likely difficult) open questions: 

 \begin{question}
\begin{itemize}
\item For $ \rho $ small enough, do we have $ \text{\rm{\color{black} dim}}(  \mathscr  M_{ n})\to \infty $? This would correspond to a satisfactory  {\it disordered behaviour}.
\item For $ \rho $ in an intermediate regime, do we have at the opposite some $ d\in \mathbb{N}$ such that $ \text{\rm{\color{black} dim}}(  \mathscr  M_{ n})\leqslant d$?
\end{itemize}
\end{question}
This discussion is essentially heuristic, as the realm of physics is populated with phase diagrams where it is not possible to prove anything rigourously.
  }

}

  \section{Hard sphere models and jammed packings}
  
  While this does not necessarily  give the most efficient simulation procedure, it is important to mention as a conclusion that the physics literature about  hyperuniformity makes numerous references and simulations of hard sphere models.  Formally, for $ \rho >0$, a $ \rho $-hard sphere model is a configuration $ P\in \N$ such that for $ x,y\in P,x\neq y  \Rightarrow\; \|x-y\|\geqslant 2\rho .$ Such models are important among other things in condensed matter physics, and also to the fundamental problem of  {\it densest packings}. We refer the reader to the survey of Torquato  \cite{Tor18} for insights about this problem from the physics point of view, see also the recent breakthrough by Viazovska and her co-authors  \cite{viazovska8,dim24packings}, but many  questions remain open in both fields. Let us present Torquato's conjecture about maximally jammed packings.
  
Let us first call  {\it saturated} a hard sphere model maximal  with respect to inclusion, i.e. such that no other point can be added to the model without breaking the hard sphere constraint.
  It is tempting to believe that  an infinite random stationary  saturated hard sphere model has low fluctuations for the number of balls in a large window, but it turns out to be false. To build a counter-example, let $  \bar \P\subset \mathbb{R}^{ d} \times \mathbb{R}_{  + }$ a ``marked'' homogeneous Poisson point process with intensity $\Leb \times (\Leb[1]1_{ \mathbb{R}_{  + }}),$ where each marked point $ (x,t)$ can be seen as a location $ x$ and a birth time $ t>0.$ Consider the random set  built by adding chronologically the points: a point $ (x,t)$ is  {\it accepted} if its sphere $ B(x,\rho )$ does not overlap   the sphere $ B(y,\rho )$ of a previously accepted marked point $ (y,s)$, with $ s<t$.
  Defining this model properly in infinite volume is not  trivial, as one must rule out the possibility of infinite causal chains to determine the acceptance status of a given point $ (x,t)$, see for instance 
  Penrose, Shreiber, Yukich  \cite{SPY07} for a formal introduction. Let $ \P_{ T}$ the set of accepted points at time $ T>0.$ It converges in law in the vague topology towards a stationary point process $ \P_{ \infty }$ called  {\it random sequential absorption model at saturation}, which is a saturated model of hard spheres and is not  hyperuniform:

    \begin{theorem}[\cite{SPY07}, Th. 1.1]

\begin{align*}
 \lim_{ R\to \infty } \frac{  \textrm{Var}\left(\#\P_{ \infty }\cap B_{ R}\right)}{R^{ d}} = \sigma ^{ 2}>0.
\end{align*}
   \end{theorem}  
   These authors furthermore prove a central limit theorem.
Torquato et al.  \cite{TS03}  observed that adding another constraint, namely  {\it jamming}, can lead to experimentally  hyperuniform configurations, if the jamming is strong enough.
  
  \begin{definition}[  \cite{Tor18}, Section 11.1]
  Say that a $ \rho $-hard sphere configuration $ P\in \N$ is  {\it locally jammed} if each $ x\in P$ has $ d + 1$ neighbours preventing it from moving, i.e. there are $ x_{ 1},\dots ,x_{ d + 1}\in P$ at distance $ 2\rho $ from $ x$ which do not lie in a half space having $ x$ on its boundary.
  
 A {\it collectively jammed} configuration is a locally jammed configuration in which no subset of particles can simultaneously be
displaced so that its members move out of contact with one another and with the remainder set fixed. See for instance  Figure \ref{fig:jamm}, left, for a counter-example.
  
  \end{definition}  
 
 Local jamming or collective jamming alone is not enough to guarantee HU. Indeed, such configurations contain  {\it rattlers}, i.e. finite group of particles that can leave the rest of the configuration unchanged, and hence generate sufficiently many variability to break HU.

 \begin{figure}[h!]
 \begin{center}
 \includegraphics[width = 8cm,  trim=0cm 4cm 0cm 0cm,
                     clip]{illust/torqu-jamm}\caption{}
 \label{fig:jamm}
 \end{center}
 \end{figure}
A strictly jammed finite system is  {\it any collectively jammed configuration that disallows all uniform volume-decreasing strains of the system boundary. }Torquato and Stillinger  \cite{TS03} establish the conjecture that  infinite strictly jammed configurations are  hyperuniform.

 Let us formulate a tentative mathematical definition of  {\it strict jamming} for an infinite random stationary configuration.

\begin{definition}
Say that a hard-sphere stationary point process $ \P$ is  {\it strictly jammed} if there is no family of hard-sphere stationary point processes $ \P_{ t},t\in [0,1]$ such that $ \P_{ 0} = 0$, $ t  \mapsto  \P_{ t}$ is continuous in law in the vague topology, and the intensity $ \lambda _{ t}$ of $ \P_{ t}$ is non-decreasing with $ \lambda _{ 1}>\lambda _{ 0}$.
\end{definition}  
Remark that any infinite system achieving maximal density, such as the hexagonal lattice in dimension $ 2$ or the face-centered cubic (FCC) packing in dimension $ 3$, or Viazovska's E8 and leech lattices in dimensions $ 8$ and $ 24,$ is immediately strictly jammed since the intensity cannot increase. Such models are immediately seen to be  hyperuniform, even stealthy, as they have a locally finite spectrum.
 Figure \ref{fig:jamm}-(middle, right) shows why the square lattice is not strictly jammed, as there is a collective displacement with strictly decreasing intensity.

        \chapter{Appendix}
        \label{app}

        We give several necessary concepts and constructions for some results related to Fourier transforms.
        Most of the material is covered in the classical textbooks  \cite{Rudin,SteinWeiss,Berg}.
        \section{Schwartz functions and tempered measures}
        
         Say that a measurable function $ f: \mathbb{R}^{ d} \to  \mathbb C $ has  {\it fast decay} if  for all $k>0$, $$ \sup_{ x\in \mathbb{R}^{ d}}(1 + \|x\|)^{k } | f(x) | <\infty .$$    

\begin{definition}
Call Schwartz function a $  \mathscr  C^{ \infty }$ function $ f:\mathbb{R}^{ d}\to  \mathbb C $ such that all its derivatives have fast decay.
The space of Schwartz functions is denoted by $ \Sc(\mathbb{R}^{ d}).$ \end{definition}   
 Such functions are in $ L^{ 1}(\mathbb{R}^{ d})\cap L^{ 2}(\mathbb{R}^{ d})$, their Fourier transforms are defined unambiguously through 
\begin{align*}
 \hat f(u) = \int_{\mathbb{R}^{ d}}e^{ -iu\cdot x}f(x)dx,
\end{align*} and satisfy the duality relation $ \widehat{ \hat f}(x) = (2\pi )^{ d}f(-x).$ The typical example is Gauss kernel $ f(x) = \exp(-\|x\|^{ 2})$. The fundamental observation is that if $ f\in \Sc$, then also $ \hat f\in \Sc$  \cite{Rudin2}.

The elements of the space $ \Sc'(\mathbb{R}^{ d})$ of continuous linear functionals in the Schwartz topology over $ \Sc(\mathbb{R}^{ d})$ are called  {\it tempered distributions}. The value of the application of a distribution $ L$ to a Schwartz function $ f$ is denoted by $ \langle L,f \rangle$.  A {\it  Tempered measure} is a measure $ \mu $ such that $ \langle L,f \rangle = \mu (f)$ defines a tempered distribution $ L.$

       The  Fourier transform can be extended to $ \Sc'(\mathbb{R}^{ d})$ by duality.  The corresponding operator is denoted by $ \F$. 

\begin{theorem}
        For $ L\in \Sc'(\mathbb{R}^{ d}) $, there exists a tempered distribution denoted by $ \F L\in \Sc'(\mathbb{R}^{ d})$ and characterised by  
\begin{align*}
 \langle \F L,f \rangle =  \langle L, \hat f \rangle, f\in \Sc(\mathbb{R}^{ d}).
\end{align*}
For $ f\in \Sc, \F f = \hat f.$
        \end{theorem}  
        
     \section{The phase-space formula on a locally compact abelian group}
     \label{app:wsrm}

     In this section, we consider more generally a  locally compact abelian topological group (LCAG for short) denoted by $ \X$,  endowed with a Haar measure (i.e. invariant under translations), denoted simply by $ dx$. We follow the first chapter of Berg \& Forst  \cite{Berg} for concepts and definitions. We introduce the dual group, denoted by $ \hat \X$ and described below. We only consider for $ \X$ a subgroup or quotient of $ \mathbb{R}^{ d}$, and define the Fourier transform as 
\begin{align*}
 \hatX{f}(u) = \int_{ \X}f(x)e^{ -iu\cdot x}dx, u\in \hat\X.
\end{align*}
Most of the theory is valid for general locally  compact abelian groups, replacing functions $ x   \mapsto  e^{ ix\cdot u}$ by character functions, but it is beyond the scope of examples considered here.

In this context, the Plancherel theorem (\cite[Th. 2.5]{Berg}) states that there is a Haar measure $ \mu _{  \hat \X}$ on $ \hat \X$ such that for $ f\in L^{ 1}(\X)\cap L^{ 2}(\X)$, $ \hatX f\in L^{ 2}(   \mu _{  \hat \X})$ and
\begin{align*}
\|f\|_{ L^{ 2}(\X)} ^{ 2}= \| \hatX f\|_{ L^{ 2}(  {  \hat \X})}^{ 2}.
\end{align*}

\begin{example}
For $ \X = \mathbb{R}^{ d}, $ endowed with Lebesgue measure, $ \hat \X =  \mathbb{R}^{ d}$ and $ \mu _{ \hat\X} = (2\pi )^{ -d}\Leb$.  When there is no ambiguity and $ \X = \mathbb{R}^{ d}$, simply write $ \hatX{f} = \hat f.$

\end{example} 

\begin{example}
If $ \X = \T,\lambda >0,$ endowed with $ \Leb_{ \X}$, the Lebesgue measure restricted to $ \X$, then $ \hat\X = \mathbb{Z} _{ \lambda }^{ d}: = 2\pi \lambda ^{ -1}\mathbb{Z} ^{ d}$, and $ \mu _{ \hat\X} =\lambda ^{ -d}\mu _{ \mathbb{Z} _{ \lambda }^{ d}}$, where $ \mu _{ \mathbb{Z} _{ \lambda }^{ d}}$  is the counting measure on $ \mathbb{Z} _{ \lambda }^{ d}.$ By duality, if $ \X = \mathbb{Z} ^{ d}$  endowed with $ \mu _{ \mathbb{Z} ^{ d}}$, $, \hat \X = \T[2\pi ]$ and $ \mu _{ \hat\X} = (2\pi )^{ -d}\Leb.$
\end{example} 

 { For a complex Radon measure \(\mu\), denote its total variation measure by
\(|\mu|\). For an open set \(A\subset \X\), it is defined by
\[
|\mu|(U)
=
\sup \left\{
|\mu(f)|:
f\in \mathscr C_c^0(A),\ \|f\|_\infty\leq 1
\right\}
\]and extends by $ \sigma $-additivity on other sets. 
The space of complex Radon measures is endowed with the vague topology, i.e.
the topology generated by the maps
\[
\varphi_f:\mu\mapsto \mu(f),
\qquad f\in \mathscr C_c^0(\X),
\]
and with the corresponding Borel \(\sigma\)-algebra.

Theorem 2.6 in \cite{Berg} gives a general inversion result, valid for every
complex measure \(\mu\) with \(|\mu|(\X)<\infty\). If
\[
\hatX\mu(u):=\mu(e^{-iu\cdot})
\]
belongs to \(L^1(\hat \X,\mu_{\hat \X})\), then \(\mu\) is absolutely continuous
with respect to Haar measure on \(\X\), with density
\[
x\mapsto
\int_{\hat \X} e^{ix\cdot u}\hatX\mu(u)\,\mu_{\hat \X}(du).
\]
Consequently, if \(\bar\mu\) denotes the conjugate measure, then for a suitable
test function \(h\),
\begin{align}
\label{eq:duality}
\int_{\X} h(x)\,\bar\mu(dx)
&=
\int_{\X}\int_{\hat \X}
e^{-ix\cdot u}\overline{\hatX\mu(u)}h(x)\,
\mu_{\hat \X}(du)\,dx  \\
&=
\int_{\hat \X}
\overline{\hatX\mu(u)}\,\hatX h(u)\,\mu_{\hat \X}(du).
\end{align}}

      We then consider in this measurability framework a complex random measure $ \M$ which is locally square integrable and weakly $ \X$-stationary, i.e. such that for $ A\subset \X$ bounded, $ \mathbf E \left[
 | \M | (A)  ^{ 2}
\right]<\infty $ and for $ f\in \mathscr B_{ c}(\X),x\in \X,$ $$\mathbf E \left[
\M(\tau _{ x}^{ \X}f)
\right] = \mathbf E \left[
\M(f)
\right],\;\;   \textrm{Var}\left(\M(\tau _{ x}^{ \X}f)\right) =   \textrm{Var}\left(\M(f)\right),$$ where $ \tau _{ x}^{ \X}$ is the operator of translation by $ x$ in $ \X.$ Note that we are not restricted to non-negative random measures. Say that a measure $ \mu $ is translation bounded if for all $ f\in \mathscr B_{ c}(\X)$, 
\begin{align*}
 \sup_{ x\in \X} | \mu |  (\tau _{ x}^{ \X}f)<\infty .
\end{align*}On $\X =  \mathbb{R}^{ d}$, it implies by additivity that $ |  \mu | (B(0,R)) = O(R^{ d}) $ by covering $ B(0,R)$ by $ O(R^{ d})$ unit radius balls,   hence it implies that $ \mu $ is a tempered measure. 

\begin{theorem}
\label{thm:wsrm-X}
Let $ \M$ a locally square integrable weakly $ \X$-stationary random measure on $ \X.$
 There is a complex translation bounded measure $ \C$ on $ \X$ and a non-negative translation-bounded  measure $ \S$ on $ \hat \X$ such that for $ f,g$ bounded with compact support 
\begin{align*}
  \textrm{Cov}\left(\M(f),\M(g)\right) = \int_{ \X^{ 2}} f(x)   \overline{   g(x + y)}dx\C(dy) = \int_{ \hat \X}\hatX  f(u)    \overline{ \hatX g(u)}   \S(du) .
\end{align*}Also, $ \C$ is Hermitian, i.e. $ \C(-.) =  \bar \C$. If $ \M$ is real-valued, $ \C$ and $ \S$ are real-valued and symmetric.

This also holds for $ f,g$ Schwartz functions if $ \X = \mathbb{R}^{ d}.$
 
 \end{theorem}  
 
\begin{proof}
Denote the  {\it tilted convolution} by 
\begin{align*}
f \tconv g(y): =  \int_{ \X}f(x)  \overline{  g(x + y)}dx.
\end{align*}
 
Let
\[
\widetilde \M=\M-\mathbf E\M.
\]
Since \(\mathbf E[|\M|(K)^2]<\infty\) for $ K$ bounded, define for $ A,B$ bounded
\[
L(A\times B)
=
\mathbf E\left[\widetilde \M(A)\overline{\widetilde \M(B)}\right].
\]
Then \(L\) is a locally finite complex Radon measure on \(\X\times\X\), and
\[
\operatorname{Cov}(\M(f),\M(g))
= 
\int_{\X\times\X} f(x)\overline{g(y)}\,L(dx,dy).
\]

Stationarity implies diagonal translation-invariance of \(L\):
\[
L((A+t)\times(B+t))=L(A\times B).
\]
Push \(L\) forward by
\[
(x,y)\mapsto (x,y-x).
\]
Then the pushed-forward measure is invariant in the first coordinate, hence has the form
\[
dx\otimes \C(dz)
\]
for a Radon measure \(\C\) on \(\X\) (see \cite[Section A2.7]{DVj08}). Therefore
\[
\operatorname{Cov}(\M(f),\M(g))
=
\int_\X\int_\X f(x)\overline{g(x+z)}\,dx\,\C(dz).
\]
We immediately have with the change of variables $ y = x + z$, with $ \tilde \C =  \bar  \C(-\;.)$, and $ \tilde L$ the corresponding bilinear operator,
\begin{align*}
 L(f,g) = \displaystyle\int_{\X}\left(
\displaystyle\int_{\X}f(y-z)  \bar g(y)
dy\right)\C(dz) = \displaystyle\int_{}  \bar g(y)f(y + z)   \C(-dz) =\overline{   {  \tilde L(g,f)}
}\end{align*}
since $  L(f,g) = \overline{L(g,f)}$ by complex covariance properties, it gives $ \tilde \C = \C,$ i.e.  $ \C$ is Hermitian. When $ \M$ is real-valued, testing $ \C$ over real-valued test functions yields $  \bar \C = \C$, hence $ \C$ is symmetric.

This measure (or tempered distribution) $ \C$ is positive-definite in the sense that for $ f\in  \mathscr  C_{ c}^{ 0}(\X)$
\begin{align*}
 \int_{ \X^{ 2}} f(x)  \overline{  f(x + y)}dx\C(dy) =  \textrm{Var}\left(\M(f)\right)\geqslant 0.
\end{align*}
Therefore, by Bochner's theorem on LCAGs, \cite[Th. 4.5]{Berg},  \footnote{$ \C$ is formally a distribution and not a measure. The cited result   formally treats   actual measures on $ \X$, and in the current setup $ \C(\X)$ might not make sense because $ \C$ is not a measure, but only the fact that $ \C$  is locally a measure is used in the theory of harmonic analysis on LCAGs.}   there is a non-negative measure $ \S $ on $ \hat \X$ such that 
\begin{align*}
L(f,g) = \int_{ \hat \X } \widehat{ f \tconv g}(u)d \S_{ 0}(u) , f,g\in \mathscr C^{ 0}_{ c} (\X)
\end{align*}
with the right hand side always finite. An easy computation gives  
\begin{align*}
 \hatX{f\tconv g}(-u) = \hat f(u)\overline{ \hat g(u)},u\in \hat \X.
\end{align*}  Let $ \S = \S_{ 0}(-\;.)$. 
Then  
\begin{align*}
 \displaystyle\int_{\X} f\tconv g(y)\C(dy) = \displaystyle\int_{ \hat \X} \hat f(u) \overline{ \hat g(u)}\S(du),
\end{align*}
proving the desired relation.

 On $ \mathbb{R}^{ d}$, there is also the original generalisation of Bochner's theorem by Schwarz (\cite[Th. XVIII]{schwartz}), that gives the same result for functions $ f,g\in \Sc(\mathbb{R}^{ d}).$

 {Let us now extend the result to arbitrary bounded functions by approximation. Let \(f\in \mathscr B_c(\mathbb R^d)\). Choose
\(\delta\in\CC_c^\infty(\mathbb R^d)\), \(\delta\geq 0\),
\(\int\delta=1\), and set
\[
\delta_i(x)=i^d\delta(ix),\qquad f_i=f*\delta_i .
\]
Then \(f_i\in\CC_c^\infty(\mathbb R^d)\), the supports of the \(f_i\)'s are contained in one compact set \(K\), and
\[
\|f_i\|_\infty\leq \|f\|_\infty,\qquad f_i(x)\to f(x)
\]
at every Lebesgue point of \(f\). Let \(N\subset K\) be the exceptional Lebesgue-null set.
We first note that if \(h\in \mathscr B_c(\mathbb R^d)\) is supported by a Lebesgue-null set, then
$
\M(h)=0\quad\text{in }L^2. $
Indeed, for every compact \(A\) with positive Lebesgue measure and for fixed \(\omega\),
\[
\int_A |\M(\omega)|(N+x)\,dx=0
\]
by Fubini, since \(N\) is Lebesgue-null. Hence \(\M(\tau_x h)=0\) for Lebesgue-a.e. \(x\in A\). Using
\(\mathbf E[|\M|(K')^2]<\infty\) for compact \(K'\), Fubini gives
\[
0=\int_A \mathbf E|\M(\tau_xh)|^2\,dx.
\]
By weakly stationarity, \(\mathbf E|\M(\tau_xh)|^2\) is independent of \(x\). Therefore
\[
\mathbf E|\M(h)|^2=0.
\]

Now decompose
\[
f_i-f=(f_i-f)\mathbf 1_{K\setminus N}+(f_i-f)\mathbf 1_N .
\]
The second term is \(L^2\)-null by the previous observation. For the first term, pathwise dominated convergence with respect to the finite measure
\(|\M|_{\mid K}\) gives
\[
\M\big((f_i-f)\mathbf 1_{K\setminus N}\big)\to0
\quad\text{a.s.}
\]
Moreover,
\[
\left|\M\big((f_i-f)\mathbf 1_{K\setminus N}\big)\right|
\leq 2\|f\|_\infty |\M|(K),
\]
and the right-hand side is square-integrable. Hence
\[
\M(f_i)\to \M(f)\quad\text{in }L^2.
\]
Consequently,
\[
\operatorname{Var}(\M(f_i))\to \operatorname{Var}(\M(f)).
\]

Since the variance identity is already known for \(\CC_c^\infty\) functions,
\[
\operatorname{Var}(\M(f_i))
=
\int_{\mathbb R^d} |\widehat{f_i}(u)|^2\,\S(du).
\]
But
\[
\widehat{f_i}(u)=\widehat f(u)\widehat{\delta_i}(u),
\qquad
\widehat{\delta_i}(u)=\widehat\delta(u/i),
\]
and, because \(\delta\) is a probability density,
\[
|\widehat{\delta_i}(u)|\leq1,\qquad
\widehat{\delta_i}(u)\to1.
\]
Therefore
\[
\operatorname{Var}(\M(f_i))
=
\int_{\mathbb R^d}
|\widehat f(u)|^2|\widehat{\delta_i}(u)|^2\,\S(du).
\]
Letting \(i\to\infty\), Fatou's lemma gives
\[
\int |\widehat f(u)|^2\,\S(du)
\leq
\liminf_i \operatorname{Var}(\M(f_i))
=
\operatorname{Var}(\M(f)).
\]
The reverse inequality follows from
\[
|\widehat f(u)|^2|\widehat{\delta_i}(u)|^2
\leq
|\widehat f(u)|^2.
\]
Hence
\[
\operatorname{Var}(\M(f))
=
\int_{\mathbb R^d}|\widehat f(u)|^2\,\S(du).
\]

Finally, for arbitrary \(f,g\in\mathscr B_c(\mathbb R^d)\), use polarization. If
\[
Q(h)=\operatorname{Var}(\M(h)),
\]
then, with covariance linear in the first argument,
\[
\operatorname{Cov}(\M(f),\M(g))
=
\frac14\Big[
Q(f+g)-Q(f-g)
+iQ(f+ig)-iQ(f-ig)
\Big].
\]
Applying the variance identity to \(f+g,f-g,f+ig,f-ig\) yields
\[
\operatorname{Cov}(\M(f),\M(g))
=
\int_{\mathbb R^d}
\widehat f(u)\overline{\widehat g(u)}\,\S(du).
\]}

\end{proof}

  \section{Fourier transform of the ball and Proof of Lemma  \ref{lm:trans-bd}}
  \label{app:balls}

The more investigated examples of  linear statistics are ball  indicators $ f = 1_{ B_{ R}},R>0.$ Understanding the behaviour of their Fourier transforms is essential to study fluctuations of the number variance of  point processes, i.e. the variance of the number of points in $ B_{ R},$ as $ R\to \infty .$ It is enough to study $ R = 1$ thanks to the scaling relation $ \widehat{ 1_{ B_{ R}}} = R^{ d} \widehat{ 1_{ B_{ 1}}}(R\;\cdot )$.

\begin{lemma}
\label{lm:fourier-ball}
We have 
\begin{align*}
 \widehat{ 1_{ B_{ 1}}}(u)  = \kappa _{ d}  \mathcal J_{ d/2}(\|u\|)
\end{align*}
with the renormalised Bessel function of the first kind 
\begin{align*}
 \mathcal J_{ \nu }(t) = \sum_{k = 0}^{ \infty }(-1)^{ k}a_{ k}(t),\qquad a_{ k}(t) = \frac{ 1}{k!\Gamma (k + \nu  + 1)}\left(
\frac{ t}{2}
\right)^{ 2k},
\end{align*} recalling $ \kappa _{ d} = \Leb(B_{ 1}) =  \widehat{ 1_{ B_{ 1}}}(0).$ It implies in particular by  \cite[Sec. 9.2]{specialFunctionsBesselSec9.2}\begin{align*}
 \widehat{ 1_{ B_{ 1}}}(u) =c_{ d} \|u\|^{ -\frac{ d + 1}{2}}\sin(\|u\|-c'_{ d})(1 + o (1)),\qquad u\to \infty ,
\end{align*}
for some $ c_{ d},c'_{ d}>0,$
and
\begin{align}
\label{eq:bessel}
 |  \widehat{ 1_{ B_{ 1}}}(u) |   = O((1 + \|u\|)^{ -\frac{ d + 1}{2}}).
\end{align} Also, we have the inequality
\begin{align}
\label{eq:half-max-inequality}
 | \widehat{ 1_{ B_{ 1}}}(u) | \geqslant \frac{ \kappa _{ d}}{2}1_{ B_{ \rho _{ d}}}(u)
\end{align}where $ \rho _{ d} = { \sqrt{d + 2}} $.   
\end{lemma}

  {
  
  \begin{proof}
   
We refer to  \cite{SteinWeiss} for instance for the 
identity
$$ {\widehat{\mathbf{1}_{B_1}}(u) = \frac{J_{d/2}(\|u\|)}{\|u\|^{d/2}} \cdot (2\pi)^{d/2}}$$
where $ J_{ d/2}$ is the Bessel function of the first kind. The renormalised Bessel function is defined by $\mathcal{J}_\nu(t) = \Gamma(\nu+1)\left(\tfrac{2}{t}\right)^\nu B_\nu(t)$ with $\nu = d/2$. We have
indeed
$$\widehat{\mathbf{1}_{B_1}}(u) = \kappa_d\cdot \mathcal{J}_{d/2}(\|u\|).$$

We are looking for $\rho_d$ such that for $ \|u\|\leqslant \rho _{ d},$ $\widehat{\mathbf{1}_{B}}(u) \geq \tfrac{1}{2}\kappa_d$. We claim that $ \rho _{ d} = \sqrt{d + 2}$ works.
Let $ z\leqslant \sqrt{d + 2}.$ 
For $k \geq 1$ and $\nu = d/2 \geq 1/2$, we have $(k+1)(\nu+k+1) \geq 1 \cdot (\nu + 2) = d/2 + 2$, hence :
$$\frac{a_{k+1}(z)}{a_k(z)} \leq \frac{z^2/4}{d/2+2} = \frac{z^2}{2(d+4)} \leq \frac{d+2}{2(d+4)} < 1$$
it is hence an alternating series with decreasing terms moduli as long as $ z\leqslant \sqrt{d + 2}$.
As long as $z$ is also smaller than the smallest positive zero $ j_{ \nu ,1}$, we have therefore:

$$\mathcal{J}_\nu(z) \geq 1 - \frac{z^2}{4(\nu+1)}$$
 
  Hence $\mathcal{J}_\nu$ does not vanish on $\left(0,\, 2\sqrt{\nu+1}\right)$, and  $j_{\nu,1} > 2\sqrt{\nu+1}$.  
 Applying this to $\nu = d/2$ gives
$$j_{d/2,\,1} > 2\sqrt{\frac{d}{2}+1} = \sqrt{2(d+2)} = \sqrt{2}\cdot\sqrt{d+2}$$

Hence for $z = \|u\| \leq \rho_d = \sqrt{d+2}  < j_{d/2,\,1}$.

$$\widehat{\mathbf{1}_{B_1}}(u) \geq \kappa_d\left(1 - \frac{\|u\|^2}{4\left(\frac{d}{2}+1\right)}\right) = \kappa_d\left(1 - \frac{\|u\|^2}{2(d+2)}\right).$$
Hence$$\|u\| \leq \sqrt{d+2} \implies \widehat{\mathbf{1}_{B(0,1)}}(u) \geq \frac{1}{2}\,\kappa_d.$$

  \end{proof}

}
\begin{longversion}{}
The proof is based on an estimate that is crucial in the variance fluctuations of random measures.
 
\begin{proof}
Decompose $ u = \|u\|  \dot u.$ Let $ \sigma _{ d}$ the Haar measure on $  \mathbb  S  ^{ d-1}$.
\begin{align*}
 \widehat{ 1_{ B_{ 1}}}(u) = &\int_{B_{ 1}}e^{ i x\cdot u}du\\
  = &\int_{0}^{ 1    }\int_{  \mathbb  S  ^{ d-1}}e^{ i\rho t\cdot u}\rho ^{ d-1}d\rho \sigma (dt)\\
   = & \int_{0}^{ 1    } \rho ^{ d-1}d\rho \int_{  \mathbb  S  ^{ d-1}}e^{ i \rho \|u\| t\cdot  \dot u}\sigma _{ d}(dt)
\end{align*}
\end{proof}

Several constructions of useful functions are built on this result.

 { 
\begin{lemma}
For $ \rho >0$, there is a function $ f_{ \rho }$ such that $ 0\leqslant \hat f_{ \rho }\leqslant 1_{ B_{ \rho }}$ and for some $ 0<c_{ \rho }^{ -}<c_{ \rho }^{  + }<\infty $, 
\begin{align*}
 c_\rho ^{ -}(1 + \|u\|)^{ -\frac{ d + 1}{2}}\leqslant  \hat f_{ \rho }(u)\leqslant 
 c_\rho ^{  + }(1 + \|u\|)^{ -\frac{ d + 1}{2}}
\end{align*}
\end{lemma}

\begin{proof}
The upper bound is easy to have with the function $ 1_{ B_{ \rho }}$ from  \eqref{eq:bessel}, but the lower bound requires a finer analysis due to the frequent cancellations of $ \widehat{ 1_{ B_{ \rho }}}$. 

\draft{useful? ...}
\end{proof}

\begin{lemma}\begin{itemize}
\item 
There exists  function $ f\in \Sc(\mathbb{R}^{ d})$ such that $  {\rm supp}( \hat f)\subset B_{ 1}.$
\item
There exists $ g\in \Sc(\mathbb{R}^{ d})$ such that $ g\equiv 1$ on $ B_{ 1}.$

\end{itemize}
\end{lemma}

\begin{proof}

This is a standard construction on $ \mathbb{R}:$\begin{itemize}
\item Start from a smooth $ \CC^{ \infty }$ step function 
\begin{align*}
 h_{ 1}(t) = 1_{ t>0}e^{ -1/t}.
\end{align*}

\item Then rescale properly 
\begin{align*}
 h_{ 2}(t) = \frac{ h_{ 1}(t)}{h_{ 1}(t) + h_{ 1}(1-t)},
\end{align*}
which takes value $ 0$ on $ (-\infty ,0)$ and $ 1$ on $ (1,\infty )$, still $ \CC^{ \infty }$.
\item The main ingredients are here. For the first point, take 
\begin{align*}
 f(t) = \tau _{ -1} h_{ 2}(t)  \tau _{ 1}h_{ 2}(-t).
\end{align*}
Since it is $ \CC^{ \infty }$ with compact support, it is a Schwartz function.
\item For the second point, add a plateau in the middle 
\begin{align*}
 g(t) = \tau _{ -2}h_{ 2}(t)\tau _{ 2}h_{ 2}(-t).
\end{align*}
\item In higher dimensions, simply take tensor products $ f(t_{ 1})\dots f(t_{ d}),t = (t_{ 1},\dots ,t_{ d})\in \mathbb{R}^{ d}$, or $ g(t_{ 1})\dots g(t_{ d})$, and apply a proper rescaling.
\end{itemize}

\end{proof}

}
\end{longversion}
 
 \subsection{  Proof of Lemma  \ref{lm:trans-bd}}
\begin{proof}
Let $f=1_{B_1}$. By \eqref{eq:half-max-inequality},
\[
|\widehat f(\xi)|
=
|\widehat{1_{B_1}}(\xi)|
\ge \frac{\kappa_d}{2}1_{B_{\rho_d}}(\xi),
\qquad \rho_d=\sqrt{d+2}.
\]
Hence, for every $u\in\mathbb R^d$,
\[
1_{B(u,\rho_d)}(\xi)
\le
\frac{4}{\kappa_d^2}\,
|\widehat f(\xi-u)|^2.
\]
Therefore
\begin{align}
\label{eq:local-S-bound}
\S(B(u,\rho_d))
&\le
\frac{4}{\kappa_d^2}
\S\!\left(|\widehat f(\cdot-u)|^2\right).
\end{align}
Since
\[
\widehat{e^{iu\cdot}f}(\xi)=\widehat f(\xi-u),
\]
the spectral representation gives
\[
\S\!\left(|\widehat f(\cdot-u)|^2\right)
=
(2\pi)^d
\operatorname{Var}\!\left(\M\!\left(e^{iu\cdot}f\right)\right).
\]
Thus
\[
\sup_{u\in\mathbb R^d}\S(B(u,\rho_d))
\le
\frac{4(2\pi)^d}{\kappa_d^2}
\sup_{u\in\mathbb R^d}
\operatorname{Var}\!\left(\M\!\left(e^{iu\cdot}1_{B_1}\right)\right).
\]

Now fix $x\in\mathbb R^d$ and $T\ge 1$. There exists a finite set
$U_T\subset\mathbb R^d$ such that
\[
B(x,T)\subset \bigcup_{v\in U_T}B(v,\rho_d).
\]
Let $ 
N_d(T,\rho_d) = \sup \{\#U_{ T}\}$  the maximal cardinality of such coverings and the covering index
\begin{align*}
 c_{ d,\rho _{ d}} = \sup _{ T}\frac{N_{ d}(T,\rho _{ d}) }{T^{ d}} = \sup_{ T} \frac{ N_{ d}(T,1)}{(T/\rho _{ d})^{ d}} = {\rho _{ d}^{ d}}{ c_{ d,1}}<\infty 
.\end{align*}
Hence, since $\S$ is a positive measure,
\[
\S(B(x,T))
\le
\sum_{v\in U_T}\S(B(x+v,\rho_d))
\le
N_d(T,\rho_d)\,
\sup_{u\in\mathbb R^d}\S(B(u,\rho_d))
\le {\rho _{ d}^{ d}}
\frac{4(2\pi)^d}{\kappa_d^2}\,
 { c_{ d,1}}\,
\sup_{u\in\mathbb R^d}
\operatorname{Var}\!\left(\M\!\left(e^{iu\cdot}1_{B_1}\right)\right)
T^d.
\]
This proves \eqref{eq:adhikari}.
Finally,
\[
\operatorname{Var}\!\left(\M\!\left(e^{iu\cdot}1_{B_1}\right)\right)
\le
\mathbf E\left[
\left|\M\!\left(e^{iu\cdot}1_{B_1}\right)\right|^2
\right]
\le
\mathbf E\left[|\M|(B_1)^2\right],
\]
which is finite by the $L^2_{\rm loc}$ assumption. The lemma statement holds with 
\begin{align*}
 c_{ d} = {\rho _{ d}^{ d}}\frac{ 4(2\pi )^{ d}c_{ d,1}}{\kappa _{ d}^{ 2} }.
\end{align*}\end{proof}

       \newpage
     
{     \section*{Acknowledgements} I am thankful to Hermine Biermé, Jean-Fran\c cois Coeurjolly, and the scientific committee of the Geosto 2025 conference for offering me the opportunity to give a mini-course about  hyperuniformity and write these lecture notes. Thanks also to   Jonas Jalowy, Gabriel Mastrilli, Mattias Bylehn and Daniela Flimmel  for comments on an early version of this work.} I thank Luca Lotz for comments on an early version of Section  \ref{sec:fair-partitions} for the second version.

\addcontentsline{toc}{chapter}{Bibliography}
\bibliographystyle{abbrv}

\end{document}